\newcommand{\@bbify}[1]{
  \ifcsname b#1\endcsname
  \message{WARNING: Overwriting b#1 with blackboard letter!}
  \fi
  \expandafter\edef\csname b#1\endcsname
  {\noexpand\ensuremath{\noexpand\mathbb #1}\noexpand\xspace}}
\newcommand{\@calify}[1]{
  \ifcsname c#1\endcsname
  \message{WARNING: Overwriting c#1 with calligraphic letter!}
  \fi 
  \expandafter\edef\csname c#1\endcsname
  {\noexpand\ensuremath{\noexpand\mathcal #1}\noexpand\xspace}}
\newcommand{\@bfify}[1]{
  \ifcsname bf#1\endcsname
  \message{WARNING: Overwriting c#1 with bold letter!}
  \fi
  \expandafter\edef\csname bf#1\endcsname
  {\noexpand\ensuremath{\noexpand\mathbf #1}\noexpand\xspace}}
\newcounter{@letter}\stepcounter{@letter}
\loop\@bbify{\Alph{@letter}}\@calify{\Alph{@letter}}\@bfify{\Alph{@letter}}
\newenvironment{tz}{\begin{center}\begin{tikzpicture}}{\end{tikzpicture}\end{center}}
\tikzstyle{d}=[double distance=.3ex]
\tikzstyle{w}=[preaction={draw=white, -,line width=4pt}]
\NewDocumentCommand{\punctuation}{ m m O{5pt} }{\node at ($(#1.east)-(0,#3)$) {#2};}
\newcounter{diagram}
\renewcommand{\thediagram}{\thetheorem}
\newenvironment{diagram}{
\setcounter{diagram}{\value{theorem}}
\refstepcounter{theorem}\refstepcounter{diagram}
\begin{center}
\normalfont{(\thediagram)}\hfill\begin{tikzpicture}[baseline=(current bounding box.center)]}
    {\end{tikzpicture}\hfill\text{ }\end{center}}
\tikzset{%
node distance=1.5cm, la/.style={scale=0.8}, rr/.style={xshift=1.5cm},
space/.style={xshift=.5cm}, over/.style={auto=false,fill=white,inner sep=1.5pt, minimum size=0, outer sep=0},
    symbol/.style={%
        draw=none,
        every to/.append style={%
            edge node={node [sloped, allow upside down, auto=false]{$#1$}}},
            
    }, pro/.style={postaction={decorate,decoration={
        markings,
        mark=at position .5 with {\node at (0,0) {$\bullet$};}
      }},
      inner sep=.9ex,
      },
  n/.style={double equal sign distance, -implies}, t/.style={double distance=2.5pt, -implies, postaction={draw,-}},
}
\newcommand{\pushout}[1]{\node at ($({#1})-(10pt,-10pt)$) {$\ulcorner$};}
\newcommand{\pullback}[1]{\node at ($({#1})+(10pt,-10pt)$) {$\lrcorner$};}
\newcommand{\Hom}{\mathrm{Hom}}
\newlist{rome}{enumerate}{7}
\setlist[rome]{label=(\roman*)}
\newtheorem{theorem}{Theorem}[section]
\newtheorem{cor}[theorem]{Corollary}
\newtheorem{prop}[theorem]{Proposition}
\newtheorem{lemma}[theorem]{Lemma}
\declaretheorem[name=Theorem,numbered=yes]{theoremA}
\theoremstyle{definition}
\newtheorem{defn}[theorem]{Definition}
\newtheorem{ex}[theorem]{Example}
\newtheorem{notation}[theorem]{Notation}
\newtheorem{constr}[theorem]{Construction}
\theoremstyle{remark}
\newtheorem{rem}[theorem]{Remark}
\crefname{theorem}{Theorem}{Theorems}
\crefname{cor}{Corollary}{Corollaries}
\crefname{prop}{Proposition}{Propositions}
\crefname{lemma}{Lemma}{Lemmas}
\crefname{defn}{Definition}{Definitions}
\crefname{ex}{Example}{Examples}
\crefname{notation}{Notation}{Notations}
\crefname{descr}{Description}{Descriptions}
\crefname{constr}{Construction}{Constructions}
\crefname{rem}{Remark}{Remarks}
\newcommand{\FibP}{\pi_P}
\newcommand{\FibF}{\mathrm{Fib}(H)}
\newcommand{\FibA}{\mathrm{Fib}(P)}
\newcommand{\FibB}{\mathrm{Fib}(Q)}
\newcommand{\un}{\mathop{\scalebox{1.2}{\raisebox{-0.1ex}{$\ast$}}}}
\newcommand{\Ab}{\mathrm{Ab}}
\newcommand{\Set}{\mathrm{Set}}
\newcommand{\sSet}{\mathrm{sSet}}
\newcommand{\Top}{\mathrm{Top}}
\newcommand{\Cat}{\mathrm{Cat}}
\newcommand{\Dbl}{\mathrm{Dbl}}
\DeclareMathOperator{\id}{id}
\newcommand{\op}{\mathrm{op}}
\DeclareMathOperator{\ev}{ev}
\newcommand{\cst}{{\mathrm{cst}}}
\DeclareMathOperator{\Ob}{ob}
\newcommand{\VCat}{{\cV\text{-}\Cat}}
\newcommand{\CatV}{{\Cat(\cV)}}
\DeclareMathOperator{\Int}{Int}
\newcommand{\Dfib}{\mathrm{DFib}}
\newcommand{\Dopfib}{\mathrm{DOpfib}}
\DeclareMathOperator{\Und}{Und}
\newcommand{\slice}[2]{{#1}_{/{#2}}}
\newcommand{\sliceunder}[2]{{}^{{#2}/}{#1}}
\title[Internal Grothendieck construction for enriched categories]{Internal Grothendieck construction \\ for enriched categories}
\author[L.\ Moser]{Lyne Moser}
\address{Fakultät für Mathematik, Universität Regensburg, 93040 Regensburg, Germany}
\email{lyne.moser@ur.de}
\author[M.\ Sarazola]{Maru Sarazola}
\address{School of Mathematics, University of Minnesota, Minneapolis MN, 55455, USA}
\email{maru@umn.edu}
\author[P.\ Verdugo]{Paula Verdugo}
\address{Department of Mathematics and Statistics, Macquarie University, NSW 2109, Australia}
\email{paula.verdugo@hdr.mq.edu.au}
\date{}
\begin{document}

\begin{abstract}
    Given a cartesian closed category $\cV$, we introduce an internal category of elements $\int_\cC F$ associated to a $\cV$-functor $F\colon \cC^{\op}\to \cV$. When $\cV$ is extensive, we show that this internal Grothendieck construction gives an equivalence of categories between $\cV$-functors $\cC^{\op}\to \cV$ and internal discrete fibrations over~$\cC$, which can be promoted to an equivalence of $\cV$-categories.
    
    Using this construction, we prove a representation theorem for $\cV$-categories, stating that a $\cV$-functor $F\colon \cC^{\op}\to \cV$ is $\cV$-representable if and only if its internal category of elements $\int_\cC F$ has an internal terminal object. We further obtain a characterization formulated completely in terms of $\cV$-categories using shifted $\cV$-categories of elements. Moreover, in the presence of $\cV$-tensors, we show that it is enough to consider $\cV$-terminal objects in the underlying $\cV$-category $\Und\int_\cC F$ to test the representability of a $\cV$-functor $F$. We apply these results to the study of weighted $\cV$-limits, and also obtain a novel result describing weighted $\cV$-limits as certain conical internal limits.
\end{abstract}

\maketitle

\setcounter{tocdepth}{1}
\tableofcontents

\section{Introduction}\label{section:intro}

\subsection*{The classical picture}\label{introsubsec:set}
Category theory plays an important role in modern mathematics, as it allows us to abstract concepts and study them formally. Notably, fundamental constructions in algebra, geometry, and topology can be understood as categorical concepts defined by certain \emph{universal properties}. For example, the cartesian product of sets, the kernel of a linear map between vector spaces, and the fiber over a point in a scheme or topological space, are all instances of a universal construction called \emph{limit}. It then comes as no surprise that the study of universal properties plays a very prominent role in category theory.

Universal properties are defined in terms of the representability of certain functors. For instance, we say that a functor $G\colon \cI\to \cC$ admits a \emph{limit} if the functor $\cC^{\cI}(\Delta(-),G)\colon \cC^{\op}\to \Set$ sending an object $A\in \cC$ to the set of cones with summit $A$ over $G$ is \emph{representable}; i.e., if there is a pair $(L,\lambda)$ of an object $L\in \cC$ and a cone $\lambda\colon \Delta L\Rightarrow G$ inducing a natural isomorphism of sets $\lambda^*\colon \cC(-,L)\cong \cC^{\cI}(\Delta(-),G)$. While this is a working definition, in practice it is often useful to have another approach to test the representability of a certain functor. For example, a classical result characterizes limits of a functor $G$ as terminal objects in the category $\Delta\downarrow G$ of cones over $G$. This characterization has been especially useful when generalizing limits to the $\infty$-categorical setting; see \cite{joyal2008notes,htt}. 

The leading example of limits is in fact a special case of a much more general result, which involves a discrete version of the \emph{Grothendieck construction}
\[ \textstyle \int_\cC \colon \Cat(\cC^{\op},\Set)\to \slice{\Cat}{\cC}. \]
This functor takes a functor $F\colon\cC^\op\to\Set$ to its category of elements $\int_\cC F$, whose objects are the pairs $(A,x)$ of an object $A\in\cC$ and an element $x\in FA$, and whose morphisms $(A,x)\to(B,y)$ are the morphisms $f\colon A\to B$ in $\cC$ such that $Ff(y)=x$. A classical theorem then shows that a functor $F\colon \cC^{\op}\to \Set$ is representable if and only if its category of elements $\int_\cC F$ has a terminal object; see e.g.~\cite[Proposition 2.4.8]{Riehlcontext}. In particular, as the category of elements of the functor $\cC^\cI(\Delta(-),G)$ considered above can be identified with the category $\Delta\downarrow G$ of cones over~$G$, we retrieve  the above characterization of limits.  

In fact, the Grothendieck construction is fully faithful and its essential image consists of the \emph{discrete fibrations} over $\cC$. In other words, we get an equivalence of categories
\[\textstyle\int_\cC\colon \Cat(\cC^{\op},\Set)\xrightarrow{\simeq}\slice{\Dfib}{\cC},\] 
where $\slice{\Dfib}{\cC}$ is the full subcategory of $\slice{\Cat}{\cC}$ consisting of the discrete fibrations over $\cC$; see e.g.~\cite[Theorem 2.1.2]{LoregianRiehl}. Hence, this construction provides a useful perspective on presheaves, which allows for a \emph{fibrational} approach to representability problems. 

 The Grothendieck construction has a further application, which we will study in this paper. Going back to the definition of limits, one may wonder why we only consider cones with a single summit, which we can encode through a natural transformation $\Delta\{*\}\Rightarrow \cC(L,G-)$ using the constant functor $\Delta\{*\}\colon\cI\to\Set$ at the one-point set, instead of studying more general diagram shapes $W\colon\cI\to\Set$. The reason is that this latter notion, called a \emph{$W$-weighted limit}, can be subsumed under the notion of a conical limit by using the Grothendieck construction. More precisely, given functors $G\colon \cI\to \cC$ and $W\colon \cI\to \Set$, \cite[(3.33)]{Kelly} shows that the $W$-weighted limit of $G$ corresponds to the conical limit of the composite of functors $\int_\cI W\to\cI\xrightarrow{G}\cC$. In particular, this is one of the reasons why weighted limits do not play a crucial role in ordinary category theory.   

\subsection*{Enriched category theory}
While categories classically encompass mathematical objects and sets of morphisms between them, the categories one meets in practice often have additional structure. One way to encode this richer data is by using \emph{enriched categories} over a monoidal category $\cV$, also known as \emph{$\cV$-categories}. A $\cV$-category $\cC$ consists of a set of objects, and a collection of hom-objects $\cC(A,B)\in \cV$ for all objects $A,B\in \cC$---as opposed to the classical hom-sets---with suitable identities and compositions, and they are a truly ubiquitous type of structure. For instance, abelian categories are enriched over $\cV=\Ab$, the category of abelian groups; this encodes the fact that we can add and subtract morphisms. Letting $\cV=\Cat$, the category of categories, yields $2$-categories, where we also have $2$-morphisms between the morphisms; a prototypical example is the $2$-category of categories, functors, and natural transformations. Categories enriched in $\cV=\Top$, the category of topological spaces, or alternatively, in $\cV=\sSet$, the category of simplicial sets, provide a good environment to model $\infty$-categories \cite{htt,bergner}.

Of course, $\cV$-categories come with an appropriate notion of $\cV$-functors which preserve all the relevant data, and hence they form a category $\VCat$. We then expect universal constructions such as (weighted) limits and representability to be suitably compatible with the enrichment. For instance, when computing limits in $\Cat$, as it is naturally a $2$-category, these limits are not only expected to satisfy a universal property with respect to functors but also with respect to natural transformations; this is the notion of a $2$-categorical limit.

A natural question then arises: how much of the behavior we see in the classical setting is still present in the enriched setting? More concretely, we ask the following:
\begin{enumerate}[label=(\Alph*)]
    \item Do we have access to a fibrational perspective on $\cV$-presheaves? That is, is there an equivalence of categories between the category of $\cV$-functors $\cC^{\op}\to \cV$ and certain fibrations over $\cC$?
    \item Can we characterize the $\cV$-representability of a $\cV$-functor $\cC^{\op}\to \cV$ in terms of the existence of certain terminal objects?
    \item Can we encode weighted $\cV$-limits by using certain conical limits? 
\end{enumerate}

As in the classical setting, these questions rely on the existence of a Grothendieck construction, which now takes $\cV$-functors $\cC^\op\to\cV$ as inputs and suitably encodes all the relevant data. The goal of this paper is to introduce such a construction, and show how it provides positive answers to all the above questions.

In the literature, there is already a Grothendieck construction involving enriched categories due to Beardsley–-Wong \cite{BeardsleyWong}. It is defined for an ordinary category $\cC$ as a functor
\[\textstyle\int_\cC\colon \mathrm{Ps}(\cC^\op,\VCat)\to\slice{\VCat}{\cC_\cV}, \]
that takes a pseudofunctor $F\colon\cC^\op\to\VCat$, and produces a $\cV$-category $\int_\cC F$ with a projection to $\cC_\cV$, the free $\cV$-category on $\cC$. A similar construction by Tamaki \cite{tamaki} deals with lax functors. However, these constructions cannot consider the general case of a $\cV$-category $\cC$ (only an ordinary one), as the category $\VCat$ of $\cV$-categories is generally not $\cV$-enriched, even when $\cV$ is monoidal closed; and furthermore the $\cV$-functors considered do not even take value in the $\cV$-category $\cV$, which is the desired target to formulate representability problems.

\subsection*{The case $\cV=\Cat$}\label{introsubsec:cat}

We can begin to understand the complexity of the questions we posed, as well as get ideas for how to answer them, by looking at the case $\cV=\Cat$. A first positive answer to (A) is given by Buckley \cite{Buckley} who constructs a (lax) Grothendieck construction \[ \textstyle\int_\cC^{lax}\colon 2\Cat(\cC^{\op},\Cat)\to \slice{2\Cat}{\cC} \]
and shows that it induces an equivalence with respect to certain \emph{$2$-fibrations} over $\cC$. However, clingman and the first author \cite{cM1} determine that there is no relation between the representability of a $2$-functor and the existence of $2$-terminal objects in this lax version of the $2$-category of elements. There is yet another approach by Gagna--Harpaz--Lanari \cite{GHL} in the case of \emph{bilimits} which requires changing the notion of \emph{terminal objects} to \emph{bifinal objects}; however, this approach will not be generalizable to any monoidal closed category $\cV$ as it takes into account the specific $2$-categorical structure of $\Cat$ which allows for ``lax'' constructions.

In fact, all of the naive approaches to questions (B) and (C) which stay entirely within the context of $2$-categories are doomed to fail. For (B), \cite{cM1} provides small counterexamples showing that $2$-limits of a $2$-functor $G$ are in general not the same as $2$-terminal objects in any type of $2$-category whose objects are $2$-cones over $G$. This indicates that the $2$-representability of a $2$-functor $F\colon \cC^{\op}\to \Cat$ will not be in general captured by the notion of $2$-terminal objects in any $2$-category of elements. As for (C), it was observed by Kelly in \cite[\S 3.9]{Kelly} that the notion of weighted 2-limits is richer than that of conical 2-limits.

The solution, suggested by Verity \cite{VerityThesis} and Grandis--Par\' e \cite{GrandisPare}, is to pass to \emph{double categories}: another type of $2$-dimensional structure which have objects, two types of morphisms between objects---the horizontal and the vertical morphisms---, and $2$-morphisms. In other words, these are precisely \emph{internal categories} to $\Cat$. In \cite[\S 1.2]{GraParPersistentII} Grandis--Par\'e introduce a double Grothendieck construction 
\[ \textstyle\int_\cC \colon 2\Cat(\cC^{\op},\Cat)\to \slice{\Dbl\Cat}{\Int\cC}, \]
where $\Int\colon 2\Cat\to\Dbl\Cat$ denotes the canonical inclusion of $2$-categories into double categories which sees a $2$-category as a horizontal double category with only trivial vertical morphisms.
This construction was later used by clingman and the first author to show in \cite[Theorem 6.8]{cM2} that a $2$-functor $F\colon \cC^{\op}\to \Cat$ is $2$-representable if and only if its double category of elements $\int_\cC F$ has a \emph{double terminal object}, thus settling question (B) after a passage to double categories. As a special case, one obtains that a $2$-functor $G$ has a $2$-limit if and only if the double category $\Delta\downarrow \Int G$ of $2$-cones over $G$ has a double terminal object, a fact that was previously understood in \cite{VerityThesis,GrandisPare}. Moreover, Grandis--Par\'e  show in \cite[Proposition~1.4]{GraParPersistentII} that the $W$-weighted $2$-limit of a $2$-functor $G$ is equivalently given by the \emph{double limit}---defined as a double terminal object in the corresponding double category of cones---of the double functor 
\[ \textstyle\int_\cI W\to \Int\cI\xrightarrow{\Int G} \Int\cC. \]
This gives a positive answer to (C), and proves that \emph{conical} double limits are enough to capture weighted $2$-limits, and hence also lax $2$-limits by \cite[(5.8)]{Kellyelementary}. 

\subsection*{New results for more general \texorpdfstring{$\cV$}{V}}

For a cartesian closed category $\cV$ with pullbacks, we can also make sense of notions of \emph{enriched} and \emph{internal} categories in $\cV$. Using the strong link between these two notions highlighted in the case $\cV=\Cat$, in this paper we introduce a Grothendieck construction for $\cV$-categories, which gives positive answers to questions (A), (B), and (C).

Recall that an \emph{internal category} $\bA$ to $\cV$ consists of an object $\bA_0\in \cV$ of \emph{objects} and an object $\bA_1\in \cV$ of \emph{morphisms}, with suitable identities and compositions. For example, when $\cV=\Set$ we retrieve the notion of an ordinary category, and when $\cV=\Cat$ the notion of a double category. Internal categories come with a natural notion of internal functors and form a category $\CatV$. As for their link with $\cV$-categories, when $\cV$ has all small coproducts there is an \emph{internalization functor} $\Int\colon \VCat\to \CatV$ due to Cottrell--Fujii--Power \cite{Extensive} generalizing the horizontal embedding mentioned above for $\cV=\Cat$.

In order to define our construction, we are inspired by the fact that the classical category of elements $\int_\cC F$ of a functor $F\colon \cC^{\op}\to \Set$ can be expressed as the category whose set of objects and morphisms are given by the sets 
\[ \textstyle\bigsqcup_{A\in \cC} FA \quad \text{and} \quad \bigsqcup_{A,B\in \cC} \cC(A,B)\times FB. \]
We can replicate this description in the case of a $\cV$-functor $F\colon\cC^{\op}\to \cV$ and find that the above coproducts both live in $\cV$, so that this defines an internal category $\int_\cC F$ to $\cV$ which comes with a projection to $\Int\cC$. We then introduce our \emph{internal Grothendieck construction} in \cref{defn:groth} as a functor
\[ \textstyle \int_\cC\colon \VCat(\cC^{\op},\cV)\to \slice{\CatV}{\Int\cC} .\]

Analogous constructions have also been provided by Verity \cite{VerityThesis} in the case of a locally presentable category, by Toma\v{s}i\'{c} \cite{Tomasic} in the case of a topos, by Heuts--Moerdijk \cite{HeutsMoerdijk} and Rasekh \cite{RasekhYoneda} in the case of spaces---with enriched categories being $\infty$-categories---, by Rasekh \cite{RasekhD} in the case of simplicial presheaves, and by the first author with Rasekh--Rovelli \cite{MRR2} in the case of $\Theta_{n-1}$-spaces---with enriched categories being $(\infty,n)$-categories. 

When we consider our internal Grothendieck construction under the additional hypothesis that the base category~$\cV$ is \emph{extensive}---roughly meaning that coproducts and pullbacks in $\cV$ interact well with each other---, we find that the functor $\int_\cC$ is fully faithful, and its essential image consists of the \emph{internal discrete fibrations} over $\Int\cC$ (recalled in \cref{defn:discfib}). In other words, combining \cref{thm:equiv,thm:enrichedequiv} we give a positive answer to question (A) as follows.

\begin{theoremA} \label{thmA:equiv}
 The internal Grothendieck construction gives an equivalence of categories
    \[ \textstyle\int_\cC\colon \VCat(\cC^{\op},\cV)\to \slice{\Dfib}{\Int\cC} \]
    between the category of $\cV$-functors $\cC^{\op}\to\cV$, and the full subcategory $\slice{\Dfib}{\Int\cC}$ of $\slice{\CatV}{\Int\cC}$ consisting of the internal discrete fibrations over $\Int\cC$. Furthermore, the above equivalence can be promoted to an enriched equivalence of $\cV$-categories.
\end{theoremA}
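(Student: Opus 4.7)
The plan is to prove the equivalence of ordinary categories first, and then upgrade it to a $\cV$-enrichment. I would first verify that $\int_\cC F$ lies in $\slice{\Dfib}{\Int\cC}$: using the explicit presentations $(\int_\cC F)_0 = \bigsqcup_{A} FA$ and $(\int_\cC F)_1 = \bigsqcup_{A,B} \cC(A,B) \times FB$ together with the analogous decompositions $(\Int\cC)_0 = \bigsqcup_A 1$ and $(\Int\cC)_1 = \bigsqcup_{A,B} \cC(A,B)$, the defining pullback square of a discrete fibration decomposes into $(A,B)$-components, each of which is the evident product square $\cC(A,B) \times FB \to \cC(A,B)$ over $1$; extensivity of $\cV$ then guarantees that these components reassemble into a genuine pullback.

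For essential surjectivity and fully faithfulness, I would construct an explicit pseudoinverse. Given an internal discrete fibration $P \colon \bE \to \Int\cC$, extensivity applied to $P_0 \colon \bE_0 \to \bigsqcup_A 1$ uniquely decomposes the object part as $\bE_0 \cong \bigsqcup_A F_P(A)$, and the defining pullback of a discrete fibration identifies $\bE_1 \cong \bigsqcup_{A,B} \cC(A,B) \times F_P(B)$. The source map of $\bE$ restricts on each $(A,B)$-summand to an action $\cC(A,B) \times F_P(B) \to F_P(A)$, and I would check that associativity and unitality of the internal category structure on $\bE$ translate directly into the $\cV$-functor axioms for $F_P\colon \cC^{\op} \to \cV$. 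Essential surjectivity is then immediate from the tautological isomorphism $\int_\cC F_P \cong \bE$ over $\Int\cC$, and fully faithfulness follows because any morphism $\int_\cC F \to \int_\cC G$ over $\Int\cC$ must cover the identity, so its object component decomposes by extensivity into a family $\alpha_A \colon FA \to GA$ whose compatibility with the morphism parts forces exactly $\cV$-naturality of the resulting $\alpha$.

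To upgrade to an equivalence of $\cV$-categories, I would equip $\slice{\Dfib}{\Int\cC}$ with a $\cV$-enrichment whose hom-object between two discrete fibrations is an appropriate $\cV$-object of internal natural transformations over $\Int\cC$, while on the other side I would use the standard $\cV$-enrichment of $\VCat(\cC^{\op}, \cV)$ by the end $\int_{A\in\cC} [FA, GA]$ in the cartesian closed $\cV$. I expect the main obstacle to be the comparison of these two hom-objects: it requires decomposing internal natural transformations over $\Int\cC$ into their $A$-components via extensivity, reinterpreting the resulting compatibility as the equalizer defining the end via cartesian closedness, and verifying that the ensuing bijections are natural enough to assemble into a $\cV$-functor. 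Once the hom-objects are matched, the underlying equivalence of ordinary categories promotes to a $\cV$-equivalence.
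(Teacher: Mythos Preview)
Your argument for the unenriched equivalence is essentially the paper's: the paper also builds an explicit pseudoinverse $\Phi$ by taking fibers, uses extensivity to decompose $\bE_0$ and $\bE_1$ as coproducts indexed by objects (resp.\ pairs of objects) of $\cC$, reads off the action maps from the source map, and checks that the internal category axioms translate into $\cV$-functoriality. The natural isomorphisms $\int_\cC F_P \cong \bE$ and $F_{\pi_F} \cong F$ are exactly the $\varepsilon$ and $\eta$ the paper writes down.

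For the enriched upgrade, however, you take a genuinely different route. You propose to compare the hom-objects directly: on one side the end $\int_{A} [FA,GA]$, on the other the $\cV$-object of internal functors over $\Int\cC$, matching them via an extensivity-based decomposition. This is correct in principle and more elementary, but carries real bookkeeping cost: you must identify the internal hom $\llbracket P,Q\rrbracket_0$ explicitly as an equalizer in $\cV$, decompose it along the coproduct structure of $\Int\cC$, and then argue that the resulting equalizer diagram is precisely the one computing the end. The paper bypasses all of this with a slicker abstract argument: it first promotes the equivalence to an \emph{adjoint} equivalence, observes that $\int_\cC$ (now a right adjoint) preserves products, and notes that $\cV$-tensors in $\cV^{\cC^{\op}}$ are just products with constant functors---so $\int_\cC$ preserves $\cV$-tensors. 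A general result (\cite[Proposition~3.7.10]{Riehlcathtpyth}) then says that an ordinary adjunction between tensored $\cV$-categories whose left adjoint preserves tensors is automatically a $\cV$-adjunction, and one concludes. Your approach is self-contained and avoids citing external enriched-category machinery; the paper's approach trades that for a one-line proof once the right lemma is invoked.
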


We then turn to question (B), and show in \cref{thm:representation} how $\cV$-representable $\cV$-functors can be characterized by the existence of an \emph{internal terminal object} (see \cref{defn:internalterminal}) in their internal category of elements.

\begin{theoremA}\label{thmA:representation}
Let $F\colon \cC^{\op}\to \cV$ be a $\cV$-functor. Given a pair $(C,x)$ of an object $C\in \cC$ and an element $x\colon \un\to FC$ in $\cV$, the following are equivalent:
\begin{rome}
    \item the $\cV$-functor $F$ is $\cV$-representable by $(C,x)$,
    \item the pair $(C,x)$ is an internal terminal object in the internal category of elements $\int_\cC F$.
\end{rome}
\end{theoremA}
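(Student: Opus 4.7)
The plan is to unfold the condition that $(C,x)$ is an internal terminal object in $\int_\cC F$ into a family of $\cV$-isomorphisms $\cC(A,C)\cong FA$ indexed by $A\in\cC$, and then recognize these as the components of the $\cV$-natural transformation $\cC(-,C)\to F$ corresponding to $x$ via the enriched Yoneda lemma. The equivalence (i)$\Leftrightarrow$(ii) then drops out at once.

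First, I would unpack what it means for $(C,x)$ to be internal terminal in $\bA=\int_\cC F$. Analogously to the $\Set$ case, this is the condition that the pullback of the target map $\bA_1\to\bA_0$ along $(C,x)\colon\un\to\bA_0$, equipped with the restricted source, be isomorphic to $\bA_0$ in $\cV$. Using the explicit description $\bA_0=\bigsqcup_{A\in\cC}FA$ and $\bA_1=\bigsqcup_{A,B\in\cC}\cC(A,B)\times FB$, together with the fact that the target on the $(A,B)$-summand is the projection $\cC(A,B)\times FB\to FB$ followed by the coproduct inclusion into $\bA_0$, extensivity of $\cV$ allows this pullback to be computed summand by summand: only the $B=C$ summands contribute, and within each the pullback of $\cC(A,C)\times FC\to FC$ along $x$ is $\cC(A,C)$. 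The pullback is therefore $\bigsqcup_{A\in\cC}\cC(A,C)$, and the induced source map into $\bigsqcup_{A\in\cC}FA$ respects the coproduct decomposition, sending the $A$-summand into the $A$-summand. A further use of extensivity shows this map is an isomorphism in $\cV$ if and only if each component $\cC(A,C)\to FA$ is an isomorphism in $\cV$.

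Next, I would identify the componentwise maps with the Yoneda transformation. Tracing through the definition of the source of $\int_\cC F$, the $A$-component above is the composite of the hom-action $\cC(A,C)\to\cV(FC,FA)$ of the $\cV$-functor $F$ with evaluation at $x\colon\un\to FC$. This is precisely the $A$-component of the $\cV$-natural transformation $\cC(-,C)\to F$ associated to $x\in FC$ under the enriched Yoneda lemma. Consequently, internal terminality of $(C,x)$ is equivalent to this transformation being a componentwise, and hence $\cV$-natural, isomorphism, which by definition is the statement that $F$ is $\cV$-representable by $(C,x)$.

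The main technical obstacle is the second paragraph: translating the single internal-terminality isomorphism (a condition on a morphism between objects of $\cV$) into the family of componentwise isomorphisms. Both the computation of the pullback along $x$ and the summand-wise detection of isomorphisms rely essentially on the extensivity of $\cV$. Once this translation is in place, identifying the source map with the Yoneda transformation is a direct unfolding of the definition of $\int_\cC F$, and the enriched Yoneda lemma then delivers the conclusion.
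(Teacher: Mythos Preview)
Your argument is correct and takes a more hands-on route than the paper's. The paper proceeds abstractly: it first identifies $\int_\cC\cC(-,C)\cong\slice{\Int\cC}{C}$ (\cref{cor:slicehom}), shows that the identity $i_C$ is always an internal terminal object in a slice (\cref{terminalobjslice}), and proves a general lemma (\cref{prop:isoslices}) comparing iterated slices; it then invokes the full equivalence $\int_\cC\colon\VCat(\cC^{\op},\cV)\simeq\slice{\Dfib}{\Int\cC}$ of \cref{thm:equiv} to detect whether $\int_\cC\alpha_x$ is an isomorphism, and concludes by $2$-out-of-$3$. Your approach bypasses the equivalence theorem entirely: you compute the level-$0$ projection $(\slice{\int_\cC F}{(C,x)})_0\to(\int_\cC F)_0$ directly, use extensivity to decompose both sides as coproducts indexed by $\Ob\cC$, and check the isomorphism summand by summand against the Yoneda components. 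The paper's route is conceptually cleaner once the machinery of \cref{section:equiv} is available, while yours is more self-contained and in fact closer in spirit to the argument the paper gives later for the tensored refinement (\cref{thm:ctensorsterminal}), where exactly this fibrewise computation appears.

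One step deserves to be made explicit. You assert, ``analogously to the $\Set$ case'', that internal terminality of $(C,x)$ amounts to the map $(\slice{\bA}{(C,x)})_0\to\bA_0$ being an isomorphism in $\cV$. This is true, but it is not the definition: internal terminality asks that the full internal functor $\slice{\bA}{(C,x)}\to\bA$ be an isomorphism in $\CatV$. The reduction to level $0$ requires knowing that this projection is an internal discrete fibration (\cref{prop:sliceisdiscfib}) and that a morphism between internal discrete fibrations over a common base is invertible as soon as its level-$0$ component is (\cref{prop:isoonfibers}). Once you insert this justification, the rest of your argument goes through exactly as written.
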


Finally, we apply our results to the case of weighted $\cV$-limits (see  \cref{defn:weightedlim}), and provide the following answer to question (C), which can be found as \cref{thm:repweightedlimits,thm:weightedasconical}.

\begin{theoremA} \label{thmA:weightasconical}
Let $G\colon \cI\to \cC$ and $W\colon \cI\to \cV$ be $\cV$-functors. Given a pair $(L,\lambda)$ of an object $L\in \cC$ and a $\cV$-natural transformation $\lambda\colon W\Rightarrow \cC(-,G)$ in $\VCat(\cI,\cV)$, the following are equivalent: 
    \begin{rome}
        \item the pair $(L,\lambda)$ is a $W$-weighted limit of $G$, 
        \item the pair $(L,\lambda)$ is an internal terminal object in the internal category $W\downarrow \Int\cC(-,G)$ of weighted $\cV$-cones over $G$, 
        \item the pair $(L,\lambda)$ is an internal limit of the internal functor 
        \[ \textstyle\int_\cI W\xrightarrow{\pi_W} \Int\cI\xrightarrow{\Int G} \Int\cC. \]
    \end{rome}
\end{theoremA}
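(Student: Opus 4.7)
The plan is to derive both equivalences (i)$\Leftrightarrow$(ii) and (ii)$\Leftrightarrow$(iii) from \cref{thmA:representation} together with suitable identifications of internal categories.

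For (i)$\Leftrightarrow$(ii), I would consider the $\cV$-functor $F_G\colon \cC^\op\to \cV$ whose value at $A\in\cC$ is the hom-object $\VCat(\cI,\cV)(W,\cC(A,G-))$ of $\cV$-natural transformations. By definition of a weighted $\cV$-limit (\cref{defn:weightedlim}), the pair $(L,\lambda)$ is a $W$-weighted limit of $G$ precisely when it $\cV$-represents $F_G$, with $\lambda$ playing the role of the associated unit element. Applying \cref{thmA:representation} to $F_G$ then reduces (i) to the assertion that $(L,\lambda)$ is an internal terminal object in $\int_\cC F_G$. What remains is to produce a natural isomorphism of internal categories
\[ \textstyle \int_\cC F_G \;\cong\; W\downarrow \Int\cC(-,G) \]
over $\Int\cC$. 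This should be immediate from unpacking \cref{defn:groth}, since a generalized element $\un\to F_G(A)$ is by definition a $\cV$-natural transformation $W\Rightarrow \cC(A,G-)$, i.e.\ a weighted $\cV$-cone over $G$ with summit $A$; the analogous identification at the level of morphisms uses the hom-adjunction defining $\VCat(\cI,\cV)$.

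For (ii)$\Leftrightarrow$(iii), the strategy is to identify the internal category $W\downarrow \Int\cC(-,G)$ of weighted $\cV$-cones with the internal category $\Delta\downarrow(\Int G\circ\pi_W)$ of conical internal cones over the composite $\int_\cI W\xrightarrow{\pi_W}\Int\cI\xrightarrow{\Int G}\Int\cC$. Since an internal limit is by definition an internal terminal object in the corresponding internal category of conical cones, the equivalence will follow immediately from this identification. At the level of summits $A\in\cC$, it amounts to the expected correspondence between $\cV$-natural transformations $W\Rightarrow\cC(A,G-)$ and internal natural transformations $\Delta A\Rightarrow \Int G\circ\pi_W$, which I expect to extract from the universal property of the internal Grothendieck construction $\int_\cI W$ as the internal discrete fibration classifying $W$ via \cref{thmA:equiv}.

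The hard part will be promoting this summit-wise bijection to a bona fide isomorphism of internal categories, and doing so naturally in $A$ so that it commutes with the projections to $\Int\cC$. This demands a careful comparison of the two relevant objects-of-morphisms in $\cV$, each built from coproducts and pullbacks arising from an instance of the Grothendieck construction; the verification is expected to rely on extensivity of $\cV$, in a spirit similar to the proof of \cref{thmA:equiv}. Once this internal identification is in place, chaining both isomorphisms through \cref{thmA:representation} delivers the full three-way equivalence.
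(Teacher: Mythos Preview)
Your overall strategy matches the paper's: (i)$\Leftrightarrow$(ii) is exactly \cref{thm:representation} applied to the $\cV$-functor $\cV^\cI(W,\cC(-,G))$, combined with an identification $\int_\cC \cV^\cI(W,\cC(-,G))\cong W\downarrow\Int\cC(-,G)$; and (ii)$\Leftrightarrow$(iii) is obtained by exhibiting an isomorphism $W\downarrow\Int\cC(-,G)\cong\Delta\downarrow(\Int G\circ\pi_W)$ of internal categories over $\Int\cC$. The paper also reduces the latter to a fiberwise check, using that both sides are internal discrete fibrations over $\Int\cC$ (via \cref{prop:commaisdiscfib}) together with \cref{prop:isoonfibers}, so your instinct that the ``hard part'' is promoting a summit-wise bijection to an internal isomorphism is correct and is handled exactly this way.

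Where your proposal diverges is in the mechanism for the fiberwise identification in (ii)$\Leftrightarrow$(iii). You suggest extracting the bijection between $\cV$-natural transformations $W\Rightarrow\cC(A,G-)$ and internal cones $\Delta A\Rightarrow\Int G\circ\pi_W$ from the classifying property of $\int_\cI W$ via \cref{thmA:equiv}. That route is plausible but would require further ingredients you do not mention: one must identify $\int_\cI\cC(A,G-)$ with the comma $A\downarrow\Int G$ (a covariant analogue of \cref{cor:slicehom}) and then invoke the universal property of commas, all while keeping track of the $\cV$-enrichment so that the resulting bijection is really an isomorphism of objects in $\cV$, not merely of sets. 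The paper instead proceeds by a direct Yoneda argument: it introduces the suspension--hom adjunction $\Sigma\dashv\Hom$ between $\cV$ and $\sliceunder{\VCat}{\{0,1\}}$, and combines it with $\Int\dashv\Und$ and the internal-hom adjunction to reduce the fiber comparison to an explicit bijection between two sets of ``bipointed'' functors, which is then checked by hand (\cref{lem:isoofslices}). This is more elementary and self-contained than your route, at the cost of a somewhat lengthy unpacking; your approach, if carried out, would be more conceptual but relies on assembling several auxiliary identifications that the paper has not stated in the covariant form you need.
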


In the special case where $W=\Delta \un\colon \cI\to \cV$ is the constant functor at the terminal object $\un\in \cV$, we obtain \cref{cor:repconicallimit,cor:enrichedlimitasinternallimit} which characterize \emph{conical} $\cV$-limits of a $\cV$-functor $G\colon \cI\to \cC$ as internal terminal objects in the internal category $\Int \Delta\downarrow G$ of $\cV$-cones over $G$ and as internal limits of the internal functor $\Int G$.

Throughout the paper, we will take as our leading example the case where $\cV=(n-1)\Cat$ is the category of $(n-1)$-categories for $n\geq 1$, with the convention that $0$-categories are sets. Inductively, \emph{$n$-categories} are defined as categories enriched over $(n-1)\Cat$. On the other hand, we refer to internal categories to $(n-1)\Cat$ as \emph{double $(n-1)$-categories}. The above results can then be translated as follows:
\begin{enumerate}[label=(\Alph*)]
    \item First, \cref{thmA:equiv} provides an $n$-equivalence between the $n$-categories of $n$-functors $\cC^{\op}\to (n-1)\Cat$ and of \emph{double $(n-1)$-discrete fibrations} over $\Int\cC$; see \cref{ex:equiv}.
    \item Next, \cref{thmA:representation} shows that an $n$-functor $\cC^{\op}\to (n-1)\Cat$ is $n$-representable if and only if its double $(n-1)$-category of elements has a \emph{double $(n-1)$-terminal object}; see \cref{ex:repthm}.
    \item Finally, \cref{thmA:weightasconical} tells us that (weighted) $n$-limits correspond to double $(n-1)$-terminal objects in the corresponding double $(n-1)$-category of (weighted) $n$-cones, and can further be obtained as certain conical \emph{double $(n-1)$-limits}; see \cref{ex:weightedlimit,ex:weightedasconical}.
\end{enumerate}  
In particular, in the case of conical $n$-limits, this provides intuition for the $\infty$-analogue \cite{MRR3}, namely the case of $(\infty,n)$-categories, where $(\infty,n)$-limits are defined as terminal objects in the corresponding double $(\infty,n-1)$-category of cones.

\subsection*{Fully enriched statements}

While \cref{thmA:representation} gives a first characterization of representable $\cV$-functors, one may wish to stay in the enriched setting whenever possible, instead of passing to the internal setting. We therefore provide a reformulation of the above statement in a fully enriched setting by considering ``shifted'' $\cV$-categories of generalized elements in \cref{enrichedstatement}. 

In order to state the result, we call a family of objects in $\cV$ \emph{conservative} if it creates isomorphisms (see \cref{defn:conservative}), and we denote by $\Und\colon \CatV\to \VCat$ the right adjoint of the internalization functor $\Int$ sending an internal category to $\cV$ to its \emph{underlying $\cV$-category}, 
and by $\llbracket -,-\rrbracket$ the internal hom bifunctor of $\CatV$. 

\begin{theoremA}
    Let $\cG$ be a conservative family of objects in $\cV$, and $F\colon \cC^{\op}\to \cV$ be a $\cV$-functor. Given a pair $(C,x)$ of an object $C\in \cC$ and an element $x\colon \un\to FC$ in $\cV$, the following are equivalent: 
\begin{rome}
    \item the $\cV$-functor $F$ is $\cV$-representable by $(C,x)$, 
    \item for every object $X\in \cG$, the pair $(C,x)$ is a $\cV$-terminal object in the $\cV$-category of generalized elements $\Und\llbracket X,\int_\cC F\rrbracket$.
\end{rome}
\end{theoremA}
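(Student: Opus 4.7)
The plan is to reduce the theorem to \cref{thmA:representation} together with an abstract comparison between internal terminality and $\cV$-terminality in internal hom objects. By \cref{thmA:representation}, condition (i) that $F$ is $\cV$-representable by $(C,x)$ is equivalent to $(C,x)$ being an internal terminal object in $\int_\cC F$. Thus it suffices to prove the following abstract claim: for any internal category $\bA\in\CatV$ and any global element $(C,x)\colon\un\to\bA_0$, the pair $(C,x)$ is an internal terminal object in $\bA$ if and only if, for every $X\in\cG$, the composite $(C,x)_X\colon X\to\un\xrightarrow{(C,x)}\bA_0$ is a $\cV$-terminal object in $\Und\llbracket X,\bA\rrbracket$.

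To prove this abstract claim, the first step is to unfold both conditions. On the one hand, internal terminality of $(C,x)$ in $\bA$ amounts to the source-projection $s\colon\bA_1\times_{\bA_0,t}\un\to\bA_0$---where the pullback is over the target map along $(C,x)$---being an isomorphism in $\cV$. On the other hand, identifying $\llbracket X,\bA\rrbracket$ for $X\in\cV$ with the cotensor of $\bA$ by the discrete internal category on $X$ yields $\llbracket X,\bA\rrbracket_0\cong\bA_0^X$ and $\llbracket X,\bA\rrbracket_1\cong\bA_1^X$, so that for an object $a\colon X\to\bA_0$, the hom-object $\Und\llbracket X,\bA\rrbracket(a,(C,x)_X)$ computes as the pullback of $\un\xrightarrow{(a,(C,x)_X)}\bA_0^X\times\bA_0^X\xleftarrow{(s^X,t^X)}\bA_1^X$. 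By cartesian closure in $\cV$, its generalized $Y$-elements correspond bijectively to maps $Y\times X\to\bA_1\times_{\bA_0,t}\un$ whose source-projection is $a\circ p_X$.

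The forward implication is then immediate: if $s$ is an isomorphism, for every $Y\in\cV$ there is exactly one such map, so the hom-object is isomorphic to $\un\in\cV$ and $(C,x)_X$ is $\cV$-terminal. For the backward implication, specialize the $\cV$-terminality hypothesis to $Y=\un$: for every $X\in\cG$ and every $a\colon X\to\bA_0$, there exists a unique morphism $X\to\bA_1$ lifting $(a,(C,x)_X)$, which is precisely the statement that $\Hom_\cV(X,s)$ is a bijection. Since $\cG$ is a conservative family, this forces $s$ to be an isomorphism in $\cV$, i.e., $(C,x)$ is internal terminal.

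The main technical obstacle is the explicit identification of the internal hom $\llbracket X,\bA\rrbracket$ for $X\in\cV$ viewed as a discrete internal category---particularly the form of its object of morphisms and the source/target structure via the internal hom adjunction in $\CatV$---since this is what enables the hom-object computation in the second step. Once this identification is established, the forward implication is a routine application of the universal property of the pullback defining $s$, and the backward implication is a direct application of conservativity, and combining with \cref{thmA:representation} closes the loop.
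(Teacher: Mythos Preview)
Your proposal is correct and follows essentially the same route as the paper: reduce to \cref{thmA:representation}, then prove the abstract equivalence between internal terminality of $T$ in $\bA$ and $\cV$-terminality of $T$ in $\Und\llbracket\cst X,\bA\rrbracket$ for all $X\in\cG$ (this is the paper's \cref{thm:thenewexample}), using the identification $\llbracket\cst X,\bA\rrbracket_i\cong[X,\bA_i]$ (the paper's \cref{rem:homcstX}) together with conservativity and the reduction to level $0$ via \cref{prop:sliceisdiscfib,prop:isoonfibers}. The only cosmetic difference is that for the forward implication the paper first applies $\llbracket\cst X,-\rrbracket$ to the isomorphism $\slice{\bA}{T}\to\bA$ and then invokes \cref{internalterminalimpliesVterminal}, whereas you argue directly via the Yoneda lemma on the hom-object; both amount to the same computation.
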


In the case where $\cV=(n-1)\Cat$ and of the conservative family consisting of the free-living $(n-1)$-morphism $C_{n-1}$, this shows that an $n$-functor $F\colon \cC^{\op}\to (n-1)\Cat$ is $n$-representable if and only if its \emph{$n$-category of $(n-1)$-morphisms}, whose objects are pairs $(C,\varphi)$ of an object $C\in \cC$ and an $(n-1)$-morphism $\varphi$ in $FC$, has an $n$-terminal object; see \cref{ex:enrichedrepthm}. In the case where $n=2$, this retrieves \cite[Theorem 6.8]{cM2}.

In the presence of enriched tensors (see \cite[\S 3.7]{Kelly}), one can even simplify this statement by testing for terminality only in the underlying $\cV$-category, as we prove in \cref{thm:enrichedrepthm}. 

\begin{theoremA}
    Let $\cG$ be a conservative family of objects in $\cV$, $\cC$ be a $\cV$-category that admits all $\cV$-tensors by objects $X\in \cG$, and $F\colon \cC^{\op}\to \cV$ be a $\cV$-functor that preserves them. Given a pair $(C,x)$ of an object $C\in \cC$ and an element $x\colon \un\to FC$ in $\cV$, the following are equivalent: 
\begin{rome}
    \item the $\cV$-functor $F$ is $\cV$-representable by $(C,x)$, 
    \item the pair $(C,x)$ is a $\cV$-terminal object of the underlying $\cV$-category of elements $\Und\int_\cC F$. 
\end{rome}
\end{theoremA}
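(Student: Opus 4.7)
The plan is to prove the two implications directly: I would use extensivity of $\cV$ to give an explicit description of the hom-objects of $\Und \int_\cC F$ as fibers in $\cV$, and then invoke the conservative family $\cG$ together with the tensor-hom adjunction to detect the required isomorphism.

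The key preliminary step is the following identification. Using extensivity of $\cV$ and the explicit formula $(\int_\cC F)_1 = \bigsqcup_{A,B} \cC(A,B) \times FB$, one isolates the summand indexed by $(D, C)$ and then pulls back along the chosen source and target to find that the hom-object $\Und \int_\cC F((D, z), (C, x))$ is the pullback $\cC(D, C) \times_{FD, z} \un$ in $\cV$, where the map $\cC(D, C) \to FD$ is the $D$-component of the $\cV$-natural transformation $\eta \colon \cC(-, C) \to F$ induced by $x$ via the enriched Yoneda construction (internally, $f \mapsto Ff(x)$). In particular, $(C, x)$ is $\cV$-terminal in $\Und \int_\cC F$ precisely when every such pullback is isomorphic to $\un$.

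Given this, (i) implies (ii) is immediate: if $F$ is $\cV$-representable by $(C, x)$, then $\eta$ is already a $\cV$-natural isomorphism, so all the required fibers are trivially $\un$. For (ii) implies (i), I would show that each component $\eta_A \colon \cC(A, C) \to FA$ is an isomorphism in $\cV$. By conservativity of $\cG$, it suffices to check that $\cV(X, \eta_A)$ is a bijection for every $X \in \cG$. Since $\cC$ admits the tensor $X \otimes A$ and $F$ sends it to the cotensor $[X, FA]$, the tensor–hom adjunction identifies $\cV(X, \eta_A)$ with the map on global sections $\cV(\un, \cC(X \otimes A, C)) \to \cV(\un, F(X \otimes A))$ induced by $\eta_{X \otimes A}$. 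Hypothesis (ii), applied to the objects $(X \otimes A, z)$ for each $z \colon \un \to F(X \otimes A)$, asserts that every fiber of $\eta_{X \otimes A}$ over a global element is isomorphic to $\un$; passing to $\cV(\un,-)$ shows that each fiber of the above map of sets is a singleton, so the map is bijective.

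The main obstacle is the initial identification of hom-objects of $\Und \int_\cC F$ as fibers of $\eta$; this relies on extensivity to decompose the coproducts defining $(\int_\cC F)_1$ and $(\int_\cC F)_0$ compatibly with the pullback picking out $(D, z)$ and $(C, x)$. Once this identification is in hand, the two implications follow from clean applications of the conservative family and the tensor–hom adjunction, making essential use of the hypothesis that $\cC$ admits and $F$ preserves $\cV$-tensors by objects of $\cG$.
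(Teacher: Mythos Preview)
Your argument is correct and takes a genuinely different, more direct route than the paper. The paper factors the result through its internal-categorical machinery: it first invokes \cref{thm:representation} to translate $\cV$-representability into the existence of an internal terminal object in $\int_\cC F$, then shows in \cref{prop:grothtensors} that the hypotheses on $\cC$ and $F$ endow $\int_\cC F$ with enough $\cC$-internal tensors, and finally applies the general comparison \cref{thm:ctensorsterminal} between internal and $\cV$-terminal objects in the presence of such tensors. Your approach bypasses internal terminal objects entirely: you compute the hom-objects of $\Und\int_\cC F$ directly as fibers of the Yoneda map $\alpha_x$, and then use the conservative family together with the tensor adjunction to reduce the isomorphism check on each component of $\alpha_x$ to a bijection on global elements at a tensored object. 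The paper's route is modular---each intermediate result stands on its own and is reused elsewhere---while yours is more self-contained and never needs to build the theory of internal tensors inside $\int_\cC F$. One step you pass over quickly is the compatibility of the tensor isomorphisms with $\alpha_x$: that under $\cC(A\otimes X,C)\cong [X,\cC(A,C)]$ and $F(A\otimes X)\cong [X,FA]$ the component $(\alpha_x)_{A\otimes X}$ corresponds to $[X,(\alpha_x)_A]$. This does hold, and follows by combining the $\cV$-naturality square for $\alpha_x$ at the pair $(A,A\otimes X)$, precomposed with the unit $\gamma\colon X\to\cC(A,A\otimes X)$, with the compatibility diagram \eqref{fig:Fprestensor} in the definition of $F$ preserving tensors; it would be worth a sentence making this explicit.
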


Considering $\cV=(n-1)\Cat$ and the conservative family consisting of $C_{n-1}$, this shows that, in the case of an $n$-category $\cC$ with $n$-tensors by $C_{n-1}$ and of an $n$-functor $F\colon \cC^{\op}\to (n-1)\Cat$ preserving these tensors, then $F$ is $n$-representable if and only if its \emph{$n$-category of elements}, whose objects are pairs $(C,x)$ of objects $C\in \cC$ and $x\in FC$, has an $n$-terminal object; see \cref{ex:repthmtensors}. Letting $n=2$ retrieves \cite[Theorem 6.15]{cM2}.

In the special case of weighted $\cV$-limits, the $\cV$-functor $\cC^{\op}\to \cV$ defining them automatically preserves $\cV$-tensors and so the result reduces to checking whether $\cC$ has $\cV$-tensors by a conservative family; see \cref{thm:limitswithtensors}. Hence, considering $\cV=(n-1)\Cat$ and the conservative family consisting of~$C_{n-1}$, this shows that, in the case of an $n$-category $\cC$ with $n$-tensors by~$C_{n-1}$, weighted $n$-limits in $\cC$ correspond to $n$-terminal objects in the $n$-category of weighted $n$-cones; see \cref{ex:weightedlimittensor}.  

\subsection*{Acknowledgments}

The authors are deeply grateful to tslil clingman, who was a vital part of the initial drive of this project, and to Nima Rasekh for discussions that prompted the inspiration for the approach we consider in this paper. We also thank Léonard Guetta for several interesting conversations. During the realization of this work, the last-named author was partially supported by an international Macquarie University Research Excellence Scholarship and the Centre of Australian Category Theory.

\section{Enriched and internal categories}

 We begin by collecting several background definitions and results on enriched categories in \cref{subsec:enrichedcat}, on internal categories in \cref{subsec:internalcat}, on the relation between them in \cref{subsec:relevant}, and on extensive categories in \cref{subsec:extensive}, that will be used throughout the paper.

\subsection{Enriched categories} \label{subsec:enrichedcat}

In this subsection, let $\cV$ denote a category with finite products, and let $\un$ denote its terminal object. We first introduce the category of categories enriched in the cartesian category $\cV$; see also \cite[\S 1.2]{Kelly} for definitions in the more general case of a monoidal category.

\begin{defn}
A \textbf{$\cV$-category} $\cC$, or \textbf{$\cV$-enriched category}, consists of 
\begin{rome}
\item a set $\Ob\cC$ of objects,
\item a hom-object $\cC(A,B)\in \cV$, for all objects $A,B\in \cC$, 
\item a composition map in $\cV$
\[ c_{A,B,C}\colon \cC(A,B)\times \cC(B,C)\to \cC(A,C), \]
for all objects $A,B,C\in \cC$, 
\item an identity map $i_A\colon \un\to \cC(A,A)$ in $\cV$, for every object $A\in \cC$,
\end{rome}
satisfying the following conditions.
\begin{enumerate}
    \item \textbf{Unitality}: for all objects $A,B\in \cC$, the following diagrams in $\cV$ commute.
\begin{tz}
\node[](1) {$\un\times \cC(A,B)\cong \cC(A,B)$}; 
\node[below of=1](2) {$\cC(A,A)\times \cC(A,B)$}; 
\node[right of=2,xshift=2.4cm](4) {$\cC(A,B)$}; 

\draw[->] (1) to node[left,la]{$i_A\times \id_{\cC(A,B)}$} (2);
\draw[->] (2) to node[below,la]{$c_{A,A,B}$} (4);
\draw[d] (1) to (4);

\node[right of=1,xshift=6cm](1) {$\cC(A,B)\times \un\cong \cC(A,B)$}; 
\node[below of=1](2) {$\cC(A,B)\times \cC(B,B)$}; 
\node[right of=2,xshift=2.4cm](4) {$\cC(A,B)$}; 

\draw[->] (1) to node[left,la]{$\id_{\cC(A,B)}\times i_B$} (2);
\draw[->] (2) to node[below,la]{$c_{A,B,B}$} (4);
\draw[d] (1) to (4);
\end{tz}
    \item \textbf{Associativity}: for all objects $A,B,C,D\in \cC$, the following diagram in $\cV$ commutes.
\begin{tz}
\node[](1) {$\cC(A,B)\times \cC(B,C)\times \cC(C,D)$}; 
\node[right of=1,xshift=5cm](2) {$\cC(A,B)\times \cC(B,D)$}; 
\node[below of=1](3) {$\cC(A,C)\times \cC(C,D)$}; 
\node[below of=2](4) {$\cC(A,D)$}; 

\draw[->] (1) to node[above,la]{$\id_{\cC(A,B)}\times c_{B,C,D}$} (2); 
\draw[->] (1) to node[left,la]{$c_{A,B,C}\times \id_{\cC(C,D)}$} (3); 
\draw[->] (2) to node[right,la]{$c_{A,B,D}$} (4);
\draw[->] (3) to node[below,la]{$c_{A,C,D}$} (4);
\end{tz}
\end{enumerate}
\end{defn}

\begin{defn}\label{defn:Vfunctor}
Let $\cC$ and $\cD$ be $\cV$-categories. A \textbf{$\cV$-functor} $F\colon \cC\to \cD$ consists of 
\begin{rome}
\item a map $\Ob F\colon \Ob\cC\to \Ob\cD$, $A\mapsto FA$ between sets of objects, 
\item a map $F_{A,B}\colon \cC(A,B)\to \cD(FA,FB)$ in $\cV$, for all objects $A,B\in \cC$,
\end{rome}
satisfying the following conditions. 
\begin{enumerate}
    \item \textbf{Compatibility with identities}: for every object $A\in \cC$, the following diagram in $\cV$ commutes.
\begin{tz}
\node[](1) {$\un$}; 
\node[right of=1,xshift=1cm](2) {$\cC(A,A)$}; 
\node[below of=2](4) {$\cD(FA,FA)$}; 

\draw[->] (1) to node[above,la]{$i_A$} (2);
\draw[->] (2) to node[right,la]{$F_{A,A}$} (4);
\draw[->] (1) to node[below,la,xshift=-3pt]{$i_{FA}$} (4);
\end{tz}
    \item \textbf{Compatibility with compositions}: for all objects $A,B,C\in \cC$, the following diagram in $\cV$ commutes. 
\begin{tz}
\node[](1) {$\cC(A,B)\times \cC(B,C)$}; 
\node[right of=1,xshift=3.6cm](2) {$\cC(A,C)$}; 
\node[below of=1](3) {$\cD(FA,FB)\times \cD(FB,FC)$}; 
\node[below of=2](4) {$\cD(FA,FC)$}; 

\draw[->] (1) to node[above,la]{$c_{A,B,C}$} (2); 
\draw[->] (1) to node[left,la]{$F_{A,B}\times F_{B,C}$} (3); 
\draw[->] (2) to node[right,la]{$F_{A,C}$} (4);
\draw[->] (3) to node[below,la]{$c_{FA,FB,FC}$} (4);
\end{tz}
\end{enumerate}
\end{defn}

\begin{notation}
We write $\VCat$ for the category of $\cV$-categories and $\cV$-functors. 
\end{notation}

\begin{ex}\label{ex:enriched}
We now give some examples of enriched categories for certain choices of~$\cV$.
\begin{enumerate}
    \item When $\cV=\Set$, a $\Set$-category has a set of objects, and hom-sets. This is just an ordinary category. Moreover, a $\Set$-functor is an ordinary functor, and so we get $\Cat=\Set\text{-}\Cat$.
    \item  When $\cV=\Cat$, a $\Cat$-category has a set of objects, and hom-categories. This is precisely a $2$-category. Moreover, a $\Cat$-functor is a $2$-functor, and so we get $2\Cat=\Cat\text{-}\Cat$. As a reference on $2$-categories, we recommend \cite{JohYau}.
    \item[($n$)] When $\cV=(n-1)\Cat$ for $n\geq 3$, we can define inductively the category of \emph{$n$-categories} and \emph{$n$-functors} to be the category of $(n-1)\Cat$-categories and $(n-1)\Cat$-functors, and we denote it by $n\Cat\coloneqq (n-1)\Cat\text{-}\Cat$.
\end{enumerate}
\end{ex}

\begin{ex}
If $\cV$ is cartesian closed, then it is $\cV$-enriched. We denote the hom-objects in $\cV$ by $[X,Y]$, for all objects $X,Y\in \cV$.
\end{ex}

As for categories, there is a notion of opposite $\cV$-categories. 

\begin{defn}
    Let $\cC$ be a $\cV$-category. Its \textbf{opposite $\cV$-category} $\cC^{\op}$ is such that 
    \begin{rome}
        \item its set of objects is $\Ob\cC$, 
        \item for objects $A,B\in \cC$, its hom-object $\cC^{\op}(B,A)$ is given by $\cC^{\op}(B,A)\coloneqq \cC(A,B)$,
        \item its composition and identity maps are the same as those of $\cC$. 
    \end{rome}
\end{defn}

In the case where $\cV$ is cartesian closed, we get an alternative characterization of $\cV$-functors into $\cV$ which is quite useful in practice. As most of the $\cV$-functors into $\cV$ we will consider in this paper are contravariant, we present our descriptions directly in this format. To introduce the desired characterization, we need the following notation. 

\begin{notation}\label{notation:evmap}
    Let $\cV$ be a cartesian closed category. Given a $\cV$-functor $F\colon \cC^{\op}\to \cV$ and objects $A,B\in \cC$, we denote by 
    \[ \ev^F_{A,B}\colon \cC(A,B)\times FB\to FA\]
    the unique map corresponding to the map $F_{A,B}\colon \cC(A,B)\to [FB,FA]$ under the adjunction $(-)\times FB\dashv~[FB,-]$.
\end{notation}

\begin{lemma} \label{lem:VfunctortoV}
    Let $\cV$ be a cartesian closed category. Then the data of a $\cV$-functor $F\colon \cC^{\op}\to \cV$ can equivalently be given as follows: it consists of
    \begin{rome}
        \item a map $\Ob F\colon \Ob\cC\to \Ob\cD$, $A\mapsto FA$ between sets of objects, 
        \item a map $\ev^F_{A,B}\colon \cC(A,B)\times FB\to FA$ in $\cV$, for all objects $A,B\in \cC$, 
    \end{rome}
    satisfying the following conditions. 
    \begin{enumerate}
        \item \textbf{Compatibility with identities}: for every object $A\in \cC$, the following diagram in $\cV$ commutes.
\begin{tz}
\node[](1) {$FA$}; 
\node[right of=1,xshift=1.8cm](2) {$\cC(A,A)\times FA$}; 
\node[below of=2](4) {$FA$}; 

\draw[->] (1) to node[above,la]{$i_{A}\times \id_{FA}$} (2);
\draw[->] (2) to node[right,la]{$\ev^F_{A,A}$} (4);
\draw[->] (1) to node[below,la,xshift=-3pt]{$\id_{FA}$} (4);
\end{tz}
    \item \textbf{Compatibility with compositions}: for all objects $A,B,C\in \cC$, the following diagram in $\cV$ commutes. 
\begin{tz}
\node[](1) {$\cC(A,B)\times \cC(B,C)\times FC$}; 
\node[right of=1,xshift=4cm](2) {$\cC(A,C)\times FC$}; 
\node[below of=1](3) {$\cC(A,B)\times FB$}; 
\node[below of=2](4) {$FA$}; 

\draw[->] (1) to node[above,la]{$c_{A,B,C}\times \id_{FC}$} (2); 
\draw[->] (1) to node[left,la]{$\id_{\cC(A,B)}\times \ev^F_{B,C}$} (3); 
\draw[->] (2) to node[right,la]{$\ev^F_{A,C}$} (4);
\draw[->] (3) to node[below,la]{$\ev^F_{A,B}$} (4);
\end{tz}
\end{enumerate}
\end{lemma}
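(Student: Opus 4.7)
The plan is to transport the definition through the adjunction $(-)\times FB\dashv [FB,-]$ that exists in any cartesian closed $\cV$. By the construction of $\ev^F_{A,B}$ in \cref{notation:evmap}, adjoint transposition provides a bijection between maps $F_{A,B}\colon \cC(A,B)\to [FB,FA]$ and maps $\ev^F_{A,B}\colon \cC(A,B)\times FB\to FA$ in $\cV$. Using $\cC^{\op}(B,A)=\cC(A,B)$ and $\cV(FB,FA)=[FB,FA]$, this already identifies the data in (ii) with the hom-object data of a $\cV$-functor $\cC^{\op}\to \cV$ as in \cref{defn:Vfunctor}. What remains is to verify that, under this bijection, the two original axioms are equivalent to (1) and (2) of the lemma.

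For the identity axiom, the composite $F_{A,A}\circ i_A\colon \un\to [FA,FA]$ has transpose $\ev^F_{A,A}\circ (i_A\times \id_{FA})\colon FA\to FA$ (using $\un\times FA\cong FA$), while $i_{FA}\colon \un\to [FA,FA]$, which in $\cV$ (viewed as $\cV$-enriched) is by definition the name of $\id_{FA}$, has transpose $\id_{FA}$. Since transposition is a bijection, commutativity of the original triangle is equivalent to commutativity of the triangle in (1).

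For the composition axiom, the key point is that the composition map of the $\cV$-category $\cV$, which in our contravariant indexing has the form $c\colon [FB,FA]\times [FC,FB]\to [FC,FA]$, is by definition the transpose of the double evaluation $[FB,FA]\times [FC,FB]\times FC\to FA$. Hence, applying the functor $(-)\times FC$ to the original square and postcomposing with the appropriate evaluation counits, the upper path $F_{A,C}\circ c_{A,B,C}$ transposes to $\ev^F_{A,C}\circ (c_{A,B,C}\times \id_{FC})$, and the lower path $c\circ (F_{A,B}\times F_{B,C})$ transposes to $\ev^F_{A,B}\circ (\id_{\cC(A,B)}\times \ev^F_{B,C})$ by naturality of $(-)\times FC\dashv [FC,-]$. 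Since transposition is bijective, commutativity of the original square is equivalent to commutativity of the square in (2).

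The only mildly technical point will be keeping careful track of the direction-reversal coming from $\cC^{\op}$ (which swaps the source and target of the composition map) and the contravariant indexing of the internal hom $[FB,FA]$; once those conventions are fixed, the proof is a routine adjoint-transposition argument, and I expect no substantial obstacle.
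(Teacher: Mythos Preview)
Your proposal is correct and follows exactly the same approach as the paper, which simply states that the result follows directly from the universal property of the adjunctions $(-)\times FA\dashv [FA,-]$ for objects $A\in\cC$. Your version merely unpacks this one-liner in more detail.
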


\begin{proof}
    This follows directly from the universal property of the adjunctions $(-)\times FA\dashv [FA,-]$, for objects $A\in \cC$. 
\end{proof}

We can use the characterization in the above lemma to define representable $\cV$-functors. 

\begin{defn}
Let $\cV$ be a cartesian closed category, $\cC$ be a $\cV$-category and $C\in\cC$ be an object. The \textbf{representable $\cV$-functor} at $C$ is the $\cV$-functor $\cC(-,C)\colon \cC^{\op}\to \cV$ which sends
    \begin{rome}
        \item an object $A\in \cC$ to the hom-object $\cC(A,C)$ in $\cV$, 
        \item for objects $A,B\in \cC$, the map $\ev^{\cC(-,C)}_{A,B}$ is given by the composition map in $\cV$
        \[ c_{A,B,C}\colon \cC(A,B)\times \cC(B,C)\to \cC(A,C). \]
    \end{rome}
    Compatibility with identities and compositions follow from the unitality and associativity of the composition maps of $\cC$. 
\end{defn}

The $\cV$-functors are the objects of a category, whose morphisms we now describe.

\begin{defn}
Let $F,G\colon \cC\to \cD$ be $\cV$-functors. A \textbf{$\cV$-natural transformation} $\alpha\colon F\Rightarrow G$ consists of a map $\alpha_A\colon 1\to \cD(FA,GA)$ in $\cV$ for every object $A\in \cC$, such that for all objects $A,B\in \cC$, the following diagram in $\cV$ commutes.
\begin{tz}
\node[](1) {$\cC(A,B)$}; 
\node[below of=1](3) {$\cD(GA,GB)$};
\node[right of=1,xshift=2.2cm](2) {$\cD(FA,FB)$}; 
\node[below of=2](4) {$\cD(FA,GB)$}; 

\draw[->] (1) to node[above,la]{$F_{A,B}$} (2);
\draw[->] (2) to node[right,la]{$(\alpha_B)_*$} (4);
\draw[->] (1) to node[left,la]{$G_{A,B}$} (3);
\draw[->] (3) to node[below,la]{$(\alpha_A)^*$} (4);
\end{tz}
\end{defn}

\begin{notation}
Let $\cC$ and $\cD$ be $\cV$-categories. We write $\VCat(\cC,\cD)$ for the category of $\cV$-functors $\cC\to \cD$ and $\cV$-natural transformations between them.
\end{notation}

In the case where $\cV$ is cartesian closed, we also get an alternative characterization of $\cV$-natural transformations between $\cV$-functors into $\cV$. To introduce it, we need the following notation. 

\begin{notation} \label{notn:evalpha} 
    Let $\cV$ be a cartesian closed category. Given a $\cV$-natural transformation $\alpha\colon F\Rightarrow G$ between $\cV$-functors $F,G\colon \cC^{\op}\to \cV$ and an object $A\in \cC$, we denote by 
    \[ \ev^\alpha_{A}\colon FA\to GA\]
    the unique map in $\cV$ corresponding to the map $\alpha_A\colon \un\to [FA,GA]$ under the adjunction $(-)\times FA\dashv [FA,-]$. 
\end{notation}

\begin{lemma} \label{lem:VnattoV}
    Let $\cV$ be a cartesian closed category, and $F,G\colon \cC^{\op}\to \cV$ be $\cV$-functors. Then the data of a $\cV$-natural transformation $\alpha\colon F\Rightarrow G$ can equivalently be given as follows: it consists of a map $\ev^\alpha_A\colon FA\to GA$ in $\cV$ for every object $A\in \cC$, such that for all objects $A,B\in \cC$, the following diagram in $\cV$ commutes. 
    \begin{tz}
\node[](1) {$\cC(A,B)\times FB$}; 
\node[below of=1](3) {$\cC(A,B)\times GB$};
\node[right of=1,xshift=1.8cm](2) {$FA$}; 
\node[below of=2](4) {$GA$}; 

\draw[->] (1) to node[above,la]{$\ev^F_{A,B}$} (2);
\draw[->] (2) to node[right,la]{$\ev^\alpha_A$} (4);
\draw[->] (1) to node[left,la]{$\id_{\cC(A,B)}\times \ev^\alpha_B$} (3);
\draw[->] (3) to node[below,la]{$\ev_{A,B}^G$} (4);
\end{tz}
\end{lemma}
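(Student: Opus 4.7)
The plan is to exhibit a bijective correspondence between the data in the definition of a $\cV$-natural transformation and the data in the lemma, and then show that the naturality square in each description corresponds to the other under this bijection.

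First, I would use the adjunction $(-)\times FA\dashv [FA,-]$ together with the isomorphism $\un\times FA\cong FA$ to identify maps $\alpha_A\colon \un\to [FA,GA]$ in $\cV$ with maps $\ev^\alpha_A\colon FA\to GA$, exactly as in \cref{notn:evalpha}. This sets up a bijection between the component data of both descriptions.

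Next, I would transpose the naturality square from the definition of $\cV$-natural transformation. Recall that, for $F,G\colon \cC^{\op}\to \cV$, the square has shape
\[ \cC(A,B)\rightrightarrows [FB,FA],[GB,GA]\rightrightarrows [FB,GA] \]
(using $\cV(X,Y)=[X,Y]$ and $\cC^{\op}(B,A)=\cC(A,B)$). I would transpose this square along $(-)\times FB\dashv [FB,-]$. By \cref{notation:evmap}, the transposes of $F_{A,B}$ and $G_{A,B}$ are $\ev^F_{A,B}$ and $\ev^G_{A,B}$. For the post-composition $(\alpha_A)_*$, a direct unwinding shows that the transpose of $(\alpha_A)_*\circ F_{A,B}$ is $\ev^\alpha_A\circ \ev^F_{A,B}$. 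For the pre-composition $(\alpha_B)^*$, which in cartesian closed $\cV$ is itself the transpose of $[GB,GA]\times FB\xrightarrow{\id\times \ev^\alpha_B}[GB,GA]\times GB\to GA$, a short diagram chase shows that the transpose of $(\alpha_B)^*\circ G_{A,B}$ is $\ev^G_{A,B}\circ(\id_{\cC(A,B)}\times \ev^\alpha_B)$. Thus the original naturality square commutes if and only if the square displayed in the lemma commutes, since the adjunction gives a bijection between maps into $[FB,GA]$ and maps out of $\cC(A,B)\times FB$ into $GA$.

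I expect the only potentially fiddly step to be the identification of the transpose of $(\alpha_B)^*\circ G_{A,B}$, where one has to be careful to distinguish the internal-hom pre-composition map $(\alpha_B)^*$ from its underlying action on elements; but once the correct formula for $(\alpha_B)^*$ as a transpose is established, the rest is a routine application of the universal property of the exponential, analogous to the proof of \cref{lem:VfunctortoV}.
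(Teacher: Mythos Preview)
Your proposal is correct and follows the same approach as the paper, which simply states that the result ``follows directly from the universal property of the adjunction $(-)\times FA\dashv [FA,-]$, for objects $A\in \cC$.'' Your write-up is just a more detailed unpacking of that one-line argument.
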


\begin{proof}
    This follows directly from the universal property of the adjunction $(-)\times FA\dashv [FA,-]$, for objects $A\in \cC$.
\end{proof}

In fact, when $\cV$ is cartesian closed, $\cV$-functors assemble into a $\cV$-category; see \cite[\S 2.2]{Kelly}.

\begin{prop}
    If $\cV$ is cartesian closed, so is $\VCat$.
\end{prop}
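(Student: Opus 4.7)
The plan is to follow Kelly's classical construction (cf.~\cite[\S 2.3]{Kelly}). First, I would establish that $\VCat$ has finite products. The terminal $\cV$-category has a single object with hom-object $\un$, while the binary product $\cC\times\cD$ has object set $\Ob\cC\times\Ob\cD$, hom-objects
\[
(\cC\times\cD)\bigl((A,X),(B,Y)\bigr)=\cC(A,B)\times\cD(X,Y),
\]
and composition and identities induced componentwise via the cartesian structure of $\cV$; the projections and universal property are immediate.

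Next, for each $\cV$-category $\cC$ I would construct a right adjoint to $(-)\times\cC$. Given $\cV$-categories $\cC,\cD$, define $[\cC,\cD]$ to have as objects the $\cV$-functors $F\colon\cC\to\cD$, and hom-objects given by the end
\[
[\cC,\cD](F,G)\;=\;\int_{A\in\cC}\cD(FA,GA),
\]
realized as the equalizer in $\cV$ of the parallel pair
\[
\prod_{A\in\Ob\cC}\cD(FA,GA)\rightrightarrows\prod_{A,B\in\Ob\cC}[\cC(A,B),\cD(FA,GB)]
\]
whose components are obtained by transposing, across $(-)\times\cC(A,B)\dashv[\cC(A,B),-]$, the postcomposition with $G_{A,B}$ and the precomposition with $F_{A,B}$ respectively. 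Composition and identities on $[\cC,\cD]$ are then induced from those of $\cD$ via the universal property of the equalizer: the pointwise composition in $\cD$ assembled across $A$ equalizes the pair above, so factors uniquely through the end.

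To produce the adjunction bijection $\VCat(\cE\times\cC,\cD)\cong\VCat(\cE,[\cC,\cD])$, I would take a $\cV$-functor $H\colon\cE\times\cC\to\cD$ and assign, to each $E\in\cE$, the $\cV$-functor $H(E,-)\colon\cC\to\cD$. The hom-component $\cE(E,E')\times\cC(A,A')\to\cD(H(E,A),H(E',A'))$ transposes to a map $\cE(E,E')\to[\cC(A,A'),\cD(H(E,A),H(E',A'))]$; the $\cV$-functoriality of $H$ in the $\cC$-variable ensures that these maps, collected over $A,A'$, equalize the two parallel maps in the end, and so factor through $[\cC,\cD](H(E,-),H(E',-))$. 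The inverse direction untransposes in the same way, and checking that unit, counit, and triangle identities hold reduces to the naturality of the cartesian-closed adjunction in $\cV$.

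The main obstacle is administrative rather than conceptual: the end-based construction implicitly requires $\cV$ to admit the relevant small products and equalizers in addition to being cartesian closed, which holds in all examples of interest (in particular $\cV=(n-1)\Cat$). The verifications that the equalizer-induced composition on $[\cC,\cD]$ is associative and unital, and that the transposition above is natural in $\cE$ and in $\cC$, amount to routine but lengthy diagram chases mediated by the universal property of the end and the naturality of the cartesian-closed structure of $\cV$.
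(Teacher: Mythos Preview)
Your proposal is correct and follows exactly the approach the paper invokes: the paper does not give an explicit proof but simply cites \cite[\S 2.2]{Kelly}, and the end-based construction of $[\cC,\cD]$ you outline is precisely Kelly's (and is the one the paper records in the subsequent \cref{rem:internalhomVcat}). Your observation that the construction tacitly requires the relevant products and equalizers in $\cV$ is well taken; Kelly handles this via standing completeness hypotheses, and the paper's applications all satisfy them.
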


\begin{notation} \label{rem:internalhomVcat}
    Given $\cV$-categories $\cC$ and $\cD$, we denote by $\cD^\cC$ the internal hom-$\cV$-category. Recall that this is the $\cV$-category such that 
    \begin{rome}
        \item its objects of $\cD^\cC$ are $\cV$-functors $\cC\to \cD$, 
        \item for $\cV$-functors $F,G\colon \cC\to \cD$, the hom-object $\cD^\cC(F,G)$ is given by the end in $\cV$
\[ \textstyle \int_{\cC} \cD(F-,G-)\cong \mathrm{eq}(\prod_{A\in \cC} \cD(FA,GA)\rightrightarrows \prod_{A,B\in \cC} [\cC(A,B),\cD(FA,GB)] ), \]
        where the equalizer is taken over the two parallel maps given by the unique maps in $\cV$ corresponding under the adjunction $\cC(A,B)\times(-)\cong [\cC(A,B),-]$ to the composites
        \[  \cC(A,B)\times \cD(FA,GB)\xrightarrow{F_{A,B}\times \id} \cD(FA,FB)\times \cD(FB,GB) \xrightarrow{c_{FA,FB,GB}} \cD(FA,GB), \]
        \[ \cD(FA,GA)\times \cC(A,B)\xrightarrow{\id\times G_{A,B}} \cD(FA,GA)\times\cD(GA,GB)\xrightarrow{c_{FA,GA,GB}} \cD(FA,GB), \]
        respectively.
        \end{rome}
        
        Moreover, given a $\cV$-functor $F\colon \cC\to \cC'$ and a $\cV$-category $\cD$, we write $F^*\coloneqq \cD^F\colon \cD^{\cC'}\to \cD^\cC$ and $F_*\coloneqq F^\cD\colon \cC^\cD\to (\cC')^\cD$ for the induced $\cV$-functors. 
\end{notation}

\begin{rem}
    Note that, by \cite[(2.16)]{Kelly}, a map $\un\to \cD^\cC(F,G)$ in $\cV$ corresponds to a $\cV$-natural transformation $F\Rightarrow G$. 
\end{rem}

\subsection{Internal categories} \label{subsec:internalcat}

In this subsection, let $\cV$ denote a category with pullbacks.

\begin{defn}
An \textbf{internal category} $\bA$ to $\cV$ consists of 
\begin{rome}
\item two objects $\bA_0,\bA_1\in \cV$, where $\bA_0$ is called the \emph{object of objects} and $\bA_1$ the \emph{object of morphisms},
\item source and target maps $s,t\colon \bA_1\to \bA_0$ in $\cV$, 
\item a composition map $c\colon \bA_1\times_{\bA_0}\bA_1\to \bA_1$ in $\cV$, 
\item an identity map $i\colon \bA_0\to \bA_1$ in $\cV$,
\end{rome}
satisfying the following conditions.
\begin{enumerate}
    \item \textbf{Compatibility with source and target}: the following diagrams in~$\cV$ commute. 
\begin{tz}
\node[](1) {$\bA_0$}; 
\node[below of=1](2) {$\bA_1$}; 
\node[left of=2,xshift=-.7cm](3) {$\bA_0$};
\node[right of=2,xshift=.7cm](4) {$\bA_0$}; 

\draw[->] (1) to node[left,la]{$i$} (2);
\draw[->] (2) to node[below,la]{$t$} (4);
\draw[d] (1) to (4);
\draw[d] (1) to (3);
\draw[->] (2) to node[below,la]{$s$} (3);

\node[right of=1,xshift=2.7cm](5) {$\bA_1$};
\node[below of=5](6) {$\bA_0$};
\node[right of=5,xshift=1cm](1) {$\bA_1\times_{\bA_0} \bA_1$}; 
\node[below of=1](3) {$\bA_1$};
\node[right of=1,xshift=1cm](2) {$\bA_1$}; 
\node[below of=2](4) {$\bA_0$}; 

\draw[->] (1) to node[above,la]{$\pi_1$} (2);
\draw[->] (2) to node[right,la]{$t$} (4);
\draw[->] (1) to node[left,la]{$c$} (3);
\draw[->] (3) to node[below,la]{$t$} (4);
\draw[->] (3) to node[below,la]{$s$} (6);
\draw[->] (5) to node[left,la]{$s$} (6);
\draw[->] (1) to node[above,la]{$\pi_0$} (5);
\end{tz}
    \item \textbf{Unitality}: the following diagrams in $\cV$ commute.
\begin{tz}
\node[](5) {$\bA_1\cong \bA_1\times_{\bA_0} \bA_0$};
\node[right of=5,xshift=2.5cm](1) {$\bA_1\times_{\bA_0} \bA_1$}; 
\node[below of=1](3) {$\bA_1$};
\node[right of=1,xshift=2.5cm](2) {$\bA_0\times_{\bA_0}\bA_1\cong \bA_1$}; 

\draw[->] (2) to node[above,la]{$i\times \id_{\bA_1}$} (1);
\draw[d] (2) to (3);
\draw[->] (1) to node[left,la]{$c$} (3);
\draw[d] (5) to (3);
\draw[->] (5) to node[above,la]{$\id_{\bA_1}\times i$} (1);
\end{tz}
    \item \textbf{Associativity}: the following diagram in $\cV$ commutes.
\begin{tz}
\node[](1) {$\bA_1\times_{\bA_0} \bA_1\times_{\bA_0} \bA_1$}; 
\node[below of=1](3) {$\bA_1\times_{\bA_0} \bA_1$};
\node[right of=1,xshift=2.7cm](2) {$\bA_1\times_{\bA_0} \bA_1$}; 
\node[below of=2](4) {$\bA_1$}; 

\draw[->] (1) to node[above,la]{$c\times \id_{\bA_1}$} (2);
\draw[->] (2) to node[right,la]{$c$} (4);
\draw[->] (1) to node[left,la]{$\id_{\bA_1}\times c$} (3);
\draw[->] (3) to node[below,la]{$c$} (4);
\end{tz}
\end{enumerate}
\end{defn}

\begin{defn}
Let $\bA$, $\bB$ be internal categories to $\cV$. An \textbf{internal functor} $H\colon \bA\to \bB$ consists of maps $H_0\colon \bA_0\to \bB_0$ and $H_1\colon \bA_1\to \bB_1$ in $\cV$ satisfying the following conditions. 
\begin{enumerate}
    \item \textbf{Compatibility with source and target}: the following diagrams in~$\cV$ commute. 
\begin{tz}
\node[](1) {$\bA_1$}; 
\node[below of=1](3) {$\bB_1$};
\node[right of=1,xshift=.7cm](2) {$\bA_0$}; 
\node[below of=2](4) {$\bB_0$}; 

\draw[->] (1) to node[above,la]{$s$} (2);
\draw[->] (2) to node[right,la]{$H_0$} (4);
\draw[->] (1) to node[left,la]{$H_1$} (3);
\draw[->] (3) to node[below,la]{$s$} (4);

\node[right of=2,xshift=1.5cm](1) {$\bA_1$}; 
\node[below of=1](3) {$\bB_1$};
\node[right of=1,xshift=.7cm](2) {$\bA_0$}; 
\node[below of=2](4) {$\bB_0$}; 

\draw[->] (1) to node[above,la]{$t$} (2);
\draw[->] (2) to node[right,la]{$H_0$} (4);
\draw[->] (1) to node[left,la]{$H_1$} (3);
\draw[->] (3) to node[below,la]{$t$} (4);
\end{tz}
    \item \textbf{Compatibility with identity}: the following diagram in $\cV$ commutes. 
\begin{tz}
\node[](1) {$\bA_0$}; 
\node[below of=1](3) {$\bB_0$};
\node[right of=1,xshift=.7cm](2) {$\bA_1$}; 
\node[below of=2](4) {$\bB_1$}; 

\draw[->] (1) to node[above,la]{$i$} (2);
\draw[->] (2) to node[right,la]{$H_1$} (4);
\draw[->] (1) to node[left,la]{$H_0$} (3);
\draw[->] (3) to node[below,la]{$i$} (4);
\end{tz}
    \item \textbf{Compatibility with composition}: the following diagram in $\cV$ commutes. 
\begin{tz}
\node[](1) {$\bA_1\times_{\bA_0} \bA_1$}; 
\node[below of=1](3) {$\bB_1\times_{\bB_0} \bB_1$};
\node[right of=1,xshift=1.3cm](2) {$\bA_1$}; 
\node[below of=2](4) {$\bB_1$}; 

\draw[->] (1) to node[above,la]{$c$} (2);
\draw[->] (2) to node[right,la]{$H_1$} (4);
\draw[->] (1) to node[left,la]{$H_1\times H_1$} (3);
\draw[->] (3) to node[below,la]{$c$} (4);
\end{tz}
\end{enumerate}
\end{defn}

\begin{notation}
We write $\CatV$ for the category of internal categories to $\cV$ and internal functors.
\end{notation}

\begin{ex}\label{ex:internal}
We give some examples of internal categories for certain choices of~$\cV$.
\begin{enumerate}
    \item When $\cV=\Set$, an internal category to $\Set$ has a set of objects, and a set of morphisms. This is just an ordinary category. Moreover, an internal functor to $\Set$ is an ordinary functor, and so we get $\Cat=\Cat(\Set)$.
    \item When $\cV=\Cat$, an internal category to $\Cat$ has a category of objects and a category of morphisms. This is called a \emph{double category}, and we typically interpret the data in the category of objects as objects and vertical morphisms, and the data in the category of morphisms as horizontal morphisms and squares. Moreover, an internal functor to $\Cat$ corresponds to a \emph{double functor}, and we have $\Dbl\Cat=\Cat(\Cat)$. As a reference on double categories, we recommend \cite{Grandis}.
    \item[($n$)] When $\cV=(n-1)\Cat$ for $n\geq 3$, we define the category of \emph{double $(n-1)$-categories} and \emph{double $(n-1)$-functors} to be the category of internal categories to $(n-1)\Cat$ and internal functors, and we denote it by $\Dbl(n-1)\Cat\coloneqq \Cat((n-1)\Cat)$. When $n=3$, this gives a strict version of the double $2$-categories from \cite{CLPS}.
\end{enumerate}
\end{ex}

When $\cV$ is cartesian closed, internal functors assemble into an internal category to $\cV$; see \cite[Lemma B2.3.15(ii)]{elephant}.

\begin{prop}
    When $\cV$ is cartesian closed, so is $\CatV$.
\end{prop}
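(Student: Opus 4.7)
The plan is to explicitly construct finite products and internal homs in $\CatV$, and then verify the required adjunction using the cartesian closed structure of $\cV$ at each level.

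\textbf{Products in $\CatV$.} The terminal object is the internal category with $\un_0 = \un_1 = \un$ (the terminal object of $\cV$) and forced structure maps. Given internal categories $\bA$ and $\bB$, their product is defined componentwise by $(\bA\times\bB)_i \coloneqq \bA_i\times\bB_i$ for $i=0,1$, with source, target, and identity given by the product of those of $\bA$ and $\bB$. Composition uses the canonical comparison $(\bA_1\times\bB_1)\times_{\bA_0\times\bB_0}(\bA_1\times\bB_1)\cong (\bA_1\times_{\bA_0}\bA_1)\times(\bB_1\times_{\bB_0}\bB_1)$, which exists because products commute with pullbacks in $\cV$. The universal property on objects and morphisms transfers levelwise from $\cV$ to $\CatV$.

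\textbf{The internal hom $\bB^{\bA}$.} Writing $[-,-]$ for the internal hom of $\cV$, first define $(\bB^{\bA})_0$ to be the limit in $\cV$ of a finite diagram whose points parametrize pairs $(H_0,H_1)\in [\bA_0,\bB_0]\times[\bA_1,\bB_1]$ satisfying the four equations of an internal functor (source, target, identity, composition compatibilities). Each equation is encoded as an equalizer of two maps between finite products of internal homs, using that $[\bA_1\times_{\bA_0}\bA_1,\bB_1]$ computes the ``object of composable pairs going to morphisms''. Similarly, define $(\bB^{\bA})_1$ as the limit parametrizing triples $(F,G,\alpha)$ of two internal functors $F,G\colon\bA\to\bB$ together with $\alpha\colon\bA_0\to\bB_1$ satisfying $s\alpha = F_0$, $t\alpha = G_0$, and the naturality square. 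Source and target maps $(\bB^{\bA})_1\to(\bB^{\bA})_0$ project onto $F$ and $G$; the identity $(\bB^{\bA})_0\to(\bB^{\bA})_1$ post-composes $H_0$ with the internal identity $i\colon\bB_0\to\bB_1$; composition is vertical pasting of natural transformations, built from the internal composition of $\bB$. Unitality and associativity of $\bB^{\bA}$ are inherited from those of $\bB$.

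\textbf{The adjunction.} The main task is to establish a natural bijection
\[ \CatV(\bC\times\bA,\bB)\cong \CatV(\bC,\bB^{\bA}). \]
Given an internal functor $\bC\times\bA\to\bB$, its two components are maps $\bC_0\times\bA_0\to\bB_0$ and $\bC_1\times\bA_1\to\bB_1$ in $\cV$; these transpose across the adjunctions $(-)\times\bA_i\dashv [\bA_i,-]$ to maps $\bC_0\to[\bA_0,\bB_0]$, $\bC_1\to[\bA_1,\bB_1]$, plus (factoring through the diagonal on $\bC_1$) a map $\bC_1\to[\bA_0,\bB_1]$ obtained from the mixed component. The equations defining an internal functor $\bC\times\bA\to\bB$ translate exactly to: (i) the transposed pairs land in the limits cutting out $(\bB^{\bA})_0$ and $(\bB^{\bA})_1$; and (ii) the resulting maps $\bC_0\to(\bB^{\bA})_0$ and $\bC_1\to(\bB^{\bA})_1$ form an internal functor $\bC\to\bB^{\bA}$. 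Both directions are natural in $\bC$ and $\bB$ by naturality of the $\cV$-level transposition.

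\textbf{Main obstacle.} Conceptually the argument is clear, but the bookkeeping is heavy: there are many diagrams in $\cV$ to chase, since $(\bB^{\bA})_0$ and $(\bB^{\bA})_1$ are each defined as limits of nontrivial finite diagrams, and the structure maps $s$, $t$, $i$, $c$ must be produced via the universal properties of these limits. Each verification — that these maps satisfy the internal-category axioms, and that the transposition above is well-defined and natural — reduces to applying the cartesian closed structure of $\cV$ together with the fact that $(-)\times X$ preserves the relevant limits. This is precisely the content of \cite[Lemma B2.3.15(ii)]{elephant}, to which we may appeal.
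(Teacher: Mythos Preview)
Your proposal is correct and ultimately rests on the same citation the paper uses: the paper does not give a proof of this proposition at all, but simply refers to \cite[Lemma B2.3.15(ii)]{elephant} in the sentence preceding the statement. Your sketch of the construction of $\bB^{\bA}$ and of the exponential adjunction is a reasonable expansion of what that reference contains; the only minor wobble is the phrase ``factoring through the diagonal on $\bC_1$'' for the mixed component, where what you really mean is restricting the level-$1$ map $\bC_1\times\bA_1\to\bB_1$ along $\id_{\bC_1}\times i\colon \bC_1\times\bA_0\to\bC_1\times\bA_1$ and then transposing.
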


\begin{notation}
    Given internal categories $\bA$ and $\bB$ to $\cV$, we denote by $\llbracket \bA,\bB\rrbracket$ the internal hom internal category of $\CatV$. 

    Moreover, given an internal functor $H\colon \bA\to \bA'$ and an internal category $\bB$ to $\cV$, we write $H^*\coloneqq \llbracket H,\bB\rrbracket\colon \llbracket \bA',\bB\rrbracket\to \llbracket \bA,\bB\rrbracket$ and $H_*\coloneqq \llbracket \bB,H\rrbracket\colon \llbracket \bB,\bA\rrbracket\to \llbracket \bB,\bA'\rrbracket$ for the induced internal functors.
\end{notation}

 \subsection{Relevant functors} \label{subsec:relevant}

In this subsection, let $\cV$ denote a category with pullbacks (and hence finite products). There is a canonical inclusion of $\cV$ into the category $\CatV$ as follows. 

\begin{defn} 
The \textbf{constant functor} $\cst\colon \cV\to \CatV$ sends an object $X$ in $\cV$ to the internal category $\cst X$ to $\cV$ constant at $X$; i.e., the internal category with
\begin{rome}
    \item $(\cst X)_0=(\cst X)_1=X$, and
    \item  $s=t=i=c=\id_X$.
\end{rome}
 It sends a map $f\colon X\to Y$ in $\cV$ to the internal functor $\cst f\colon \cst X\to \cst Y$ constant at $f$; i.e., the internal functor with $(\cst f)_0=(\cst f)_1=f$.

 We call an internal category $\bA$ to $\cV$ that is in the essential image of $\cst\colon \cV\to \CatV$ a \textbf{constant internal category}. 
 \end{defn}

 This functor is left adjoint to the functor sending an internal category to its object of objects. 

 \begin{prop}
There is an adjunction 
\begin{tz}
\node[](1) {$\cV$}; 
\node[right of=1,xshift=1.1cm](2) {$\CatV$}; 
\punctuation{2}{.};

\draw[->] ($(1.east)+(0,5pt)$) to node[above,la]{$\cst$} ($(2.west)+(0,5pt)$);
\draw[->] ($(2.west)-(0,5pt)$) to node[below,la]{$(-)_0$} ($(1.east)-(0,5pt)$);

\node[la] at ($(1.east)!0.5!(2.west)$) {$\bot$};
\end{tz}
\end{prop}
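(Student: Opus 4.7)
The plan is to establish the adjunction by constructing a natural bijection
\[ \CatV(\cst X, \bA) \cong \cV(X, \bA_0) \]
for every object $X \in \cV$ and every internal category $\bA$ to $\cV$. The unit will send $f\colon X \to \bA_0$ to itself (and its derived $H_1$), and the counit will be the identity $\id_{\bA_0}\colon (\cst \bA_0)_0 \to \bA_0$, although it is cleaner to just verify the hom-set bijection directly and then invoke naturality.

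First, I would analyze the shape of an internal functor $H\colon \cst X \to \bA$. Such a datum consists of two maps $H_0, H_1\colon X \to \bA_0, \bA_1$ in $\cV$. Because the source, target, and identity maps of $\cst X$ are all $\id_X$, the compatibility conditions with source and target reduce to $s \circ H_1 = H_0 = t \circ H_1$, and the compatibility with the identity reduces to $H_1 = i_\bA \circ H_0$. In particular $H_1$ is entirely determined by $H_0$, and the source/target conditions are then automatic from $s \circ i_\bA = \id_{\bA_0} = t \circ i_\bA$. This gives the forward map $H \mapsto H_0$.

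Conversely, given $f\colon X \to \bA_0$, I would define $H_0 \coloneqq f$ and $H_1 \coloneqq i_\bA \circ f$, and verify that this yields an internal functor. The source, target, and identity conditions are immediate as above. The only substantive check is compatibility with composition: since $c_{\cst X} = \id_X$ under $X \times_X X \cong X$, it amounts to showing that
\[ c_\bA \circ (i_\bA \times i_\bA) \circ (f \times_f f) = i_\bA \circ f, \]
where $f \times_f f\colon X \cong X \times_X X \to \bA_0 \times_{\bA_0} \bA_0 \cong \bA_0$ is $f$. This follows from the unitality axiom of $\bA$ applied to the identity pair $(i_\bA, i_\bA)$ at a common object, since $c_\bA \circ (i_\bA \times i_\bA) = i_\bA$ as maps $\bA_0 \cong \bA_0 \times_{\bA_0} \bA_0 \to \bA_1$. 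The two assignments are mutually inverse by construction.

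Finally, I would note that naturality in $X$ is clear since both sides post-compose maps $X' \to X$ with the data, and naturality in $\bA$ follows because an internal functor $\bA \to \bA'$ commutes with the identity map $i$. The main ``obstacle'' is simply bookkeeping—spelling out the unitality argument in the composition check—but no nontrivial hypothesis on $\cV$ beyond the existence of pullbacks is needed.
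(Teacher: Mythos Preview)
Your proof is correct and is the standard direct verification of the hom-set bijection; the paper itself states this proposition without proof, so there is nothing to compare against beyond noting that your argument is exactly the expected one.
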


 \begin{notation}
     For convenience, we write $\un$ for the internal category $\cst \un$ at the terminal object $\un$ of $\cV$. Note that this is a terminal object in $\CatV$. Given an internal category $\bA$, we call an internal functor $A\colon \un\to \bA$ an \textbf{element} of $\bA$.
 \end{notation}

 In the case where $\cV$ has small coproducts, we can relate the categories of enriched and internal categories sharing a common base category $\cV$ as follows; see \cite{Extensive} for further details.

\begin{defn}
The \textbf{internalization functor} $\Int\colon \VCat\to \CatV$ sends a $\cV$-ca\-te\-gory~$\cC$ to the internal category $\Int\cC$ to $\cV$ whose
\begin{rome}
\item object of objects and object of morphisms are given by the coproducts in $\cV$
\[ \textstyle(\Int\cC)_0\coloneqq \bigsqcup_{A\in \cC} \un \quad \text{and} \quad
 \textstyle(\Int\cC)_1\coloneqq \bigsqcup_{A,B\in \cC} \cC(A,B) , \]
\item source and target maps $s,t\colon (\Int\cC)_1\to (\Int\cC)_0$ are the unique maps in $\cV$ making the following diagrams commute, for all objects $A,B\in \cC$,
\begin{tz}
\node[](1) {$\cC(A,B)$}; 
\node[below of=1](3) {$\un$};
\node[right of=1,xshift=2.2cm](2) {$\bigsqcup_{A,B\in \cC} \cC(A,B)$}; 
\node[below of=2](4) {$\bigsqcup_{A\in \cC} \un$}; 

\draw[->] (1) to node[above,la]{$\iota_{A,B}$} (2);
\draw[->] (2) to node[right,la]{$s$} (4);
\draw[->] (1) to node[left,la]{$!$} (3);
\draw[->] (3) to node[below,la]{$\iota_A$} (4);

\node[right of=2,xshift=2cm](1) {$\cC(A,B)$}; 
\node[below of=1](3) {$\un$};
\node[right of=1,xshift=2.2cm](2) {$\bigsqcup_{A,B\in \cC} \cC(A,B)$}; 
\node[below of=2](4) {$\bigsqcup_{B\in \cC} \un$}; 

\draw[->] (1) to node[above,la]{$\iota_{A,B}$} (2);
\draw[->] (2) to node[right,la]{$t$} (4);
\draw[->] (1) to node[left,la]{$!$} (3);
\draw[->] (3) to node[below,la]{$\iota_B$} (4);
\end{tz}
    \item composition map $c\colon (\Int\cC)_1\times_{(\Int\cC)_0} (\Int\cC)_1\to (\Int\cC)_1$ is the unique map in $\cV$ making the following diagram commute, for all objects $A,B,C\in \cC$,
\begin{tz}
\node[](1) {$\cC(A,B)\times\cC(B,C)$}; 
\node[below of=1](3) {$\cC(A,C)$};
\node[right of=1,xshift=4.1cm](2) {$\bigsqcup_{A,B,C\in \cC} \cC(A,B)\times\cC(B,C)$}; 
\node[below of=2](4) {$\bigsqcup_{A,C\in \cC} \cC(A,C)$}; 

\draw[->] (1) to node[above,la]{$\iota_{A,B,C}$} (2);
\draw[->] (2) to node[right,la]{$c$} (4);
\draw[->] (1) to node[left,la]{$c_{A,B,C}$} (3);
\draw[->] (3) to node[below,la]{$\iota_{A,C}$} (4);
\end{tz}
    \item identity map $i\colon (\Int\cC)_0\to (\Int\cC)_1$ is the unique map in $\cV$ making the following diagram commute, for every object $A\in \cC$.
\begin{tz}
\node[](1) {$\un$}; 
\node[below of=1](3) {$\cC(A,A)$};
\node[right of=1,xshift=2.2cm](2) {$\bigsqcup_{A\in \cC} \un$}; 
\node[below of=2](4) {$\bigsqcup_{A,B\in \cC} \cC(A,B)$}; 

\draw[->] (1) to node[above,la]{$\iota_{A}$} (2);
\draw[->] (2) to node[right,la]{$i$} (4);
\draw[->] (1) to node[left,la]{$i_A$} (3);
\draw[->] (3) to node[below,la]{$\iota_{A,A}$} (4);
\end{tz}
Compatibility with source and target are straightforward, and unitality and associativity of composition follows from the respective properties  in $\cC$. 

On morphisms, the functor $\Int$ sends a $\cV$-functor $F\colon \cC\to \cD$ to the internal functor $\Int F\colon \Int\cC\to \Int\cD$ such that $(\Int F)_0$ and $(\Int F)_1$ are the unique maps in $\cV$ making the following diagrams commute, for all objects $A,B\in \cC$.
\begin{tz}
\node[](1) {$\un$}; 
\node[right of=1,xshift=1cm](2) {$\bigsqcup_{A\in \cC} \un$}; 
\node[below of=2](4) {$\bigsqcup_{X\in \cD} \un$}; 

\draw[->] (1) to node[above,la]{$\iota_{A}$} (2);
\draw[->] (2) to node[right,la]{$(\Int F)_0$} (4);
\draw[->] (1) to node[below,la,xshift=-6pt]{$\iota_{FA}$} (4);

\node[right of=2,xshift=2.2cm](1) {$\cC(A,B)$}; 
\node[below of=1](3) {$\cD(FA,FB)$};
\node[right of=1,xshift=2.6cm](2) {$\bigsqcup_{A,B\in \cC} \cC(A,B)$}; 
\node[below of=2](4) {$\bigsqcup_{X,Y\in \cD} \cD(X,Y)$}; 

\draw[->] (1) to node[above,la]{$\iota_{A,B}$} (2);
\draw[->] (2) to node[right,la]{$(\Int F)_1$} (4);
\draw[->] (1) to node[left,la]{$F_{A,B}$} (3);
\draw[->] (3) to node[below,la]{$\iota_{FA,FB}$} (4);
\end{tz}
Compatibility with source and target are straightforward, and compatibility with identity and composition follows from the respective properties of $F$. 
\end{rome}
\end{defn}

\begin{defn} 
The functor $\Und\colon \CatV\to \VCat$ sends an internal category $\bA$ to $\cV$ to its \textbf{underlying $\cV$-category} $\Und\bA$ whose 
\begin{rome}
\item object set is the set $\cV(\un,\bA_0)$, 
\item hom-object $\Und\bA(A,B)$ is given by the following pullback in $\cV$, 
\begin{tz}
\node[](1) {$\Und\bA(A,B)$}; 
\node[below of=1](3) {$\un$};
\node[right of=1,xshift=1.5cm](2) {$\bA_1$}; 
\node[below of=2](4) {$\bA_0\times \bA_0$}; 
\pullback{1};

\draw[->] (1) to node[above,la]{$\pi_{A,B}$} (2);
\draw[->] (2) to node[right,la]{$(s,t)$} (4);
\draw[->] (1) to (3);
\draw[->] (3) to node[below,la]{$(A,B)$} (4);
\end{tz} 
for all maps $A,B\colon \un\to \bA_0$ in $\cV$,
\item composition map $c_{A,B,C}\colon \Und\bA(A,B)\times \Und\bA(B,C)\to \Und\bA(A,C)$ is given by the unique map in $\cV$ making the following diagram commute, 
\begin{tz}
\node[](0) {$\Und\bA(A,B)\times \Und\bA(B,C)$};
\node[right of=0,xshift=3.7cm] (9) {$\bA_1\times_{\bA_0}\bA_1$};
\node[below of=0,xshift=2cm](1) {$\Und\bA(A,C)$}; 
\node[below of=1](3) {$\un$};
\node[right of=1,xshift=3.7cm](2) {$\bA_1$}; 
\node[below of=2](4) {$\bA_0\times \bA_0$}; 
\pullback{1};

\draw[->,bend right=30] (0) to node[left,la,xshift=-2pt]{$!$} (3);
\draw[->] (0) to node[right,la,yshift=3pt]{$c_{A,B,C}$} (1);
\draw[->] (0) to node[above,la]{$\pi_{A,B}\times \pi_{B,C}$} (9);
\draw[->] (9) to node[above,la,yshift=3pt]{$c$} (2);
\draw[->] (1) to node[above,la]{$\pi_{A,C}$} (2);
\draw[->] (2) to node[right,la]{$(s,t)$} (4);
\draw[->] (1) to (3);
\draw[->] (3) to node[below,la]{$(A,C)$} (4);
\end{tz}
for all maps $A,B,C\colon \un\to \bA_0$ in $\cV$,
\item identity map $i_A\colon \un\to \Und\bA(A,A)$ is given by the unique map in $\cV$ making the following diagram commute, 
\begin{tz}
\node[](0) {$\un$};
\node[right of=0,xshift=1.5cm] (9) {$\bA_0$};
\node[below of=0,xshift=2cm](1) {$\Und\bA(A,A)$}; 
\node[below of=1](3) {$\un$};
\node[right of=1,xshift=1.5cm](2) {$\bA_1$}; 
\node[below of=2](4) {$\bA_0\times \bA_0$};
\pullback{1};

\draw[d,bend right=30] (0) to (3);
\draw[->] (0) to node[right,la,yshift=6pt]{$i_A$} (1);
\draw[->] (0) to node[above,la]{$A$} (9);
\draw[->] (9) to node[above,la,yshift=3pt]{$i$} (2);
\draw[->] (1) to (2);
\draw[->] (2) to node[right,la]{$(s,t)$} (4);
\draw[->] (1) to (3);
\draw[->] (3) to node[below,la]{$(A,A)$} (4);
\end{tz}
for every map $A\colon \un\to \bA_0$ in $\cV$.
\end{rome}
Unitality and associativity of compositions follow from the respective properties in $\bA$. 

On morphisms, the functor $\Und$ sends an internal functor $H\colon \bA\to \bB$ to the $\cV$-functor $\Und H\colon \Und\bA\to \Und\bB$ given by $\cV(\un,H_0)\colon \cV(\un,\bA_0)\to \cV(\un,\bB_0)$ on objects, and such that $(\Und H)_{A,B}\colon \Und\bA(A,B)\to \Und\bB(HA,HB)$ is the unique map in $\cV$ making the following diagram commute, 
\begin{tz}
\node[](0) {$\Und\bA(A,B)$};
\node[right of=0,xshift=2.2cm] (9) {$\bA_1$};
\node[below of=0,xshift=2cm](1) {$\Und\bB(HA,HB)$}; 
\node[below of=1](3) {$\un$};
\node[right of=1,xshift=2.2cm](2) {$\bB_1$}; 
\node[below of=2](4) {$\bB_0\times \bB_0$}; 
\pullback{1};

\draw[->,bend right=30] (0) to node[left,la,xshift=-2pt]{$!$} (3);
\draw[->] (0) to node[right,la,yshift=6pt]{$(\Und H)_{A,B}$} (1);
\draw[->] (0) to node[above,la]{$\pi_{A,B}$} (9);
\draw[->] (9) to node[above,la,yshift=3pt]{$H_1$} (2);
\draw[->] (1) to node[above,la,pos=0.4]{$\pi_{HA,HB}$} (2);
\draw[->] (2) to node[right,la]{$(s,t)$} (4);
\draw[->] (1) to (3);
\draw[->] (3) to node[below,la]{$(HA,HB)$} (4);
\end{tz}
for all maps $A,B\colon \un\to \bA_0$ in $\cV$. Compatibility with identities and compositions follows from the respective properties of $H$.
\end{defn}

The following result appears as \cite[Proposition 4.2]{Extensive}.

\begin{prop} \label{prop:IntUnd}
Suppose that, for every set $I$, the functor $\bigsqcup_{i\in I}\colon \prod_{i\in I} \cV\to \slice{\cV}{\bigsqcup_{i\in I} \un}$ preserves binary products. Then there is an adjunction
\begin{tz}
\node[](1) {$\VCat$}; 
\node[right of=1,xshift=1.5cm](2) {$\CatV$}; 
\punctuation{2}{.};

\draw[->] ($(1.east)+(0,5pt)$) to node[above,la]{$\Int$} ($(2.west)+(0,5pt)$);
\draw[->] ($(2.west)-(0,5pt)$) to node[below,la]{$\Und$} ($(1.east)-(0,5pt)$);

\node[la] at ($(1.east)!0.5!(2.west)$) {$\bot$};
\end{tz}
\end{prop}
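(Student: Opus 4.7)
The plan is to establish the adjunction $\Int \dashv \Und$ by exhibiting a natural bijection
\[ \CatV(\Int\cC, \bA) \cong \VCat(\cC, \Und\bA) \]
for every $\cV$-category $\cC$ and every internal category $\bA$ to $\cV$. The construction on both sides will be obtained by repeatedly unpacking the universal properties of the coproducts defining $\Int\cC$ and of the pullbacks defining $\Und\bA$.

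First, I would describe how an internal functor $H\colon \Int\cC \to \bA$ yields a $\cV$-functor $\cC \to \Und\bA$. The component $H_0\colon \bigsqcup_{A\in\cC}\un\to \bA_0$ corresponds, by the universal property of the coproduct, to a family of elements $\{HA\colon \un \to \bA_0\}_{A\in\cC}$ of $\Und\bA$, producing the assignment on objects. For each pair $A,B\in\cC$, the composite $\cC(A,B)\xrightarrow{\iota_{A,B}}(\Int\cC)_1\xrightarrow{H_1}\bA_1$ has source $HA$ and target $HB$ by compatibility of $H$ with source and target, so it factors uniquely through the pullback $\Und\bA(HA,HB)$, producing the hom-component of the induced $\cV$-functor. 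Conversely, given a $\cV$-functor $F\colon \cC\to\Und\bA$, I would produce an internal functor $\Int\cC\to\bA$ whose object and morphism components are induced, via the universal property of the coproducts, from the families $\{FA\colon \un\to \bA_0\}_{A\in\cC}$ and $\{\cC(A,B)\xrightarrow{F_{A,B}}\Und\bA(FA,FB)\xrightarrow{\pi_{FA,FB}}\bA_1\}_{A,B\in\cC}$. Mutual inverseness and naturality in $\cC$ and $\bA$ are then a routine matter of tracking the coproduct and pullback universal properties through the definitions.

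The main obstacle will be verifying that this correspondence respects the compatibility with composition on both sides. The composition axiom for a $\cV$-functor involves binary products $\cC(A,B)\times \cC(B,C)$, whereas the composition axiom for an internal functor involves the pullback $(\Int\cC)_1 \times_{(\Int\cC)_0} (\Int\cC)_1$. To bridge the two, I would invoke the standing hypothesis: since the functor $\bigsqcup_{i\in I}\colon \prod_{i\in I}\cV \to \slice{\cV}{\bigsqcup_{i\in I}\un}$ preserves binary products, taking pullbacks over the coproduct $(\Int\cC)_0 = \bigsqcup_{A\in\cC}\un$ commutes with the relevant coproducts of hom-objects, yielding a canonical isomorphism
\[ (\Int\cC)_1 \times_{(\Int\cC)_0} (\Int\cC)_1 \;\cong\; \bigsqcup_{A,B,C\in\cC} \cC(A,B)\times \cC(B,C). \]
Under this identification, the composition map of $\Int\cC$ decomposes as the coproduct of the composition maps $c_{A,B,C}$ of $\cC$, and an analogous (simpler) statement holds for its identity map. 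This reduces both compatibility axioms of the correspondence to their componentwise versions, which then follow from the corresponding $\cV$-functor axioms for $F$ (respectively, internal functor axioms for $H$) together with the pullback definitions of the hom-objects of $\Und\bA$.
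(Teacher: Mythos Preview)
Your approach is correct, but note that the paper does not actually supply its own proof of this proposition: it is stated with an attribution to \cite[Proposition~4.2]{Extensive} and left at that. So there is no ``paper's proof'' to compare against in the strict sense.

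That said, your sketch is exactly the standard argument one would give, and it lines up with how the paper implicitly uses the hypothesis elsewhere. In particular, the identification
\[
(\Int\cC)_1 \times_{(\Int\cC)_0} (\Int\cC)_1 \;\cong\; \bigsqcup_{A,B,C\in\cC} \cC(A,B)\times \cC(B,C)
\]
is precisely what the paper assumes when it defines the composition map of $\Int\cC$ componentwise, and your observation that binary products in $\slice{\cV}{\bigsqcup_{i\in I}\un}$ are pullbacks over $\bigsqcup_{i\in I}\un$ is the right way to extract this from the stated hypothesis. One small point worth making explicit in a full write-up: obtaining the displayed isomorphism requires applying the hypothesis twice (first with $I=\Ob\cC$ to split the pullback over $(\Int\cC)_0$, then again to distribute the remaining product over the inner coproducts), so you should check that each step really is an instance of a binary product in the appropriate slice. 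Once that is in place, the rest of your outline---matching the coproduct components of $H_0,H_1$ against the object and hom assignments of a $\cV$-functor via the pullback defining $\Und\bA(HA,HB)$---goes through without difficulty.
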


\begin{rem} \label{rem:Intffiffunconnected} 
    The functor $\Int\colon \VCat\to \CatV$ is not necessarily fully faithful, as given a $\cV$-category $\cC$, the canonical map $\Ob\cC\to \cV(\un,\bigsqcup_{A\in\cC} \un)$ may not be an isomorphism of sets. In fact, the functor $\Int$ is fully faithful precisely when the terminal object $\un$ is \emph{connected}; i.e., when the functor $\cV(\un,-)\colon \cV\to \Set$ preserves coproducts. This condition is not satisfied, for instance, when we consider a presheaf category over a non-connected category, such as $\cV=\Set^{\mathbbm{1}\sqcup\mathbbm{1}}\cong \Set\times \Set$, where $\mathbbm{1}$ denotes the terminal category.
\end{rem}

\begin{ex} \label{ex:functors}
    Let us describe these functors for certain choices of $\cV$, using the terminology from \cref{ex:enriched,ex:internal}.
\begin{enumerate}
    \item When $\cV=\Set$, the internalization functor $\Int\colon\Cat\to\Cat$ is simply the identity functor. This is also the case for the functor $\Und\colon\Cat\to\Cat$.
    \item When $\cV=\Cat$, the internalization functor $\Int\colon2\Cat\to\Dbl\Cat$ takes a 2-category~$\cC$ to the double category whose objects are the objects of $\cC$, vertical morphisms are identities, horizontal morphisms are the morphisms of $\cC$, and squares are the 2-morphisms of $\cC$. This is also known in the literature as the horizontal full embedding functor. 

    The functor $\Und\colon\Dbl\Cat\to2\Cat$ takes a double category $\bA$ to its underlying horizontal 2-category; that is, to the 2-category whose objects are the objects of $\bA$, morphisms are the horizontal morphisms of $\bA$, and 2-morphisms are given by the squares in $\bA$ with vertical identity boundaries. 
    \item[($n$)] When $\cV=(n-1)\Cat$ for $n\geq 3$, the internalization functor $\Int\colon n\Cat\to \Dbl(n-1)\Cat$ is also a full embedding by \cref{rem:Intffiffunconnected} and identifies $n$-categories with double $(n-1)$-categories with a discrete $(n-1)$-category of objects.
\end{enumerate}
Note that in all these examples the functor $\Int$ preserves binary products, a fact that will be used later.
\end{ex}

 \subsection{Extensivity} \label{subsec:extensive}

 In order for the constructions in this paper to work as expected, we will require the base category $\cV$ to satisfy an extensivity property, which we now recall.

 \begin{defn}\label{extensivity}
    A category $\cV$ is \textbf{extensive} if it has all small coproducts, and for every set $I$ and every family of objects $\{X_i\}_{i\in I}$ in $\cV$, the functor 
    \[ \textstyle \bigsqcup_{i\in I}\colon \prod_{i\in I} \slice{\cV}{X_i}\to \slice{\cV}{\bigsqcup_{i\in I} X_i}  \]
    is an equivalence of categories.
\end{defn}

\begin{ex} 
    The class of extensive categories includes, among others, the categories $\Set$, $\Cat$, and $n\Cat$; the category of topological spaces; any category of presheaves\textemdash e.g.~the category of simplicial sets\textemdash; and  more generally, any Grothendieck topos\textemdash e.g.~a category of sheaves. Moreover, if $\cV$ has pullbacks, then the fact that $\cV$ is extensive implies that $\VCat$ and $\Cat(\cV)$ are as well, by \cite[\S 2]{Extensive}.
\end{ex}

\begin{rem}
     In particular, note that when $\cV$ is extensive, then for every set $I$ the functor $\bigsqcup_{i\in I}\colon \prod_{i\in I} \cV\to \slice{\cV}{\bigsqcup_{i\in I} \un}$ is an equivalence and so it preserves binary products as required for the existence of the adjunction in \cref{prop:IntUnd}. 
\end{rem}

\begin{rem}
 \label{rem:extensive}
 If $\cV$ is a category with pullbacks along coproduct inclusions, then, for every set~$I$ and every family of objects $\{X_i\}_{i\in I}$ in~$\cV$, the functor
    \[ \textstyle \bigsqcup_{i\in I}\colon \prod_{i\in I} \slice{\cV}{X_i}\to \slice{\cV}{\bigsqcup_{i\in I} X_i}  \] 
    admits a right adjoint. This right adjoint sends a map $g\colon Y\to \bigsqcup_{i\in I} X_i$ in $\cV$ to the family of maps $\{g_i\colon Y_i\to X_i\}_{i\in I}$ in $\cV$, where the map $g_i\colon Y_i\to X_i$ is obtained as the following pullback in $\cV$, for every $i\in I$. 
    \begin{diagram} \label{pullbackYi}
\node[](1) {$Y_i$}; 
\node[below of=1](3) {$X_i$};
\node[right of=1,xshift=1.1cm](2) {$Y$}; 
\node[below of=2](4) {$\bigsqcup_{i\in I} X_i$}; 
\pullback{1};

\draw[->] (1) to (2);
\draw[->] (2) to node[right,la]{$g$} (4);
\draw[->] (1) to node[left,la]{$g_i$} (3);
\draw[->] (3) to node[below,la]{$\iota_{i}$} (4);
\end{diagram}
When $\cV$ is extensive, the fact that $\bigsqcup_{i\in I}$ is an equivalence of categories implies that the counit of this adjunction is an isomorphism; i.e., there is a canonical isomorphism $\bigsqcup_{i\in I} Y_i\cong Y$ in $\cV$ over $\bigsqcup_{i\in I} X_i$.
\end{rem}

\begin{notation}\label{defn:oplus}
    Let $\alpha\colon I\to J$ be a map of sets, $\{X_i\}_{i\in I}$ and $\{Y_j\}_{j\in J}$ be families of objects in~$\cV$, and $\{g_i\colon X_i\to Y_{\alpha(i)}\}_{i\in I}$ be a family of maps in $\cV$. We define
\[ \textstyle \bigsqcup_{\alpha}\, g_i\colon \bigsqcup_{i\in I} X_i\to \bigsqcup_{j\in J} Y_j \]
to be the unique map in $\cV$ given by the universal property of the coproduct making the following square in $\cV$ commute, for all $i\in I$. 
\begin{tz}
    \node[](1) {$X_i$}; 
    \node[right of=1,xshift=1.2cm](2) {$\bigsqcup_{i\in I} X_i$}; 
    \node[below of=1](3) {$Y_{\alpha(i)}$}; 
    \node[below of=2](4) {$\bigsqcup_{j\in J} Y_j$}; 

    \draw[->] (1) to node[above,la]{$\iota_i$} (2); 
    \draw[->] (3) to node[below,la]{$\iota_{\alpha(i)}$} (4); 
    \draw[->] (1) to node[left,la]{$g_i$} (3); 
    \draw[->] (2) to node[right,la]{$\bigsqcup_{\alpha}\, g_i$} (4); 
\end{tz}
\end{notation} 

\begin{lemma} \label{extensivelemma}
    Let $\cV$ be an extensive category. Let $\alpha\colon I\to J$ be a map of sets, $\{X_i\}_{i\in I}$ and $\{Y_j\}_{j\in J}$ be families of objects in $\cV$, and $\{g_i\colon X_i\to Y_{\alpha(i)}\}_{i\in I}$ be a family of maps in~$\cV$. Consider a commutative square in $\cV$ as follows.
    \begin{tz}
\node[](1) {$\bigsqcup_{i\in I} W_i$}; 
\node[below of=1](3) {$\bigsqcup_{i\in I} X_i$};
\node[right of=1,xshift=1.6cm](2) {$\bigsqcup_{j\in J} Z_j$}; 
\node[below of=2](4) {$\bigsqcup_{j\in J} Y_j$};

\draw[->] (1) to node[above,la]{$f$} (2);
\draw[->] (2) to node[right,la]{$\bigsqcup_{j\in J} k_j$} (4);
\draw[->] (1) to node[left,la]{$\bigsqcup_{i\in I} h_i$} (3);
\draw[->] (3) to node[below,la]{$\bigsqcup_\alpha g_i$} (4);
\end{tz}
Then there is a unique family $\{f_i\colon W_i\to Z_{\alpha(i)}\}_{i\in I}$ of maps in $\cV$ such that the map $f$ is given by the induced map
\[ \textstyle \bigsqcup_{\alpha} f_i\colon \bigsqcup_{i\in I} W_i\to \bigsqcup_{j\in J} Z_j. \]
and the following square in $\cV$ commutes, for all $i\in I$.
\begin{tz}
\node[](1) {$W_i$}; 
\node[below of=1](3) {$X_i$};
\node[right of=1,xshift=.9cm](2) {$Z_{\alpha(i)}$}; 
\node[below of=2](4) {$Y_{\alpha(i)}$}; 

\draw[->] (1) to node[above,la]{$f_i$} (2);
\draw[->] (2) to node[right,la]{$k_{\alpha(i)}$} (4);
\draw[->] (1) to node[left,la]{$h_i$} (3);
\draw[->] (3) to node[below,la]{$g_i$} (4);
\end{tz}
\end{lemma}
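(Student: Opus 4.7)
My plan is to exploit the equivalence of categories guaranteed by extensivity of $\cV$, namely the fact that for every family $\{Y_j\}_{j\in J}$ the functor $\bigsqcup_{j\in J}\colon \prod_{j\in J}\slice{\cV}{Y_j}\to \slice{\cV}{\bigsqcup_{j\in J} Y_j}$ is an equivalence. The given square can be viewed as a morphism in $\slice{\cV}{\bigsqcup_{j\in J} Y_j}$ from the object $(\bigsqcup_\alpha g_i)\circ (\bigsqcup_{i\in I} h_i)=\bigsqcup_\alpha (g_i h_i)\colon \bigsqcup_{i\in I} W_i\to \bigsqcup_{j\in J} Y_j$ to the object $\bigsqcup_{j\in J} k_j\colon \bigsqcup_{j\in J} Z_j\to \bigsqcup_{j\in J} Y_j$. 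By the equivalence, this morphism decomposes uniquely as a $J$-indexed family of slice morphisms, and the whole statement amounts to identifying the relevant summands.

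The target object decomposes tautologically, since pulling $\bigsqcup_{j\in J} k_j$ back along the coproduct inclusion $\iota_j\colon Y_j\to \bigsqcup_{j\in J} Y_j$ gives back $k_j\colon Z_j\to Y_j$, as explained in \cref{rem:extensive}. For the source object, I would use the same remark together with the description of $\bigsqcup_\alpha (g_ih_i)$ from \cref{defn:oplus}: pulling $\bigsqcup_\alpha (g_ih_i)\colon \bigsqcup_{i\in I} W_i\to \bigsqcup_{j\in J} Y_j$ back along $\iota_j$ yields the object $\bigsqcup_{i\in\alpha^{-1}(j)}(g_ih_i)\colon \bigsqcup_{i\in\alpha^{-1}(j)} W_i\to Y_j$, because the pullback of $\iota_j$ along $\iota_{j'}$ is initial for $j\neq j'$ and is the identity for $j=j'$. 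Hence the $j$-th component of the decomposition of $f$ in the slice is a unique map $f^{(j)}\colon \bigsqcup_{i\in\alpha^{-1}(j)} W_i\to Z_j$ satisfying $k_j\circ f^{(j)}=\bigsqcup_{i\in\alpha^{-1}(j)}(g_i h_i)$.

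From each such $f^{(j)}$, the universal property of the coproduct $\bigsqcup_{i\in\alpha^{-1}(j)} W_i$ extracts a unique family $\{f_i\colon W_i\to Z_j\}_{i\in\alpha^{-1}(j)}$, and the relation above restricts to $k_j\circ f_i=g_i\circ h_i$ for each $i\in\alpha^{-1}(j)$, which is precisely the required commuting square in the statement after setting $j=\alpha(i)$. Assembling these maps across all $j\in J$ gives a single family $\{f_i\colon W_i\to Z_{\alpha(i)}\}_{i\in I}$; by construction $f^{(j)}$ is the coproduct of the $f_i$ with $\alpha(i)=j$, so the induced map $\bigsqcup_\alpha f_i\colon \bigsqcup_{i\in I} W_i\to \bigsqcup_{j\in J} Z_j$ is (via the equivalence) the assembly of the $f^{(j)}$ and therefore agrees with $f$.

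Uniqueness of the family then falls out of two successive uniqueness statements: first, uniqueness of the decomposition of $f$ into the $f^{(j)}$ is provided by the equivalence of categories, and second, uniqueness of the $f_i$ given $f^{(j)}$ is the universal property of the coproduct. The main technical point that needs care is the pullback computation identifying the $j$-th component of $\bigsqcup_\alpha (g_i h_i)$ as $\bigsqcup_{i\in\alpha^{-1}(j)}(g_i h_i)$; this relies crucially on extensivity (not just on the existence of coproducts and pullbacks), and is where the hypothesis on $\cV$ is essentially used.
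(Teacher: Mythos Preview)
Your proof is correct and follows essentially the same approach as the paper: both use the full faithfulness (equivalence) of $\bigsqcup_{j\in J}\colon \prod_{j\in J}\slice{\cV}{Y_j}\to \slice{\cV}{\bigsqcup_{j\in J} Y_j}$ to decompose $f$ into a $J$-indexed family $\overline{f}_j\colon \bigsqcup_{i\in\alpha^{-1}(j)} W_i\to Z_j$ (your $f^{(j)}$), and then invoke the universal property of the coproduct to extract the individual $f_i$. You are slightly more explicit than the paper about why the $j$-th summand of the source is $\bigsqcup_{i\in\alpha^{-1}(j)} W_i$ (via the pullback description in \cref{rem:extensive}), whereas the paper simply asserts the regrouping $\bigsqcup_{i\in I} W_i\cong \bigsqcup_{j\in J}\bigsqcup_{i\in\alpha^{-1}(j)} W_i$; but this is the same content.
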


\begin{proof}
    Since $I=\bigsqcup_{j\in J} \alpha^{-1}(j)$, we can rewrite $\bigsqcup_{i\in I} W_i\cong \bigsqcup_{j\in J}\bigsqcup_{i\in \alpha^{-1}(j)} W_i$. Then, by fully faithfulness of the functor $\bigsqcup_{j\in J}\colon \prod_{j\in J} \slice{\cV}{Y_j}\to \slice{\cV}{\bigsqcup_{j\in J} Y_j}$, there is a unique family
    \[ \textstyle\{ \overline{f}_j\colon \bigsqcup_{i\in \alpha^{-1}(j)} W_i\to Z_j\}_{j\in J} \]
    of maps in $\cV$ such that the map $f$ is given by the induced map 
    \[ \textstyle\bigsqcup_{j\in J}\overline{f}_j\colon \bigsqcup_{j\in J}\bigsqcup_{i\in \alpha^{-1}(j)} W_i\to \bigsqcup_{j\in J} Z_j \] 
    and the following square in $\cV$ commutes, for all $j\in J$. 
    \begin{tz}
\node[](1) {$\bigsqcup_{i\in \alpha^{-1}(j)} W_i$}; 
\node[below of=1](3) {$\bigsqcup_{i\in \alpha^{-1}(j)} X_i$};
\node[right of=1,xshift=1.65cm](2) {$Z_{\alpha(i)}$}; 
\node[below of=2](4) {$Y_{\alpha(i)}$}; 

\draw[->] (1) to node[above,la]{$\overline{f}_j$} (2);
\draw[->] (2) to node[right,la]{$k_{\alpha(i)}$} (4);
\draw[->] (1) to node[left,la]{$\bigsqcup_{i\in \alpha^{-1}(j)} h_i$} (3);
\draw[->] (3) to node[below,la]{$\bigsqcup_! g_i$} (4);
\end{tz}
Hence, by the universal property of the coproduct, we get the desired result. 
\end{proof}

\section{Internal Grothendieck construction}\label{sec:groth}

In this section, let $\cV$ denote a cartesian closed category which admits pullbacks and small coproducts. The goal is to construct a generalization of the category of elements that one can apply to $\cV$-functors $\cC^\op\to\cV$. As explained in the introduction, this construction cannot be done entirely within the setting of enriched categories, and instead requires a passage to internal categories. In \cref{subsec:defnGC} we give the desired internal Grothendieck construction for a $\cV$-category $\cC$, and in \cref{subsec:naturality} we show that it is natural in $\cC$. 

\subsection{Definition}  \label{subsec:defnGC}

Let us fix a $\cV$-category $\cC$. We first construct the desired~functor. 

\begin{defn}\label{defn:groth}
The \textbf{internal Grothendieck construction} is the functor 
\[ \textstyle\int_\cC\colon \VCat(\cC^{\op},\cV)\to \slice{\CatV}{\Int\cC} \]
defined as follows. It sends a $\cV$-functor $F\colon \cC^{\op}\to \cV$ to its \textbf{internal category of elements}~$\int_\cC F$ to~$\cV$ whose 
\begin{rome}
\item object of objects and object of morphisms are given by the coproducts in $\cV$ 
\[ \textstyle(\int_\cC F)_0=\bigsqcup_{A\in \cC} FB \quad \text{and} \quad (\int_\cC F)_1=\bigsqcup_{A,B\in \cC} \cC(A,B)\times FB, \]
\item source and target maps $s,t\colon (\int_\cC F)_1\to (\int_\cC F)_0$ are given by the unique maps in~$\cV$ making the following diagrams commute, for all objects $A,B\in \cC$, 
\end{rome}
\begin{tz}
\node[](1) {$\cC(A,B)\times FB$}; 
\node[below of=1](3) {$FA$};
\node[right of=1,xshift=2.8cm](2) {$\bigsqcup_{A,B\in \cC} \cC(A,B)\times FB$};
\node[below of=2](4) {$\bigsqcup_{A\in \cC} FA$}; 

\draw[->] (1) to node[above,la]{$\iota_{A,B}$} (2);
\draw[->] (2) to node[right,la]{$s$} (4);
\draw[->] (1) to node[left,la]{$\ev^F_{A,B}$} (3);
\draw[->] (3) to node[below,la]{$\iota_{A}$} (4);

\node[right of=2,xshift=2.5cm](1) {$\cC(A,B)\times FB$}; 
\node[below of=1](3) {$FB$};
\node[right of=1,xshift=2.8cm](2) {$\bigsqcup_{A,B\in \cC} \cC(A,B)\times FB$};
\node[below of=2](4) {$\bigsqcup_{B\in \cC} FB$}; 

\draw[->] (1) to node[above,la]{$\iota_{A,B}$} (2);
\draw[->] (2) to node[right,la]{$t$} (4);
\draw[->] (1) to node[left,la]{$\pi_1$} (3);
\draw[->] (3) to node[below,la]{$\iota_B$} (4);
\end{tz}
 \hspace{1.12cm} where $\ev^F_{A,B}$ is the map introduced in \cref{notation:evmap},
 \begin{enumerate}[label=(\roman*), start=3]
\item composition map $c\colon (\int_\cC F)_1\times_{(\int_\cC F)_0} (\int_\cC F)_1\to (\int_\cC F)_1$ is given by the unique map in $\cV$ making the following diagram commute, for all objects $A,B,C\in \cC$, 
\begin{tz}
\node[](1) {$\cC(A,B)\times \cC(B,C)\times FC$};
\node[below of=1](3) {$\cC(A,C)\times FC$};
\node[right of=1,xshift=5.2cm](2) {$\bigsqcup_{A,B,C\in \cC} \cC(A,B)\times \cC(B,C)\times FC$};
\node[below of=2](4) {$\bigsqcup_{A,B\in \cC} \cC(A,B)\times FB$}; 

\draw[->] (1) to node[above,la]{$\iota_{A,B,C}$} (2);
\draw[->] (2) to node[right,la]{$c$} (4);
\draw[->] (1) to node[left,la]{$c_{A,B,C}\times\id_{FC}$} (3);
\draw[->] (3) to node[below,la]{$\iota_{A,C}$} (4);
\end{tz}
\item identity map $i\colon (\int_\cC F)_0\to (\int_\cC F)_1$ is given by the unique map in $\cV$ making the following diagram commute, for every object $A\in \cC$.
\begin{tz}
\node[](1) {$FA$}; 
\node[below of=1](3) {$\cC(A,A)\times FA$};
\node[right of=1,xshift=3.2cm](2) {$\bigsqcup_{A\in \cC} FA$};
\node[below of=2](4) {$\bigsqcup_{A,B\in \cC} \cC(A,B)\times FB$}; 

\draw[->] (1) to node[above,la]{$\iota_{A}$} (2);
\draw[->] (2) to node[right,la]{$i$} (4);
\draw[->] (1) to node[left,la]{$i_{A}\times \id_{FA}$} (3);
\draw[->] (3) to node[below,la]{$\iota_{A,A}$} (4);
\end{tz}
\end{enumerate}
Compatibility with target is straightforward, compatibility with source follows from the $\cV$-functoriality of $F$ as described in \cref{lem:VfunctortoV}, and unitality and associativity of composition follows from the respective properties in $\cC$. 

This comes with a projection internal functor $\pi_F\colon \int_\cC F\to \Int\cC$ such that $(\pi_F)_0$ and $(\pi_F)_1$ are given by the unique maps in $\cV$ making the following diagrams commute, for all objects~$A,B\in \cC$.
\begin{tz}
\node[](1) {$FA$}; 
\node[below of=1](3) {$\un$};
\node[right of=1,xshift=1.3cm](2) {$\bigsqcup_{A\in \cC} FA$};
\node[below of=2](4) {$\bigsqcup_{A\in \cC} \un$}; 

\draw[->] (1) to node[above,la]{$\iota_{A}$} (2);
\draw[->] (2) to node[right,la]{$(\pi_F)_0$} (4);
\draw[->] (1) to node[left,la]{$!$} (3);
\draw[->] (3) to node[below,la]{$\iota_{A}$} (4);

\node[right of=2,xshift=2.2cm](1) {$\cC(A,B)\times FB$}; 
\node[below of=1](3) {$\cC(A,B)$};
\node[right of=1,xshift=3.2cm](2) {$\bigsqcup_{A,B\in \cC} \cC(A,B)\times FB$};
\node[below of=2](4) {$\bigsqcup_{A,B\in \cC} \cC(A,B)$}; 

\draw[->] (1) to node[above,la]{$\iota_{A,B}$} (2);
\draw[->] (2) to node[right,la]{$(\pi_F)_1$} (4);
\draw[->] (1) to node[left,la]{$\pi_0$} (3);
\draw[->] (3) to node[below,la]{$\iota_{A,B}$} (4);
\end{tz}

On morphisms, the functor $\int_\cC$ sends a $\cV$-natural transformation $\alpha\colon F\Rightarrow G$ to the internal functor $\int_\cC\alpha\colon \int_\cC F\to \int_\cC G$ such that $(\int_\cC\alpha)_0$ and $(\int_\cC\alpha)_1$ are given by the unique maps in~$\cV$ making the following diagrams commute, for all objects $A,B\in \cC$, 
\begin{tz}
\node[](1) {$FA$}; 
\node[below of=1](3) {$GA$};
\node[right of=1,xshift=1.3cm](2) {$\bigsqcup_{A\in \cC} FA$};
\node[below of=2](4) {$\bigsqcup_{A\in \cC} GA$}; 

\draw[->] (1) to node[above,la]{$\iota_{A}$} (2);
\draw[->] (2) to node[right,la]{$(\int_\cC\alpha)_0$} (4);
\draw[->] (1) to node[left,la]{$\ev^\alpha_A$} (3);
\draw[->] (3) to node[below,la]{$\iota_{A}$} (4);

\node[right of=2,xshift=2.2cm](1) {$\cC(A,B)\times FB$}; 
\node[below of=1](3) {$\cC(A,B)\times GB$};
\node[right of=1,xshift=3.2cm](2) {$\bigsqcup_{A,B\in \cC} \cC(A,B)\times FB$};
\node[below of=2](4) {$\bigsqcup_{A,B\in \cC} \cC(A,B)\times GB$}; 

\draw[->] (1) to node[above,la]{$\iota_{A,B}$} (2);
\draw[->] (2) to node[right,la]{$(\int_\cC\alpha)_1$} (4);
\draw[->] (1) to node[left,la]{$\id_{\cC(A,B)}\times \ev^\alpha_B$} (3);
\draw[->] (3) to node[below,la]{$\iota_{A,B}$} (4);
\end{tz}
where $\ev^\alpha_A$ is the map introduced in \cref{notn:evalpha}. Compatibility with target, composition, and identity is straightforward, and compatibility with source follows from the $\cV$-naturality of $\alpha$ as described in \cref{lem:VnattoV}.

Note that $\int_\cC\alpha$ is such that the following triangle in $\CatV$ commutes.
\begin{tz}
\node[](1) {$\int_\cC F$}; 
\node[below of=1,xshift=2cm,yshift=.2cm](3) {$\Int\cC$};
\node[above of=3,xshift=2cm,yshift=-.2cm](2) {$\int_\cC G$}; 

\draw[->] (1) to node[above,la]{$\int_\cC \alpha$} (2);
\draw[->] (2) to node[right,la,yshift=-4pt]{$\pi_G$} (3);
\draw[->] (1) to node[left,la,yshift=-4pt]{$\pi_F$} (3);
\end{tz}
\end{defn}

\begin{ex} \label{ex:catofelements}
    Going back to our main examples, we have the following interpretation of the internal category of elements. 
    \begin{enumerate}
        \item When $\cV=\Set$, given a functor $F\colon \cC^{\op}\to \Set$, the category $\int_\cC F$ is the usual category of elements considered e.g.~in \cite[Definition 2.4.2]{Riehlcontext}. It is such that
        \begin{rome}
            \item its objects are pairs $(A,x)$ of an object $A\in \cC$ and an element $x\in FA$,
            \item its morphisms $(A,x)\to(B,y)$ are morphisms $f\colon A\to B$ in $\cC$ such that $Ff(y)=x$.
        \end{rome}
        \item When $\cV=\Cat$, given a $2$-functor $F\colon \cC^{\op}\to \Cat$, the double category $\int_\cC F$ coincides with the double category of elements considered in \cite[\S 1.2]{GraParPersistentII}. It is such that 
        \begin{rome}
            \item its objects are pairs $(A,x)$ of objects $A\in \cC$ and $x\in FA$, and its vertical morphisms are pairs $(A,u)$ of an object $A\in \cC$ and a morphism $u\colon x\to x'$ in $FA$, 
            \item its horizontal morphisms $(A,x)\to(B,y)$ are morphisms $f\colon A\to B$ in $\cC$ such that $Ff(y)=x$, and its squares from $(A,u)$ to $(B,v)$ are $2$-morphisms $\alpha\colon f\Rightarrow g\colon A\to B$ in $\cC$ such that $Fg(v)\circ (F\alpha)_y=u$, where $y$ denotes the source of $v$.
        \end{rome}
        \item[($n$)] When $\cV=(n-1)\Cat$ for $n\geq 3$, given an $n$-functor $F\colon \cC^{\op}\to (n-1)\Cat$, the double $(n-1)$-category $\int_\cC F$ is such that
        \begin{rome}
            \item its $(n-1)$-category of objects has as objects pairs $(A,x)$ of objects $A\in \cC$ and $x\in FA$, and as $k$-morphisms pairs $(A,\varphi)$ of an object $A\in \cC$ and a $k$-morphism $\varphi$ in $FA$, for all $1\leq k\leq n-1$,
            \item its $(n-1)$-category of morphisms has as objects from $(A,x)$ to $(B,y)$ morphisms $f\colon A\to B$ in $\cC$ such that $Ff(y)=x$, and as $k$-morphisms from $(A,\varphi)$ to $(B,\psi)$ pairs of a $k$-morphism $\mu$ in $\cC(A,B)$---i.e., a $(k+1)$-morphism in $\cC$ whose source and target objects are $A$ and $B$---such that $Fg(\varphi)\circ_0 (F\mu)_y=\psi$, where $y$ denotes the source object of the $k$-morphism $\psi$ and $\circ_0$ denotes the composition along objects of $k$-morphisms in $\cC$, for all $1\leq k\leq n-1$.
        \end{rome}
    \end{enumerate}
\end{ex}

\subsection{Naturality} \label{subsec:naturality}

We further give the following change of base construction.

\begin{defn} \label{constr:changeofbase}
    Given $\cV$-functors $F\colon \cC\to \cD$ and $G\colon \cD^{\op}\to \cV$, using \cref{defn:oplus}, we define an internal functor $\int_F G\colon \int_\cC GF\to \int_\cD G$ such that $(\int_F G)_0$ is given by the map in $\cV$
    \[ \textstyle\bigsqcup_{\Ob F} \id_{GFA}\colon \bigsqcup_{A\in \cC} GFA\to \bigsqcup_{D\in \cD} GD \]
    and $(\int_F G)_1$ is given by the map in $\cV$
    \[ \textstyle\bigsqcup_{\Ob F\times \Ob F} F_{A,B}\times \id_{GFB}\colon  \bigsqcup_{A,B\in \cC} \cC(A,B)\times GFB\to \bigsqcup_{C,D\in \cD} \cD(C,D)\times GD. \]
    Note that $\int_F G$ is such that the following square in $\CatV$ commutes. 
    \begin{tz}
\node[](1) {$\int_\cC GF$}; 
\node[below of=1](3) {$\Int\cC$};
\node[right of=1,xshift=1.2cm](2) {$\int_\cD G$};
\node[below of=2](4) {$\Int\cD$}; 

\draw[->] (1) to node[above,la]{$\int_F G$} (2);
\draw[->] (2) to node[right,la]{$\pi_G$} (4);
\draw[->] (1) to node[left,la]{$\pi_{GF}$} (3);
\draw[->] (3) to node[below,la]{$\Int F$} (4);
\end{tz}
\end{defn}

In fact, the above square is a pullback, which yields the following naturality of the internal Grothendieck construction. 

\begin{prop}
For every $\cV$-functor $F\colon\cC\to \cD$, the following diagram in $\Cat$ commutes,
    \begin{tz}
\node[](1) {$\VCat(\cD^{\op}, \cV)$}; 
\node[below of=1](3) {$\VCat(\cC^{\op}, \cV)$};
\node[right of=1,xshift=2.5cm](2) {$\slice{\CatV}{\Int\cD}$};
\node[below of=2](4) {$\slice{\CatV}{\Int\cC}$}; 

\draw[->] (1) to node[above,la]{$\int_\cD$} (2);
\draw[->] (2) to node[right,la]{$\Int F^*$} (4);
\draw[->] (1) to node[left,la]{$F^*$} (3);
\draw[->] (3) to node[below,la]{$\int_\cC$} (4);
\end{tz}
where the left-hand functor is given by pre-composing with $F$, and the right-hand functor by taking pullbacks along $\Int F$.
\end{prop}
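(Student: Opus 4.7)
The plan is to establish the proposition by showing that the commutative square displayed at the end of \cref{constr:changeofbase} is in fact a pullback square in $\CatV$. Once this is granted, we may choose the pullback along $\Int F$ that defines the change-of-base functor $\Int F^*$ to be given by $\int_F G\colon \int_\cC GF \to \int_\cD G$, so that $\Int F^*(\int_\cD G)=\int_\cC GF=\int_\cC F^*G$ on the nose as objects of $\slice{\CatV}{\Int\cC}$. Commutativity on morphisms then follows from the universal property of the pullback: for a $\cV$-natural transformation $\beta\colon G\Rightarrow G'$, both $\Int F^*(\int_\cD\beta)$ and $\int_\cC F^*\beta$ are internal functors $\int_\cC GF\to\int_\cC G'F$ over $\Int\cC$, and inspection of \cref{defn:groth,constr:changeofbase} shows they agree after post-composition with $\int_F G'\colon\int_\cC G'F\to\int_\cD G'$, forcing them to coincide.

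To verify the pullback claim, I would use that limits in $\CatV$ are computed levelwise in $\cV$ on the objects of objects and the objects of morphisms, so it suffices to check that the two corresponding squares in $\cV$ are pullbacks. The strategy in each is the same: the bottom horizontal map $\bigsqcup_{A\in\cC}\un\to\bigsqcup_{D\in\cD}\un$, respectively $\bigsqcup_{A,B\in\cC}\cC(A,B)\to\bigsqcup_{C,D\in\cD}\cD(C,D)$, is a map between coproducts indexed by $\Ob F$, respectively $\Ob F\times \Ob F$ with components $F_{A,B}$. By extensivity (\cref{rem:extensive}), pulling back the right-hand vertical along such a coproduct-map distributes over the coproduct decomposition and isolates the fiber over each index, reducing the computation to the pullback at each summand. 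For the level-$0$ square, the fiber over $D\in\cD$ on the right is $GD$, and distributivity gives $\bigsqcup_{A\in F^{-1}(D)}\un\times GD\cong\bigsqcup_{A\in F^{-1}(D)}GD$; reassembling over $\Ob\cD$ yields $\bigsqcup_{A\in\cC}GFA$. For the level-$1$ square, the right-hand vertical restricted to the $(C,D)$-summand is the projection $\cD(C,D)\times GD\to\cD(C,D)$, and its pullback along $F_{A,B}\colon\cC(A,B)\to\cD(FA,FB)$ is simply $\cC(A,B)\times GFB$ since projections are stable under pullback; reassembling over $\Ob\cC\times\Ob\cC$ yields $\bigsqcup_{A,B\in\cC}\cC(A,B)\times GFB$. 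These recover $(\int_\cC F^*G)_0$ and $(\int_\cC F^*G)_1$ respectively, and the induced maps to the pullback corner are precisely those defining $\int_F G$.

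The main obstacle is the careful bookkeeping in the two extensivity arguments, particularly tracking how the coproduct decomposition over $\Ob\cD$ (or $\Ob\cD\times\Ob\cD$) relates to the one over $\Ob\cC$ (or $\Ob\cC\times\Ob\cC$) across the map $\Int F$. Conceptually, however, no step is delicate: everything reduces to distributivity of products over coproducts in the extensive category $\cV$, encoded in \cref{rem:extensive,extensivelemma}.
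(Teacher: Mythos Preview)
Your approach matches the paper's exactly: the paper's entire argument is the single sentence preceding the proposition, ``In fact, the above square is a pullback, which yields the following naturality of the internal Grothendieck construction,'' and you have supplied the details of precisely that claim. Your levelwise verification via extensivity is the natural way to unpack it, and your treatment of morphisms via the universal property is correct. One minor remark: the standing hypotheses of \cref{sec:groth} do not include extensivity, whereas your argument invokes \cref{rem:extensive,extensivelemma}; the paper is silent on this point, but since every subsequent use of the construction occurs under the extensivity assumption, this is not a real issue.
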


\section{Internal discrete fibrations}\label{sec:discfib}

In this section, let $\cV$ be a category with pullbacks. The internal Grothendieck construction is closely related to internal discrete fibrations, whose definition we recall in \cref{subsec:defndiscfib}. A strong understanding of the structure enforced by an internal discrete fibration on its domain, and of the behavior of internal functors between internal discrete fibrations, will be crucial for our goals; we study this in \cref{subsec:discfiboverC,subsec:mapofdiscfib} in the case where $\cV$ is extensive. We also show in \cref{subsec:props} that internal discrete fibrations are stable under pullbacks, and how it is possible to characterize an isomorphism between internal discrete fibrations simply in terms of its level $0$ or of its values on fibers. Finally, \cref{subsec:examples} contains three examples of internal discrete fibrations, given by slice internal categories, comma internal categories, and our internal category of elements.

\subsection{Definition} \label{subsec:defndiscfib}

 We recall the notion of internal discrete fibrations in $\CatV$.

\begin{defn} \label{defn:discfib}
    An internal functor $P\colon\bA\to\bB$ to $\cV$ is an \textbf{internal discrete fibration} if the following square in $\cV$ is a pullback.
\begin{tz}
\node[](1) {$\bA_1$}; 
\node[below of=1](3) {$\bB_1$};
\node[right of=1,xshift=.7cm](2) {$\bA_0$}; 
\node[below of=2](4) {$\bB_0$};
\pullback{1};

\draw[->] (1) to node[above,la]{$t$} (2);
\draw[->] (2) to node[right,la]{$P_0$} (4);
\draw[->] (1) to node[left,la]{$P_1$} (3);
\draw[->] (3) to node[below,la]{$t$} (4);
\end{tz}
\end{defn}

\begin{ex} \label{ex:discfib} 
    In the examples of interest, we have the following. 
    \begin{enumerate}
        \item When $\cV=\Set$, internal discrete fibrations coincide with the usual notion of discrete fibrations for categories; see e.g. \cite[Definition 2.1.3]{LoregianRiehl}.
        \item When $\cV=\Cat$, internal discrete fibrations coincide with the \emph{double discrete fibrations} considered in \cite[Definition 2.12]{Lambert}.
        \item[($n$)] When $\cV=(n-1)\Cat$ for $n\geq 3$, we refer to internal discrete fibrations as \emph{double $(n-1)$-discrete fibrations}.
    \end{enumerate}
\end{ex}

\begin{lemma} \label{lem:pullbackcomp}
    Let $P\colon \bA\to \bB$ be an internal discrete fibration in $\CatV$. Then the following square in $\cV$ is a pullback, 
    \begin{tz}
\node[](1) {$\bA_1\times_{\bA_0} \bA_1$}; 
\node[below of=1](3) {$\bB_1\times_{\bB_0}\bB_1$};
\node[right of=1,xshift=1.3cm](2) {$\bA_0$}; 
\node[below of=2](4) {$\bB_0$};
\pullback{1};

\draw[->] (1) to node[above,la]{$t\circ \pi_1$} (2);
\draw[->] (2) to node[right,la]{$P_0$} (4);
\draw[->] (1) to node[left,la]{$P_1\times_{P_0} P_1$} (3);
\draw[->] (3) to node[below,la]{$t\circ \pi_1$} (4);
\end{tz}
where the maps $\pi_1\colon \bA_1\times_{\bA_0} \bA_1\to \bA_1$ and $\pi_1\colon \bB_1\times_{\bB_0} \bB_1\to \bB_1$ denote the canonical projections onto the second factor. 
\end{lemma}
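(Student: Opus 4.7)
The plan is to prove this via the pullback pasting lemma applied to a horizontal decomposition of the given square. Namely, the map $t\circ\pi_1\colon \bA_1\times_{\bA_0}\bA_1\to \bA_0$ factors through $\bA_1$, and similarly for $\bB$, so the square decomposes as a left square (with horizontal maps $\pi_1$ and vertical maps $P_1\times_{P_0} P_1$ and $P_1$) followed by a right square (with horizontal maps $t$ and vertical maps $P_1$ and $P_0$). The right square is a pullback by hypothesis, since $P$ is an internal discrete fibration (\cref{defn:discfib}). It therefore suffices to show that the left square is a pullback as well, and the result will follow by the pasting lemma.

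To verify the left square, I would unpack the universal property directly. Given a test object $X$ equipped with maps $f\colon X\to \bA_1$ and $g\colon X\to \bB_1\times_{\bB_0} \bB_1$ satisfying $P_1 f=\pi_1 g$, one needs to construct a unique $h\colon X\to \bA_1\times_{\bA_0}\bA_1$ such that $\pi_1 h=f$ and $(P_1\times_{P_0} P_1)h=g$. The second component of $h$ is forced to be $f$, while the first component $h_0\coloneqq \pi_0 h$ must satisfy $P_1 h_0=\pi_0 g$ and $t h_0=sf$, the latter being the composability condition defining $\bA_1\times_{\bA_0}\bA_1$. Using that $P$ is an internal functor (so it commutes with $s$) and that $g$ lands in $\bB_1\times_{\bB_0}\bB_1$, one computes $t\circ\pi_0 g=s\circ\pi_1 g=s\circ P_1 f=P_0\circ sf$, which is precisely the compatibility required so that $(sf,\pi_0 g)$ forms lifting data for the discrete fibration $P$. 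Thus $h_0$ exists and is unique, yielding the required $h$.

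I do not expect any serious obstacle here: the result is a formal consequence of the discrete fibration hypothesis together with the pullback pasting lemma. The only minor subtlety is to keep careful track of which of the two projections $\pi_0,\pi_1\colon \bA_1\times_{\bA_0}\bA_1\rightrightarrows \bA_1$ pulls back $s$ versus $t$, so that the lifting problem we encounter genuinely matches the one supplied by \cref{defn:discfib} (which is formulated using the target map $t$).
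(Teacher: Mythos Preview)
Your argument is correct, and the overall decomposition --- factoring $t\circ\pi_1$ through $\bA_1$ and applying the pasting lemma, with the right-hand square being the discrete fibration pullback --- is exactly what the paper does.

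The one point of divergence is how the left-hand square (with horizontal maps $\pi_1$) is shown to be a pullback. You verify its universal property directly, reducing to a lifting problem for the discrete fibration square. The paper instead argues purely diagrammatically: it first pastes the defining pullback of $\bA_1\times_{\bA_0}\bA_1$ on top of the discrete fibration square for $P$ to obtain an auxiliary pullback rectangle, then recognises that rectangle as the vertical composite of the sought left-hand square with the defining pullback of $\bB_1\times_{\bB_0}\bB_1$, and concludes by pullback cancellation. Your element-style verification is shorter and more transparent; the paper's version has the virtue of never leaving the language of pullback pasting and cancellation. Substantively they are the same argument.
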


\begin{proof}
    By definition of $\bA_1\times_{\bA_0}\bA_1$ and $\bB_1\times_{\bB_0}\bB_1$ and the fact that $P$ is an internal discrete fibration, we have the following pullbacks in $\cV$.
    \begin{tz}
\node[](1) {$\bA_1\times_{\bA_0} \bA_1$}; 
\node[below of=1](3) {$\bA_1$};
\node[below of=3](5) {$\bB_1$};
\node[right of=1,xshift=1.3cm](2) {$\bA_1$}; 
\node[below of=2](4) {$\bA_0$};
\node[below of=4](6) {$\bB_0$};
\pullback{1};
\pullback{3};

\draw[->] (1) to node[above,la]{$\pi_1$} (2);
\draw[->] (2) to node[right,la]{$s$} (4);
\draw[->] (4) to node[right,la]{$P_0$} (6);
\draw[->] (1) to node[left,la]{$\pi_0$} (3);
\draw[->] (3) to node[left,la]{$P_1$} (5);
\draw[->] (3) to node[below,la]{$t$} (4);
\draw[->] (5) to node[below,la]{$t$} (6);

\node[right of=2,xshift=2cm](1) {$\bA_1\times_{\bA_0} \bA_1$}; 
\node[below of=1](3) {$\bB_1\times_{\bB_0}\bB_1$};
\node[below of=3](5) {$\bB_1$};
\node[right of=1,xshift=1.3cm](2) {$\bA_1$}; 
\node[right of=2,xshift=1cm](2') {$\bA_0$}; 
\node[below of=2'](4') {$\bB_0$};
\node[below of=2](4) {$\bB_1$};
\node[below of=4](6) {$\bB_0$};
\pullback{3};
\pullback{2};

\draw[->] (1) to node[above,la]{$\pi_1$} (2);
\draw[->] (2) to node[above,la]{$t$} (2');
\draw[->] (2) to node[left,la]{$P_1$} (4);
\draw[->] (2') to node[right,la]{$P_0$} (4');
\draw[->] (4) to node[right,la]{$s$} (6);
\draw[->] (1) to node[left,la]{$P_1\times_{P_0} P_1$} (3);
\draw[->] (3) to node[left,la]{$\pi_0$} (5);
\draw[->] (3) to node[below,la]{$\pi_1$} (4);
\draw[->] (4) to node[below,la]{$t$} (4');
\draw[->] (5) to node[below,la]{$t$} (6);
\end{tz}
Note that the maps in the left-hand outer rectangle agree with those in the vertical rectangle in the right-hand diagram above. Hence, since the rectangle on the left is a pullback, so is the one on the right. By cancellation of pullbacks, this implies that the top left square in the right-hand diagram is a pullback as well. Hence, the horizontal rectangle is also a pullback as desired. 
\end{proof}

\begin{notation}
    We denote by $\slice{\Dfib}{\bB}$ the full subcategory of $\slice{\CatV}{\bB}$ spanned by the internal discrete fibrations over $\bB$.
\end{notation}

\subsection{Structure of an internal discrete fibration} \label{subsec:discfiboverC}

In this subsection, suppose that $\cV$ is extensive. Of particular interest to us are internal discrete fibrations over $\Int\cC$, where $\cC$ is a $\cV$-category. In what follows, we consider an internal discrete fibration $P\colon\bA\to\Int\cC$ in $\CatV$, and investigate what this implies for the structure of $\bA$. We begin by studying the fibers of $P$. 

\begin{defn} 
    The \textbf{fiber of $P$} at an object $A\in \cC$ is defined as the following pullback in~$\Cat(\cV)$.
    \begin{tz}
\node[](1) {$P^{-1}A$}; 
\node[below of=1](3) {$\un$};
\node[right of=1,xshift=.9cm](2) {$\bA$}; 
\node[below of=2](4) {$\Int\cC$};
\pullback{1};

\draw[->] (1) to node[above,la]{} (2);
\draw[->] (2) to node[right,la]{$P$} (4);
\draw[->] (1) to node[left,la]{} (3);
\draw[->] (3) to node[below,la]{$A$} (4);
\end{tz}
\end{defn}

\begin{rem} \label{pullbacksinCatV}
    Recall that pullbacks in $\CatV$ are computed levelwise in $\cV$, by taking the pullbacks of the objects of objects, and of the objects of morphisms.
\end{rem}

    In analogy with the classical case for $\cV=\Set$, we expect our internal Grothendieck construction to exhibit an equivalence between internal discrete fibrations over $\Int\cC$ and $\cV$-functors $\cC^\op\to\cV$, where the inverse correspondence is given by taking fibers (this is the content of \cref{section:equiv}). One may think that the fiber of an internal discrete fibration contains too much information for this to hold, since a priori it has relevant data both at levels 0 and 1, but the following result shows this is not the case.

\begin{prop}
    The fiber $P^{-1} A$ at an object $A\in \cC$ is a constant internal category. 
\end{prop}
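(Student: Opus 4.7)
The plan is to use the levelwise nature of pullbacks in $\CatV$ (as noted in the preceding remark) to reduce everything to pullback manipulations in $\cV$, and then to extract constancy from a resulting isomorphism between the two fibers. First, I would describe the internal functor $A\colon \un \to \Int\cC$ concretely: at level $0$ it is the coproduct inclusion $\iota_A\colon \un \to (\Int\cC)_0$, and at level $1$ it is $\iota_{A,A}\circ i_A\colon \un \to (\Int\cC)_1$. Postcomposing the level-$1$ map with the target $t\colon (\Int\cC)_1 \to (\Int\cC)_0$ recovers $\iota_A$, because $t\circ \iota_{A,A}$ factors as $\iota_A\circ {!}$ for ${!}\colon \cC(A,A)\to \un$, and ${!}\circ i_A = \id_\un$.

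The two fibers are then the levelwise pullbacks $(P^{-1}A)_0 = \un\times_{(\Int\cC)_0} \bA_0$ and $(P^{-1}A)_1 = \un\times_{(\Int\cC)_1} \bA_1$. Horizontally pasting the defining square of $(P^{-1}A)_1$ with the discrete-fibration pullback square for $P$ along $t$ yields an outer rectangle whose bottom edge is $t\circ \iota_{A,A}\circ i_A = \iota_A$, and whose top-left corner is $(P^{-1}A)_1$. By the pasting lemma this outer rectangle is itself a pullback, so it must coincide with the defining pullback of $(P^{-1}A)_0$. Consequently, the fiber target map $t\colon (P^{-1}A)_1 \to (P^{-1}A)_0$ is an isomorphism.

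To finish, I would extract constancy from standard internal-category identities. The fiber identity map $i\colon (P^{-1}A)_0 \to (P^{-1}A)_1$ is a common section of $s$ and $t$; since $t$ is invertible, this forces $i = t^{-1}$, and in turn $s = t$ is invertible too. Identifying $(P^{-1}A)_1$ with $(P^{-1}A)_0$ along $t$ transports $s$, $t$, and $i$ to identities, and unitality then forces the composition $c$ to be the identity as well. Hence $P^{-1}A$ lies in the essential image of $\cst$. I do not foresee a serious obstacle; the most delicate point is verifying that the pasted outer rectangle truly has $A$ as its bottom edge, which relies only on the explicit description of the identity internal functor on $\Int\cC$ in terms of coproduct inclusions.
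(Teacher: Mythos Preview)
Your argument is correct. The core pullback-pasting step---using the discrete-fibration square to identify the level-$1$ fiber with the level-$0$ fiber via the target map---is exactly what the paper does. The difference lies in how constancy is then extracted. The paper invokes the auxiliary \cref{lem:pullbackcomp} to see that $P_1\times_{P_0}P_1$ is also a pullback of $P_0$, so that all three relevant levels of the fiber are simultaneously computed by the same pullback in $\cV$; constancy is then immediate. You instead stop after showing that the fiber target map is an isomorphism and deduce the rest purely from the internal-category axioms: $i$ is a section of the invertible $t$, hence $i=t^{-1}$ and $s=t$, and unitality forces $c$ to collapse. Your route is slightly more elementary in that it does not require the composable-pairs lemma, at the cost of a short verification with the axioms; the paper's route is terser but depends on that extra lemma. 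Both are clean, and your explicit identification of the level-$1$ component of $A\colon\un\to\Int\cC$ (as $\iota_{A,A}\circ i_A$, with $t\circ\iota_{A,A}\circ i_A=\iota_A$) fills in a detail the paper leaves implicit.
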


\begin{proof}
    This is a direct consequence of the fact that the maps in $\cV$ 
    \[ P_1\colon \bA_1\to \Int\cC_1 \quad \text{and} \quad P_1\times_{P_0} P_1\colon \bA_1\times_{\bA_0}\bA_1 \to \Int\cC_1\times_{\Int\cC_0} \Int\cC_1 \]
    are pullbacks of the map $P_0\colon \bA_0\to \Int\cC_0$ by definition of an internal discrete fibration and by \cref{lem:pullbackcomp}. 
\end{proof}

    For an object $A\in \cC$, since $P^{-1}A$ is a constant internal category, we also write $P^{-1} A$ for the following pullback in $\cV$, which is precisely its object of objects.
    \begin{diagram} \label{defn:fiber}
\node[](1) {$P^{-1}A$}; 
\node[below of=1](3) {$\un$};
\node[right of=1,xshift=.9cm](2) {$\bA_0$}; 
\node[below of=2](4) {$\bigsqcup_{A\in \cC} \un$};
\pullback{1};

\draw[->] (1) to node[above,la]{} (2);
\draw[->] (2) to node[right,la]{$P_0$} (4);
\draw[->] (1) to node[left,la]{} (3);
\draw[->] (3) to node[below,la]{$\iota_A$} (4);
    \end{diagram}

Then, by \cref{rem:extensive}, the facts that $\cV$ is extensive and that the bottom map in this pullback is a coproduct inclusion, for every object $A\in \cC$, imply that there is an isomorphism over $\bigsqcup_{A\in \cC} \un$ \[ \textstyle \varphi_0\colon \bigsqcup_{A\in\cC} P^{-1}A\cong \bA_0. \] 

We can also express $\bA_1$ in terms of the fibers of $P$, as we now explain. Since $P$ is an internal discrete fibration, we have the following commutative diagrams in $\cV$,
\begin{tz}
    \node[](1') {$\cC(A,B)\times P^{-1}B$};
    \node[below of=1'](3') {$\cC(A,B)$};
\node[right of=1',xshift=1.65cm](1) {$\bA_1$}; 
\node[below of=1](3) {$\bigsqcup_{A,B\in\cC}\cC(A,B)$};
\node[right of=1,xshift=1.55cm](2) {$\bA_0$}; 
\node[below of=2](4) {$\bigsqcup_{B\in\cC} \un$};
\pullback{1};

\draw[->] (1) to node[above,la]{$t$} (2);
\draw[->] (2) to node[right,la]{$P_0$} (4);
\draw[->] (1) to node[left,la]{$P_1$} (3);
\draw[->] (3) to node[below,la]{$t$} (4);
\draw[->,dashed] (1') to (1);
\draw[->] (1') to node[left,la]{$\pi_0$} (3');
\draw[->] (3') to node[below,la]{$\iota_{A,B}$} (3);

\node[right of=2,xshift=1cm](1') {$\cC(A,B)\times P^{-1}B$};
\node at ($(4)!0.5!(1')$) {$=$};
    \node[below of=1'](3') {$\cC(A,B)$};
\node[right of=1',xshift=1.55cm](1) {$P^{-1}B$}; 
\node[below of=1](3) {$\un$};
\node[right of=1,xshift=.45cm](2) {$\bA_0$}; 
\node[below of=2](4) {$\bigsqcup_{B\in\cC} \un$};
\pullback{1};
\pullback{1'};

\draw[->] (1) to (2);
\draw[->] (2) to node[right,la]{$P_0$} (4);
\draw[->] (1) to (3);
\draw[->] (3) to node[below,la]{$\iota_B$} (4);
\draw[->] (1') to node[above,la]{$\pi_1$} (1);
\draw[->] (1') to node[left,la]{$\pi_0$} (3');
\draw[->] (3') to (3);
\end{tz}
where three of the squares are pullbacks. By  cancellation of pullbacks, we get that the left-most square is also a pullback. As before, by \cref{rem:extensive} we get an isomorphism over~$\bigsqcup_{A,B\in \cC} \cC(A,B)$
\[  \textstyle \varphi_1\colon \bigsqcup_{A,B\in\cC} \cC(A,B)\times P^{-1}B\cong \bA_1. \]

A similar argument, using the pullback square from \cref{lem:pullbackcomp}, yields an isomorphism over~$\bigsqcup_{A,B,C\in \cC} \cC(A,B)\times \cC(B,C)$
\[  \textstyle \varphi_2\colon \bigsqcup_{A,B,C\in\cC} \cC(A,B)\times \cC(B,C)\times P^{-1}C\cong \bA_1\times_{\bA_0}\bA_1. \]

In particular, using the internal categorical structure of $\bA$, we get a unique internal category $\FibA$ to $\cV$
\begin{tz}
    \node[](1) {$\bigsqcup_{A\in \cC} P^{-1}A$}; 
    \node[right of=1,xshift=3.2cm](2) {$\bigsqcup_{A,B\in \cC} \cC(A,B)\times P^{-1}B$}; 
    \node[right of=2,xshift=5.3cm](3) {$\bigsqcup_{A,B,C\in \cC} \cC(A,B)\times \cC(B,C)\times P^{-1}C$};

    \draw[->] (1) to node[over,la]{$i$} (2);
    \draw[->] ($(2.west)+(0,5pt)$) to node[above,la]{$s$} ($(1.east)+(0,5pt)$); 
    \draw[->] ($(2.west)-(0,5pt)$) to node[below,la]{$t$} ($(1.east)-(0,5pt)$);
    \draw[->] (3) to node[above,la]{$c$} (2);
\end{tz}
such that the isomorphisms $\varphi_i$ with $i=0,1,2$ assemble into an invertible internal functor $\varphi\colon \FibA\cong \bA$. Moreover, we get an internal functor $\FibP\coloneqq P\circ \varphi \colon \FibA\to \Int\cC$ whose components are induced by the canonical projections.

We now study how the source, target, identity, and composition maps in $\FibA$ can be understood in terms of their relation with those of $\Int\cC$.

\subsubsection*{Target map in \texorpdfstring{$\FibA$}{Fib(A)}} 

Consider the following diagram in $\cV$,
\begin{tz}
\node[](5) {$\cC(A,B)\times P^{-1}B$}; 
\node[below of=5](6) {$P^{-1}B$};
\node[right of=5,xshift=3.7cm](1) {$\bigsqcup_{A,B\in\cC} \cC(A,B)\times P^{-1}B$}; 
\node[below of=1](3) {$\bigsqcup_{B\in\cC} P^{-1}B$};
\node[right of=1,xshift=2.5cm](2) {$\bA_1$};
\node[below of=2](4) {$\bA_0$};

\draw[->] (5) to node[above,la]{$\iota_{A,B}$} (1);
\draw[->] (5) to node[left,la]{$\pi_1$} (6);
\draw[->] (6) to node[below,la]{$\iota_B$} (3);
\draw[->] (1) to node[below,la]{$\varphi_1$} node[above,la]{$\cong$} (2);
\draw[->] (2) to node[right,la]{$t$} (4);
\draw[->] (1) to node[left,la]{$t$} (3);
\draw[->] (3) to node[below,la]{$\varphi_0$} node[above,la]{$\cong$} (4);
\end{tz} 
where the outer square commutes by definition of $\varphi_1$ and the right-hand square commutes since~$\varphi$ is an internal functor. Note that the left-hand square must also commute, since $\varphi_0$ is an isomorphism. Hence, using \cref{defn:oplus}, by the universal property of the coproduct, we get that the target map $t\colon \FibA_1\to \FibA_0$ must be given by the map  
\[ \textstyle \bigsqcup_{\mathrm{pr}_1} \pi_1\colon\bigsqcup_{A,B\in \cC} \cC(A,B)\times P^{-1} B\to \bigsqcup_{B\in \cC} P^{-1}B \]
induced by the projection $\mathrm{pr}_1\colon \Ob\cC\times \Ob\cC\to \Ob\cC$, $(A,B)\mapsto B$, and the canonical projections $\pi_1\colon \cC(A,B)\times P^{-1} B\to P^{-1}B$, for all objects $A,B\in \cC$.

\subsubsection*{Source map in \texorpdfstring{$\FibA$}{Fib(A)}}

Let $\mathrm{pr}_0\colon \Ob\cC\times \Ob\cC\to \Ob\cC$ denote the projection $(A,B)\mapsto A$ and consider the family of maps $\{!\colon \cC(A,B)\to \un\}_{A,B\in \cC}$ in $\cV$. Then, using \cref{defn:oplus}, the source map $s\colon (\Int\cC)_1\to (\Int\cC)_0$ is given by
        \[ \textstyle s=\bigsqcup_{\mathrm{pr}_0} !\colon \bigsqcup_{A,B\in \cC} \cC(A,B)\to \bigsqcup_{A\in \cC} \un. \]
From the commutativity of the following square in~$\cV$
\begin{tz}
\node[](1) {$\bigsqcup_{A,B\in\cC} \cC(A,B)\times P^{-1}B$}; 
\node[below of=1](3) {$\bigsqcup_{A,B\in\cC}\cC(A,B)$};
\node[right of=1,xshift=3.2cm](2) {$\bigsqcup_{A\in\cC} P^{-1}A$};
\node[below of=2](4) {$\bigsqcup_{A\in\cC} \un$}; 

\draw[->] (1) to node[above,la]{$s$} (2);
\draw[->] (2) to node[right,la]{$(\FibP)_0$} (4);
\draw[->] (1) to node[left,la]{$(\FibP)_1$} (3);
\draw[->] (3) to node[below,la]{$s=\bigsqcup_{\mathrm{pr}_0}!$} (4);
\end{tz}
 and using \cref{extensivelemma}, we see that there is a unique family of maps in $\cV$
 \setcounter{equation}{\value{theorem}}
\refstepcounter{theorem}\refstepcounter{equation}
\begin{equation} \label{evP-1}
    \{ \ev^{P^{-1}}_{A,B}\colon \cC(A,B)\times P^{-1}B\to P^{-1}A \}_{A,B\in \cC}
\end{equation} 
such that the source map $s\colon \FibA_1\to \FibA_0$ is given by the induced map 
\[ \textstyle \bigsqcup_{\mathrm{pr}_0} \ev^{P^{-1}}_{A,B}\colon \bigsqcup_{A,B\in\cC}\cC(A,B)\times P^{-1}B\to \bigsqcup_{A\in\cC}P^{-1}A. \]

\subsubsection*{Identity map in \texorpdfstring{$\FibA$}{Fib(A)}} 

Let $\mathrm{diag}\colon \Ob\cC\to \Ob\cC\times \Ob\cC$ denote the diagonal map $A\mapsto (A,A)$ and consider the family of maps $\{i_A\colon \un\to \cC(A,A)\}_{A\in \cC}$ in $\cV$. Then, using \cref{defn:oplus}, the identity map $i\colon (\Int\cC)_0\to (\Int\cC)_1$ is given by 
        \[ \textstyle i=\bigsqcup_{\mathrm{diag}} i_A\colon \bigsqcup_{A\in \cC} \un \to \bigsqcup_{A,B\in \cC} \cC(A,B). \]
        From the commutativity of the following diagram in $\cV$
\begin{tz}
\node[](1) {$\bigsqcup_{B\in\cC} P^{-1}B$}; 
\node[below of=1](3) {$\bigsqcup_{B\in\cC} \un$};
\node[right of=1, xshift=3.2cm](2) {$\bigsqcup_{A,B\in\cC} \cC(A,B)\times P^{-1}B$}; 
\node[below of=2](4) {$\bigsqcup_{A,B\in\cC}\cC(A,B)$}; 
\node[right of=2, xshift=3.2cm](2') {$\bigsqcup_{B\in\cC} P^{-1}B$};
\node[below of=2'](4') {$\bigsqcup_{B\in\cC} \un$};

\draw[->] (1) to node[above,la]{$i$} (2);
\draw[->] (2) to node[above,la]{$t$} (2');
\draw[->,bend left=15] (1) to node[above,la]{$\id$} (2');
\draw[->] (2) to node[right,la]{$(\FibP)_1$} (4);
\draw[->] (2') to node[right,la]{$(\FibP)_0$} (4');
\draw[->] (1) to node[left,la]{$(\FibP)_0$} (3);
\draw[->] (3) to node[below,la]{$i=\bigsqcup_{\mathrm{diag}} i_B$} (4);
\draw[->] (4) to node[below,la]{$t$} (4');
\end{tz}
and using \cref{extensivelemma}, there is a unique family of maps in $\cV$
\[ \{ \psi_{B}\colon P^{-1}B\to \cC(B,B)\times P^{-1} B\}_{B\in \cC} \]
such that the identity map $i\colon \FibA_0\to \FibA_1$ is given by the induced map 
\[ \textstyle \bigsqcup_{\mathrm{diag}} \psi_B\colon \bigsqcup_{B\in \cC} P^{-1} B\to \bigsqcup_{A,B\in \cC} \cC(A,B)\times P^{-1} B, \]
 and such that the following diagram in $\cV$ commutes, for all objects $B\in \cC$.
\begin{tz}
\node[](1) {$P^{-1}B$}; 
\node[below of=1](3) {$\un$};
\node[right of=1, xshift=2.2cm](2) {$\cC(B,B)\times P^{-1}B$}; 
\node[below of=2](4) {$\cC(B,B)$}; 
\node[right of=2, xshift=2.2cm](2') {$P^{-1}B$};
\node[below of=2'](4') {$\un$};

\draw[->] (1) to node[above,la]{$\psi_B$} (2);
\draw[->] (2) to node[above,la]{$\pi_1$} (2');
\draw[->,bend left=20] (1) to node[above,la]{$\id_{P^{-1}B}$} (2');
\draw[->] (2) to node[right,la]{$\pi_0$} (4);
\draw[->] (2') to (4');
\draw[->] (1) to (3);
\draw[->] (3) to node[below,la]{$i_B$} (4);
\draw[->] (4) to (4');
\end{tz}
In particular, from the commutativity of this diagram and the universal property of the product, we deduce that $\psi_B$ must be given by the map 
\[ i_B\times \id_{P^{-1}B}\colon P^{-1}B\to \cC(B,B)\times P^{-1} B, \]
and so the identity map $i\colon \FibA_0\to \FibA_1$ must be the induced map  
\[ \textstyle\bigsqcup_{\mathrm{diag}} i_B\times \id_{P^{-1}B}\colon \bigsqcup_{B\in \cC} P^{-1} B\to \bigsqcup_{A,B\in \cC} \cC(A,B)\times P^{-1} B. \]

\subsubsection*{Composition map in \texorpdfstring{$\FibA$}{Fib(A)}}

First consider the following diagram in $\cV$,
\begin{tz}
\node[](5) {$\cC(A,B)\times \cC(B,C)\times P^{-1}C$}; 
\node[below of=5](6) {$P^{-1}C$};
\node[right of=5,xshift=5.5cm](1) {$\bigsqcup_{A,B,C\in\cC} \cC(A,B)\times \cC(B,C)\times P^{-1}C$}; 
\node[below of=1](3) {$\bigsqcup_{C\in\cC} P^{-1}C$};
\node[right of=1,xshift=4cm](2) {$\bA_1\times_{\bA_0}\bA_1$};
\node[below of=2](4) {$\bA_0$};

\draw[->] (5) to node[above,la]{$\iota_{A,B,C}$} (1);
\draw[->] (5) to node[left,la]{$\pi_2$} (6);
\draw[->] (6) to node[below,la]{$\iota_C$} (3);
\draw[->] (1) to node[below,la]{$\varphi_2$} node[above,la]{$\cong$} (2);
\draw[->] (2) to node[right,la]{$t\circ \pi_1$} (4);
\draw[->] (1) to node[left,la]{$t\circ \pi_1$} (3);
\draw[->] (3) to node[below,la]{$\varphi_0$} node[above,la]{$\cong$} (4);
\end{tz}
where the outer square commutes by definition of $\varphi_2$ and the right-hand square commutes since $\varphi$ is an internal functor. Note that the left-hand square must also commute, since $\varphi_0$ is an isomorphism. Hence, using \cref{defn:oplus}, by the universal property of the coproduct, we get that the map $t\circ \pi_1\colon \FibA_1\to \FibA_0$ must be given by the map  
\[ \textstyle \bigsqcup_{\mathrm{pr}_2} \pi_2\colon\bigsqcup_{A,B,C\in \cC} \cC(A,B)\times \cC(B,C)\times P^{-1} C\to \bigsqcup_{C\in \cC} P^{-1}C \]
induced by the projection $\mathrm{pr}_2\colon \Ob\cC\times \Ob\cC\times \Ob\cC\to \Ob\cC$, $(A,B,C)\mapsto C$, and the canonical projection $\pi_2\colon \cC(A,B)\times \cC(B,C)\times P^{-1} C\to P^{-1}C$, for all objects $A,B,C\in \cC$.

Next, let $\mathrm{pr}_{0,2}\colon \Ob\cC\times \Ob\cC\times \Ob\cC\to \Ob\cC\times \Ob\cC$ denote the projection $(A,B,C)\mapsto (A,C)$ and consider the family of maps $\{c_{A,B,C}\colon \cC(A,B)\times \cC(B,C)\to \cC(A,C)\}_{A,B,C\in \cC}$ in $\cV$. Then, using \cref{defn:oplus}, the composition map $c\colon (\Int\cC)_1\times_{(\Int\cC)_0}(\Int\cC)_1\to (\Int\cC)_1$ is given by
        \[ \textstyle c=\bigsqcup_{\mathrm{pr}_{0,2}} c_{A,B,C}\colon \bigsqcup_{A,B,C\in \cC}\cC(A,B)\times \cC(B,C)\to \bigsqcup_{A,C\in \cC}\cC(A,C). \]
From the commutativity of the following diagram in $\cV$
\begin{tz}
\node[](1) {$\bigsqcup_{A,B,C\in\cC} \cC(A,B)\times\cC(B,C)\times P^{-1}C$}; 
\node[below of=1](3) {$\bigsqcup_{A,B,C\in\cC} \cC(A,B)\times\cC(B,C)$};
\node[right of=1,xshift=5.3cm](2) {$\bigsqcup_{A,C\in\cC} \cC(A,C)\times P^{-1}C$}; 
\node[below of=2](4) {$\bigsqcup_{A,C\in\cC}\cC(A,C)$}; 
\node[right of=2, xshift=3.2cm](2') {$\bigsqcup_{C\in\cC} P^{-1}C$};
\node[below of=2'](4') {$\bigsqcup_{C\in\cC} \un$};

\draw[->] (1) to node[above,la]{$c$} (2);
\draw[->] (2) to node[above,la]{$t$} (2');
\draw[->,bend left=15] (1) to node[above,la]{$t\circ \pi_1$} (2');
\draw[->] (2) to node[right,la]{$(\FibP)_1$} (4);
\draw[->] (2') to node[right,la]{$(\FibP)_0$} (4');
\draw[->] (1) to node[left,la]{$(\FibP)_1\times_{(\FibP)_0} (\FibP)_1$} (3);
\draw[->] (3) to node[below,la]{$c=\bigsqcup_{\mathrm{pr}_{0,2}} c_{A,B,C}$} (4);
\draw[->] (4) to node[below,la]{$t$} (4');
\end{tz}
 and using \cref{extensivelemma}, there is a unique family of maps in $\cV$ 
\[ \{\gamma_{A,B,C}\colon \cC(A,B)\times\cC(B,C)\times P^{-1}C\to \cC(A,C)\times P^{-1}C\}_{A,B,C\in \cC} \]
such that the composition map $c\colon \FibA_1\times_{\FibA_0} \FibA_1\to \FibA_1$
is given by the induced map 
\[ \textstyle \bigsqcup_{\mathrm{pr}_{0,2}} \gamma_{A,B,C} \colon \bigsqcup_{A,B,C\in \cC}\cC(A,B)\times\cC(B,C)\times P^{-1}C \to \bigsqcup_{A,C\in \cC}\cC(A,C)\times P^{-1}C, \]
 and such that the following diagram in $\cV$ commutes, for all objects $A,B,C\in \cC$.
\begin{tz}
\node[](1) {$\cC(A,B)\times\cC(B,C)\times P^{-1}C$}; 
\node[below of=1](3) {$\cC(A,B)\times\cC(B,C)$};
\node[right of=1,xshift=3.9cm](2) {$\cC(A,C)\times P^{-1}C$}; 
\node[below of=2](4) {$\cC(A,C)$}; 
\node[right of=2, xshift=2.1cm](2') {$P^{-1}C$};
\node[below of=2'](4') {$\un$};

\draw[->] (1) to node[above,la]{$\gamma_{A,B,C}$} (2);
\draw[->] (2) to node[above,la]{$\pi_1$} (2');
\draw[->,bend left=20] (1) to node[above,la]{$\pi_1$} (2');
\draw[->] (2) to node[right,la]{$(\FibP)_1$} (4);
\draw[->] (2') to (4');
\draw[->] (1) to node[left,la]{$\pi_0$} (3);
\draw[->] (3) to node[below,la]{$c_{A,B,C}$} (4);
\draw[->] (4) to (4');
\end{tz}
In particular, from the commutativity of this diagram and the universal property of the product, we deduce  that $\gamma_{A,B,C}$ must be given by the map 
\[ c_{A,B,C}\times \id_{P^{-1}C}\colon \cC(A,B)\times \cC(B,C)\times P^{-1}C\to \cC(A,C)\times P^{-1}C\]
and so the composition map $c\colon \FibA_1\times_{\FibA_0} \FibA_1\to \FibA_1$ must be the induced map 
\[ \textstyle\bigsqcup_{\mathrm{pr}_{0,2}}c_{A,B,C}\times \id_{P^{-1}C}\colon \bigsqcup_{A,B,C\in \cC}\cC(A,B)\times \cC(B,C)\times P^{-1}C\to \bigsqcup_{A,C\in \cC}\cC(A,C)\times P^{-1}C. \]

\subsection{Structure of an internal functor between internal discrete fibrations} \label{subsec:mapofdiscfib}

In this subsection, suppose that $\cV$ is extensive. Having studied the structure of internal discrete fibrations over $\Int\cC$, we now focus on the internal functors between these internal discrete fibrations. In what follows, we consider a commutative diagram in $\CatV$
\begin{tz}
\node[](1) {$\bA$}; 
\node[below of=1,xshift=1.8cm,yshift=.2cm](3) {$\Int\cC$};
\node[above of=3,xshift=1.8cm,yshift=-.2cm](2) {$\bB$}; 

\draw[->] (1) to node[above,la]{$H$} (2);
\draw[->] (2) to node[right,la,yshift=-4pt]{$P$} (3);
\draw[->] (1) to node[left,la,yshift=-4pt]{$Q$} (3);
\end{tz}
    with $P$ and $Q$ internal discrete fibrations over $\Int\cC$, and investigate what this implies for the structure of the internal functor $H$. 

Recall the isomorphisms $\varphi\colon \FibA\cong \bA$ and $\psi\colon \FibB\cong \bB$ constructed in \cref{subsec:discfiboverC}. We write $\FibF\colon\FibA\to \FibB$ for the composite
\[ \FibA\xrightarrow{\varphi} \bA\xrightarrow{H}\bB\xrightarrow{\psi^{-1}} \FibB. \]
We denote by $\ev^H_A\colon P^{-1} A\to Q^{-1} A$ the unique map given by the universal property of the pullback making the following diagram commute.
\begin{diagram}\label{defn:mapoffibers}
        \node[](0) {$P^{-1}A$};
        \node[right of=0,xshift=1cm](0'){$\bA_0$};
\node[below right of=0,xshift=1cm](1) {$Q^{-1}A$}; 
\node[below of=1](3) {$\un$};
\node[right of=1,xshift=1cm](2) {$\bB_0$}; 
\node[below of=2](4) {$\bigsqcup_{A\in \cC} \un$}; 
\pullback{1};

\draw[->,dashed] (0) to node[below,la,xshift=-8pt]{$\ev^H_A$} (1);
\draw[->] (0) to (0');
\draw[->] (0') to node[above,la]{$H_0$} (2);
\draw[->,bend right=30](0) to (3);

\draw[->] (1) to (2);
\draw[->] (2) to node[right,la]{$Q_0$} (4);
\draw[->] (1) to (3);
\draw[->] (3) to node[below,la]{$A$} (4);
\end{diagram}

Note that since the maps $P^{-1}A\to \bA_0$ and $Q^{-1} A\to \bB_0$ determine the isomorphisms $\varphi_0$ and~$\psi_0$, the maps $\ev_A^H$ correspond to the components of the map $\FibF_0$. By the universal property of the coproduct, the map $\FibF_0\colon\FibA_0\to \FibB_0$ must be given by the induced map over $\bigsqcup_{A\in \cC}\un$
\[ \textstyle\bigsqcup_{A\in \cC} \ev^H_A\colon \bigsqcup_{A\in \cC} P^{-1} A\to\bigsqcup_{A\in \cC} Q^{-1} A. \]

Now, since the functor $\bigsqcup_{A,B\in\cC}\colon \prod_{A,B\in\cC}\slice{\cV}{\cC(A,B)}\to \slice{\cV}{\bigsqcup_{A,B\in \cC} \cC(A,B)}$ is fully faithful, there is a unique family of maps in $\cV$ over $\cC(A,B)$
\[ \{\phi_{A,B}\colon \cC(A,B)\times P^{-1} B\to \cC(A,B)\times Q^{-1} B\}_{A,B\in \cC} \]
such that the map $\FibF_1\colon \FibA_1\to \FibB_1$ is the induced map over $\bigsqcup_{A,B\in\cC}\cC(A,B)$
\[ \textstyle\bigsqcup_{A,B\in\cC}\phi_{A,B}\colon\bigsqcup_{A,B\in\cC}\cC(A,B)\times P^{-1} B\to \bigsqcup_{A,B\in\cC}\cC(A,B)\times Q^{-1} B. \]
Then, by commutativity of the following diagram in $\cV$, 
\begin{tz}
\node[](1) {$\bigsqcup_{A,B\in\cC} \cC(A,B)\times P^{-1}B$}; 
\node[below of=1](3) {$\bigsqcup_{A,B\in\cC}\cC(A,B)\times Q^{-1}B$};
\node[right of=1, xshift=3.6cm](2) {$\bigsqcup_{B\in\cC} P^{-1}B$};
\node[below of=2](4) {$\bigsqcup_{B\in\cC} Q^{-1}B$}; 

\draw[->] (1) to node[above,la]{$t=\bigsqcup_{\mathrm{pr}_1} \pi_1$} (2);
\draw[->] (2) to node[right,la]{$\FibF_0$} (4);
\draw[->] (1) to node[left,la]{$\FibF_1$} (3);
\draw[->] (3) to node[below,la]{$t=\bigsqcup_{\mathrm{pr}_1} \pi_1$} (4);
\end{tz}
and the description of the target maps given in \cref{subsec:discfiboverC}, we get that the following diagram in $\cV$ commutes, for all objects $A,B\in \cC$.
\begin{tz}
\node[](1) {$\cC(A,B)\times P^{-1}B$}; 
\node[below of=1](3) {$\cC(A,B)\times Q^{-1}B$};
\node[right of=1, xshift=2.2cm](2) {$P^{-1}B$};
\node[below of=2](4) {$Q^{-1}B$}; 

\draw[->] (1) to node[above,la]{$\pi_1$} (2);
\draw[->] (2) to node[right,la]{$\ev^H_B$} (4);
\draw[->] (1) to node[left,la]{$\phi_{A,B}$} (3);
\draw[->] (3) to node[below,la]{$\pi_1$} (4);
\end{tz}
In particular, from the commutativity of this diagram, together with the fact that the maps $\phi_{A,B}$ are over $\cC(A,B)$ and the universal property of the product, we can deduce that $\phi_{A,B}$ must be given by the map 
\[ \id_{\cC(A,B)}\times \ev^H_B\colon \cC(A,B)\times P^{-1} B\to \cC(A,B)\times Q^{-1} B. \]
Then, the map $\FibF_1\colon \FibA_1\to \FibB_1$ must be the induced map 
\[ \textstyle\bigsqcup_{A,B\in\cC}\id_{\cC(A,B)}\times \ev^H_B\colon\bigsqcup_{A,B\in\cC}\cC(A,B)\times P^{-1} B\to \bigsqcup_{A,B\in\cC}\cC(A,B)\times Q^{-1} B. \]

\subsection{Properties}\label{subsec:props}

We now collect some useful properties of internal discrete fibrations.

\begin{prop} \label{prop:pullbackofdiscfib}
 The pullback of an internal discrete fibration in $\CatV$ is also an internal discrete fibration. 
\end{prop}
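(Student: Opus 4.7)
The plan is to reduce the claim to the pasting lemma for pullbacks applied twice. Let $P\colon\bA\to\bB$ be an internal discrete fibration and $H\colon\bC\to\bB$ any internal functor. By \cref{pullbacksinCatV}, the pullback $\bD\coloneqq\bC\times_\bB\bA$ in $\CatV$ is computed levelwise, i.e.\ $\bD_0=\bC_0\times_{\bB_0}\bA_0$ and $\bD_1=\bC_1\times_{\bB_1}\bA_1$, with projection internal functor $Q\colon\bD\to\bC$ given by the first projections at each level. I need to show that the square
\begin{tz}
\node[](1) {$\bD_1$};
\node[right of=1,xshift=.3cm](2) {$\bD_0$};
\node[below of=1](3) {$\bC_1$};
\node[below of=2](4) {$\bC_0$};
\draw[->] (1) to node[above,la]{$t$} (2);
\draw[->] (1) to node[left,la]{$Q_1$} (3);
\draw[->] (2) to node[right,la]{$Q_0$} (4);
\draw[->] (3) to node[below,la]{$t$} (4);
\end{tz}
is a pullback in~$\cV$.

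First, I would consider the diagram in $\cV$
\begin{tz}
\node[](1) {$\bD_1$};
\node[right of=1,xshift=.7cm](2) {$\bA_1$};
\node[right of=2,xshift=.7cm](3) {$\bA_0$};
\node[below of=1](4) {$\bC_1$};
\node[below of=2](5) {$\bB_1$};
\node[below of=3](6) {$\bB_0$};
\draw[->] (1) to node[above,la]{$p_1$} (2);
\draw[->] (2) to node[above,la]{$t$} (3);
\draw[->] (1) to node[left,la]{$Q_1$} (4);
\draw[->] (2) to node[right,la]{$P_1$} (5);
\draw[->] (3) to node[right,la]{$P_0$} (6);
\draw[->] (4) to node[below,la]{$H_1$} (5);
\draw[->] (5) to node[below,la]{$t$} (6);
\end{tz}
where $p_1\colon \bD_1\to\bA_1$ is the second projection. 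The left square is a pullback by the definition of $\bD_1$, and the right square is a pullback since $P$ is an internal discrete fibration; by the pasting lemma the outer rectangle is a pullback.

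Next, using functoriality of target maps, the same outer rectangle factors as
\begin{tz}
\node[](1) {$\bD_1$};
\node[right of=1,xshift=.7cm](2) {$\bD_0$};
\node[right of=2,xshift=.7cm](3) {$\bA_0$};
\node[below of=1](4) {$\bC_1$};
\node[below of=2](5) {$\bC_0$};
\node[below of=3](6) {$\bB_0$};
\draw[->] (1) to node[above,la]{$t$} (2);
\draw[->] (2) to node[above,la]{$p_0$} (3);
\draw[->] (1) to node[left,la]{$Q_1$} (4);
\draw[->] (2) to node[right,la]{$Q_0$} (5);
\draw[->] (3) to node[right,la]{$P_0$} (6);
\draw[->] (4) to node[below,la]{$t$} (5);
\draw[->] (5) to node[below,la]{$H_0$} (6);
\end{tz}
(the equality of the two rectangles follows from $p_0\circ t=t\circ p_1$ in $\bD$ and $H_0\circ t=t\circ H_1$ in $\bB$). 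The right square is a pullback by the definition of $\bD_0$, and we just showed that the outer rectangle is a pullback. Applying the pasting lemma in the other direction then yields that the left square is a pullback, which is exactly the required condition for $Q\colon\bD\to\bC$ to be an internal discrete fibration.

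The only subtle point is to make sure that the two outer rectangles above really do coincide as commutative squares in $\cV$, which boils down to the source/target naturality built into any internal functor; once this is written out, the argument is purely a double application of the pasting lemma, with no extensivity or cartesian closedness hypotheses needed.
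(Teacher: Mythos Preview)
Your proof is correct and follows essentially the same route as the paper's: both arguments factor the same outer rectangle in two ways and apply the pasting lemma for pullbacks once in each direction (first to compose, then to cancel). The only difference is notational, and your remark about verifying that the two rectangles agree via naturality of targets is a nice bit of extra care that the paper leaves implicit.
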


\begin{proof}
    Consider a pullback square in $\CatV$
    \begin{tz}
\node[](1) {$\bA'$}; 
\node[below of=1](3) {$\bB'$};
\node[right of=1,xshift=.5cm](2) {$\bA$}; 
\node[below of=2](4) {$\bB$};
\pullback{1};

\draw[->] (1) to (2);
\draw[->] (2) to node[right,la]{$P$} (4);
\draw[->] (1) to node[left,la]{$P'$} (3);
\draw[->] (3) to (4);
\end{tz}
where $P$ is an internal discrete fibration. We show that $P'$ is an internal discrete fibration as well. For this, consider the following diagram in $\cV$
\begin{tz}
    \node[](1') {$\bA'_1$};
    \node[below of=1'](3') {$\bB'_1$};
\node[right of=1',xshift=.7cm](1) {$\bA'_0$}; 
\node[below of=1](3) {$\bB'_0$};
\node[right of=1,xshift=.7cm](2) {$\bA_0$}; 
\node[below of=2](4) {$\bB_0$};
\pullback{1};

\draw[->] (1) to (2);
\draw[->] (2) to node[right,la]{$P_0$} (4);
\draw[->] (1) to node[left,la]{$P'_0$} (3);
\draw[->] (3) to node[below,la]{} (4);
\draw[->] (1') to node[above,la]{$t$} (1);
\draw[->] (1') to node[left,la]{$P'_1$} (3');
\draw[->] (3') to node[below,la]{$t$} (3);

\node[right of=2,xshift=.7cm](1') {$\bA'_1$};
\node at ($(4)!0.5!(1')$) {$=$};
    \node[below of=1'](3') {$\bB'_1$};
\node[right of=1',xshift=.7cm](1) {$\bA_1$}; 
\node[below of=1](3) {$\bB_1$};
\node[right of=1,xshift=.7cm](2) {$\bA_0$}; 
\node[below of=2](4) {$\bB_0$};
\pullback{1};
\pullback{1'};

\draw[->] (1) to node[above,la]{$t$} (2);
\draw[->] (2) to node[right,la]{$P_0$} (4);
\draw[->] (1) to node[left,la]{$P_1$} (3);
\draw[->] (3) to node[below,la]{$t$} (4);
\draw[->] (1') to (1);
\draw[->] (1') to node[left,la]{$P'_1$} (3');
\draw[->] (3') to (3);
\end{tz}
where the two middle squares are pullbacks as $P'$ is a pullback of $P$, and the right-hand square is a pullback by definition of $P$ being an internal discrete fibration. By cancellation of pullbacks, the left-hand square is also a pullback which shows that $P'$ is an internal discrete fibration.
\end{proof}

The following result is particularly useful, as it significantly reduces the difficulty of checking whether an internal functor between internal discrete fibrations is an isomorphism.

\begin{prop}\label{prop:isoonfibers}
    Consider a commutative diagram in $\CatV$
\begin{tz}
\node[](1) {$\bA$}; 
\node[below of=1,xshift=1.8cm,yshift=.2cm](3) {$\bC$};
\node[above of=3,xshift=1.8cm,yshift=-.2cm](2) {$\bB$}; 

\draw[->] (1) to node[above,la]{$H$} (2);
\draw[->] (2) to node[right,la,yshift=-4pt]{$P$} (3);
\draw[->] (1) to node[left,la,yshift=-4pt]{$Q$} (3);
\end{tz}
    with $P$ and $Q$ internal discrete fibrations over $\bC$. Then the following are equivalent:
    \begin{rome}
        \item the internal functor $H\colon\bA\to\bB$ is an isomorphism in $\CatV$,
        \item the map $H_0\colon\bA_0\to\bB_0$ is an isomorphism in $\cV$.
    \end{rome}
    Moreover, if $\cV$ is extensive and $\bC=\Int\cC$ for some $\cV$-category $\cC$, the above conditions are further equivalent to:
    \begin{enumerate}[label=(\roman*)]
    \setcounter{enumi}{2}
        \item the map $\ev^H_A\colon P^{-1}A\to Q^{-1}A$ from \eqref{defn:mapoffibers} is an isomorphism in $\cV$, for every object $A\in\cC$.
    \end{enumerate}
\end{prop}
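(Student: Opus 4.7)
The implication (i) $\Rightarrow$ (ii) is immediate. For (ii) $\Rightarrow$ (i), the plan is to build an inverse $K\colon\bB\to\bA$ of $H$ in $\CatV$ directly from $H_0^{-1}$, using the pullback squares that define the internal discrete fibrations. Since $P\colon\bB\to\bC$ is an internal discrete fibration, the universal property of the pullback $\bA_1=\bC_1\times_{\bC_0}\bA_0$ (which $Q$ being a discrete fibration supplies) turns the pair $(P_1,\,H_0^{-1}\circ t_{\bB})\colon \bB_1\rightrightarrows \bC_1,\bA_0$ into a well-defined map $K_1\colon\bB_1\to\bA_1$ once one checks the required compatibility $t_{\bC}\circ P_1 = Q_0\circ H_0^{-1}\circ t_{\bB}$, which follows from the identity $P_0\circ H_0=Q_0$ and the compatibility of~$H$ and $P$ with sources/targets. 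The pair $(H_0^{-1},K_1)$ then assembles to an internal functor since both $\bA_1$ and $\bB_1$ are pullbacks over~$\bC$ so compatibility of $K$ with source, identity, and composition can be checked after projecting to $\bC_1$ (where it is automatic because $P$ is an internal functor), and after projecting to $\bA_0$ or $\bB_0$ (where it reduces to the corresponding compatibility for $H$ twisted by the iso $H_0^{-1}$). Uniqueness in these pullbacks then shows $KH=\id_{\bA}$ and $HK=\id_{\bB}$.

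For the second part, assume $\cV$ is extensive and $\bC=\Int\cC$. The key observation, coming straight out of the analysis in \cref{subsec:mapofdiscfib}, is that after applying the isomorphisms $\varphi_0\colon\bigsqcup_{A\in\cC}P^{-1}A\cong\bA_0$ and $\psi_0\colon\bigsqcup_{A\in\cC}Q^{-1}A\cong\bB_0$, the map $H_0$ is identified with the coproduct
\[ \textstyle\bigsqcup_{A\in\cC}\ev^H_A\colon \bigsqcup_{A\in\cC}P^{-1}A\to\bigsqcup_{A\in\cC}Q^{-1}A \]
over $\bigsqcup_{A\in\cC}\un$. Hence (ii) $\Leftrightarrow$ (iii) reduces to showing that such a coproduct map is an isomorphism in $\cV$ if and only if every component $\ev^H_A$ is. This is where extensivity is used: the functor $\bigsqcup_{A\in\cC}\colon \prod_{A\in\cC}\cV\to\slice{\cV}{\bigsqcup_{A\in\cC}\un}$ is an equivalence of categories (being the instance of the functor in \cref{extensivity} with $X_i=\un$), and equivalences both preserve and reflect isomorphisms.

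No substantial obstacle is expected: (i) $\Leftrightarrow$ (ii) is a general feature of morphisms between objects with the same discrete-fibration structure and does not require extensivity, while (ii) $\Leftrightarrow$ (iii) is a direct application of the structural results of \cref{subsec:mapofdiscfib} together with extensivity. The only mild care needed is in the pullback argument for (ii) $\Rightarrow$ (i), where one must verify that the formula defining $K_1$ is compatible with the $\cV$-internal categorical structure; this is routine once one exploits that all the relevant checks can be transported through the pullback $\bA_1=\bC_1\times_{\bC_0}\bA_0$.
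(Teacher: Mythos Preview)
Your argument is correct. For (ii) $\Rightarrow$ (i) you take a slightly more hands-on route than the paper: you explicitly construct the inverse $K_1$ via the universal property of the pullback $\bA_1\cong\bC_1\times_{\bC_0}\bA_0$ and then verify that $(H_0^{-1},K_1)$ is an internal functor inverse to $H$. The paper instead observes directly that $H_1$ sits in a commutative square
\[
\begin{tikzcd}
\bA_1 \ar[r,"H_1"] \ar[d,"\cong"'] & \bB_1 \ar[d,"\cong"] \\
\bA_0\times_{\bC_0}\bC_1 \ar[r,"H_0\times_{\bC_0}\bC_1","\cong"'] & \bB_0\times_{\bC_0}\bC_1
\end{tikzcd}
\]
where the verticals are the pullback identifications and the bottom is an isomorphism because $H_0$ is; hence $H_1$ is an isomorphism by $2$-out-of-$3$, and $H$ is an isomorphism since isomorphisms in $\CatV$ are detected levelwise. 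This bypasses the need to verify internal-functor axioms for the inverse. Your approach works just as well, it simply does a bit more bookkeeping; the paper's is a one-line shortcut exploiting that both $\bA_1$ and $\bB_1$ are already expressed as the same pullback shape.

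For (ii) $\Leftrightarrow$ (iii) your argument is essentially identical to the paper's: identify $H_0$ with $\bigsqcup_{A\in\cC}\ev^H_A$ via the isomorphisms of \cref{subsec:mapofdiscfib}, and use that the coproduct functor is an equivalence (hence creates isomorphisms) by extensivity.
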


\begin{proof}    
    Since $H$ is an isomorphism in $\CatV$ precisely when $H_0$ and $H_1$ are isomorphisms in~$\cV$, the fact that (i) implies (ii) is immediate, and to prove that (ii) implies (i) it is enough to show that $H_1$ is an isomorphism in $\cV$. This follows from 2-out-of-3 in the following commutative diagram in $\cV$, where the vertical isomorphisms come from the definition of $P$ and $Q$ being internal discrete fibrations.
    \begin{tz}
\node[](1) {$\bA_1$}; 
\node[below of=1](3) {$\bA_0\times_{\bC_0} \bC_1$};
\node[right of=1,xshift=2.3cm](2) {$\bB_1$}; 
\node[below of=2](4) {$\bB_0\times_{\bC_0} \bC_1$};

\draw[->] (1) to node[above,la]{$H_1$} (2);
\draw[->] (2) to node[right,la]{$\cong$} (4);
\draw[->] (1) to node[left,la]{$\cong$}
 (3);
\draw[->] (3) to node[above,la]{$\cong$} node[below,la]{$H_0\times_{\bC_0} \bC_1$} (4);
\end{tz}
 
    Now assume that $\bC=\Int\cC$ for some $\cV$-category $\cC$.
    To show that (ii) and (iii) are equivalent, consider the following commutative diagram in $\cV$, where the vertical isomorphisms are the ones from \cref{subsec:discfiboverC}.
\begin{tz}
\node[](1) {$\bA_0$}; 
\node[below of=1](3) {$\bigsqcup_{A\in \cC} P^{-1}A$};
\node[right of=1,xshift=2.3cm](2) {$\bB_0$}; 
\node[below of=2](4) {$\bigsqcup_{A\in \cC} Q^{-1}A$};

\draw[->] (1) to node[above,la]{$H_0$} (2);
\draw[->] (2) to node[right,la]{$\cong$} (4);
\draw[->] (1) to node[left,la]{$\cong$}
 (3);
\draw[->] (3) to node[above,la]{$\cong$} node[below,la]{$\bigsqcup_{A\in \cC} \ev^H_A$} (4);
    \end{tz}
    Then by $2$-out-of-$3$, the top map is an isomorphism if and only if the bottom one is. As the functor $\bigsqcup_{A\in \cC}\colon \prod_{A\in \cC}\slice{\cV}{\un}\to \slice{\cV}{\bigsqcup_{A\in \cC}\un}$ creates isomorphisms, the bottom map is an isomorphism if and only if each map $\ev^H_A\colon P^{-1}A\to Q^{-1}A$ is an isomorphism, for every object $A\in \cC$. 
\end{proof}

\subsection{Examples}\label{subsec:examples}

We conclude this section with some examples of internal discrete fibrations.

\subsubsection*{Slice internal categories} We first define slices in the internal setting, and show that the projection from the slice is an internal discrete fibration.

\begin{notation}
    We write $\mathbbm{1}$ for the terminal $\cV$-category with one object $0$ and hom-object $\mathbbm{1}(0,0)=\un$. Note that $\Int\mathbbm{1}=\cst\un=\un$. For convenience, we also write $\mathbbm{2}$ for the internal category $\Int\mathbbm{2}$ where $\mathbbm{2}$ is the $\cV$-category with object set $\{0,1\}$ and hom-objects in $\cV$ 
    \[ \mathbbm{2}(0,0)=\mathbbm{2}(0,1)=\mathbbm{2}(1,1)=\un \quad\text{and}\quad \mathbbm{2}(1,0)=\emptyset,  \]
    where $\emptyset$ denotes the initial object of $\cV$. We write $\langle 0\rangle\colon \un\to \mathbbm{2}$, $\langle 1\rangle\colon \un\to \mathbbm{2}$, and $\langle 0,1\rangle\colon \un\sqcup\un\to \mathbbm{2}$ for the canonical internal functors to $\cV$ induced by coproduct inclusions.
\end{notation}

\begin{rem} \label{rem:hom2A}
    Given an internal category $\bA$, note that the internal category $\llbracket\mathbbm{2},\bA\rrbracket$ has level $0$ given by $\llbracket\mathbbm{2},\bA\rrbracket_0=\bA_1$.
\end{rem}

\begin{defn} \label{defn:slice}
Let $\bA$ be an internal category to $\cV$, and $T\colon \un\to \bA$ be an element. The \textbf{slice internal category} $\slice{\bA}{T}$ is defined as the following pullback in $\CatV$.
\begin{tz}
\node[](1) {$\slice{\bA}{T}$}; 
\node[below of=1](3) {$\bA$};
\node[right of=1,xshift=1cm](2) {$\llbracket\mathbbm{2},\bA\rrbracket$}; 
\node[below of=2](4) {$\bA\times\bA$};
\pullback{1};

\draw[->] (1) to node[above,la]{} (2);
\draw[->] (2) to node[right,la]{$\langle 0,1\rangle^*$} (4);
\draw[->] (1) to node[left,la]{} (3);
\draw[->] (3) to node[below,la]{$\id_\bA\times T$} (4);
\end{tz}
\end{defn}

\begin{rem} \label{rem:pullbackslice}
    Since pullbacks compose, the slice internal category $\slice{\bA}{T}$ can equivalently be seen as the following pullback in $\CatV$.
    \begin{tz}
\node[](1) {$\slice{\bA}{T}$}; 
\node[below of=1](3) {$\un$};
\node[right of=1,xshift=1cm](2) {$\llbracket\mathbbm{2},\bA\rrbracket$}; 
\node[below of=2](4) {$\bA$};
\pullback{1};

\draw[->] (1) to node[above,la]{} (2);
\draw[->] (2) to node[right,la]{$\langle 1\rangle^*$} (4);
\draw[->] (1) to node[left,la]{} (3);
\draw[->] (3) to node[below,la]{$T$} (4);
\end{tz}
\end{rem}

\begin{prop} \label{prop:sliceisdiscfib}
    Let $\bA$ be an internal category to $\cV$, and $T\colon \un\to \bA$ be an element. Then the projection $\slice{\bA}{T}\to\bA$ is an internal discrete fibration.
\end{prop}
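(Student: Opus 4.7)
The plan is to show directly that the square
\begin{tz}
\node[](1) {$(\slice{\bA}{T})_1$};
\node[below of=1](3) {$\bA_1$};
\node[right of=1,xshift=1.5cm](2) {$(\slice{\bA}{T})_0$};
\node[below of=2](4) {$\bA_0$};
\draw[->] (1) to node[above,la]{$t$} (2);
\draw[->] (2) to node[right,la]{$p_0$} (4);
\draw[->] (1) to node[left,la]{$p_1$} (3);
\draw[->] (3) to node[below,la]{$t$} (4);
\end{tz}
is a pullback in $\cV$, using the equivalent description of $\slice{\bA}{T}$ from \cref{rem:pullbackslice} and the fact that pullbacks in $\CatV$ are computed levelwise in $\cV$ (\cref{pullbacksinCatV}). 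The upshot will be that $(\slice{\bA}{T})_1$ is the object of composable pairs of arrows in $\bA$ whose second arrow has target $T$, in such a way that $p_1$ records the first arrow and $t$ records the second, which is exactly the pullback condition required to witness an internal discrete fibration.

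First, at level $0$, I identify $(\slice{\bA}{T})_0\cong \bA_1\times_{\bA_0, t, T_0}\un$, with $p_0$ given by the source map $s$ after projecting to $\bA_1$; this presents $(\slice{\bA}{T})_0$ as the object of arrows in $\bA$ ending at $T$, and reads off the source as its value in $\bA_0$. Second, at level $1$, I unpack $(\slice{\bA}{T})_1$ using that $\llbracket\mathbbm{2},\bA\rrbracket_1$ is the object of internal commutative squares in $\bA$ and that the pullback along $T_1=i\circ T_0$ constrains the ``right edge'' of such a square to be the identity at $T$. This forces the ``top edge'' to be the composite of the left edge and the bottom edge, so that $(\slice{\bA}{T})_1$ is equivalently described by the iterated pullback
\[ (\slice{\bA}{T})_1\cong \bA_1\times_{\bA_0, t, s}\bA_1\times_{\bA_0, t, T_0}\un, \]
with $p_1$ picking out the first $\bA_1$-factor (the left edge) and $t$ picking out the second $\bA_1$-factor (the bottom edge), viewed as an arrow ending at $T$.

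Finally, pullback pasting yields
\[ (\slice{\bA}{T})_0\times_{\bA_0, p_0, t}\bA_1\cong (\bA_1\times_{\bA_0, t, T_0}\un)\times_{\bA_0, s, t}\bA_1\cong \bA_1\times_{\bA_0, t, s}\bA_1\times_{\bA_0, t, T_0}\un, \]
which matches the description of $(\slice{\bA}{T})_1$ above together with its projections, thereby exhibiting the square as a pullback. I expect the main obstacle to be the second step: rigorously describing $\llbracket\mathbbm{2},\bA\rrbracket_1$ and the interaction of the level-$1$ pullback with $T_1=i\circ T_0$, so as to extract composable pairs from squares whose right edge is the identity at $T$. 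This requires careful bookkeeping with the cartesian closed structure of $\CatV$ and the internal category axioms of $\bA$, in particular using the composition map $c\colon \bA_1\times_{\bA_0}\bA_1\to \bA_1$ to recover the top edge of such a square from its left and bottom edges.
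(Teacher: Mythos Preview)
Your approach is correct and arrives at the same intermediate characterization as the paper: $(\slice{\bA}{T})_1$ is the object of composable pairs in $\bA$ whose second arrow ends at $T$, with $p_1$ and $t$ given by the two projections. The difference lies in how this is established. You propose to compute $\llbracket\mathbbm{2},\bA\rrbracket_1$ explicitly as the object of internal commutative squares and then argue that restricting the right edge to $i\circ T_0$ collapses such a square to a composable pair. The paper instead observes that there is a pushout
\[
\mathbbm{2}\sqcup_{\mathbbm{1}}\mathbbm{2}\;\cong\;(\mathbbm{2}\times\mathbbm{2})\amalg_{\mathbbm{2}}\mathbbm{1}
\]
in $\Cat$ (collapsing the edge $\{1\}\times\mathbbm{2}$ of the square), and applies $\llbracket-,\bA\rrbracket$ to obtain a pullback square whose level~$0$ reads
\[
\bA_1\times_{\bA_0}\bA_1\;\cong\;\llbracket\mathbbm{2},\bA\rrbracket_1\times_{\bA_1,\,\langle 1\rangle^*_1,\,i}\bA_0.
\]
This yields exactly the reduction you want---squares with degenerate right edge are composable pairs---without ever having to unfold $\llbracket\mathbbm{2},\bA\rrbracket_1$ by hand. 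The remaining pullback pasting is then essentially identical to your final step.

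In short: your route is the honest elementary computation, and the obstacle you flag (describing $\llbracket\mathbbm{2},\bA\rrbracket_1$ and its interaction with $i\circ T_0$) is real but tractable. The paper's route packages precisely that computation into a single pushout identity in $\Cat$, which is slicker and avoids bookkeeping with the internal hom at level~$1$.
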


\begin{proof}
    Consider the following pushout in $\Cat$.
    \begin{tz}
        \node[](1) {$\mathbbm{2}$}; 
\node[below of=1](3) {$\mathbbm{1}$};
\node[right of=1,xshift=.9cm](2) {$\mathbbm{2}\times \mathbbm{2}$}; 
\node[below of=2](4) {$\mathbbm{2}\sqcup_{\mathbbm{1}} \mathbbm{2}$};
\pushout{4};

\draw[->] (1) to node[above,la]{$\id_\mathbbm{2}\times \langle 1\rangle$} (2);
\draw[->] (2) to (4);
\draw[->] (1) to node[left,la]{$!$} (3);
\draw[->] (3) to node[below,la]{$\langle 2\rangle$} (4);
    \end{tz}
    By applying the functor $\llbracket -,\bA\rrbracket$, we get a pullback in $\CatV$ as below left, whose level 0  can be seen to be the pullback in $\cV$ depicted below right using \cref{rem:hom2A}.
    \begin{tz}
        \node[](1) {$\llbracket\mathbbm{2}\sqcup_{\un} \mathbbm{2},\bA\rrbracket$}; 
\node[below of=1](3) {$\bA$};
\node[right of=1,xshift=2cm](2) {$\llbracket\mathbbm{2}\times \mathbbm{2},\bA\rrbracket$}; 
\node[below of=2](4) {$\llbracket\mathbbm{2},\bA\rrbracket$};
\pullback{1};

\draw[->] (1) to (2);
\draw[->] (2) to node[right,la]{$(\id_\mathbbm{2}\times \langle 1\rangle)^*$} (4);
\draw[->] (1) to node[left,la]{$\langle 2\rangle^*$} (3);
\draw[->] (3) to node[below,la]{$!^*$} (4);

    \node[right of=2,xshift=2cm](1) {$\bA_1\times_{\bA_0}\bA_1$}; 
\node[below of=1](3) {$\bA_0$};
\node[right of=1,xshift=1.6cm](2) {$\llbracket\mathbbm{2},\bA\rrbracket_1$}; 
\node[below of=2](4) {$\bA_1$};
\pullback{1};

\draw[->] (1) to (2);
\draw[->] (2) to node[right,la]{$\langle 1\rangle^*_1$} (4);
\draw[->] (1) to node[left,la]{$t\circ \pi_1$} (3);
\draw[->] (3) to node[below,la]{$i$} (4);
    \end{tz}
    
Now consider the following commutative diagram in $\cV$.
    \begin{tz}
    \node[](1') {$(\slice{\bA}{T})_1$};
    \node[below of=1'](3') {$\un$};
\node[right of=1',xshift=1.7cm](1) {$\bA_1\times_{\bA_0} \bA_1$}; 
\node[below of=1](3) {$\bA_0$};
\node[right of=1,xshift=1.7cm](2) {$\llbracket\mathbbm{2},\bA\rrbracket_1$}; 
\node[below of=2](4) {$\bA_1$};
\pullback{1};

\draw[->] (1) to (2);
\draw[->] (2) to node[right,la]{$\langle 1\rangle^*_1$} (4);
\draw[->] (1) to node[left,la]{$t\circ \pi_1$} (3);
\draw[->] (3) to node[below,la]{$i$} (4);
\draw[->,dashed] (1') to (1);
\draw[->] (1') to (3');
\draw[->] (3') to node[below,la]{$T$} (3);
\draw[->,bend left=20] (1') to (2);
\end{tz}
By the universal property of the pullback, there is a unique dashed map as depicted. Moreover,  as the outer square is a pullback by \cref{rem:pullbackslice}, by cancellation of pullbacks the left-hand square is also a pullback. Finally, consider the following diagram in $\cV$. 
\begin{tz}
\node[](1) {$(\slice{\bA}{T})_1$}; 
\node[below of=1](3) {$\bA_1\times_{\bA_0}\bA_1$};
\node[below of=3](5) {$\bA_1$};
\node[right of=1,xshift=1.3cm](2) {$(\slice{\bA}{T})_0$}; 
\node[right of=2,xshift=1cm](2') {$\un$}; 
\node[below of=2'](4') {$\bA_0$};
\node[below of=2](4) {$\bA_1$};
\node[below of=4](6) {$\bA_0$};
\pullback{3};
\pullback{2};

\draw[->] (1) to node[above,la]{$t$} (2);
\draw[->] (2) to (2');
\draw[->] (2) to (4);
\draw[->] (2') to node[right,la]{$T$} (4');
\draw[->] (4) to node[right,la]{$s$} (6);
\draw[->] (1) to node[left,la]{} (3);
\draw[->] (3) to node[left,la]{$\pi_0$} (5);
\draw[->] (3) to node[below,la]{$\pi_1$} (4);
\draw[->] (4) to node[below,la]{$t$} (4');
\draw[->] (5) to node[below,la]{$t$} (6);
\end{tz}
The top right-hand square is a pullback by \cref{rem:pullbackslice}, the horizontal rectangle is a pullback as explained above, and the bottom left square is a pullback by definition of $\bA_1\times_{\bA_0}\bA_1$. Hence, by cancellation of pullbacks, we get that the top left square is a pullback, and hence so is the vertical rectangle. This shows that $\slice{\bA}{T}\to \bA$ is an internal discrete fibration. 
\end{proof}

\subsubsection*{Comma internal categories} Next, we define comma internal categories, and show that they provide another source of internal discrete fibrations.

\begin{defn}\label{defn:internalcomma}
    Let $H\colon \bA\to \bB$ be an internal functor to $\cV$, and $B\colon \un\to \bB$ be an element. The \textbf{comma internal category} $H\downarrow B$ is defined as the following pullback in $\CatV$.
\begin{tz}
\node[](1) {$H\downarrow B$}; 
\node[below of=1](3) {$\bA$};
\node[right of=1,xshift=1.2cm](2) {$\llbracket\mathbbm{2},\bB\rrbracket$}; 
\node[below of=2](4) {$\bB\times\bB$};
\pullback{1};

\draw[->] (1) to node[above,la]{} (2);
\draw[->] (2) to node[right,la]{$\langle 0,1\rangle^*$} (4);
\draw[->] (1) to node[left,la]{} (3);
\draw[->] (3) to node[below,la]{$H\times B$} (4);
\end{tz}
\end{defn}

\begin{prop} \label{prop:commaisdiscfib}
    Let $H\colon \bA\to \bB$ be an internal functor to $\cV$, and $B\colon \un\to \bB$ be an element. Then the projection $H\downarrow B\to \bA$ is an internal discrete fibration. 
\end{prop}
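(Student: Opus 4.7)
The plan is to reduce the comma case to the slice case, which was already handled in \cref{prop:sliceisdiscfib}, and then invoke the stability of internal discrete fibrations under pullbacks from \cref{prop:pullbackofdiscfib}. The key observation is that a comma internal category over an element is obtained as a pullback of a slice along the given internal functor.

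More precisely, I would first argue that there is an isomorphism $H\downarrow B\cong \bA\times_\bB (\slice{\bB}{B})$ over $\bA$. To see this, consider the diagram in $\CatV$
\begin{tz}
\node[](1) {$H\downarrow B$};
\node[right of=1,xshift=1.4cm](2) {$\slice{\bB}{B}$};
\node[right of=2,xshift=1.5cm](5) {$\llbracket\mathbbm{2},\bB\rrbracket$};
\node[below of=1](3) {$\bA$};
\node[below of=2](4) {$\bB$};
\node[below of=5](6) {$\bB\times\bB$};

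\draw[->] (1) to (2);
\draw[->] (2) to (5);
\draw[->] (5) to node[right,la]{$\langle 0,1\rangle^*$} (6);
\draw[->] (1) to (3);
\draw[->] (2) to (4);
\draw[->] (3) to node[below,la]{$H$} (4);
\draw[->] (4) to node[below,la]{$\id_\bB\times B$} (6);
\end{tz}
where the right square is the defining pullback of $\slice{\bB}{B}$ from \cref{defn:slice}. The composite of the bottom maps is $H\times B\colon\bA\to\bB\times\bB$, so by \cref{defn:internalcomma} the outer rectangle is a pullback precisely when it computes $H\downarrow B$. By the pullback pasting lemma applied to the right pullback square, defining the left square as a pullback makes the outer rectangle a pullback, and hence identifies the left square with the defining pullback of $H\downarrow B$.

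With this identification, the projection $H\downarrow B\to\bA$ is the pullback of the slice projection $\slice{\bB}{B}\to\bB$ along $H\colon\bA\to\bB$. Since $\slice{\bB}{B}\to\bB$ is an internal discrete fibration by \cref{prop:sliceisdiscfib}, its pullback $H\downarrow B\to\bA$ is also an internal discrete fibration by \cref{prop:pullbackofdiscfib}, which is the desired conclusion. There is no real obstacle here beyond bookkeeping with the pullback pasting lemma; the work has already been done in establishing \cref{prop:sliceisdiscfib,prop:pullbackofdiscfib}.
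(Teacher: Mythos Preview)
Your proposal is correct and takes essentially the same approach as the paper: both factor the comma through the slice via the same two-square diagram, identify the left square as a pullback (you argue via pasting, the paper via cancellation, which are equivalent here), and then invoke \cref{prop:sliceisdiscfib} and \cref{prop:pullbackofdiscfib}.
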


\begin{proof}
    Consider the following diagram in $\CatV$.
    \begin{tz}
    \node[](1') {$H\downarrow B$};
    \node[below of=1'](3') {$\bA$};
\node[right of=1',xshift=1.1cm](1) {$\slice{\bB}{B}$}; 
\node[below of=1](3) {$\bB$};
\node[right of=1,xshift=1.1cm](2) {$\llbracket\mathbbm{2},\bB\rrbracket$}; 
\node[below of=2](4) {$\bB\times\bB$};
\pullback{1};

\draw[->] (1) to (2);
\draw[->,bend left=20] (1') to (2);
\draw[->] (2) to node[right,la]{$\langle 0,1\rangle^*$} (4);
\draw[->] (1) to node[left,la]{} (3);
\draw[->] (3) to node[below,la]{$\id_\bB\times B$} (4);
\draw[->,dashed] (1') to (1);
\draw[->] (1') to (3');
\draw[->] (3') to node[below,la]{$H$} (3);
\end{tz}
By the universal property of the pullback, there is a unique dashed internal functor as depicted. Moreover, note that the right-hand square and the outer square are pullbacks by definition of the slice and comma internal categories, respectively. Hence, by cancellation of pullbacks, the left-hand square is also a pullback. Since we know by \cref{prop:sliceisdiscfib} that $\slice{\bB}{B}\to \bB$ is an internal discrete fibration, \cref{prop:pullbackofdiscfib} then ensures that its pullback $H\downarrow B\to \bA$ is also an internal discrete fibration. 
\end{proof}

\subsubsection*{Internal Grothendieck construction} 
The projection from the internal category of elements from \cref{defn:groth} is another example of an internal discrete fibration, whenever $\cV$ is cartesian closed and extensive.

\begin{prop}\label{grothdiscfib}
    Let $F\colon\cC^{\op}\to\cV$ be a $\cV$-functor. Then the projection from its internal category of elements $\pi_F\colon\int_\cC F\to\Int \cC$ is an internal discrete fibration. 
\end{prop}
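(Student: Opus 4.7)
The plan is to unpack the defining square for an internal discrete fibration and then exploit extensivity of $\cV$ to reduce to a completely trivial levelwise pullback. Explicitly, I need to verify that
\begin{tz}
\node[](1) {$\bigsqcup_{A,B\in \cC}\cC(A,B)\times FB$};
\node[below of=1](3) {$\bigsqcup_{A,B\in \cC}\cC(A,B)$};
\node[right of=1,xshift=3.5cm](2) {$\bigsqcup_{B\in \cC}FB$};
\node[below of=2](4) {$\bigsqcup_{B\in \cC}\un$};
\draw[->] (1) to node[above,la]{$t$} (2);
\draw[->] (2) to node[right,la]{$(\pi_F)_0$} (4);
\draw[->] (1) to node[left,la]{$(\pi_F)_1$} (3);
\draw[->] (3) to node[below,la]{$t$} (4);
\end{tz}
is a pullback in $\cV$, where by the definitions in \cref{defn:groth} the four maps are, in the notation of \cref{defn:oplus}, $\bigsqcup_{\mathrm{pr}_1}\pi_1$, $\bigsqcup_{B\in\cC}\,!$, $\bigsqcup_{\Ob\cC\times\Ob\cC}\pi_0$, and $\bigsqcup_{\mathrm{pr}_1}\,!$ respectively.

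For each fixed pair $(A,B)\in \Ob\cC\times \Ob\cC$, the commutative square
\begin{tz}
\node[](1) {$\cC(A,B)\times FB$};
\node[below of=1](3) {$\cC(A,B)$};
\node[right of=1,xshift=1.2cm](2) {$FB$};
\node[below of=2](4) {$\un$};
\pullback{1};
\draw[->] (1) to node[above,la]{$\pi_1$} (2);
\draw[->] (2) to node[right,la]{$!$} (4);
\draw[->] (1) to node[left,la]{$\pi_0$} (3);
\draw[->] (3) to node[below,la]{$!$} (4);
\end{tz}
is a pullback in $\cV$, simply because it is the product square of $\cC(A,B)$ and $FB$. So the outer square above is, tautologically, the coproduct of this family of elementary product pullbacks, after regrouping the index set $\Ob\cC\times\Ob\cC$ as $\bigsqcup_{B\in\cC}\{A\in\cC\}$.

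The main step will then be to argue that extensivity lets us conclude the coproduct of these pullbacks is itself a pullback. I would phrase this as follows: by \cref{rem:extensive}, the functor $\bigsqcup_{B\in\cC}\colon \prod_{B\in\cC}\slice{\cV}{\un}\to \slice{\cV}{\bigsqcup_{B\in\cC}\un}$ is an equivalence, and under this equivalence the map $(\pi_F)_0$ corresponds to the family $\{FB\to \un\}_{B\in\cC}$ while the target map $t\colon (\Int\cC)_1\to (\Int\cC)_0$ corresponds to the family $\{\bigsqcup_{A\in\cC}\cC(A,B)\to \un\}_{B\in\cC}$ (this re-indexing of the coproduct $\bigsqcup_{A,B\in\cC}\cC(A,B)$ over $B$ is precisely the content of \cref{extensivelemma}). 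Since pullbacks in the slice $\slice{\cV}{\bigsqcup_{B\in\cC}\un}$ are computed componentwise under this equivalence, the pullback of these two maps is the coproduct over $B\in\cC$ of the products $FB\times \bigsqcup_{A\in\cC}\cC(A,B)$, and by distributivity of $\times$ over $\bigsqcup$ in $\cV$ (which also follows from extensivity, as products are fiber products over $\un$) this is exactly $\bigsqcup_{A,B\in\cC}\cC(A,B)\times FB = (\int_\cC F)_1$.

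I do not anticipate any real obstacle; the only thing to be careful about is bookkeeping the indexing maps correctly so that the coproduct decompositions line up, and making sure to invoke extensivity both for the equivalence of slice categories and for the distributivity of products over coproducts. An alternative, essentially equivalent presentation would be to observe that the arguments in \cref{subsec:discfiboverC} give, for any internal discrete fibration $P\colon\bA\to\Int\cC$ with fibers $P^{-1}A$, an isomorphism $\bA_1\cong \bigsqcup_{A,B\in\cC}\cC(A,B)\times P^{-1}B$; applied in reverse, one reads off that setting $P^{-1}A := FA$ produces an internal discrete fibration whose total internal category is precisely $\int_\cC F$.
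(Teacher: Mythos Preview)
Your proposal is correct and follows the same approach as the paper: the paper's proof is the one-line observation that the required square is a pullback because ``coproducts in $\cV$ preserve pullbacks, as $\cV$ is extensive,'' and what you have written is precisely a careful unpacking of that sentence (re-indexing over $B$, identifying the componentwise product squares, and invoking the equivalence $\prod_{B}\slice{\cV}{\un}\simeq\slice{\cV}{\bigsqcup_B\un}$). Your level of detail is more than the paper provides, but the argument is the same.
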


\begin{proof}
    To see this, we need to check that the following diagram in $\cV$ is a pullback.
\begin{tz}
\node[](1) {$\bigsqcup_{A,B\in \cC} \cC(A,B)\times FB$}; 
\node[below of=1](3) {$\bigsqcup_{A,B\in\cC}\cC(A,B)$};
\node[right of=1, xshift=2.8cm](2) {$\bigsqcup_{B\in\cC} FB$}; 
\node[below of=2](4) {$\bigsqcup_{B\in\cC} \un$}; 

\draw[->] (1) to node[above,la]{$t$} (2);
\draw[->] (2) to node[right,la]{$(\pi_F)_1$} (4);
\draw[->] (1) to node[left,la]{$(\pi_F)_0$} (3);
\draw[->] (3) to node[below,la]{$t$} (4);
\end{tz}
This is immediate from the fact that coproducts in $\cV$ preserve pullbacks, as $\cV$ is extensive. 
\end{proof}

\section{Equivalence between enriched functors and internal discrete fibrations}\label{section:equiv}

In this section, let $\cV$ be an extensive and cartesian closed category with pullbacks. In light of \cref{grothdiscfib}, we know that the internal Grothendieck construction takes values in internal discrete fibrations. The aim of this section is to prove the following.

\begin{theorem}\label{thm:equiv}
    The internal Grothendieck construction gives an equivalence of categories
    \[ \textstyle\int_\cC\colon \VCat(\cC^{\op},\cV)\to \slice{\Dfib}{\Int\cC} \]
    between the category of $\cV$-functors $\cC^{\op}\to\cV$, and the category of internal discrete fibrations over~$\Int\cC$. 
\end{theorem}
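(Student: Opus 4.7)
The plan is to construct an inverse functor $\Phi\colon \slice{\Dfib}{\Int\cC}\to \VCat(\cC^{\op},\cV)$ using the fiber analyses carried out in \cref{subsec:discfiboverC,subsec:mapofdiscfib}, and then show that $\Phi$ and $\int_\cC$ are mutually inverse.

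On an internal discrete fibration $P\colon \bA\to \Int\cC$, I would define $\Phi(P)\colon \cC^{\op}\to \cV$, via the characterization of \cref{lem:VfunctortoV}, by the assignment $A\mapsto P^{-1}A$ on objects together with the structure maps $\ev^{\Phi(P)}_{A,B}\coloneqq \ev^{P^{-1}}_{A,B}$ produced in \cref{subsec:discfiboverC}. To verify the compatibility axioms of \cref{lem:VfunctortoV}, I use the isomorphism $\varphi\colon \FibA\cong \bA$ constructed in \cref{subsec:discfiboverC}: under it, the internal category axioms of $\bA$ become coproduct-indexed commutative diagrams in $\cV$ whose components are built from $\ev^{P^{-1}}_{A,B}$, $i_B$, and $c_{A,B,C}$, and applying \cref{extensivelemma} extracts the summand-wise commutative diagrams that are exactly the required $\cV$-functor axioms. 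On morphisms, given an internal functor $H\colon \bA\to \bB$ over $\Int\cC$ between internal discrete fibrations, I would define $\Phi(H)\colon \Phi(P)\Rightarrow \Phi(Q)$ via \cref{lem:VnattoV} by components $\ev^{\Phi(H)}_A\coloneqq \ev^H_A$; the $\cV$-naturality follows by the same strategy, using the identity $H_1=\bigsqcup_{A,B\in\cC}(\id_{\cC(A,B)}\times \ev^H_B)$ established in \cref{subsec:mapofdiscfib} combined with compatibility of $H$ with sources, and reduced summand-by-summand via \cref{extensivelemma}.

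It then remains to produce the two natural isomorphisms. For $\int_\cC\circ \Phi\cong \id$, observe that by \cref{defn:groth}, the internal category $\int_\cC\Phi(P)$ is literally the internal category $\FibA$ of \cref{subsec:discfiboverC} equipped with the same projection to $\Int\cC$; hence $\varphi\colon \FibA\cong \bA$ furnishes the required isomorphism over $\Int\cC$, natural in $P$ by a direct comparison of formulas. For $\Phi\circ \int_\cC\cong \id$, given $F\colon \cC^{\op}\to \cV$, extensivity (\cref{rem:extensive}) identifies the fiber $\pi_F^{-1}A$ canonically with $FA$, since the defining pullback of $\pi_F^{-1}A$ is taken along the coproduct inclusion $\un\to \bigsqcup_{A\in\cC}\un$; tracing the characterization of $\ev^{\pi_F^{-1}}_{A,B}$ through the source map of $\int_\cC F$, which is built from $\ev^F_{A,B}$ by construction, then yields $\ev^{\pi_F^{-1}}_{A,B}=\ev^F_{A,B}$, so $\Phi(\int_\cC F)=F$ on the nose.

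The main obstacle will be the verification that the fiber data assembles into a $\cV$-functor and that internal functors between internal discrete fibrations over $\Int\cC$ assemble into $\cV$-natural transformations. Both reductions rest on repeated applications of extensivity (\cref{extensivelemma}) to decompose coproduct-indexed commutative diagrams summand-wise, but most of the structural groundwork has already been laid in \cref{subsec:discfiboverC,subsec:mapofdiscfib}, leaving mostly bookkeeping.
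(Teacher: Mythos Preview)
Your proposal is correct and follows essentially the same approach as the paper: construct the inverse $\Phi$ by taking fibers and using the structural analyses of \cref{subsec:discfiboverC,subsec:mapofdiscfib} to verify the $\cV$-functor and $\cV$-naturality axioms, then exhibit $\varphi\colon \FibA\cong \bA$ as the component of $\int_\cC\circ\Phi\cong\id$ and the canonical fiber identification $\pi_F^{-1}A\cong FA$ as the component of $\Phi\circ\int_\cC\cong\id$. The only cosmetic difference is that the paper records the latter as a natural isomorphism $\eta$ rather than an on-the-nose equality, since $\pi_F^{-1}A$ is defined as a pullback and hence only determined up to canonical isomorphism; your ``on the nose'' claim is fine provided you choose $FA$ itself as that pullback.
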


This proof is done in two stages: first, in \cref{subsec:inversegroth} we construct the inverse functor, and prove that it is well-defined; then, in \cref{subsec:proofequiv} we prove that this is indeed an inverse by constructing the required natural isomorphisms. Moreover, once the equivalence is settled, in \cref{subsec:enrichedequiv} we show that it can be promoted to an equivalence of $\cV$-categories.

\subsection{The inverse of the internal Grothendieck construction}\label{subsec:inversegroth}

From the data of an internal discrete fibration $P\colon \bA\to\Int\cC$, we construct a $\cV$-functor $\cC^{\op}\to \cV$ by using the description in \cref{lem:VfunctortoV}.

\begin{constr}
Let $P\colon \bA\to\Int\cC$ be an internal discrete fibration in $\CatV$. We define a $\cV$-functor $P^{-1}\colon\cC^{\op}\to\cV$ such that
\begin{rome}
    \item it sends an object $A\in \cC$ to the object $P^{-1}A$ of $\cV$  defined in \eqref{defn:fiber},
    \item for objects $A,B\in \cC$, the map $\ev^{P^{-1}}_{A,B}$ is given by the map in $\cV$ defined in \eqref{evP-1} \[ \ev^{P^{-1}}_{A,B}\colon \cC(A,B)\times P^{-1}B\to P^{-1}A. \]
\end{rome}
\end{constr}

\begin{prop}\label{prop:phiP}
The correspondence $P^{-1}\colon\cC^{\op}\to\cV$ is a $\cV$-functor.
\end{prop}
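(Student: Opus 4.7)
By \cref{lem:VfunctortoV}, the $\cV$-functoriality of $P^{-1}$ amounts to verifying two conditions on the family $\{\ev^{P^{-1}}_{A,B}\}$: compatibility with identities and compatibility with compositions. The plan is to extract both conditions from the fact that $\FibA$, being isomorphic via $\varphi$ to the internal category $\bA$, is itself an internal category. In particular, the coproduct descriptions of the source, identity, and composition maps of $\FibA$ obtained in \cref{subsec:discfiboverC}, combined with the uniqueness afforded by \cref{extensivelemma} (equivalently, the fully faithfulness of $\bigsqcup_{I}\colon \prod_{i\in I}\slice{\cV}{X_i}\to \slice{\cV}{\bigsqcup_{i\in I} X_i}$), let us detect component-wise identities from identities between maps of coproducts.

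For compatibility with identities, I would invoke the internal category axiom $s\circ i=\id_{\FibA_0}$. Substituting $i=\bigsqcup_{\mathrm{diag}}(i_B\times \id_{P^{-1}B})$ and $s=\bigsqcup_{\mathrm{pr}_0}\ev^{P^{-1}}_{A,B}$, and applying the uniqueness above to the resulting coproduct identity over $\bigsqcup_{B\in\cC}\un$, one immediately obtains, for every $B\in\cC$, the required equation
\[ \ev^{P^{-1}}_{B,B}\circ (i_B\times \id_{P^{-1}B})=\id_{P^{-1}B}. \]
For compatibility with compositions, I would use the axiom $s\circ c=s\circ \pi_0$, where $\pi_0\colon \FibA_1\times_{\FibA_0}\FibA_1\to \FibA_1$ is the projection onto the first morphism of a composable pair. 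Under the iso $\varphi_2$, a triple $(f,g,y)\in \cC(A,B)\times\cC(B,C)\times P^{-1}C$ corresponds to the composable pair whose second morphism is $(g,y)\in\cC(B,C)\times P^{-1}B$ and whose first morphism is therefore $(f,\ev^{P^{-1}}_{B,C}(g,y))\in\cC(A,B)\times P^{-1}B$; consequently $\pi_0$ on components is $(f,g,y)\mapsto (f,\ev^{P^{-1}}_{B,C}(g,y))$. Combining this with $c=\bigsqcup_{\mathrm{pr}_{0,2}}(c_{A,B,C}\times\id_{P^{-1}C})$ and the formula for $s$, the axiom $s\circ c=s\circ \pi_0$ becomes, at each $(A,B,C)$, the identity
\[ \ev^{P^{-1}}_{A,C}\circ(c_{A,B,C}\times \id_{P^{-1}C})=\ev^{P^{-1}}_{A,B}\circ(\id_{\cC(A,B)}\times \ev^{P^{-1}}_{B,C}), \]
which is precisely the required compatibility.

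The main technical obstacle will be justifying the explicit form of $\pi_0$ under $\varphi_2$, since $\varphi_2$ was defined abstractly as a composite of pullbacks. This reduces to the observation that a composable pair $(h,k)\in \bA_1\times_{\bA_0}\bA_1$ at component $(A,B,C)$ must satisfy $t(h)=s(k)=\ev^{P^{-1}}_{B,C}(g,y)$ for $k=(g,y)$, which together with the pullback universal property forces $h=(f,\ev^{P^{-1}}_{B,C}(g,y))$; the rest of the argument is a routine unwinding of definitions.
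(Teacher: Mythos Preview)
Your proof is correct and follows essentially the same approach as the paper: both extract the two compatibility conditions of \cref{lem:VfunctortoV} from the internal category axioms $s\circ i=\id$ and $s\circ c=s\circ\pi_0$ for $\FibA$, using the componentwise descriptions of $s$, $i$, and $c$ established in \cref{subsec:discfiboverC}. The paper is slightly more terse, simply asserting that the components of these two diagrams at objects of $\cC$ yield the desired conditions, whereas you explicitly unwind the componentwise form of $\pi_0$; this extra justification is correct and arguably clarifies a step the paper leaves implicit.
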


\begin{proof}
 To show that $P^{-1}$ is indeed a $\cV$-functor, we must verify the conditions in \cref{lem:VfunctortoV}. For this, first note that since $\FibA$ is an internal category, the following diagrams in $\CatV$ commute.
    \begin{tz}
       \node[](1) {$\FibA_0$}; 
\node[right of=1,xshift=1.6cm](2) {$\FibA_1$}; 
\node[below of=2](4) {$\FibA_0$}; 

\draw[->] (1) to node[above,la]{$i$} (2);
\draw[->] (2) to node[right,la]{$s$} (4);
\draw[->] (1) to node[below,la,xshift=-6pt]{$\id_{\FibA_0}$} (4);

\node[right of=2,xshift=2.7cm](1) {$\FibA_1\times_{\FibA_0}\FibA_1$}; 
\node[below of=1](3) {$\FibA_1$};
\node[right of=1, xshift=3cm](2) {$\FibA_1$}; 
\node[below of=2](4) {$\FibA_0$}; 

\draw[->] (1) to node[above,la]{$c$} (2);
\draw[->] (2) to node[right,la]{$s$} (4);
\draw[->] (1) to node[left,la]{$\pi_0$} (3);
\draw[->] (3) to node[below,la]{$s$} (4);
\end{tz}
   Then, as explained in \cref{subsec:discfiboverC}, the components of the above diagrams at objects $A,B,C\in \cC$ consist of the following commutative diagrams in $\cV$,
    \begin{tz}
 
\node[](1) {$P^{-1}B$}; 
\node[right of=1,xshift=2.1cm](2) {$\cC(B,B)\times P^{-1} B$}; 
\node[below of=2](4) {$P^{-1}B$}; 

\draw[->] (1) to node[above,la]{$i_B\times \id_{P^{-1}B}$} (2);
\draw[->] (2) to node[right,la]{$\ev^{P^{-1}}_{B,B}$} (4);
\draw[->] (1) to node[below,la,xshift=-6pt]{$\id_{P^{-1} B}$} (4);

\node[right of=2,xshift=2.9cm](1) {$\cC(A,B)\times \cC(B,C)\times P^{-1}C$}; 
\node[below of=1](3) {$\cC(A,B)\times P^{-1}B$};
\node[right of=1, xshift=4.3cm](2) {$\cC(A,C)\times P^{-1}C$}; 
\node[below of=2](4) {$P^{-1}A$}; 

\draw[->] (1) to node[above,la]{$c_{A,B,C}\times \id_{P^{-1}C}$} (2);
\draw[->] (2) to node[right,la]{$\ev^{P^{-1}}_{A,C}$} (4);
\draw[->] (1) to node[left,la]{$\id_{\cC(A,B)}\times \ev^{P^{-1}}_{B,C}$} (3);
\draw[->] (3) to node[below,la]{$\ev^{P^{-1}}_{A,B}$} (4);
\end{tz}
   which express the desired compatibility of $P^{-1}$ with identities and compositions.
\end{proof} 

We now describe a way to produce a $\cV$-natural transformation between $\cV$-functors $\cC^{\op}\to \cV$ from the data of an internal functor $H\colon \bA\to \bB$ between internal discrete fibrations over $\Int\cC$, using  \cref{lem:VnattoV}.

\begin{constr}
Consider a commutative diagram in $\CatV$
\begin{tz}
\node[](1) {$\bA$}; 
\node[below of=1,xshift=1.8cm,yshift=.2cm](3) {$\Int\cC$};
\node[above of=3,xshift=1.8cm,yshift=-.2cm](2) {$\bB$}; 

\draw[->] (1) to node[above,la]{$H$} (2);
\draw[->] (2) to node[right,la,yshift=-4pt]{$P$} (3);
\draw[->] (1) to node[left,la,yshift=-4pt]{$Q$} (3);
\end{tz}
    with $P$ and $Q$ internal discrete fibrations over $\Int\cC$. We define a $\cV$-natural transformation $H\colon P^{-1}\Rightarrow Q^{-1}$ whose component $\ev^{H}_A$ at an object $A\in \cC$ is given by the map in $\cV$ defined in~\eqref{defn:mapoffibers} 
\[ \ev^H_A\colon P^{-1} A\to Q^{-1} A. \]
\end{constr}

\begin{prop}
    The correspondence $H\colon P^{-1}\Rightarrow Q^{-1}$ is a $\cV$-natural transformation.
\end{prop}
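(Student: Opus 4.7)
The plan is to exploit the fact that, by definition, $\FibF = \psi^{-1}\circ H\circ \varphi\colon \FibA\to\FibB$ is an internal functor between the internal categories $\FibA$ and $\FibB$ constructed in \cref{subsec:discfiboverC}. In particular, $\FibF$ satisfies compatibility with source maps: $s\circ\FibF_1 = \FibF_0\circ s$.

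I would then combine this with the explicit descriptions of these maps collected in \cref{subsec:discfiboverC,subsec:mapofdiscfib}. Concretely, the source map $s\colon \FibA_1\to \FibA_0$ is the coproduct map $\bigsqcup_{\mathrm{pr}_0}\ev^{P^{-1}}_{A,B}$ over the family $\{\ev^{P^{-1}}_{A,B}\colon \cC(A,B)\times P^{-1}B\to P^{-1}A\}_{A,B\in\cC}$, and similarly for $\bB$. On the other hand, by the analysis in \cref{subsec:mapofdiscfib}, the internal functor $\FibF$ is levelwise of the form $\FibF_0 = \bigsqcup_{A\in\cC}\ev^H_A$ and $\FibF_1 = \bigsqcup_{A,B\in\cC}\id_{\cC(A,B)}\times\ev^H_B$. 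Plugging these descriptions into the source compatibility yields an equality of maps $\bigsqcup_{A,B\in\cC}\cC(A,B)\times P^{-1}B\to\bigsqcup_{A\in\cC}Q^{-1}A$ indexed along the projection $\mathrm{pr}_0$.

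Finally, I would invoke \cref{extensivelemma}: since $\cV$ is extensive, the uniqueness statement allows us to decompose such an equality into a family of componentwise commuting squares, one for each pair of objects $A,B\in\cC$. The resulting square is precisely the naturality diagram
\begin{tz}
\node[](1) {$\cC(A,B)\times P^{-1}B$};
\node[below of=1](3) {$\cC(A,B)\times Q^{-1}B$};
\node[right of=1,xshift=1.8cm](2) {$P^{-1}A$};
\node[below of=2](4) {$Q^{-1}A$};
\draw[->] (1) to node[above,la]{$\ev^{P^{-1}}_{A,B}$} (2);
\draw[->] (2) to node[right,la]{$\ev^H_A$} (4);
\draw[->] (1) to node[left,la]{$\id_{\cC(A,B)}\times\ev^H_B$} (3);
\draw[->] (3) to node[below,la]{$\ev^{Q^{-1}}_{A,B}$} (4);
\end{tz}
required by \cref{lem:VnattoV} for $H$ to be a $\cV$-natural transformation $P^{-1}\Rightarrow Q^{-1}$.

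The proof is essentially a bookkeeping exercise; there is no substantial obstacle. The only subtle point to flag is ensuring the decomposition step truly follows from \cref{extensivelemma}\textemdash this is where extensivity of $\cV$ is crucial, since otherwise one could not move from an equality of coproduct maps to an equality of their components. I would therefore state the extensivity-based decomposition carefully and leave the verification of the remaining diagrammatic identities to the reader.
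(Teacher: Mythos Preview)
Your proposal is correct and follows essentially the same approach as the paper: the paper's proof also reduces the $\cV$-naturality condition of \cref{lem:VnattoV} to the source-compatibility of the internal functor $\FibF$, and then reads off the componentwise naturality square using the explicit descriptions established in \cref{subsec:discfiboverC,subsec:mapofdiscfib}. You are a bit more explicit than the paper in singling out \cref{extensivelemma} as the mechanism for passing from the coproduct-level equality to its components, but this is exactly the content behind the paper's phrase ``the component of the above diagram at objects $A,B\in\cC$''.
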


\begin{proof}
    To show that $H$ is indeed a $\cV$-natural transformation, we must verify the $\cV$-naturality condition in \cref{lem:VnattoV}. For this, first recall that since $\FibF\colon \FibA\to \FibB$ is an internal functor, the following diagram in $\CatV$ commutes. 
    \begin{tz}
\node[](1) {$\FibA_1$}; 
\node[below of=1](3) {$\FibB_1$};
\node[right of=1,xshift=1.6cm](2) {$\FibA_0$}; 
\node[below of=2](4) {$\FibB_0$}; 

\draw[->] (1) to node[above,la]{$s$} (2);
\draw[->] (2) to node[right,la]{$\FibF_0$} (4);
\draw[->] (1) to node[left,la]{$\FibF_1$} (3);
\draw[->] (3) to node[below,la]{$s$} (4);
\end{tz}
Then, as explained in \cref{subsec:discfiboverC,subsec:mapofdiscfib}, the component of the above diagram at objects $A,B\in \cC$ consists of the following commutative diagram in $\cV$,
\begin{tz}
\node[](1) {$\cC(A,B)\times P^{-1}B$}; 
\node[below of=1](3) {$\cC(A,B)\times Q^{-1}B$};
\node[right of=1,xshift=2.2cm](2) {$P^{-1}A$}; 
\node[below of=2](4) {$Q^{-1}A$}; 

\draw[->] (1) to node[above,la]{$\ev^{P^{-1}}_{A,B}$} (2);
\draw[->] (2) to node[right,la]{$\ev^H_A$} (4);
\draw[->] (1) to node[left,la]{$\id_{\cC(A,B)}\times \ev^H_B$} (3);
\draw[->] (3) to node[below,la]{$\ev^{Q^{-1}}_{A,B}$} (4);
\end{tz}
which expresses the desired $\cV$-naturality condition for $H$.
\end{proof}

We can use the above constructions to define the inverse to the internal Grothendieck construction.

\begin{constr}\label{constr:phi}
    We define a functor $\Phi\colon \slice{\Dfib}{\Int\cC}\to\VCat(\cC^{\op},\cV)$, which sends
    \begin{rome}
        \item an internal discrete fibration $P\colon\bA\to\Int\cC$ to the $\cV$-functor $P^{-1}\colon\cC^{\op}\to\cV$,
        \item an internal functor $H\colon \bA\to \bB$ over $\Int\cC$ between internal discrete fibrations $P$ and $Q$ to the $\cV$-natural transformation $H\colon P^{-1}\Rightarrow Q^{-1}$.
    \end{rome}
\end{constr}

\begin{prop}
    The correspondence $\Phi\colon \slice{\Dfib}{\Int\cC}\to\VCat(\cC^{\op},\cV)$ is a functor.
\end{prop}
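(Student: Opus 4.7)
The plan is to verify the two functoriality axioms separately: preservation of identities and preservation of composition. Since Propositions~5.3 and~5.4 already ensure that $\Phi$ lands in the correct types of morphisms, only these two equalities remain to be checked. Both reduce to invoking the uniqueness half of the universal property of the pullback \eqref{defn:fiber} defining $P^{-1}A$.

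For identities, I would fix an internal discrete fibration $P\colon\bA\to\Int\cC$ and an object $A\in\cC$, and show that $\ev^{\id_\bA}_A = \id_{P^{-1}A}$. By \cref{defn:mapoffibers}, $\ev^{\id_\bA}_A$ is the unique map into the pullback $P^{-1}A$ making the diagram involving $(\id_\bA)_0 = \id_{\bA_0}$ commute. Since the identity map $\id_{P^{-1}A}$ clearly satisfies that diagram, uniqueness gives $\ev^{\id_\bA}_A = \id_{P^{-1}A}$, which means $\Phi(\id_\bA) = \id_{P^{-1}}$.

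For composition, I would take internal functors $H\colon\bA\to\bB$ and $K\colon\bB\to\bC$ over $\Int\cC$, between internal discrete fibrations $P\colon\bA\to\Int\cC$, $Q\colon\bB\to\Int\cC$, $R\colon\bC\to\Int\cC$, and show that $\ev^{KH}_A = \ev^K_A \circ \ev^H_A$ for every $A\in\cC$. The idea is to paste the two defining diagrams for $\ev^H_A$ and $\ev^K_A$ vertically: the outer rectangle obtained witnesses that the composite $\ev^K_A \circ \ev^H_A$ factors through the pullback $R^{-1}A$ via the composite $K_0 \circ H_0 = (KH)_0$ into $\bC_0$. This is precisely the diagram characterizing $\ev^{KH}_A$, so by uniqueness we conclude $\ev^{KH}_A = \ev^K_A \circ \ev^H_A$, and hence $\Phi(KH) = \Phi(K)\circ\Phi(H)$ as $\cV$-natural transformations.

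There is no real obstacle here; the argument is a purely formal diagram chase using the universal property of pullbacks and the factorizations $(\id_\bA)_0 = \id_{\bA_0}$ and $(KH)_0 = K_0\circ H_0$. The only point worth organizing carefully is the pasting of pullback squares in the composition step, but this is routine.
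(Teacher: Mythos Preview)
Your proposal is correct and follows exactly the approach of the paper: the paper's proof simply observes that preservation of identities and compositions follows from the uniqueness clause in the universal property of the pullbacks \eqref{defn:mapoffibers} defining the components $\ev^H_A$. Your write-up just unpacks this one-line justification into the two explicit checks.
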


\begin{proof}
The fact that $\Phi$ preserves compositions and identities follows from the uniqueness in the universal property of the pullbacks \eqref{defn:mapoffibers} defining the components of the $\cV$-natural transformations.
\end{proof}

\subsection{Proof of the equivalence}\label{subsec:proofequiv} 

Our strategy to prove \cref{thm:equiv} is to show that the internal Grothendieck construction $\int_\cC$ of \cref{sec:groth} and the functor $\Phi$ of \cref{constr:phi} are inverse equivalences. This requires us to construct natural isomorphisms \[  \textstyle\eta\colon \Phi\int_\cC\cong \id_{\VCat(\cC^{\op},\cV)} \quad \text{and} \quad \varepsilon\colon \int_\cC \Phi\cong \id_{\slice{\Dfib}{\Int\cC}}. \] 

We first define the components of $\eta$.

\begin{constr}\label{constr:eta}
    Let $F\colon\cC^{\op}\to\cV$ be a $\cV$-functor. We define a $\cV$-natural transformation $\eta_F\colon \Phi\int_\cC F\Rightarrow F$ whose component $\ev_A^{\eta_F}$ at an object $A\in \cC$ is given by the unique isomorphism 
    \[ \textstyle  \ev_A^{\eta_F}\colon (\int_\cC F)^{-1} A\xrightarrow{\cong} FA \]
    induced by the fact that both objects are pullbacks of the same diagram in $\cV$ as depicted below. 
    \begin{tz}
\node[](1) {$(\int_\cC F)^{-1} A$}; 
\node[below of=1](3) {$\un$};
\node[right of=1,xshift=1.8cm](2) {$\bigsqcup_{A\in \cC} FA$}; 
\node[below of=2](4) {$\bigsqcup_{A\in \cC} \un$}; 
\pullback{1};

\draw[->] (1) to node[above,la]{} (2);
\draw[->] (2) to node[right,la]{$\ev^F_A$} (4);
\draw[->] (1) to (3);
\draw[->] (3) to node[below,la]{$\iota_A$} (4);

\node[right of=2,xshift=1.5cm](1) {$FA$}; 
\node[below of=1](3) {$\un$};
\node[right of=1,xshift=1.3cm](2) {$\bigsqcup_{A\in \cC} FA$}; 
\node[below of=2](4) {$\bigsqcup_{A\in \cC} \un$}; 
\pullback{1};

\draw[->] (1) to node[above,la]{$\iota_A$} (2);
\draw[->] (2) to node[right,la]{$\ev^F_A$} (4);
\draw[->] (1) to (3);
\draw[->] (3) to node[below,la]{$\iota_A$} (4);
\end{tz}
It is straightforward to check that $\eta_F$ is $\cV$-natural, and that the $\eta_F$'s assemble into a natural isomorphism $\eta\colon \Phi \int_\cC \cong \id_{\VCat(\cC^{\op},\cV)}$.
\end{constr}

Next, we define the components of $\varepsilon\colon \int_\cC \Phi\cong \id_{\slice{\Dfib}{\Int\cC}}$. Given an internal discrete fibration $P\colon\bA\to\Int\cC$, one can unpack the definitions to see that $\int_\cC P^{-1}$ is equal to the internal category~$\FibA$ constructed in \cref{subsec:discfiboverC}. Then, considering a commutative diagram in~$\CatV$
\begin{tz}
\node[](1) {$\bA$}; 
\node[below of=1,xshift=1.8cm,yshift=.2cm](3) {$\Int\cC$};
\node[above of=3,xshift=1.8cm,yshift=-.2cm](2) {$\bB$}; 

\draw[->] (1) to node[above,la]{$H$} (2);
\draw[->] (2) to node[right,la,yshift=-4pt]{$P$} (3);
\draw[->] (1) to node[left,la,yshift=-4pt]{$Q$} (3);
\end{tz}
with $P$ and $Q$ internal discrete fibrations over $\Int\cC$, one can check that the internal functor $\int_\cC H\colon \int_\cC P^{-1}\to \int_\cC Q^{-1}$ agrees with the description of $\FibF\colon \FibA\to \FibB$ given in \cref{subsec:mapofdiscfib}. Moreover, recall the $\FibF$ was defined in such a way that the following diagram in~$\CatV$ commutes. 
\begin{tz}
\node[](1) {$\FibA$}; 
\node[below of=1](3) {$\FibB$};
\node[right of=1,xshift=1.1cm](2) {$\bA$}; 
\node[below of=2](4) {$\bB$};

\draw[->] (1) to node[above,la]{$\varepsilon_P\coloneqq \varphi$} (2);
\draw[->] (2) to node[right,la]{$H$} (4);
\draw[->] (1) to node[left,la]{$\FibF$} (3);
\draw[->] (3) to node[below,la]{$\varepsilon_Q\coloneqq \psi$} (4);
    \end{tz}

\begin{constr}\label{constr:epsilon}
    Let $P\colon \bA\to \Int\cC$ be an internal discrete fibration in $\CatV$. We define an internal functor $\varepsilon_P\colon \int_\cC P^{-1}\to \bA$ to be the isomorphism $\varphi\colon\FibA\cong\bA$ constructed in \cref{subsec:discfiboverC}. Moreover, the $\varepsilon_P$'s assemble into a natural isomorphism $\varepsilon\colon \int_\cC \Phi\cong \id_{\slice{\Dfib}{\Int\cC}}$.
\end{constr}

\begin{proof}[Proof of \cref{thm:equiv}]
    The data $(\Phi,\eta,\varepsilon)$ from \cref{constr:phi,constr:eta,constr:epsilon} makes \[\textstyle\int_\cC\colon \VCat(\cC^{\op},\cV)\to \slice{\Dfib}{\Int\cC}\] into an equivalence of categories. 
\end{proof}

\subsection{Enrichment of the equivalence}\label{subsec:enrichedequiv}

The categories involved in the equivalence of \cref{thm:equiv} can be promoted to $\cV$-categories, and we now explain how the equivalence from \cref{thm:equiv} can in turn be promoted to an equivalence of $\cV$-categories.

\begin{rem}
    The category $\VCat(\cC^{\op},\cV)$ is $\cV$-enriched with hom-objects given by $\cV^{\cC^{\op}}(F,G)$, for all $\cV$-functors $F,G\colon \cC^{\op}\to \cV$, as defined in \cref{rem:internalhomVcat}. We write $\cV^{\cC^{\op}}$ for this $\cV$-category. 
\end{rem}

\begin{rem}
    The category $\CatV$ is $\cV$-enriched with hom-objects given by $\llbracket \bA,\bB\rrbracket_0$, for internal categories $\bA$ and $\bB$ to $\cV$. Hence, the category $\slice{\CatV}{\Int\cC}$ is also $\cV$-enriched with hom-objects given by the following pullbacks in $\cV$,
\begin{tz}
\node[](1) {$\llbracket P,Q\rrbracket_0$}; 
\node[below of=1](3) {$\un$};
\node[right of=1,xshift=1.5cm](2) {$\llbracket \bA,\bB\rrbracket_0$}; 
\node[below of=2](4) {$\llbracket \bA,\Int\cC\rrbracket_0$};
\pullback{1};

\draw[->] (1) to (2);
\draw[->] (2) to node[right,la]{$(Q_*)_0$} (4);
\draw[->] (1) to (3);
\draw[->] (3) to node[below,la]{$P$} (4);
    \end{tz}
    for all internal functors $P\colon \bA\to \Int\cC$ and $Q\colon \bB\to \Int\cC$ to $\cV$. In particular, as a full subcategory of $\slice{\CatV}{\Int\cC}$, the category $\slice{\Dfib}{\Int\cC}$ inherits this $\cV$-enrichment. We write $\slice{\underline{\Dfib}}{\Int\cC}$ for this $\cV$-category. 
\end{rem}

\begin{defn}
    A $\cV$-functor $F\colon \cC\to \cD$ is an \textbf{equivalence of $\cV$-categories} if there is a $\cV$-functor $G\colon \cD\to \cC$ and $\cV$-natural isomorphisms $\eta\colon \id_\cC\cong GF$ and $\varepsilon\colon FG\cong \id_\cD$.
\end{defn}

\begin{theorem}\label{thm:enrichedequiv}
    The equivalence of categories $\int_\cC\colon \VCat(\cC^{\op},\cV)\to \slice{\Dfib}{\Int\cC}$ can be promoted to an equivalence of $\cV$-categories 
    \[ \textstyle\int_\cC \colon \cV^{\cC^{\op}}\to \slice{\underline{\Dfib}}{\Int\cC} \]
    between the $\cV$-category of $\cV$-functors $\cC^{\op}\to \cV$ and the $\cV$-category of internal discrete fibrations over $\Int\cC$. 
\end{theorem}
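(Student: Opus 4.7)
The plan is to upgrade the ordinary equivalence of \cref{thm:equiv} to a $\cV$-enriched equivalence by exhibiting $\int_\cC$ as a $\cV$-functor and verifying that it induces isomorphisms on hom-objects; essential surjectivity is then inherited from the underlying ordinary equivalence. The inverse $\cV$-functor will be given by the $\cV$-enrichment of $\Phi$ from \cref{constr:phi}, and the unit and counit will lift from the natural isomorphisms $\eta$ and $\varepsilon$ of \cref{constr:eta,constr:epsilon}.

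The key technical step is, for $\cV$-functors $F, G\colon\cC^{\op}\to\cV$, to construct an isomorphism in $\cV$
\[ \textstyle \cV^{\cC^{\op}}(F,G)\;\cong\;\slice{\underline{\Dfib}}{\Int\cC}\bigl(\int_\cC F,\int_\cC G\bigr), \]
natural in $F$ and $G$. By the Yoneda lemma, it suffices to give a bijection on generalized elements from each $X\in\cV$, naturally in $X$. On the left, a map $X\to\cV^{\cC^{\op}}(F,G)$ unfolds through the defining end and the cartesian closure of $\cV$ into precisely the data of a $\cV$-natural transformation $X\times F\Rightarrow G$. On the right, using the adjunctions $\cst\dashv(-)_0$ and $-\times\int_\cC F\dashv\llbracket\int_\cC F,-\rrbracket$, a generalized element $X\to\llbracket\int_\cC F,\int_\cC G\rrbracket_0$ corresponds to an internal functor $H\colon\cst X\times\int_\cC F\to\int_\cC G$, and the pullback defining the enrichment of $\slice{\underline{\Dfib}}{\Int\cC}$ forces $\pi_G\circ H = \pi_F\circ\pi_2$. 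By \cref{prop:pullbackofdiscfib}, the projection $\cst X\times\int_\cC F\to\int_\cC F$ is itself an internal discrete fibration, so its composition with $\pi_F$ is a discrete fibration over $\Int\cC$; extensivity identifies the fiber at $A\in\cC$ with $X\times FA$ and the fiber $\cV$-functor with the tensor $X\times F(-)$. Applying \cref{thm:equiv} then puts $H$ in bijection with $\cV$-natural transformations $X\times F\Rightarrow G$, matching the left-hand description.

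Granting this hom-object isomorphism, the $\cV$-functor $\int_\cC$ is fully faithful on hom-objects, and \cref{thm:equiv} supplies essential surjectivity, so we obtain a $\cV$-equivalence. The unit and counit of \cref{constr:eta,constr:epsilon} upgrade to $\cV$-natural isomorphisms by rerunning their defining universal properties parametrically over $X\in\cV$. The main obstacle is in the middle step: one must carefully unwind the internal-hom in $\CatV$, invoke extensivity to identify the fiber $\cV$-functor of $\cst X\times\int_\cC F$ with $X\times F$, and then check that the assembled bijections respect composition and identities on both sides --- that is, that the end defining $\cV^{\cC^{\op}}(F,G)$ matches the pullback-of-internal-hom structure defining $\slice{\underline{\Dfib}}{\Int\cC}(-,-)$. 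All of this is a diagram chase using only tools already developed in \cref{sec:discfib,section:equiv}, but the bookkeeping is the main source of work.
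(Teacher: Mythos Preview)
Your approach is correct and, at its core, amounts to the same observation the paper uses --- namely that $\int_\cC$ sends the $\cV$-tensor $X\otimes F = \Delta X\times F$ in $\cV^{\cC^{\op}}$ to $\cst X\times\int_\cC F$ in $\slice{\underline{\Dfib}}{\Int\cC}$ --- but you carry out the lifting by hand via Yoneda and generalized elements, whereas the paper invokes off-the-shelf enriched category theory. Specifically, the paper first promotes the ordinary equivalence $(\int_\cC,\Phi,\eta,\varepsilon)$ to an adjoint equivalence (so that $\int_\cC$ is in particular a right adjoint), observes that $\cV$-tensors in $\cV^{\cC^{\op}}$ are computed as products with constant presheaves and are therefore preserved by any right adjoint, and then cites the general fact (Riehl, \emph{Categorical Homotopy Theory}, Proposition~3.7.10) that an ordinary adjunction between tensored $\cV$-categories whose left adjoint preserves tensors lifts to a $\cV$-adjunction --- hence to a $\cV$-equivalence in this case. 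Your identification of the fiber functor of $\cst X\times\int_\cC F\to\Int\cC$ with $X\times F(-)$ is precisely the tensor-preservation statement, and your Yoneda argument is an explicit unpacking of the abstract lifting result. The paper's route is a two-line proof modulo references; yours is self-contained but incurs the bookkeeping you acknowledge at the end.
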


\begin{proof}
    By \cite[Proposition 2.1.12]{RiehlVerity}, the equivalence data $(\int_\cC, \Phi, \eta, \varepsilon)$ can be promoted to an adjoint equivalence. Then, since $\int_\cC$ preserves products (as a right adjoint) and $\cV$-tensors in~$\cV^{\cC^{\op}}$ are given by products with constant $\cV$-functors at objects of $\cV$, it also preserves $\cV$-tensors. Hence, by \cite[Proposition 3.7.10]{Riehlcathtpyth} we get a $\cV$-adjoint equivalence, as desired. 
\end{proof}

\begin{ex} \label{ex:equiv}
    In the usual examples, using the interpretation and terminology from \cref{ex:catofelements,ex:discfib}, \cref{thm:enrichedequiv} translates as follows.
    \begin{enumerate}
        \item When $\cV=\Set$, it retrieves the usual equivalence between the categories of functors $\cC^{\op}\to \Set$ and  of discrete fibrations over $\cC$ from e.g.~\cite[Theorem 2.1.2]{LoregianRiehl}.
        \item When $\cV=\Cat$, even though the double Grothendiceck construction has been considered in several contexts \cite{GraParPersistentII,cM2}, and double discrete fibrations have been compared in \cite{Lambert} to double functors valued in the double category of spans, we could not find the comparison as stated above in the literature. In this case, the theorem gives a $2$-equivalence between the $2$-categories of $2$-functors $\cC^{\op}\to \Cat$ and of double discrete fibrations over~$\Int\cC$.
        \item[($n$)] When $\cV=(n-1)\Cat$ for $n\geq 3$, the theorem gives an $n$-equivalence between the $n$-categories of $n$-functors $\cC^{\op}\to (n-1)\Cat$ and of double $(n-1)$-discrete fibrations over~$\Int\cC$. 
    \end{enumerate}
\end{ex}

\section{The representation theorem for enriched functors}

In this section, let $\cV$ be an extensive and cartesian closed category with pullbacks. We now turn our attention to representable $\cV$-functors, and show how these can be characterized by the existence of an internal terminal object in their internal category of elements. To this end, in \cref{subsec:internalterminal}, we first introduce internal terminal objects and prove that slice internal categories have an internal terminal object given by the identity. Then, in \cref{subsec:repfib}, given a $\cV$-category~$\cC$ and an object $C\in \cC$, we identify the internal category of elements of the representable $\cV$-functor $\cC(-,C)$ with the slice internal category~$\slice{\Int\cC}{C}$. We use this in \cref{subsec:proofofrepthm} to prove the representation theorem stated as \cref{thm:representation}. Finally, in \cref{subsec:reformulate}, we give a fully enriched statement of the representation theorem in terms of $\cV$-terminal objects in $\cV$-categories of generalized elements, which can be found as \cref{enrichedstatement}.

\subsection{Internal terminal objects} \label{subsec:internalterminal}

In order to state the theorem, we must first understand what it means to have a terminal object in the internal setting. We start by introducing this notion, and exploring some examples.

\begin{defn}\label{defn:internalterminal}
Let $\bA$ be an internal category to $\cV$, and $T\colon \un\to \bA$ be an element. We say that $T$ is an \textbf{internal terminal object} in $\bA$ if the canonical projection $\slice{\bA}{T}\to\bA$ is an isomorphism.   
\end{defn}

\begin{rem}\label{terminalinvariance}
    It is immediate from the definition that internal terminal objects are created by isomorphisms of internal categories.
\end{rem}

\begin{ex} \label{ex:doubleterminal}
    For our usual examples, we have the following. 
    \begin{enumerate}
        \item When $\cV=\Set$, an internal terminal object in a category $\bA$ coincides with the usual notion of a terminal object in $\bA$. 
        \item When $\cV=\Cat$, an internal terminal object in a double category $\bA$ coincides with the notion of a \emph{double terminal object} as introduced in \cite[\S 1.8]{GrandisPare}. This amounts to an object $T\in \bA$ such that for every object $A\in \bA$ there is a unique horizontal morphism from $A\to T$, and for every vertical morphism $u$ in $\bA$, there is a unique square from $u$ to the vertical identity at $T$. 
        \item[($n$)] When $\cV=(n-1)\Cat$ for $n\geq 3$, we call an internal terminal object in a double $(n-1)$-category~$\bA$ a \emph{double $(n-1)$-terminal object}. It consists of an object $T\in \bA$ such that, for every object $A$ in $\bA_0$, there is a unique object in $\bA_1$ from $A$ to $T$ and, for every $k$-morphism~$\varphi$ in $\bA_0$, there is a unique $k$-morphism in $\bA_1$ from $\varphi$ to the identity $k$-morphism at $T$, for all $1\leq k\leq n-1$.
    \end{enumerate}
\end{ex}

A concrete case where we can always find an internal terminal object is given by considering slice internal categories. Given an element $A\colon \un\to \bA$ in an internal category $\bA$ to $\cV$, we write $i_A\colon \un \to \llbracket\mathbbm{2},\bA\rrbracket$ for the element given by the composite of internal functors
\[ \un\xrightarrow{A}\bA\xrightarrow{!^*} \llbracket\mathbbm{2},\bA\rrbracket. \]
Note that it induces a unique element $i_A\colon \un\to \slice{\bA}{A}$, by the universal property of the pullback defining the slice internal category $\slice{\bA}{A}$ from \cref{rem:pullbackslice}. We show that $i_A$ is in fact an internal terminal object in $\slice{\bA}{A}$.

\begin{prop}\label{terminalobjslice}
    Let $\bA$ be an internal category to $\cV$, and $A\colon \un\to \bA$ be an element. Then the element $i_A\colon \un\to \slice{\bA}{A}$ is an internal terminal object in $\slice{\bA}{A}$. 
\end{prop}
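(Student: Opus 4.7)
The plan is to show that the projection $p\colon \slice{(\slice{\bA}{A})}{i_A}\to\slice{\bA}{A}$ is an isomorphism in $\CatV$. By \cref{prop:sliceisdiscfib}, the projection $p$ is an internal discrete fibration, and the identity on $\slice{\bA}{A}$ is trivially one; since $p$ is then a map between these two internal discrete fibrations over $\slice{\bA}{A}$, the equivalence of (i) and (ii) in \cref{prop:isoonfibers} reduces the claim to showing that the level-$0$ map $p_0$ is an isomorphism in $\cV$.

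Writing $\bC = \slice{\bA}{A}$, \cref{rem:pullbackslice} together with \cref{rem:hom2A} identify $(\slice{\bC}{i_A})_0$ with the pullback of the target map $t\colon \bC_1\to\bC_0$ along $i_A\colon \un\to \bC_0$, so that $p_0$ factors as the first projection of this pullback followed by the source $s\colon \bC_1\to \bC_0$. I would then construct an inverse to $p_0$ by producing a map $\sigma\colon \bC_0\to\bC_1$ satisfying $t\circ\sigma = i_A\circ {!_{\bC_0}}$ and $s\circ\sigma = \id_{\bC_0}$, encoding the assignment that sends each object $f\colon X\to A$ of $\bC$ to itself, viewed as a morphism from $f$ to $i_A$ in $\bC$. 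Concretely, $\bC_1$ arises as a pullback of the commutative-square object $\llbracket\mathbbm{2},\bA\rrbracket_1$, and $\sigma$ is assembled from the identity $i\colon \bA_0\to\bA_1$ evaluated at $A$ together with the structural maps of $\llbracket\mathbbm{2},\bA\rrbracket_1$; the unitality axioms of $\bA$ supply the required compatibilities and also ensure that $\sigma\circ s$ restricted to the fiber of $t$ at $i_A$ acts as the identity, so that $\sigma$ induces a genuine two-sided inverse to $p_0$.

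The main obstacle is making this construction fully internal to $\cV$: one must analyse $\bC_1$ and $\llbracket\mathbbm{2},\bA\rrbracket_1$ explicitly in $\cV$ and carefully write down $\sigma$ using only the structural morphisms of $\bA$. An alternative, more formal route is a Yoneda-style argument in $\CatV$: combining the universal properties of the slices with the exponential adjunction $(-)\times\mathbbm{2}\dashv \llbracket\mathbbm{2},-\rrbracket$, internal functors $\bX\to \slice{\bC}{i_A}$ correspond naturally in $\bX$ to internal functors $W\colon \bX\times\mathbbm{2}\times\mathbbm{2}\to \bA$ that are constantly $A$ on $\bX\times\mathbbm{2}\times\{1\}$ and on $\bX\times\{1\}\times\mathbbm{2}$, and functoriality of $W$ on the square $\mathbbm{2}\times\mathbbm{2}$ forces such a $W$ to be uniquely determined by its restriction to $\bX\times\mathbbm{2}\times\{0\}$. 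This restriction is precisely the map induced by $p$, which is therefore a bijection natural in $\bX$, and $p$ is an isomorphism by the Yoneda lemma in $\CatV$.
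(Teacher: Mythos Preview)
Your second, Yoneda-style approach is essentially the paper's proof. The paper formalises precisely the step you summarise as ``functoriality of $W$ on the square $\mathbbm{2}\times\mathbbm{2}$ forces such a $W$ to be uniquely determined by its restriction'': it introduces the category $\rhd$ obtained from $\mathbbm{2}\times\mathbbm{2}$ by collapsing the edge $\{1\}\times\mathbbm{2}$, records the relevant collapsing maps as two pushout squares in $\Cat$, and then applies $\llbracket -,\bA\rrbracket$ to turn them into pullback squares in $\CatV$. A $3\times 3$ diagram then computes the same iterated pullback once row-wise (giving $\slice{\bA}{A}$) and once column-wise (giving $\slice{(\slice{\bA}{A})}{i_A}$). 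So your representability argument is exactly right, and the paper's pushout bookkeeping is what makes the ``functoriality forces uniqueness'' step rigorous for internal categories in an arbitrary $\cV$. One small slip: with your ordering of the adjunctions the projection $p$ corresponds to restriction along $\bX\times\{0\}\times\mathbbm{2}$ rather than $\bX\times\mathbbm{2}\times\{0\}$; by the symmetry of the two constancy conditions this does not affect the argument.

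Your first approach, reducing to level~$0$ via \cref{prop:isoonfibers} and building an explicit section $\sigma\colon\bC_0\to\bC_1$, is a genuine alternative and would also work. The delicate point you flag is the right one: verifying that $\sigma$ is a two-sided inverse on the fiber of $t$ over $i_A$ requires unpacking $\bC_1$ in terms of $\llbracket\mathbbm{2},\bA\rrbracket_1$ (the object of commutative squares in $\bA$) and then invoking unitality, which is doable but noticeably more laborious than the shape-level argument. The paper's route avoids this by never opening up $\bC_1$ explicitly.
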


\begin{proof}
Let $\rhd$ denote the pushout in $\Cat$ depicted below left, whose terminal object is denoted by $\top$. Then, it is not hard to check that the square below right is also a pushout in $\Cat$. 
\begin{tz}
\node[](1) {$\mathbbm{2}$}; 
\node[below of=1](3) {$\mathbbm{1}$};
\node[right of=1,xshift=.9cm](2) {$\mathbbm{2}\times \mathbbm{2}$}; 
\node[below of=2](4) {$\rhd$}; 
\pushout{4};

\draw[->] (1) to node[above,la]{$\langle 1\rangle\times \id_{\mathbbm{2}}$} (2);
\draw[->] (2) to (4);
\draw[->] (1) to node[left,la]{$!$} (3);
\draw[->] (3) to node[below,la]{$\langle \top\rangle$} (4);

\node[right of=2,xshift=1cm](1) {$\mathbbm{2}$}; 
\node[below of=1](3) {$\mathbbm{1}$};
\node[right of=1,xshift=.9cm](2') {$\mathbbm{2}\times \mathbbm{2}$}; 
\node[right of=2',xshift=.9cm](2) {$\rhd$}; 
\node[below of=2](4) {$\mathbbm{2}$}; 
\pushout{4};

\draw[->] (1) to node[above,la]{$\id_{\mathbbm{2}}\times \langle 1\rangle$} (2');
\draw[->] (2') to (2);
\draw[->] (2) to (4);
\draw[->] (1) to node[left,la]{$!$} (3);
\draw[->] (3) to node[below,la]{$\langle 1\rangle$} (4);
\end{tz}
By applying $\llbracket -,\bA\rrbracket$, we then get the following pullbacks in $\CatV$. 
\begin{tz}
\node[](1) {$\llbracket\rhd,\bA\rrbracket$}; 
\node[below of=1](3) {$\bA$};
\node[right of=1,xshift=1.6cm](2) {$\llbracket\mathbbm{2}\times \mathbbm{2},\bA\rrbracket$}; 
\node[below of=2](4) {$\llbracket\mathbbm{2},\bA\rrbracket$}; 
\pullback{1};

\draw[->] (1) to (2);
\draw[->] (2) to node[right,la]{$(\langle 1\rangle\times \id_{\mathbbm{2}})^*$} (4);
\draw[->] (1) to node[left,la]{$\langle 
\top\rangle^*$} (3);
\draw[->] (3) to node[below,la]{$!^*$} (4);

\node[right of=2,xshift=2cm](1) {$\llbracket\mathbbm{2},\bA\rrbracket$}; 
\node[below of=1](3) {$\bA$};
\node[right of=1,xshift=1.3cm](2) {$\llbracket\rhd,\bA\rrbracket$}; 
\node[below of=2](4) {$\llbracket\mathbbm{2},\bA\rrbracket$}; 
\pullback{1};

\draw[->] (1) to (2);
\draw[->] (2) to node[right,la]{$(\id_{\mathbbm{2}}\times \langle 1\rangle)^*$} (4);
\draw[->] (1) to node[left,la]{$\langle 1\rangle^*$} (3);
\draw[->] (3) to node[below,la]{$!^*$} (4);
\end{tz}
Hence the right-hand square in the following commutative diagram in $\CatV$ is a pullback.
    \begin{tz}
    \node[](1') {$\llbracket\mathbbm{2},\slice{\bA}{A}\rrbracket$};
    \node[below of=1'](3') {$\un$};
\node[right of=1',xshift=1.4cm](1) {$\llbracket\rhd,\bA\rrbracket$}; 
\node[below of=1](3) {$\bA$};
\node[right of=1,xshift=1.6cm](2) {$\llbracket\mathbbm{2}\times\mathbbm{2},\bA\rrbracket$}; 
\node[below of=2](4) {$\llbracket\mathbbm{2},\bA\rrbracket$};
\pullback{1};

\draw[->] (1) to (2);
\draw[->] (2) to node[right,la]{$(\langle 1\rangle\times \id_{\mathbbm{2}})^*$} (4);
\draw[->] (1) to node[left,la]{$\langle \top\rangle^*$} (3);
\draw[->] (3) to node[below,la]{$!^*$} (4);
\draw[->,dashed] (1') to (1);
\draw[->] (1') to (3');
\draw[->] (3') to node[below,la]{$A$} (3);
\draw[->,bend left=20] (1') to (2);
\end{tz}
By the universal property of the pullback, there is a unique dashed map as depicted. Moreover, the outer square can be seen to be a pullback by applying $\llbracket\mathbbm{2},-\rrbracket$ to the pullback from \cref{rem:pullbackslice}. Hence, by cancellation of pullbacks, the left-hand square is also a pullback.

Now consider the following commutative diagram in $\CatV$.
\begin{tz}
    \node[](1') {$\llbracket\rhd,\bA\rrbracket$};
    \node[below of=1'](3') {$\bA$};
\node[right of=1',xshift=1.2cm](1) {$\llbracket\mathbbm{2},\bA\rrbracket$}; 
\node[below of=1](3) {$\bA$};
\node[right of=1,xshift=1.2cm](2) {$\bA$}; 
\node[below of=2](4) {$\bA$};
    \node[below of=3'](5') {$\un$};
\node[below of=3](5) {$\un$};
\node[below of=4](6) {$\un$};

\draw[->] (2) to node[above,la]{$!^*$} (1);
\draw[d] (2) to (4);
\draw[->] (1) to node[right,la]{$\langle 1\rangle^*$} (3);
\draw[d] (4) to (3);
\draw[->] (1') to node[above,la]{$(\id_{\mathbbm{2}}\times \langle 1\rangle)^*$} (1);
\draw[->] (1') to node[left,la]{$\langle \top\rangle^*$} (3');
\draw[d] (3') to (3);

\draw[->] (5') to node[left,la]{$A$} (3');
\draw[->] (5) to node[left,la]{$A$} (3);
\draw[->] (6) to node[right,la]{$A$} (4);
\draw[d] (5') to (5);
\draw[d] (6) to (5);
\end{tz}
By first computing the pullback of each row, we get the following span
\begin{tz}
    \node[](1') {$\llbracket\mathbbm{2},\bA\rrbracket$};
\node[right of=1',xshift=1.2cm](1) {$\bA$}; 
\node[right of=1,xshift=1.2cm](2) {$\un$}; 

\draw[->] (2) to node[above,la]{$A$} (1);
\draw[->] (1') to node[above,la]{$\langle 1\rangle^*$} (1);
\end{tz}
whose pullback is given by $\slice{\bA}{A}$ using \cref{rem:pullbackslice}. Next, by computing the pullback of each column, we get the following span 
\begin{tz}
    \node[](1') {$\llbracket\mathbbm{2},\slice{\bA}{A}\rrbracket$};
\node[right of=1',xshift=1.2cm](1) {$\slice{\bA}{A}$}; 
\node[right of=1,xshift=1.2cm](2) {$\un$}; 

\draw[->] (2) to node[above,la]{$i_A$} (1);
\draw[->] (1') to node[above,la]{$\langle 1\rangle^*$} (1);
\end{tz}
whose pullback is given by $\slice{(\slice{\bA}{A})}{i_A}$. We therefore have an isomorphism 
\[ \slice{(\slice{\bA}{A})}{i_A}\cong \slice{\bA}{A}\]
which is given by the desired projection; this can be seen by direct inspection of the canonical functor $\rhd\to \mathbbm{2}$.
\end{proof}

\subsection{Representable internal discrete fibrations} \label{subsec:repfib}

 Let us now recall the notion of $\cV$-repre\-sen\-table functors.

\begin{rem}\label{rem:xnattr}
    Let $F\colon \cC^\op\to \cV$ be a $\cV$-functor and $C\in\cC$ be an object. By the enriched Yoneda lemma (see \cite[\S 1.9]{Kelly}), a $\cV$-natural transformation
    \[ \alpha\colon \cC(-,C)\Rightarrow F \]
    is determined by a unique element $x\colon \un\to FC$ given by the composite of maps in $\cV$
    \[ \un\xrightarrow{i_C} \cC(C,C)\xrightarrow{\ev^\alpha_C} FC. \]
    We write $\alpha_x\colon \cC(-,C)\Rightarrow F$ for the $\cV$-natural transformation corresponding to $x$.
\end{rem}

\begin{defn}
    Let $F\colon \cC^\op\to \cV$ be a $\cV$-functor. We say that $F$ is \textbf{$\cV$-representable} by a pair~$(C,x)$ of an object $C\in\cC$ and an element $x\colon \un \to FC$ if the $\cV$-natural transformation
    \[ \alpha_x\colon \cC(-,C)\Rightarrow F\]
    is a $\cV$-natural isomorphism.
\end{defn}

Since the result we are after seeks to characterize $\cV$-representable $\cV$-functors in terms of their internal category of elements, it is crucial to understand what this construction does to a $\cV$-functor of the form $\cC(-,C)\colon\cC^\op\to\cV$ for an object $C\in \cC$. As we now show, this construction recovers the slice internal category $\slice{\Int\cC}{C}$.

\begin{constr}\label{constr:psi}
    Let $\cC$ be a $\cV$-category and $C\in \cC$ be an object. We construct an internal functor 
    \begin{tz}
\node[](1) {$\int_\cC \cC(-,C)$}; 
\node[below of=1,xshift=2.2cm,yshift=.2cm](3) {$\Int\cC$};
\node[above of=3,xshift=2.2cm,yshift=-.2cm](2) {$\slice{\Int\cC}{C}$}; 

\draw[->] (1) to node[above,la]{$\Psi_C$} (2);
\draw[->] (2) to (3);
\draw[->] (1) to (3);
\end{tz}
    For this, we first construct an internal functor 
    \[ \textstyle\int_\cC \cC(-,C)\to \llbracket\mathbbm{2},\Int\cC\rrbracket. \]
    At level $0$, it is given by the canonical inclusion 
    \[ \textstyle\bigsqcup_{A\in \cC} \cC(A,C) \to \bigsqcup_{A,B\in \cC} \cC(A,B), \]
    and at level $1$, it is given by the map 
    \[ \textstyle\bigsqcup_{A,B\in \cC} \cC(A,B)\times \cC(B,C) \to \bigsqcup_{A,B,C,D\in \cC} (\cC(A,B)\times \cC(B,C))\times_{\cC(A,C)} (\cC(A,C)\times \cC(C,D)) \]
    induced by the maps in $\cV$
    \[ \id\times (c_{A,B,C},i_C)\colon \cC(A,B)\times \cC(B,C)\to (\cC(A,B)\times \cC(B,C))\times_{\cC(A,C)} (\cC(A,C)\times \cC(C,C)). \]
    By the universal property of the pullback defining $\slice{\Int\cC}{C}$ from \cref{defn:slice}, this induces a unique internal functor $\int_\cC \cC(-,C)\to \slice{\Int\cC}{C}$ over $\Int\cC$, as desired.
\end{constr}

\begin{theorem}\label{cor:slicehom}
Let $\cC$ be a $\cV$-category and $C\in \cC$ be an object. Then the internal functor $\Psi_C$ of \cref{constr:psi} is an isomorphism in $\slice{\CatV}{\Int\cC}$.
\begin{tz}
\node[](1) {$\int_\cC \cC(-,C)$}; 
\node[below of=1,xshift=2.2cm,yshift=.2cm](3) {$\Int\cC$};
\node[above of=3,xshift=2.2cm,yshift=-.2cm](2) {$\slice{\Int\cC}{C}$}; 

\draw[->] (1) to node[above,la]{$\cong$} node[below,la]{$\Psi_C$} (2);
\draw[->] (2) to (3);
\draw[->] (1) to (3);
\end{tz}
\end{theorem}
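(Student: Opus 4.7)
The strategy is to use \cref{prop:isoonfibers} to reduce the claim to verifying that $\Psi_C$ is an isomorphism at level $0$. By \cref{grothdiscfib} the projection $\pi_{\cC(-,C)}\colon \int_\cC\cC(-,C)\to\Int\cC$ is an internal discrete fibration, and by \cref{prop:sliceisdiscfib} so is $\slice{\Int\cC}{C}\to \Int\cC$. Since $\Psi_C$ is a morphism in $\slice{\CatV}{\Int\cC}$ between them, \cref{prop:isoonfibers} reduces the problem to showing $(\Psi_C)_0$ is an isomorphism in $\cV$.

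Next, I would compute both sides at level $0$. By \cref{defn:groth}, $(\int_\cC\cC(-,C))_0=\bigsqcup_{A\in\cC}\cC(A,C)$. For the slice, I would use the equivalent description from \cref{rem:pullbackslice}: $\slice{\Int\cC}{C}$ is the pullback in $\CatV$ of $\langle 1\rangle^*\colon \llbracket\mathbbm{2},\Int\cC\rrbracket\to \Int\cC$ along $C\colon\un\to\Int\cC$. Since pullbacks in $\CatV$ are computed levelwise (\cref{pullbacksinCatV}), and since $\llbracket\mathbbm{2},\Int\cC\rrbracket_0=(\Int\cC)_1=\bigsqcup_{A,B\in\cC}\cC(A,B)$ by \cref{rem:hom2A}, the level-$0$ object $(\slice{\Int\cC}{C})_0$ is the pullback in $\cV$ of the target map $t\colon \bigsqcup_{A,B\in\cC}\cC(A,B)\to\bigsqcup_{B\in\cC}\un$ along the coproduct inclusion $\iota_C\colon \un\to\bigsqcup_{B\in\cC}\un$. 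Using extensivity of $\cV$ (\cref{rem:extensive}), this pullback is canonically identified with $\bigsqcup_{A\in\cC}\cC(A,C)$.

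Finally, I would unpack \cref{constr:psi} at level $0$: the map $\int_\cC\cC(-,C)\to\llbracket\mathbbm{2},\Int\cC\rrbracket$ at level $0$ is the canonical inclusion $\bigsqcup_{A\in\cC}\cC(A,C)\hookrightarrow\bigsqcup_{A,B\in\cC}\cC(A,B)$, and $(\Psi_C)_0$ is the unique factorization of this inclusion through the pullback $(\slice{\Int\cC}{C})_0$. Under the identification of that pullback with $\bigsqcup_{A\in\cC}\cC(A,C)$ from the previous step, this factorization is the identity map, hence an isomorphism, which completes the proof.

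The main thing to be careful about is the bookkeeping in identifying $(\slice{\Int\cC}{C})_0$ via extensivity, since one must use that $t\colon\bigsqcup_{A,B\in\cC}\cC(A,B)\to\bigsqcup_{B\in\cC}\un$ is the coproduct map $\bigsqcup_{\mathrm{pr}_1}!$ and apply the right-adjoint description from \cref{rem:extensive} to see that pulling back along the coproduct inclusion $\iota_C$ picks out precisely the summand $\bigsqcup_{A\in\cC}\cC(A,C)$; once this identification is made, the compatibility with $\Psi_C$ is transparent from the construction.
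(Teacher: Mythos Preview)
Your proof is correct and follows essentially the same approach as the paper: both sides are internal discrete fibrations over $\Int\cC$ (via \cref{grothdiscfib} and \cref{prop:sliceisdiscfib}), so \cref{prop:isoonfibers} applies. The only minor difference is that you invoke condition (ii) of \cref{prop:isoonfibers} (checking $(\Psi_C)_0$ is an isomorphism) and compute level $0$ of the slice via extensivity, whereas the paper invokes condition (iii) and simply notes that the fibers over each $A\in\cC$ are both $\cC(A,C)$; these are two sides of the same coin, and your level-$0$ computation is precisely what underlies the paper's fiber inspection.
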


\begin{proof}
    Recall from \cref{prop:sliceisdiscfib,grothdiscfib} that the internal functors $\slice{\Int\cC}{C}\to \Int\cC$ and $\int_\cC \cC(-,C)\to \Int\cC$ are internal discrete fibrations. Hence, by \cref{prop:isoonfibers}, to show that $\int_\cC \cC(-,C)$ and $\slice{\Int\cC}{C}$ are isomorphic over $\Int\cC$ it suffices to show that their fibers are isomorphic. By direct inspection, we see that the fibers at an object $A\in \cC$ are given in both cases by the hom-object $\cC(A,C)$.
\end{proof}

\begin{rem}\label{rem:functorsfromint}
   Let $F\colon \cC^{\op}\to \cV$ be a $\cV$-functor and $C\in \cC$ be an object. We can use the above result to construct internal functors $\slice{\Int\cC}{C}\to\int_\cC F$ to $\cV$. Indeed, given an element $x\colon \un\to FC$, the $\cV$-natural transformation $\alpha_x\colon\cC(-,C)\Rightarrow F$ from \cref{rem:xnattr} induces an internal functor $\int_\cC\alpha_x\colon\int_\cC \cC(-,C)\to\int_\cC F$. We then get an internal functor in $\slice{\CatV}{\Int\cC}$ \[ \textstyle\int_\cC\alpha_x \circ \Psi_C^{-1}\colon\slice{\Int\cC}{C}\to\int_\cC F \] 
   such that its pre-composition with $i_C\colon \un \to \slice{\Int\cC}{C}$ is precisely $(C,x)\colon \un\to \int_\cC F$,
   where $\Psi_C$ is the isomorphism from \cref{cor:slicehom}.

    Moreover, every internal functor $\slice{\Int\cC}{C}\to\int_\cC F$ to $\cV$ must arise in this manner in a unique way, since by pre-composing with the isomorphism $\Psi_C\colon\int_\cC \cC(-,C)\to\slice{\Int\cC}{C}$ we obtain an internal functor $\int_\cC \cC(-,C)\to\int_\cC F$, and \cref{thm:equiv} ensures that the functor $\int_\cC$ is fully faithful.
\end{rem}

\subsection{The representation theorem} \label{subsec:proofofrepthm}

We now establish some technical results that will allow us to prove our desired characterization of $\cV$-representable functors.

\begin{constr}\label{constr:slicemap}
    Let $P\colon\bA\to\Int\cC$ be an internal functor, and consider a commutative diagram in $\CatV$.
    \begin{tz}
\node[](1) {$\slice{\Int\cC}{C}$}; 
\node[below of=1,xshift=2cm,yshift=.2cm](3) {$\Int\cC$};
\node[above of=3,xshift=2cm,yshift=-.2cm](2) {$\bA$}; 

\draw[->] (1) to node[above,la,xshift=-5pt]{$H$} (2);
\draw[->] (2) to node[right,la,yshift=-4pt]{$P$} (3);
\draw[->] (1) to (3);
\end{tz}
For every object $C\in \cC$, we get an element of $\bA$ given by the composite of internal functors
\[ \un\xrightarrow{i_C} \slice{\Int\cC}{C}\xrightarrow{H} \bA \]
whose post-composition with $P$ is the canonical inclusion $\iota_C\colon \un\to \Int\cC$. In particular, by the universal property of the pullback defining $P^{-1}C$, the above map induces a unique map in $\cV$
\[ H\circ i_C\colon \un\to P^{-1} C. \]
Moreover, we get an internal functor $(\slice{\Int\cC}{C})_{/i_C}\to \slice{\bA}{H\circ i_C}$ induced by the universal property of the pullback as depicted below.
\begin{tz}
\node[](1) {$\slice{(\slice{\Int\cC}{C})}{i_C}$}; 
\node[below of=1,yshift=-.5cm](3) {$\slice{\Int\cC}{C}$};
\node[right of=1,xshift=3.5cm](2) {$\llbracket\mathbbm{2},\slice{\Int\cC}{C}\rrbracket$}; 
\node[below of=2,yshift=-.5cm](4) {$\slice{\Int\cC}{C}\times \slice{\Int\cC}{C}$}; 
\pullback{1};

\draw[->] (1) to (2);
\draw[->] (2) to node[right,la,pos=0.3]{$\langle 0,1\rangle^*$} (4);
\draw[->] (1) to (3);
\draw[->] (3) to node[above,la,pos=0.32,yshift=-2pt]{$\id_{\slice{\Int\cC}{C}}\times i_C$} (4);

\node[right of=1,yshift=-1cm,xshift=1.2cm](1') {$\slice{\bA}{H\circ i_C}$}; 
\node[below of=1',yshift=-.5cm](3') {$\bA$};
\node[right of=1',xshift=3.5cm](2') {$\llbracket\mathbbm{2},\bA\rrbracket$}; 
\node[below of=2',yshift=-.5cm](4') {$\bA\times \bA$}; 
\pullback{1'};

\draw[->,w] (1') to (2');
\draw[->] (2') to node[right,la]{$\langle 0,1\rangle^*$} (4');
\draw[->,w] (1') to (3');
\draw[->] (3') to node[below,la]{$\id_{\bA}\times (H\circ i_C)$} (4');

\draw[->,dashed] (1) to (1');
\draw[->] (2) to node[above,la,pos=0.6,xshift=10pt]{$\llbracket\mathbbm{2},H\rrbracket$} (2');
\draw[->] (3) to node[below,la,pos=0.4,xshift=-3pt]{$H$} (3');
\draw[->] (4) to node[above,la,pos=0.6,xshift=10pt]{$H\times H$} (4');
\end{tz}
\end{constr}

\begin{prop}\label{prop:isoslices}
Let $P\colon\bA\to\Int\cC$ be an internal discrete fibration in $\CatV$, $C\in \cC$ be an object, and $H\colon \slice{\Int\cC}{C}\to \bA$ be an 
internal functor over $\Int\cC$. Then the induced internal functor from  \cref{constr:slicemap} is an isomorphism in $\CatV$ 
\[ (\slice{\Int\cC}{C})_{/i_C}\xrightarrow{\cong} \slice{\bA}{H\circ i_C}. \]
\end{prop}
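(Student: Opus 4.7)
The plan is to apply \cref{prop:isoonfibers} to the induced internal functor $K\colon (\slice{\Int\cC}{C})_{/i_C}\to \slice{\bA}{H\circ i_C}$. First I would verify that both sides are internal discrete fibrations over $\Int\cC$, and that $K$ respects the projections down to $\Int\cC$. Composites of internal discrete fibrations are again internal discrete fibrations, as follows immediately from pasting of pullbacks; together with \cref{prop:sliceisdiscfib} applied on each side and the hypothesis that $P$ is an internal discrete fibration, this shows that the composites $(\slice{\Int\cC}{C})_{/i_C}\to \slice{\Int\cC}{C}\to \Int\cC$ and $\slice{\bA}{H\circ i_C}\to \bA\xrightarrow{P}\Int\cC$ are internal discrete fibrations. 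The functor $K$ lives over $\Int\cC$ by direct inspection of \cref{constr:slicemap}, since $H$ does. With this setup, \cref{prop:isoonfibers} will reduce the claim to checking that $K$ induces an isomorphism on fibers over each object $A\in\cC$.

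Next, I would identify both fibers with $\cC(A,C)$. On the left, since $i_C$ is an internal terminal object of $\slice{\Int\cC}{C}$ by \cref{terminalobjslice}, the projection $(\slice{\Int\cC}{C})_{/i_C}\to \slice{\Int\cC}{C}$ is an isomorphism by \cref{defn:internalterminal}, so via the isomorphism $\Psi_C$ of \cref{cor:slicehom} the fiber at $A$ identifies with $\cC(A,C)$. On the right, I would use the description of $\slice{\bA}{H\circ i_C}$ together with the decomposition $\bA_1\cong \bigsqcup_{X,Y\in \cC}\cC(X,Y)\times P^{-1}Y$ from \cref{subsec:discfiboverC}, in which the target map is projection to the second factor. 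Morphisms in $\bA$ with target $H\circ i_C\in P^{-1}C$ correspond exactly to pairs $(f, H\circ i_C)$ with $f\in \cC(-,C)$, whose source is determined by $f$ via $\ev^{P^{-1}}(f, H\circ i_C)$, identifying the fiber at $A$ with $\cC(A,C)$ as well.

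Finally, tracing \cref{constr:slicemap}, the induced map sends $f\in \cC(A,C)$, viewed as the unique morphism in $\slice{\Int\cC}{C}$ from $(A,f)$ to $i_C$, to its image $H_1(f)\in \bA_1$. Since $H$ is over $\Int\cC$, this image lies over $f$; by the decomposition, the unique such morphism with target $H\circ i_C$ is precisely $(f, H\circ i_C)$. Hence the map on fibers is the identity on $\cC(A,C)$, and \cref{prop:isoonfibers} gives the desired isomorphism in $\CatV$. The main obstacle I anticipate is the careful bookkeeping of the fiber on the right side via the pullback defining $\slice{\bA}{H\circ i_C}$ and the decomposition of $\bA$ from \cref{subsec:discfiboverC}.
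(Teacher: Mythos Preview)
Your proposal is correct and follows essentially the same strategy as the paper: reduce to a fiberwise (equivalently, level-$0$) check via \cref{prop:isoonfibers}, use \cref{terminalobjslice} to identify the left-hand side with $\slice{\Int\cC}{C}$, and use the structural decomposition of $\bA_1$ from \cref{subsec:discfiboverC} to compute the fiber of $\slice{\bA}{H\circ i_C}$ at $A$ as $\cC(A,C)$. The only cosmetic difference is that the paper first replaces $(\slice{\Int\cC}{C})_{/i_C}$ by $\slice{\Int\cC}{C}$ using the terminal-object isomorphism and then compares level~$0$ directly, whereas you apply \cref{prop:isoonfibers} to $K$ and invoke part~(iii); the content is the same.
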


\begin{proof}
First recall that $i_C$ is an internal terminal object in $\slice{\Int\cC}{C}$ by \cref{terminalobjslice}; hence, the projection $(\slice{\Int\cC}{C})_{/i_C}\to\slice{\Int\cC}{C}$ is an isomorphism. We can use its inverse to get an internal functor \[\slice{\Int\cC}{C}\to \slice{\bA}{H\circ i_C},\] and by 2-out-of-3, it is enough to prove that this is an isomorphism to conclude our result.

 Recall from \cref{prop:sliceisdiscfib} that the internal functors $\slice{\Int\cC}{C}\to \Int\cC$ and $\slice{\bA}{H\circ i_C}\to \bA$ are internal discrete fibrations, and so the composite $\slice{\bA}{H\circ i_C}\to \bA\to \Int\cC$ is also an internal discrete fibration. Hence, by \cref{prop:isoonfibers}, to show that $\slice{\Int\cC}{C}$ and $\slice{\bA}{H\circ i_C}$ are isomorphic over $\Int\cC$, it suffices to show that their levels $0$ are isomorphic. For this, first note that we have the following pullback in $\cV$.
    \begin{tz}
\node[](1) {$\cC(A,C)$}; 
\node[below of=1](3) {$P^{-1}A$};
\node[right of=1,xshift=3.3cm](2) {$\cC(A,C)\times P^{-1} C$}; 
\node[below of=2](4) {$P^{-1} A\times P^{-1} C$}; 
\pullback{1};

\draw[->] (1) to node[above,la]{$\id_{\cC(A,C)}\times (H\circ i_C)$} (2);
\draw[->] (2) to node[right,la]{$(\ev^{P^{-1}}_{A,C},\pi_1)$} (4);
\draw[->] (1) to (3);
\draw[->] (3) to node[below,la]{$\id_{P^{-1} A}\times (H\circ i_C)$} (4);
\end{tz}
Hence, by extensivity, this induces a pullback in $\cV$ as below.
    \begin{tz}
\node[](1) {$\bigsqcup_{A\in \cC} \cC(A,C)$}; 
\node[below of=1](3) {$\bA_0=\bigsqcup_{A\in \cC} P^{-1}A$};
\node[right of=1,xshift=6.7cm](2) {$\bigsqcup_{A,B\in \cC} \cC(A,B)\times P^{-1} B=\bA_1$}; 
\node[below of=2](4) {$\bigsqcup_{A\in \cC} P^{-1} A\times \bigsqcup_{B\in \cC} P^{-1} B=\bA_0\times \bA_0$}; 
\pullback{1};

\draw[->] (1) to (2);
\draw[->] (2) to node[right,la]{$\bigsqcup_{A\in\cC}(\ev^{P^{-1}}_{A,C},\pi_1)$} (4);
\draw[->] (1) to (3);
\draw[->] (3) to node[below,la]{$\bigsqcup_{A\in \cC}\id_{P^{-1} A}\times (H\circ i_C)$} (4);
\end{tz}
By \cref{defn:slice}, this pullback is precisely $(\slice{\bA}{H\circ i_C})_0$, and so we have a canonical isomorphism $(\slice{\Int\cC}{C})_0\cong (\slice{\bA}{H\circ i_C})_0$. Moreover, this map is level $0$ of the internal functor $\slice{\Int\cC}{C}\cong \slice{\bA}{H\circ i_C}$, as it is the unique internal functor induced by the universal property of the pullback.
\end{proof}

We are finally equipped to prove one of the main results of this paper.

\begin{theorem}\label{thm:representation}
Let $F\colon \cC^{\op}\to \cV$ be a $\cV$-functor. Given a pair $(C,x)$ of an object $C\in \cC$ and an element $x\colon \un\to FC$ in $\cV$, the following are equivalent: 
\begin{rome}
    \item the $\cV$-functor $F$ is $\cV$-representable by $(C,x)$, 
    \item the pair $(C,x)$ is an internal terminal object in the internal category of elements $\int_\cC F$. 
\end{rome}
\end{theorem}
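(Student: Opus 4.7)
The plan is to translate both conditions into statements about internal functors from the slice $\slice{\Int\cC}{C}$, using the equivalence of \cref{thm:equiv} on one side and the definition of internal terminal objects on the other. By \cref{rem:functorsfromint}, the pair $(C,x)$ determines a canonical internal functor over $\Int\cC$
\[ \textstyle H_x\coloneqq (\int_\cC \alpha_x)\circ \Psi_C^{-1}\colon \slice{\Int\cC}{C}\to \int_\cC F, \]
and conversely every internal functor $\slice{\Int\cC}{C}\to \int_\cC F$ over $\Int\cC$ arises uniquely in this way. Since $\Psi_C$ is an isomorphism by \cref{cor:slicehom}, the fully faithfulness of $\int_\cC$ (\cref{thm:equiv}) implies that $\alpha_x$ is a $\cV$-natural isomorphism if and only if $H_x$ is an isomorphism. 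Thus (i) is equivalent to the statement that $H_x$ is an isomorphism in $\slice{\CatV}{\Int\cC}$.

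Next, I would apply \cref{prop:isoslices} to $P=\pi_F\colon \int_\cC F\to \Int\cC$ (an internal discrete fibration by \cref{grothdiscfib}) and to the functor $H_x$, and note that by construction $H_x\circ i_C$ is precisely the element $(C,x)$ of $\int_\cC F$. This yields an isomorphism
\[ \textstyle (\slice{\Int\cC}{C})_{/i_C}\xrightarrow{\cong} \slice{\int_\cC F}{(C,x)}\]
fitting into a commutative square in $\CatV$ whose bottom arrow is $H_x$, whose vertical arrows are the two slice projections, and whose left projection $(\slice{\Int\cC}{C})_{/i_C}\to \slice{\Int\cC}{C}$ is an isomorphism because $i_C$ is an internal terminal object in $\slice{\Int\cC}{C}$ by \cref{terminalobjslice}. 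In this square, three of the four arrows are already known to be isomorphisms, so by commutativity the right projection $\slice{\int_\cC F}{(C,x)}\to \int_\cC F$ is an isomorphism if and only if $H_x$ is. By definition, the former condition is exactly (ii), which closes the equivalence.

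The main obstacle is verifying the bookkeeping that makes the square above commute on the nose: concretely, that $H_x\circ i_C$ really equals the element $(C,x)$ of $\int_\cC F$, and that the isomorphism produced by \cref{prop:isoslices} is compatible with the slice projections down to $\slice{\Int\cC}{C}$ and $\int_\cC F$. Both follow by unwinding the pullback descriptions of the slice internal category and chasing through the definition of $\Psi_C$ in \cref{constr:psi}, together with the Yoneda-type identification in \cref{rem:xnattr} that sends $x$ to $\alpha_x$. Once this is in place, everything else reduces to an application of \cref{cor:slicehom,prop:isoslices,terminalobjslice} and the fully faithfulness part of \cref{thm:equiv}.
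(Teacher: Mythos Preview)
Your proposal is correct and follows essentially the same route as the paper's own proof: both translate (i) into the statement that $H_x=(\int_\cC\alpha_x)\circ\Psi_C^{-1}$ is an isomorphism via \cref{cor:slicehom} and \cref{thm:equiv}, then use \cref{prop:isoslices} and \cref{terminalobjslice} to set up a commutative square in which the slice projection $\slice{\int_\cC F}{(C,x)}\to\int_\cC F$ is an isomorphism if and only if $H_x$ is. The bookkeeping you flag (that $H_x\circ i_C=(C,x)$ and that the square commutes) is exactly what the paper handles implicitly through \cref{constr:slicemap} and \cref{rem:functorsfromint}.
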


\begin{proof}
Consider the composite of internal functors $\int_\cC\alpha_x \circ \Psi_C\colon\slice{\Int\cC}{C}\to\int_\cC F$ in $\slice{\CatV}{\Int\cC}$ as in \cref{rem:functorsfromint}. Recall that, by \cref{constr:slicemap}, we have a commutative diagram in $\CatV$
\begin{tz}
\node[](1) {$\slice{(\slice{\Int\cC}{C})}{i_C}$}; 
\node[below of=1](3) {$\slice{\Int\cC}{C}$};
\node[right of=1,xshift=2cm](2) {$\slice{\int_\cC F}{(C,x)}$}; 
\node[below of=2](4) {$\int_\cC F$}; 

\draw[->] (1) to node[above,la]{$\cong$} (2);
\draw[->] (2) to node[right,la]{} (4);
\draw[->] (1) to node[left,la]{$\cong$} (3);
\draw[->] (3) to node[below,la]{$\int_\cC\alpha_x \circ \Psi_C$} (4);
\end{tz}
where the top internal functor is an isomorphism by \cref{prop:isoslices}, and the left-hand internal functor is an isomorphism by \cref{terminalobjslice}.

By definition, we have that $(C,x)$ is an internal terminal object in $\int_\cC F$ if and only if the projection from the slice internal category $\slice{\int_\cC F}{(C,x)}\to\int_\cC F$ is an isomorphism. By $2$-out-of-$3$ in the above diagram, this is the case if and only if the internal functor $\int_\cC\alpha_x \circ \Psi_C\colon \slice{\Int\cC}{C}\to\int_\cC F$ is an isomorphism, and since $\Psi_C$ is an isomorphism by \cref{cor:slicehom}, this happens if and only if $\int_\cC\alpha_x \colon\int_\cC \cC(-,C)\to\int_\cC F$ is an isomorphism in $\slice{\CatV}{\Int\cC}$. In turn, this happens if and only if  $\alpha_x\colon\cC(-,C)\Rightarrow F$ is a $\cV$-natural isomorphism, as the internal Grothendieck construction $\int_\cC\colon \VCat(\cC^{\op},\cV)\to \slice{\Dfib}{\Int\cC}$ is an equivalence of categories by \cref{thm:equiv}, and as such it creates isomorphisms. Finally, by definition, this is equivalent to the $\cV$-functor $F\colon\cC^{\op}\to\cV$ being $\cV$-representable by $(C,x)$.
\end{proof}

\begin{ex} \label{ex:repthm} 
    Going back to our examples, using the interpretation and terminology from \cref{ex:catofelements,ex:doubleterminal}, \cref{thm:representation} translates as follows. 
    \begin{enumerate}
        \item When $\cV=\Set$, it retrieves the classical result which states that a functor $F\colon \cC^{\op}\to \Set$ is representable if and only if its category of elements $\int_\cC F$ has a terminal object; see e.g.~\cite[Proposition 2.4.8]{Riehlcontext}. 
        \item When $\cV=\Cat$, we retrieve a stricter analogue of the equivalence between (i) and (ii) in \cite[Theorem 6.8]{cM2}. In our case, the theorem proves that a $2$-functor $F\colon \cC^{\op}\to \Cat$ is $2$-representable if and only if its double category of elements $\int_\cC F$ has a double terminal object. 
        \item[($n$)] When $\cV=(n-1)\Cat$ for $n\geq 3$, the theorem says that an $n$-functor $F\colon \cC^{\op}\to (n-1)\Cat$ is $n$-representable if and only if its double $(n-1)$-category of elements $\int_\cC F$ has a double $(n-1)$-terminal object. 
    \end{enumerate}
\end{ex}

\subsection{An equivalent formulation in the language of enriched categories} \label{subsec:reformulate}

There is also a notion of terminal object in the setting of $\cV$-categories, which we now recall.

\begin{defn}\label{defn:Vterminal}
    Let $\cC$ be a $\cV$-category, and $T\in \cC$ be an object. We say that $T$ is a \textbf{$\cV$-terminal object} in $\cC$ if the unique map $\cC(A,T)\to \un$ in $\cV$ is an isomorphism, for every object~$A\in \cC$. 
\end{defn} 

It is natural to try to compare \cref{defn:internalterminal} and \cref{defn:Vterminal}; that is, to wonder whether the data of an internal terminal object in an internal category is the same as the data of a $\cV$-terminal object in its underlying $\cV$-category. The following shows that the notion of internal terminal object is stronger than that of $\cV$-terminal object.

\begin{prop} \label{internalterminalimpliesVterminal}
    Let $\bA$ be an internal category to $\cV$, and $T\colon \un\to \bA$ be an element. If $T$ is an internal terminal object in $\bA$, then $T$ is a $\cV$-terminal object in the underlying $\cV$-category~$\Und\bA$.
\end{prop}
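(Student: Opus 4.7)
My plan is to show that the hom-object $\Und\bA(A,T)$ is isomorphic to $\un$ by decomposing the pullback that defines it and then using the assumption that $\slice{\bA}{T}\to\bA$ is an isomorphism.

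First, I would unpack the object-of-objects of $\slice{\bA}{T}$ using \cref{rem:pullbackslice}: computing the given pullback at level $0$ and using $\llbracket\mathbbm{2},\bA\rrbracket_0 = \bA_1$ (see \cref{rem:hom2A}), one obtains that $(\slice{\bA}{T})_0$ fits in the pullback square in $\cV$
\begin{tz}
\node[](1) {$(\slice{\bA}{T})_0$};
\node[below of=1](3) {$\un$};
\node[right of=1,xshift=1cm](2) {$\bA_1$};
\node[below of=2](4) {$\bA_0$};
\pullback{1};
\draw[->] (1) to node[above,la]{$\pi$} (2);
\draw[->] (2) to node[right,la]{$t$} (4);
\draw[->] (1) to (3);
\draw[->] (3) to node[below,la]{$T$} (4);
\end{tz}
and that the map $(\slice{\bA}{T})_0\to\bA_0$ induced by the projection $\slice{\bA}{T}\to\bA$ coincides with the composite $s\circ\pi$.

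Next, for any object $A\colon\un\to\bA_0$, I would rewrite the hom-object $\Und\bA(A,T)$ by factoring the pullback along $(A,T)\colon\un\to\bA_0\times\bA_0$ as two successive pullbacks: first along $T$ (using the target map $t$), and then along $A$ (using the source map $s$). By pasting of pullbacks, this identifies $\Und\bA(A,T)$ with the pullback
\begin{tz}
\node[](1) {$\Und\bA(A,T)$};
\node[below of=1](3) {$\un$};
\node[right of=1,xshift=1.6cm](2) {$(\slice{\bA}{T})_0$};
\node[below of=2](4) {$\bA_0$};
\pullback{1};
\draw[->] (1) to (2);
\draw[->] (2) to node[right,la]{$s\circ\pi$} (4);
\draw[->] (1) to (3);
\draw[->] (3) to node[below,la]{$A$} (4);
\end{tz}
in $\cV$.

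Finally, by hypothesis the internal functor $\slice{\bA}{T}\to\bA$ is an isomorphism, so in particular its component at level $0$, which is exactly the map $s\circ\pi\colon (\slice{\bA}{T})_0\to\bA_0$, is an isomorphism in $\cV$. Since isomorphisms are stable under pullback, the top map in the above square is an isomorphism, yielding $\Und\bA(A,T)\cong\un$. As this identification is over $\un$, the unique map $\Und\bA(A,T)\to\un$ is an isomorphism for every object $A$, showing that $T$ is $\cV$-terminal in $\Und\bA$. The main (very mild) obstacle is just the bookkeeping to verify that the two factorizations of the pullback assemble correctly and that the projection $(\slice{\bA}{T})_0\to\bA_0$ is indeed $s\circ\pi$ rather than some other map.
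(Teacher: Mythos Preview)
Your argument is correct and essentially the same as the paper's: both factor the defining pullback for $\Und\bA(A,T)$ through $(\slice{\bA}{T})_0$ and then use that the projection $(\slice{\bA}{T})_0\to\bA_0$ is an isomorphism. One small slip: in your final step it is the \emph{left} vertical map $\Und\bA(A,T)\to\un$ (not the top map) that becomes an isomorphism as the pullback of the isomorphism $s\circ\pi$.
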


\begin{proof}
Let $A\colon \un\to \bA_0$ be an object in $\Und\bA$, and consider the following diagram in~$\cV$.
\begin{tz}
\node[](5) {$\Und\bA(A,T)$}; 
\node[below of=5](6) {$ \un$};
\node[right of=5, xshift=1.8cm](1) {$(\slice{\bA}{T})_0$}; 
\node[below of=1](3) {$\bA_0$};
\node[right of=1, xshift=1.3cm](2) {$\bA_1$};
\node[below of=2](4) {$\bA_0\times\bA_0$};
\pullback{1};

\draw[->,dashed] (5) to (1);
\draw[->] (5) to (6);
\draw[->] (6) to node[below,la]{$A$} (3);
\draw[->] (1) to (2);
\draw[->] (2) to node[right,la]{$(s,t)$} (4);
\draw[->] (1) to node[left,la]{$\cong$} (3);
\draw[->] (3) to node[below,la]{$\id_{\bA_0}\times T$} (4);
\draw[->,bend left=20] (5) to (2);
\end{tz}
Here, the middle vertical map is an isomorphism since $T$ is internal terminal, and the right-hand square and the outer square are pullbacks by definition. Hence, by pullback cancellation the left-hand square is also a pullback, and so $\Und\bA(A,T)\to \un$ is an isomorphism. This shows that $T$ is $\cV$-terminal in $\Und\bA$.
\end{proof}

While true for $\cV=\Set$, the converse does not hold in general. Notably, this explains why the stronger notion of an internal terminal object is needed in \cref{thm:representation}, and in general we cannot simply encode the data of the representability of a $\cV$-functor in terms of a $\cV$-terminal object in the underlying $\cV$-category $\Und\int_\cC F$, as was already understood in the case of $\cV=\Cat$ \cite{cM1}. However, it is possible to have an equivalent statement to that of \cref{thm:representation} solely in the language of enriched categories, as long as we are willing to consider ``shifted'' underlying $\cV$-categories of generalized elements, as we now explain.

\begin{defn}\label{defn:conservative}
    A family of objects $\cG$ is \textbf{conservative} if it satisfies the following condition: a map $f\colon Y\to Z$ in $\cV$ is an isomorphism in $\cV$ if and only if, for every object $X\in \cG$, the induced~map 
    \[ f_*\colon \cV(X,Y)\to \cV(X,Z) \]
    is an isomorphism of sets. 
\end{defn}

\begin{ex}\label{ex:conservatives}
 For any category $\cV$, the Yoneda lemma ensures that there exists at least one conservative family: the one consisting of all objects in $\cV$. In practice, we can often do better, as illustrated by the following examples.
    \begin{enumerate}
        \item When $\cV=\Set$, the family containing just the one-point set $\{*\}$ is conservative.
        \item When $\cV=\Cat$, the family containing just the free-living morphism $\mathbbm{2}$ is conservative. To see this, note that a functor is an isomorphism if and only if it is an isomorphism on objects and on morphisms; however, the map on objects is a retract of the map on morphisms and so it is an isomorphism if the map on morphisms is so.         
        \item[($n$)] When $\cV=(n-1)\Cat$ for $n\geq 3$, the family containing just the free-living $(n-1)$-morphism $C_{n-1}$ is conservative, by a similar argument to the one in (2).
    \end{enumerate}
\end{ex}

\begin{rem} \label{rem:homcstX}
    Given an object $X\in \cV$ and an internal category to $\cV$, note that we have the following computations in $\cV$
    \[ \llbracket \cst X,\bA\rrbracket_0=[X,\bA_0] \quad \text{and} \quad \llbracket \mathbbm{2},\llbracket\cst X,\bA\rrbracket\rrbracket_0=\llbracket\cst X,\bA\rrbracket_1=[X,\bA_1]. \]
\end{rem}

\begin{theorem}\label{thm:thenewexample} 
    Let $\cG$ be a conservative family of objects in $\cV$, and $\bA$ be an internal category to $\cV$. Given an element $T\colon \un\to \bA$, the following are equivalent: 
    \begin{rome}
        \item the element $T$ is an internal terminal object in $\bA$, 
        \item for every object $X\in \cG$, the element $T\colon \un\xrightarrow{T} \bA\xrightarrow{!^*} \llbracket \cst X,\bA\rrbracket$ is a $\cV$-terminal object in the $\cV$-category $\Und \llbracket\cst X,\bA\rrbracket$.
    \end{rome}
\end{theorem}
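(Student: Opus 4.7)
The key computational lemma will be the identification
\[ \llbracket\cst X,\slice{\bA}{T}\rrbracket\;\cong\;\slice{\llbracket\cst X,\bA\rrbracket}{T_X} \]
in $\CatV$ over $\llbracket\cst X,\bA\rrbracket$ (writing $T_X$ for the element of~(ii)), valid for every object $X\in\cV$. This holds because $\llbracket\cst X,-\rrbracket$ is a right adjoint and so preserves the pullback defining $\slice{\bA}{T}$ in \cref{defn:slice}, while the symmetry isomorphism $\llbracket\cst X,\llbracket\mathbbm{2},\bA\rrbracket\rrbracket\cong\llbracket\mathbbm{2},\llbracket\cst X,\bA\rrbracket\rrbracket$ coming from cartesian closure of $\CatV$ together with $\llbracket\cst X,\un\rrbracket\cong\un$ rewrites the resulting pullback as the slice on the right. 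This is what will let the structure on each side ``travel through'' the shift by~$X$.

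For (i)$\Rightarrow$(ii), the plan is to apply $\llbracket\cst X,-\rrbracket$ to the isomorphism $\slice{\bA}{T}\to\bA$ witnessing internal terminality of $T$, and use the identification above to deduce that $\slice{\llbracket\cst X,\bA\rrbracket}{T_X}\to\llbracket\cst X,\bA\rrbracket$ is an isomorphism. Hence $T_X$ is internal terminal in $\llbracket\cst X,\bA\rrbracket$, and \cref{internalterminalimpliesVterminal} upgrades this to $\cV$-terminality in $\Und\llbracket\cst X,\bA\rrbracket$.

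For (ii)$\Rightarrow$(i), I will use conservativity of $\cG$ to reduce internal terminality of $T$ to bijections of sets: $T$ is internal terminal in $\bA$ iff $\slice{\bA}{T}_i\to\bA_i$ is an isomorphism in $\cV$ for $i=0,1$, iff it induces a bijection on $\cV(X,-)$ for every $X\in\cG$. Currying together with \cref{rem:homcstX} rewrites $\cV(X,\bB_i)\cong\cV(\un,\llbracket\cst X,\bB\rrbracket_i)$, and the identification above turns the task into showing, for $\bC\coloneqq\llbracket\cst X,\bA\rrbracket$, that the projection $\slice{\bC}{T_X}\to\bC$ induces a bijection on $\cV(\un,-)$ at levels $0$ and~$1$. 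The level $0$ case is exactly the $\cV$-terminality of $T_X$, unpacked through the pullback definition of $\Und\bC(A',T_X)$ as in the proof of \cref{internalterminalimpliesVterminal}. For level $1$, I will invoke \cref{prop:sliceisdiscfib} to see that $\slice{\bC}{T_X}\to\bC$ is an internal discrete fibration, so by \cref{defn:discfib} there is a pullback $\slice{\bC}{T_X}_1\cong \slice{\bC}{T_X}_0\times_{\bC_0}\bC_1$; applying $\cV(\un,-)$, which preserves pullbacks, the level $1$ bijection reduces to finding, for each $f\colon\un\to\bC_1$, a unique morphism $t(f)\to T_X$ in $\bC$, which is precisely the already-established level $0$ bijection evaluated at $t(f)\colon\un\to\bC_0$.

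The main obstacle will be the level $1$ step of (ii)$\Rightarrow$(i): $\cV$-terminality of $T_X$ only directly controls uniqueness of slice-objects over each object of $\bC$, whereas level $1$ of $\slice{\bC}{T_X}$ a priori concerns uniqueness of slice-morphisms over each morphism of $\bC$. The internal discrete fibration structure of the slice projection, captured by \cref{prop:sliceisdiscfib}, is the decisive ingredient that collapses the level $1$ problem to the level $0$ one, where the hypothesis directly applies.
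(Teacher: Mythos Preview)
Your proof is correct and follows essentially the same strategy as the paper's. The (i)$\Rightarrow$(ii) direction is identical. For (ii)$\Rightarrow$(i), the paper invokes \cref{prop:isoonfibers} upfront---applied to the slice projection and the identity, both viewed as internal discrete fibrations over $\bA$---to reduce the isomorphism check to level~$0$ only, and then unwinds $\cV(X,(\slice{\bA}{T})_0)\to\cV(X,\bA_0)$ directly; you instead curry first, transfer to $\bC=\llbracket\cst X,\bA\rrbracket$ via your identification lemma, and handle levels $0$ and $1$ separately, reducing level~$1$ to level~$0$ through the discrete fibration pullback of \cref{prop:sliceisdiscfib}. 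That last step is precisely the content of the implication (ii)$\Rightarrow$(i) in \cref{prop:isoonfibers}, so the two arguments are the same up to the order in which currying and the discrete-fibration reduction are applied.
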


\begin{proof}
    We first prove that (i) implies (ii). By definition, if $T$ is an internal terminal object in~$\bA$, we have that the projection $\slice{\bA}{T}\to \bA$ is an isomorphism of internal categories. Applying the functor $\llbracket \cst X,-\rrbracket$ to this isomorphism, we obtain that the projection 
    \[ \slice{\llbracket \cst X,\bA\rrbracket}{T}\cong \llbracket \cst X,\slice{\bA}{T}\rrbracket\to \llbracket \cst X,\bA\rrbracket\]
    is also an isomorphism, where $T$ is seen as an element of $\llbracket \cst X,\bA\rrbracket$ by post-composing $T\colon \un\to \bA$ with $\bA\xrightarrow{!^*}\llbracket \cst X,\bA\rrbracket$. Hence,  \cref{internalterminalimpliesVterminal} ensures that $T$ is $\cV$-terminal in the underlying $\cV$-category $\Und \llbracket \cst X,\bA\rrbracket$, as desired. 

    Next, we prove that (ii) implies (i). Suppose that, for every object $X\in \cG$, the element $T\colon \un\xrightarrow{T} \bA\xrightarrow{!^*} \llbracket\cst X,\bA\rrbracket$ is $\cV$-terminal in $\Und\llbracket\cst X,\bA\rrbracket$; we need to show that the internal functor $\slice{\bA}{T}\to \bA$ is an isomorphism. Since the identity at $\bA$ is an internal discrete fibration, as is the projection $\slice{\bA}{T}\to \bA$ by \cref{prop:sliceisdiscfib}, it then follows from \cref{prop:isoonfibers} that this is equivalent to showing that the induced map $(\slice{\bA}{T})_0\to \bA_0$ is an isomorphism in $\cV$. By definition of the conservative family $\cG$, this is equivalent to showing that, for every object $X\in \cG$, the induced map 
\[ \cV(X,(\slice{\bA}{T})_0)\to \cV(X,\bA_0) \]
is an isomorphism of sets. 

Consider an element $G$ in the right-hand set; i.e., a map $G\colon X\to \bA_0$ in $\cV$. Then $G$ can be regarded as an internal functor $G\colon \cst X\to \bA$, or equivalently, as an object $G\colon \un\to \llbracket\cst X,\bA\rrbracket$. Since $T$ is $\cV$-terminal in $\Und\llbracket\cst X,\bA\rrbracket$, there is an isomorphism $\Und \llbracket\cst X,\bA\rrbracket(G,T)\cong \un$ in $\cV$ and hence an isomorphism of sets 
\[ \cV(\un, \Und\llbracket\cst X,\bA\rrbracket(G,T))\cong \cV(\un,\un)\cong \{*\}.\]
Therefore, there is a unique map $f\colon \un \to \Und \llbracket\cst X,\bA\rrbracket(G,T)$ in $\cV$. The map $f$ can equivalently be seen, by definition of $\Und \llbracket\cst X,\bA\rrbracket$ and using \cref{rem:homcstX}, as a map $f\colon \un\to [X,\bA_1]$ in $\cV$ making the diagram  below left commute, and then, by the universal property of the adjunction $X\times (-)\dashv [X,-]$, as a map $X\to \bA_1$ in $\cV$ making the  diagram below right commute.
\begin{tz}
    \node[](1) {$\un$}; 
        \node[right of=1,xshift=1cm](2) {$[X,\bA_1]$}; 
        \node[below of=2](3) {$[X,\bA_0]\times [X,\bA_0]$}; 

        \draw[->] (1) to node[above,la]{$f$} (2); 
        \draw[->] (2) to node[right,la] {$(s,t)$} (3); 
        \draw[->] (1) to node[left,la,yshift=-6pt]{$G\times T$} (3);

        \node[right of=2,xshift=1.5cm](1) {$X$}; 
        \node[right of=1,xshift=1cm](2) {$\bA_1$}; 
        \node[below of=2](3) {$\bA_0\times \bA_0$}; 

        \draw[->] (1) to node[above,la]{$f$} (2); 
        \draw[->] (2) to node[right,la] {$(s,t)$} (3); 
        \draw[->] (1) to node[left,la,yshift=-6pt]{$(G,T)$} (3);
        \end{tz}
        Finally, by the pullback description of $\slice{\bA}{T}$ from \cref{defn:slice}, the map $f\colon X\to \bA_1$ defines a unique map $X\to (\slice{\bA}{T})_0$ whose post-composition with $(\slice{\bA}{T})_0\to \bA_0$ is precisely $G$. This shows the desired bijection. 
\end{proof}

Applying this perspective to \cref{thm:representation}, we get the following.

\begin{theorem} \label{enrichedstatement}
    Let $\cG$ be a conservative family of objects in $\cV$, and $F\colon \cC^{\op}\to \cV$ be a $\cV$-functor. Given a pair $(C,x)$ of an object $C\in \cC$ and an element $x\colon \un\to FC$ in $\cV$, the following are equivalent: 
\begin{rome}
    \item the $\cV$-functor $F$ is $\cV$-representable by $(C,x)$, 
    \item for every object $X\in \cG$, the pair $(C,x)$ is a $\cV$-terminal object in the $\cV$-category of generalized elements $\Und\llbracket \cst X,\int_\cC F\rrbracket$. 
\end{rome}
\end{theorem}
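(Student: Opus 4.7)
The plan is to chain together the two preceding theorems: \cref{thm:representation}, which characterizes $\cV$-representability as the existence of an internal terminal object in $\int_\cC F$, and \cref{thm:thenewexample}, which in turn characterizes internal terminal objects via $\cV$-terminal objects in underlying $\cV$-categories of the form $\Und\llbracket \cst X,-\rrbracket$ indexed over a conservative family $\cG$.

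More concretely, first I would invoke \cref{thm:representation} to reduce condition (i) to the statement that $(C,x)\colon \un \to \int_\cC F$ is an internal terminal object in the internal category of elements $\int_\cC F$. Then I would apply \cref{thm:thenewexample} with $\bA \coloneqq \int_\cC F$ and $T \coloneqq (C,x)$: this theorem produces an equivalence between $(C,x)$ being an internal terminal object in $\int_\cC F$ and, for every $X \in \cG$, the element
\[ \un\xrightarrow{(C,x)} \textstyle\int_\cC F\xrightarrow{!^*} \llbracket\cst X,\textstyle\int_\cC F\rrbracket \]
being a $\cV$-terminal object in $\Und\llbracket \cst X,\int_\cC F\rrbracket$, which is exactly condition (ii). Composing the two equivalences gives the desired result.

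There is essentially no obstacle, since all the real work has already been done in \cref{thm:representation,thm:thenewexample}. The only minor bookkeeping task is to verify that the shifted element used in \cref{thm:thenewexample}\,(ii), namely the post-composition of $(C,x)$ with $!^*\colon \int_\cC F \to \llbracket \cst X, \int_\cC F\rrbracket$, matches what we call ``the pair $(C,x)$'' when viewed as an object of $\Und\llbracket \cst X,\int_\cC F\rrbracket$; this is just an unravelling of definitions using \cref{rem:homcstX} and the adjunction $\cst \dashv (-)_0$, so no subtlety arises. The proof therefore amounts to a single line: combine \cref{thm:representation} with \cref{thm:thenewexample} applied to $\bA = \int_\cC F$.
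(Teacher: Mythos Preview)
Your proposal is correct and matches the paper's proof exactly: the paper simply states that the result is obtained by combining \cref{thm:representation} and \cref{thm:thenewexample}. Your additional remark about the bookkeeping for the shifted element is accurate and harmless, though the paper does not spell it out.
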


\begin{proof}
    This is obtained by combining \cref{thm:representation,thm:thenewexample}.
\end{proof}

\begin{ex} \label{ex:enrichedrepthm} 
Going back to our examples, using the interpretation from \cref{ex:functors} and the conservative families from \cref{ex:conservatives}, \cref{enrichedstatement} translates as follows. 
    \begin{enumerate}
        \item When $\cV=\Set$, it is identical to the one in \cref{thm:representation}, specialized to $\cV=\Set$ in \cref{ex:repthm}(1), since we can choose the conservative family consisting of~$\{*\}$ and we have that the functor $\Und$ is simply the identity.
        \item When $\cV=\Cat$, we get a stricter analogue of the equivalence between (i) and (iv) in \cite[Theorem 6.8]{cM2}. In our case, the theorem tells us that to test whether a $2$-functor $F\colon \cC^{\op}\to \Cat$ is $2$-representable by a pair $(C,x)$ of objects $C\in \cC$ and $x\in FC$, it is enough to test whether the pair $(C,\id_x)$ is $2$-terminal in the \emph{$2$-category of morphisms} $\Und \llbracket \cst\mathbbm{2},\int_\cC F\rrbracket$, whose objects are pairs $(A,u)$ of an object $A\in \cC$ and a morphism $\varphi$ in~$FA$. 
        \item[($n$)] When $\cV=(n-1)\Cat$ for $n\geq 3$, the theorem tells us that to test whether an $n$-functor $F\colon \cC^{\op}\to (n-1)\Cat$ is $n$-representable by a pair $(C,x)$ of objects $C\in \cC$ and $x\in FC$, it is enough to test whether the pair $(C,\id^{n-1}_x)$---where $\id^{n-1}_x$ denotes the identity $(n-1)$-morphism at the object $x$---is $n$-terminal in the \emph{$n$-category of $(n-1)$-morphisms} $\Und \llbracket \cst C_{n-1},\int_\cC F\rrbracket$, whose objects are pairs $(A,\varphi)$ of an object $A\in \cC$ and an $(n-1)$-morphism $\varphi$ in $FA$. 
    \end{enumerate}
\end{ex}

\section{The representation theorem in the presence of tensors}

In this section, let $\cV$ be an extensive and cartesian closed category with pullbacks. The aim of this section is to study when \cref{thm:representation}, which characterizes the representability of a $\cV$-functor $F\colon\cC^\op\to\cV$ in terms of the existence of an internal terminal object in its internal category of elements $\int_\cC F$, can be expressed solely in the language of $\cV$-categories by looking for $\cV$-terminal objects in the underlying $\cV$-category $\Und\int_\cC F$.

As a first step, we investigate when internal terminal objects to an internal category $\bA$ agree with $\cV$-terminal objects in $\Und\bA$. In \cref{subsec:tensors} we show that this is the case when $\bA$ has certain \emph{internal tensors}; this is the content of \cref{thm:enrvsintterminal}. 

Since we are interested in internal terminal objects in $\int_\cC F$, the natural next step is to study when this specific internal category has  internal tensors. Unfortunately, this is seldom the case, and we are required to modify our approach. In \cref{subsec:Ctensors}, we start from an internal discrete fibration $\bA\to\Int\cC$ and define a more restricted notion of \emph{$\cC$-internal tensors} in $\bA$. \cref{thm:ctensorsterminal} then shows that if $\bA$ admits certain $\cC$-internal tensors, we can still freely move between internal and enriched terminal objects.

Finally, \cref{subsec:reptensors} focuses on the internal discrete fibration $\int_\cC F\to\Int\cC$. \cref{prop:grothtensors} shows that $\int_\cC F$ has the required $\cC$-internal tensors when $\cC$ has the corresponding tensors in the enriched sense and $F\colon \cC^{\op}\to \cV$ preserves them, and \cref{thm:enrichedrepthm} achieves our goal of a representation theorem that only involves the underlying $\cV$-category of $\int_\cC F$. 

\subsection{Internal terminal objects in the presence of internal tensors}\label{subsec:tensors}

Recall from \cref{internalterminalimpliesVterminal} that an internal terminal object is in particular $\cV$-terminal, but that the converse is generally false. Here we show that, in the presence of \emph{internal tensors}, the two notions are in fact equivalent.

\begin{notation}
    Given an internal category $\bA$ to $\cV$ and an element $I\colon \un\to \bA$, we denote by~$\sliceunder{\bA}{I}$ the dual version of \cref{defn:slice} obtained by pulling back along the internal functor $I\times \id_{\bA}\colon \bA\to \bA\times \bA$.
\end{notation}

\begin{defn}
    Let $\bA$ be an internal category to $\cV$, and $I\colon \un\to \bA$ be an element. We say that $I$ is an \textbf{internal initial object} in $\bA$ if the canonical projection $\sliceunder{\bA}{I}\to \bA$ is an isomorphism. 
\end{defn}

\begin{notation} \label{not:catofcocones}
    Given an internal functor $G\colon \bI\to \bA$ to $\cV$, we denote by $G\downarrow \Delta$ the dual version of the comma internal category from \cref{defn:internalcomma} obtained by pulling back along the internal functor $G\times \Delta\colon \bA\to \llbracket\bI,\bA\rrbracket\times\llbracket\bI,\bA\rrbracket$. We refer to $G\downarrow \Delta$ as the \textbf{internal category of cocones under $G$}.
\end{notation}

\begin{defn} \label{def:internalcolimit}
    Let $G\colon \bI\to \bA$ be an internal functor to $\cV$. An \textbf{internal colimit} of $G$ is an internal initial object in the internal category $G\downarrow \Delta$ of cocones under $G$. 
\end{defn}

We are interested in internal colimits of a particular type of internal functors: those of the form $\cst X\to \bA$. Recall that these correspond under the adjunction $\cst\dashv (-)_0$ to maps $X\to \bA_0$ in $\cV$. We introduce the following terminology.

\begin{defn}
    Given an object $X\in \cV$, we say that an internal category $\bA$ has all \textbf{internal tensors by $X$}\footnote{Our choice of terminology is due to the fact that internal tensors are closely related to $\cV$-tensors; for a detailed explanation, see \cref{rem:justifytensors}.} if, for every internal functor $G\colon \cst X\to \bA$, the internal colimit of $G$ exists. 
\end{defn}

\begin{ex} \label{ex:tensors}
    In particular, we will only be interested in internal tensors by a conservative family of objects in $\cV$; see \cref{defn:conservative}. For our usual examples, families of conservative objects can be chosen as in \cref{ex:conservatives} and we get the following. 
    \begin{enumerate}
        \item When $\cV=\Set$, internal tensors by the point $\{*\}$ always exist in any category $\bA$ and, for a given functor $A\colon \{*\}\to \bA$, it is given by the object $A$ itself.
        \item When $\cV=\Cat$, internal tensors by the free-living morphism $\mathbbm{2}$ are called \emph{cotabulators}; see \cite[\S 5.3]{GrandisPare}.
        \item[($n$)] When $\cV=(n-1)\Cat$ for $n\geq 3$, we refer to internal tensors by the free-living $(n-1)$-morphism $C_{n-1}$ as \emph{$n$-cotabulators}.
    \end{enumerate}
\end{ex}

In order to study internal tensors, it will be useful to have an explicit description of the internal category $G\downarrow \Delta$ for a given internal functor $G\colon\cst X\to\bA$.

\begin{prop} \label{descrslice}
    Let $X\in \cV$ be an object, and $G\colon \cst X\to \bA$ be an internal functor to $\cV$, or equivalently a map $G\colon X\to \bA_0$ in $\cV$. Given an object $Y\in \cV$, we have that
    \begin{rome}
        \item a map $Y\to (G\downarrow \Delta)_0$ consists of a pair $(y,\alpha)$ of maps $y\colon Y\to \bA_0$ and $\alpha\colon X\times Y\to \bA_1$ in $\cV$ such that the following diagram commutes,
    \begin{tz}
        \node[](1) {$X\times Y$}; 
        \node[right of=1,xshift=1cm](2) {$\bA_1$}; 
        \node[below of=2](3) {$\bA_0\times \bA_0$}; 

        \draw[->] (1) to node[above,la]{$\alpha$} (2); 
        \draw[->] (2) to node[right,la] {$(s,t)$} (3); 
        \draw[->] (1) to node[left,la,yshift=-5pt]{$G\times y$} (3);
    \end{tz}
    \item a map $Y\to (G\downarrow \Delta)_1$ whose post-compositions with source and target are given by pairs $(y,\alpha)$ and $(z,\beta)$ consists of a map $f\colon Y\to \bA_1$ in $\cV$ such that the following diagrams commute.
    \begin{tz}
    \node[](1) {$Y$}; 
        \node[right of=1,xshift=1cm](2) {$\bA_1$}; 
        \node[below of=2](3) {$\bA_0\times \bA_0$}; 

        \draw[->] (1) to node[above,la]{$f$} (2); 
        \draw[->] (2) to node[right,la] {$(s,t)$} (3); 
        \draw[->] (1) to node[left,la,yshift=-6pt]{$(y,z)$} (3);
        
        \node[right of=2,xshift=1.5cm](1) {$X\times Y$}; 
        \node[right of=1,xshift=1.7cm](2) {$\bA_1\times_{\bA_0}\bA_1$}; 
        \node[below of=2](3) {$\bA_1$}; 

        \draw[->] (1) to node[above,la]{$(\alpha,f\pi_1)$} (2); 
        \draw[->] (2) to node[right,la] {$c$} (3); 
        \draw[->] (1) to node[left,la,yshift=-6pt]{$\beta$} (3);
    \end{tz}
    \end{rome}
\end{prop}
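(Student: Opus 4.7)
The plan is to directly unpack the pullback that defines $G\downarrow\Delta$ at each level, using the fact that pullbacks in $\CatV$ are computed levelwise in $\cV$ (\cref{pullbacksinCatV}), together with the adjunction $X\times(-)\dashv[X,-]$ and the explicit description of $\llbracket\cst X,\bA\rrbracket$ coming from $\cV$-enrichment of $\CatV$. Concretely, viewing $G$ as an element $G\colon\un\to\llbracket\cst X,\bA\rrbracket$, the internal category $G\downarrow\Delta$ is by definition the pullback in $\CatV$ of $\Delta\times G\colon\bA\times\un\to\llbracket\cst X,\bA\rrbracket\times\llbracket\cst X,\bA\rrbracket$ against $\langle 0,1\rangle^*\colon\llbracket\mathbbm{2},\llbracket\cst X,\bA\rrbracket\rrbracket\to\llbracket\cst X,\bA\rrbracket\times\llbracket\cst X,\bA\rrbracket$.

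For part (i), this gives
\[ (G\downarrow\Delta)_0 \;=\; \bA_0\times_{[X,\bA_0]\times[X,\bA_0]}[X,\bA_1], \]
since $\llbracket\cst X,\bA\rrbracket_0=[X,\bA_0]$ and $\llbracket\mathbbm{2},\llbracket\cst X,\bA\rrbracket\rrbracket_0=\llbracket\cst X,\bA\rrbracket_1=[X,\bA_1]$ by \cref{rem:hom2A,rem:homcstX}. By the universal property of the pullback, a map $Y\to(G\downarrow\Delta)_0$ consists of a map $y\colon Y\to\bA_0$ and a map $Y\to[X,\bA_1]$ that agree after further mapping to $[X,\bA_0]\times[X,\bA_0]$. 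Transposing under $X\times(-)\dashv[X,-]$ turns the second datum into $\alpha\colon X\times Y\to\bA_1$, and the compatibility condition becomes exactly that $(s,t)\circ\alpha=G\times y$ as a map $X\times Y\to\bA_0\times\bA_0$; here the $G$-factor comes from the adjunct of $G\colon Y\to\un\to[X,\bA_0]$, which is $G\circ\pi_X$, and the $y$-factor comes from the adjunct of $\Delta_0\circ y$, which is $y\circ\pi_Y$.

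For part (ii), the same recipe applies at level $1$, where the key computation is $\llbracket\mathbbm{2},\llbracket\cst X,\bA\rrbracket\rrbracket_1\cong\llbracket\mathbbm{2}\times\mathbbm{2},\llbracket\cst X,\bA\rrbracket\rrbracket_0$, so that a map $Y$ into it transposes to a $(Y\times X)$-indexed commutative square in $\bA$. Pulling back along $\Delta_1\times G_1$, where $G_1\colon\un\to[X,\bA_1]$ is the identity morphism $i\circ G$, identifies the left edge of that square with $G$ (constant in $Y$) and the right edge with the identity at the target of $f$; what remains is the top/bottom edges $\alpha$ and $\beta$ together with $f\pi_1\colon X\times Y\to\bA_1$ forming a commutative square. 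Unpacking ``commutative square'' into the two equations $(s,t)\circ f=(y,z)$ and $c\circ(\alpha,f\pi_1)=\beta$ yields exactly the characterization stated. The main technical point, and essentially the only nontrivial step, is keeping track of which factor of $[X,\bA_0]\times[X,\bA_0]$ is constant in which variable after transposing under the cartesian closed structure; once this bookkeeping is done correctly the identifications are forced by universal properties.
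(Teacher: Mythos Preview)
Your approach is correct and matches the paper's: compute the defining pullback of $G\downarrow\Delta$ levelwise via \cref{pullbacksinCatV,rem:hom2A,rem:homcstX} and transpose under $X\times(-)\dashv[X,-]$. One small slip in your level-$1$ sketch: the right edge of the square is $\Delta f$, not ``the identity at the target of $f$'' (and the map in \cref{not:catofcocones} is $G\times\Delta$, not $\Delta\times G$), but the conditions you extract are the correct ones.
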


\begin{proof}
    As pullbacks in $\CatV$ are computed levelwise by \cref{pullbacksinCatV}, we can unpack \cref{not:catofcocones} using \cref{rem:homcstX} to get that levels $0$ and $1$ of the internal category $G\downarrow \Delta$ of cocones under $G$ are given by the following pullbacks in $\cV$,
    \begin{tz}
        \node[](1) {$(G\downarrow \Delta)_0$}; 
\node[below of=1](3) {$\bA_0$};
\node[right of=1,xshift=1.8cm](2) {$[X,\bA_1]$}; 
\node[below of=2](4) {$[X,\bA_0]\times[X,\bA_0]$};
\pullback{1};

\draw[->] (1) to node[above,la]{} (2);
\draw[->] (2) to node[right,la]{$(s,t)$} (4);
\draw[->] (1) to node[left,la]{} (3);
\draw[->] (3) to node[below,la]{$G\times \Delta$} (4);

\node[right of=2,xshift=1.8cm](1) {$(G\downarrow \Delta)_1$}; 
\node[below of=1](3) {$\bA_1$};
\node[right of=1,xshift=3.8cm](2) {$[X,(\bA_1\times_{\bA_0} \bA_1)\times_{\bA_1}(\bA_1\times_{\bA_0} \bA_1)]$}; 
\node[below of=2](4) {$[X,\bA_1]\times [X,\bA_1]$};
\pullback{1};

\draw[->] (1) to node[above,la]{} (2);
\draw[->] (2) to node[right,la]{$(s,t)_*$} (4);
\draw[->] (1) to node[left,la]{} (3);
\draw[->] (3) to node[below,la]{$G\times \Delta$} (4);
    \end{tz}
    where the pullback $(\bA_1\times_{\bA_0} \bA_1)\times_{\bA_1}(\bA_1\times_{\bA_0} \bA_1)$ is taken on both sides over the composition map $c\colon \bA_1\times_{\bA_0}\bA_1\to \bA_1$. Then, using the universal property of the above pullbacks and the adjunction $X\times (-)\dashv [X,-]$, we obtain the desired result.
\end{proof}

\begin{rem}\label{elementsincatofcones}
    In particular, we can use \cref{descrslice} to describe a generic element $\un\to G\downarrow \Delta$. Under the adjunction $\cst\dashv (-)_0$, such an element corresponds to a map $\un\to (G\downarrow\Delta)_0$. Taking $Y=\un$ in \cref{descrslice}(i), we get that it consists of a pair $(y,\alpha)$ of maps $y\colon \un \to \bA_0$ and $\alpha\colon X\to \bA_1$ in $\cV$ such that the following diagram in $\cV$ commutes.
    
    \begin{tz}
        \node[](1) {$X$}; 
        \node[right of=1,xshift=1cm](2) {$\bA_1$}; 
        \node[below of=2](3) {$\bA_0\times \bA_0$}; 

        \draw[->] (1) to node[above,la]{$\alpha$} (2); 
        \draw[->] (2) to node[right,la] {$(s,t)$} (3); 
        \draw[->] (1) to node[left,la,yshift=-5pt]{$G\times y$} (3);
    \end{tz}
\end{rem}

We can also make explicit what it means for an element $\un\to G\downarrow \Delta$ to be an internal colimit of $G$.

\begin{prop} \label{univpropcolim}
    Let $X\in \cV$ be an object and $G\colon \cst X\to \bA$ be an internal functor. An element $(G\otimes X,\gamma)\colon \un\to G\downarrow \Delta$ is an internal colimit of $G$ if and only if, for every map $(y,\alpha)\colon Y\to (G\downarrow \Delta)_0$ in $\cV$, there is a unique map $f\colon Y\to (G\downarrow \Delta)_1$ from $(G\otimes X,\gamma)$ to $(y,\alpha)$; i.e., making the following diagram in $\cV$ commute. 
        \begin{tz}
        \node[](1) {$Y$}; 
        \node[right of=1,xshift=1.3cm](2) {$(G\downarrow \Delta)_1$}; 
        \node[below of=2](3) {$(G\downarrow \Delta)_0\times (G\downarrow \Delta)_0$}; 
        \draw[->,dashed] (1) to node[above,la]{$\exists ! f$} (2); 
        \draw[->] (2) to node[right,la] {$(s,t)$} (3); 
        \draw[->] (1) to node[left,la,yshift=-6pt]{$(G\otimes X,\gamma)\times  (y,\alpha)$} (3);
    \end{tz}
\end{prop}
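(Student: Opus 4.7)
The plan is to unpack the definition of internal colimit via the coslice construction, reduce the resulting iso condition to level~$0$ using the dual of the discrete fibration machinery from \cref{sec:discfib}, and then translate everything into the stated universal property using Yoneda and \cref{descrslice}.

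First, by \cref{def:internalcolimit}, the pair $(G\otimes X,\gamma)$ is an internal colimit of $G$ if and only if it is an internal initial object of $G\downarrow\Delta$. Dualizing \cref{defn:internalterminal}, this means that the canonical projection
\[ \sliceunder{(G\downarrow\Delta)}{(G\otimes X,\gamma)}\longrightarrow G\downarrow\Delta \]
is an isomorphism in $\CatV$.

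Next, I would reduce this to an iso condition on level~$0$. The dual of \cref{prop:sliceisdiscfib} says that the coslice projection above is an internal discrete opfibration, meaning that its level~$1$ is the pullback of its level~$0$ along the source map $s\colon (G\downarrow\Delta)_1 \to (G\downarrow\Delta)_0$. Applying the dual of \cref{prop:isoonfibers} with the identity on $G\downarrow\Delta$ as the other discrete opfibration, the coslice projection is an isomorphism in $\CatV$ if and only if its level~$0$ map
\[ (\sliceunder{(G\downarrow\Delta)}{(G\otimes X,\gamma)})_0 \longrightarrow (G\downarrow\Delta)_0 \]
is an isomorphism in $\cV$.

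Finally, I would translate this level~$0$ condition into the stated universal property via Yoneda. Dualizing the pullback description of the slice in \cref{defn:slice}, for any $Y\in\cV$ a map $Y\to (\sliceunder{(G\downarrow\Delta)}{(G\otimes X,\gamma)})_0$ is exactly a map $f\colon Y\to(G\downarrow\Delta)_1$ with $s\circ f = (G\otimes X,\gamma)\circ !_Y$, and the coslice projection sends such an $f$ to $t\circ f\colon Y\to(G\downarrow\Delta)_0$. By Yoneda, bijectivity of this correspondence at every $Y$ is precisely the condition that each map $(y,\alpha)\colon Y\to(G\downarrow\Delta)_0$ lifts uniquely to such an $f$; reading $(y,\alpha)$ and $f$ via \cref{descrslice} recovers exactly the commutative diagram in the statement. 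The main obstacle is the explicit dualization of \cref{prop:sliceisdiscfib} and \cref{prop:isoonfibers} to the coslice/discrete opfibration setting; both duals hold by the same formal pullback manipulations, so this is bookkeeping rather than new input.
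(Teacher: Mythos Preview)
Your proposal is correct and follows essentially the same approach as the paper: unpack the definition of internal initial object, use the duals of \cref{prop:sliceisdiscfib} and \cref{prop:isoonfibers} (with the identity as the other discrete opfibration) to reduce to level~$0$, and then apply Yoneda together with the pullback description of the coslice to obtain the stated bijection. The paper's proof is the same argument, only slightly more terse in the final step where it says ``by direct inspection'' rather than invoking \cref{descrslice}.
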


\begin{proof}
By definition, an object $(G\otimes X,\gamma)$ in $G\downarrow \Delta$ is an internal colimit of $G$ if it is an initial object in $G\downarrow \Delta$; i.e., if the internal functor
\[ P\colon \sliceunder{G\downarrow \Delta}{(G\otimes X,\gamma)}\to G\downarrow \Delta \]
is an isomorphism. Now, the identity at $G\downarrow \Delta$ is an internal discrete opfibration, as is the projection $P$ by the dual of \cref{prop:sliceisdiscfib}; it then follows from the dual of \cref{prop:isoonfibers} that this is equivalent to requiring that the induced map $P_0$ is an isomorphism in $\cV$. By the Yoneda lemma, this is equivalent to saying that, for every $Y\in \cV$, the induced map
\[ (P_0)_*\colon \cV(Y,(\sliceunder{G\downarrow \Delta}{(G\otimes X,\gamma)})_0)\to \cV(Y,(G\downarrow \Delta)_0) \]
is an isomorphism of sets. 

By direct inspection, an element in the left-hand set is a map $Y\to (G\downarrow \Delta)_1$ whose source is $(G\otimes X,\gamma)$, an element in the right-hand set is a map $Y\to (G\downarrow \Delta)_0$, and the assignment is given by sending each $Y\to (G\downarrow \Delta)_1$ to its target. Hence, we get the desired result.
\end{proof}

We can now use the notion of internal tensors to obtain a converse to \cref{internalterminalimpliesVterminal}. In particular, for this, we only need to require the existence of internal tensors by a conservative family of objects in $\cV$; see \cref{defn:conservative}. 

\begin{theorem}\label{thm:enrvsintterminal} 
Let $\cG$ be a conservative family of objects in $\cV$, and $\bA$ be an internal category to~$\cV$ that has all internal tensors by objects $X\in \cG$. Given an element $T\colon \un\to \bA$, the following are equivalent: 
\begin{rome}
        \item the element $T$ is an internal terminal object in~$\bA$,
        \item the element $T$ is a $\cV$-terminal object in the underlying $\cV$-category $\Und\bA$.
\end{rome}
\end{theorem}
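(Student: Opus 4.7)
The plan is to prove the nontrivial direction (ii) $\Rightarrow$ (i), since (i) $\Rightarrow$ (ii) is exactly \cref{internalterminalimpliesVterminal}. Assuming that $T$ is a $\cV$-terminal object in $\Und\bA$, I would show that the projection $\slice{\bA}{T}\to\bA$ is an isomorphism in $\CatV$. As this projection is an internal discrete fibration by \cref{prop:sliceisdiscfib}, and the identity on $\bA$ is trivially one, \cref{prop:isoonfibers} reduces the task to showing that the induced map $(\slice{\bA}{T})_0\to\bA_0$ is an isomorphism in $\cV$. The conservativity of $\cG$ then reduces it further to showing that, for every $X\in\cG$ and every $G\colon X\to\bA_0$, there is a unique lift $X\to(\slice{\bA}{T})_0$; unpacking \cref{defn:slice} at level $0$, this amounts to exhibiting a unique $f\colon X\to\bA_1$ with $sf=G$ and $tf=T\circ !_X$, where $!_X\colon X\to\un$ is the canonical map.

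The crux is to use the internal tensor to transfer this lifting problem into a statement about the underlying $\cV$-category. Viewing $G$ as an internal functor $\cst X\to\bA$ via the adjunction $\cst\dashv(-)_0$, our hypothesis provides an internal colimit $(G\otimes X,\gamma)\colon\un\to G\downarrow\Delta$. By \cref{descrslice}(i) with $Y=\un$ (see also \cref{elementsincatofcones}), the desired lifts $f$ correspond precisely to the elements of $(G\downarrow\Delta)_0$ whose first component is $T$; and by \cref{descrslice}(ii), a morphism in $G\downarrow\Delta$ from $(G\otimes X,\gamma)$ to such an element $(T,f)$ is a map $h\colon\un\to\bA_1$ satisfying $sh=G\otimes X$, $th=T$, and $c\circ(\gamma, h\circ !_X)=f$. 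Applying the universal property of the internal colimit recorded in \cref{univpropcolim}, I would obtain a bijection
\[ \{h\colon\un\to\bA_1 \mid sh=G\otimes X,\ th=T\} \;\xrightarrow{\;\sim\;}\; \{f\colon X\to\bA_1 \mid sf=G,\ tf=T\circ !_X\} \]
given by $h\mapsto c\circ(\gamma, h\circ !_X)$.

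The conclusion is then immediate: the left-hand set is precisely $\cV(\un,\Und\bA(G\otimes X, T))$, which is a singleton by the $\cV$-terminality of $T$, so the right-hand set is also a singleton, delivering the required uniqueness. The main obstacle here is not any individual technical step, but rather the careful bookkeeping to set up this bijection correctly: one must identify a lift $X\to(\slice{\bA}{T})_0$ of $G$ with an element of $(G\downarrow\Delta)_0$ sitting over $T$, and then read the universal property of $G\otimes X$ as a way to factor such an element through a map $G\otimes X\to T$ in $\Und\bA$, where the $\cV$-terminality hypothesis can be invoked directly.
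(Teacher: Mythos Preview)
Your proposal is correct and follows essentially the same approach as the paper's proof: both reduce to the level-$0$ map via \cref{prop:sliceisdiscfib} and \cref{prop:isoonfibers}, test against the conservative family, and then use the internal tensor $(G\otimes X,\gamma)$ together with \cref{descrslice} and \cref{univpropcolim} to convert the lifting problem into the existence and uniqueness of a map $\un\to\Und\bA(G\otimes X,T)$, which is guaranteed by $\cV$-terminality. Your explicit bijection is exactly the mechanism the paper uses when it constructs $c(\gamma,f\circ !)$ and argues its uniqueness.
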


\begin{proof} 
The fact that (i) implies (ii) is \cref{internalterminalimpliesVterminal}. To show that (ii) implies (i), suppose that $T$ is $\cV$-terminal in $\Und\bA$; we need to show that the internal functor $\slice{\bA}{T}\to \bA$ is an isomorphism. Since the identity at $\bA$ is an internal discrete fibration, as is the projection $\slice{\bA}{T}\to \bA$ by \cref{prop:sliceisdiscfib}, it then follows from \cref{prop:isoonfibers} that this is equivalent to showing that the induced map $(\slice{\bA}{T})_0\to \bA_0$ is an isomorphism in $\cV$. By definition of the conservative family $\cG$, this is equivalent to showing that, for every object $X\in \cG$, the induced map 
\[ \cV(X,(\slice{\bA}{T})_0)\to \cV(X,\bA_0) \]
is an isomorphism of sets. 

Consider an element $G$ in the right-hand set; i.e., a map $G\colon X\to \bA_0$ in $\cV$. As $\bA$ has all internal tensors by objects in $\cG$, there is an internal colimit $(G\otimes X,\gamma)$ for the corresponding internal functor $G\colon \cst X\to \bA$, where we recall that $G\otimes X$ is a map $\un\to \bA_0$ and $\gamma$ is a map $X\to \bA_1$ in $\cV$ making the diagram below left commute.
\begin{tz}
    \node[](1) {$X$}; 
        \node[right of=1,xshift=.9cm](2) {$\bA_1$}; 
        \node[below of=2](3) {$\bA_0\times \bA_0$}; 

        \draw[->] (1) to node[above,la]{$\gamma$} (2); 
        \draw[->] (2) to node[right,la] {$(s,t)$} (3); 
        \draw[->] (1) to node[left,la,yshift=-6pt]{$G\times (G\otimes X)$} (3);

\node[right of=2,xshift=.5cm](1) {$\un$}; 
        \node[right of=1,xshift=.9cm](2) {$\bA_1$}; 
        \node[below of=2](3) {$\bA_0\times \bA_0$}; 

        \draw[->] (1) to node[above,la]{$f$} (2); 
        \draw[->] (2) to node[right,la] {$(s,t)$} (3); 
        \draw[->] (1) to node[left,la,yshift=-6pt]{$(G\otimes X)\times T$} (3);

        \node[right of=2,xshift=.5cm](1) {$X$}; 
        \node[right of=1,xshift=1.2cm](2) {$\bA_1\times_{\bA_0}\bA_1$}; 
        \node[right of=2,xshift=1.2cm](3) {$\bA_1$}; 
        \node[below of=3](4) {$\bA_0\times \bA_0$};

        \draw[->] (1) to node[above,la]{$(\gamma,f\circ !)$} (2); 
        \draw[->] (2) to node[above,la] {$c$} (3); 
        \draw[->] (3) to node[right,la]{$(s,t)$} (4);
        \draw[->] (1) to node[left,la,yshift=-8pt]{$G\times T$} (4);
\end{tz}
In particular, the element $G\otimes X\colon \un\to \bA_0$ defines an object of $\Und\bA$. Since $T$ is $\cV$-terminal in $\Und\bA$, there is an isomorphism $\Und\bA(G\otimes X,T)\cong \un$ in $\cV$ and hence an isomorphism of sets 
\[ \cV(\un, \Und\bA(G\otimes X,T))\cong \cV(\un,\un)\cong \{*\}.  \]
Therefore, there is a unique map $f\colon \un\to \Und\bA(G\otimes X,T)$ in $\cV$ which, by definition of $\Und\bA$, can equivalently be seen as a map $\un\to \bA_1$ in $\cV$ making the middle diagram above commute. 

Now consider the composite $c(\gamma, f\circ !)\colon X\to \bA_1$ in $\cV$, which in particular makes the diagram above right commute. Using the universal property of the internal tensor $(G\otimes X,\gamma)$ as stated in \cref{univpropcolim}, and the description in \cref{descrslice}, both for the case $Y=\un$, we see that the map $c(\gamma, f\circ !)$ is uniquely determined by $f$. Finally, by the pullback description of $\slice{\bA}{T}$ from \cref{defn:slice}, the map $c(\gamma, f\circ !)\colon X\to \bA_1$ defines a unique map $X\to (\slice{\bA}{T})_0$ whose post-composition with $(\slice{\bA}{T})_0\to \bA_0$ is precisely $G$. This shows the desired bijection. 
\end{proof}

\begin{ex} \label{ex:doublevsnterminal}
       In our examples, using the facts and terminology from \cref{ex:functors,ex:doubleterminal,ex:tensors} and the conservative families from \cref{ex:conservatives}, \cref{thm:enrvsintterminal} translates as follows. 
       \begin{rome}
           \item When $\cV=\Set$, it is tautological. Indeed, in this case, we have that $\Und$ is the identity functor, internal and $\cV$-terminal objects in $\CatV=\VCat$ agree by \cref{ex:doubleterminal}(1), and tensors with $\{*\}$ always exist by \cref{ex:tensors}(1).
           \item When $\cV=\Cat$, we retrieve a stricter analogue of the dual of \cite[Theorem 5.11]{cM2}. In our case, the theorem tells us that in the presence of cotabulators in a double category~$\bA$, a double terminal object in $\bA$ corresponds to a $2$-terminal object in the underlying (horizontal) $2$-category $\Und\bA$. 
           \item[($n$)] When $\cV=(n-1)\Cat$ for $n\geq 3$, the theorem tells us that in the presence of $n$-cotabulators in a double $(n-1)$-category $\bA$, a double $(n-1)$-terminal object in $\bA$ corresponds to an $n$-terminal object in the underlying $n$-category $\Und\bA$. 
       \end{rome}
\end{ex}

\subsection{Specializing to internal discrete fibrations}\label{subsec:Ctensors}

We now study internal terminal objects in an internal category living over a $\cV$-category. Let us fix a $\cV$-category $\cC$. We first introduce the notion of a $\cC$-internal tensor.

\begin{defn}
    Let $P\colon \bA\to \Int\cC$ be an internal discrete fibration. Given an object $X\in \cV$, we say that the internal category $\bA$ has all \textbf{$\cC$-internal tensors by $X$} if, for every object $A\in \cC$ and every map $G\colon X\to P^{-1}A$ in $\cV$, the internal colimit of the corresponding internal functor $G\colon \cst X\to \bA$ exists. 
\end{defn}

\begin{ex} \label{ex:C-tensors}
    In \cref{ex:tensors}, all internal tensors by objects in the given conservative families coincide with $\cC$-internal tensors, as internal functors out of the set $\{*\}$, the category $\mathbbm{2}$, or the $(n-1)$-category $C_{n-1}$ for $n\geq 3$ must factor through the fiber at some object $C\in \cC$. 
\end{ex}

We show that in this context, $\cC$-internal tensors are enough to imply that a $\cV$-terminal object in the underlying $\cV$-category $\Und\bA$ is also an internal terminal object in $\bA$.

\begin{theorem}\label{thm:ctensorsterminal}
    Let $\cG$ be a conservative family of objects in $\cV$, and $P\colon \bA\to \Int\cC$ be an internal discrete fibration such that $\bA$ has all $\cC$-internal tensors by objects $X\in \cG$. Given a pair~$(C,x)$ of an object $C\in \cC$ and an element $x\colon \un\to P^{-1}C$, the following are equivalent: 
    \begin{rome}
        \item the pair $(C,x)$ is an internal terminal object in~$\bA$,
        \item the pair $(C,x)$ is a $\cV$-terminal object in the underlying $\cV$-category $\Und\bA$.
    \end{rome}
\end{theorem}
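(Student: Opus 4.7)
The strategy will be to adapt the proof of \cref{thm:enrvsintterminal}, with the key modification of working fiberwise over $\Int\cC$ in order to take advantage of the weaker hypothesis of $\cC$-internal tensors. The implication (i) $\Rightarrow$ (ii) is immediate from \cref{internalterminalimpliesVterminal}, so the content lies in (ii) $\Rightarrow$ (i).

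Assuming that $T = (C,x)$ is $\cV$-terminal in $\Und\bA$, the goal is to show that the projection $H\colon \slice{\bA}{T} \to \bA$ is an isomorphism in $\CatV$. Both the composite $\slice{\bA}{T} \to \bA \to \Int\cC$ (an internal discrete fibration by \cref{prop:sliceisdiscfib} composed with $P$) and $P\colon \bA \to \Int\cC$ are internal discrete fibrations, with $H$ living over $\Int\cC$. Here I would depart from the proof of \cref{thm:enrvsintterminal}: rather than trying to show that the induced map $(\slice{\bA}{T})_0 \to \bA_0$ is an isomorphism in $\cV$ by testing against all of $\cG$ (which would require internal tensors indexed by arbitrary maps $X \to \bA_0$), the plan is to apply \cref{prop:isoonfibers}(iii) to reduce the problem to showing that the induced map on fibers $(\slice{\bA}{T})^{-1}A \to P^{-1}A$ is an isomorphism in $\cV$ for every object $A \in \cC$, and then use conservativity of $\cG$ to reduce further to showing that the induced map of sets
\[ \cV(X, (\slice{\bA}{T})^{-1}A) \to \cV(X, P^{-1}A) \]
is a bijection, for every $X \in \cG$ and $A \in \cC$.

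Having made this reduction, the remainder of the argument mimics the proof of \cref{thm:enrvsintterminal}. Given a map $G\colon X \to P^{-1}A$, interpret it as an internal functor $\cst X \to \bA$ factoring through the fiber at $A$; the hypothesis of $\cC$-internal tensors by $X$ now applies and produces an internal colimit $(G \otimes X, \gamma)$, with $\gamma\colon X \to \bA_1$ of source $G$ and target $G\otimes X$. The $\cV$-terminality of $T$ in $\Und\bA$ then yields a unique map $f\colon \un \to \Und\bA(G\otimes X, T)$, which reads as a map $f\colon \un \to \bA_1$ of source $G\otimes X$ and target $T$. The composite $c(\gamma, f\circ !)\colon X \to \bA_1$ has source $G$ and target $T$, and by the pullback description of $\slice{\bA}{T}$ defines a lift $X \to (\slice{\bA}{T})_0$ of $G$; this lift moreover lands in the fiber $(\slice{\bA}{T})^{-1}A$ since its source factors through $P^{-1}A$.

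The main obstacle I anticipate lies in verifying uniqueness cleanly, and specifically in applying the universal property of the internal colimit $(G\otimes X, \gamma)$ from \cref{univpropcolim}, with $Y = \un$. Taking objects of the form $(T,\alpha)$ in $G\downarrow\Delta$, this universal property should give a bijection between maps $\alpha\colon X \to \bA_1$ of source $G$ and target $T$ and maps $f\colon \un \to \bA_1$ of source $G\otimes X$ and target $T$; but the latter set is precisely $\cV(\un, \Und\bA(G\otimes X, T))$, which is a singleton by $\cV$-terminality of $T$. This will establish the required bijection fiberwise, completing the argument. The subtle checks lie in ensuring that the fiberwise reduction is genuinely compatible with the universal property of $\cC$-internal tensors (which take place in $\bA$ rather than inside the fibers), and in tracking that all constructed lifts indeed land in the correct fiber $(\slice{\bA}{T})^{-1}A$.
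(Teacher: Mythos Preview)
Your proposal is correct and follows essentially the same route as the paper's proof: reduce via \cref{prop:isoonfibers}(iii) to checking that the induced map on fibers over each $A\in\cC$ is an isomorphism, then use conservativity of $\cG$ and the $\cC$-internal tensor hypothesis to rerun the argument of \cref{thm:enrvsintterminal} fiberwise. The only cosmetic difference is that the paper explicitly identifies the fiber of $\slice{\bA}{(C,x)}\to\Int\cC$ at $A$ as $\cC(A,C)$ before testing against $\cG$, whereas you work with the abstract fiber $(\slice{\bA}{T})^{-1}A$; both are fine.
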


\begin{proof}
The fact that (i) implies (ii) is \cref{internalterminalimpliesVterminal}. We show that (ii) implies (i). Suppose that $(C,x)$ is $\cV$-terminal in $\Und\bA$; we need to show that the internal functor $\slice{\bA}{(C,x)}\to \bA$ is an isomorphism. Since $P$ is an internal discrete fibration, as is the projection $\slice{\bA}{(C,x)}\to \bA$ by \cref{prop:sliceisdiscfib}, we also have that the composite $\slice{\bA}{(C,x)}\to \bA\xrightarrow{P} \Int\cC$ is an internal discrete fibration. It then follows from \cref{prop:isoonfibers} that this is equivalent to showing that the induced maps between fibers are isomorphisms in $\cV$. 

Given an object $A\in \cC$, recall that we denote by $P^{-1}A$ the fiber of $P$ at $A$. Then, by \cref{defn:slice}, level $0$ of $\slice{\bA}{(C,x)}$ is given by the pullback in $\cV$ as below left, which can be computed using \cref{subsec:discfiboverC} as the pullback in $\cV$ as below right.
    \begin{tz}
\node[](5) {$(\slice{\bA}{(C,x)})_0$}; 
\node[below of=5](6) {$\un$};
\node[right of=5, xshift=1.2cm](1) {$\bA_1$}; 
\node[below of=1](3) {$\bA_0$};
\pullback{5};

\draw[->] (5) to (1);
\draw[->] (5) to (6);
\draw[->] (6) to node[below,la]{$(C,x)$} (3);
\draw[->] (1) to node[right,la]{$t$} (3);

\node[right of=1,xshift=2cm](5) {$\bigsqcup_{A\in \cC} \cC(A,C)$}; 
\node[below of=5](6) {$\un$};
\node[right of=5, xshift=3.2cm](1) {$\bigsqcup_{A\in \cC} \cC(A,C)\times P^{-1} C$}; 
\node[below of=1](3) {$P^{-1}C$};
\pullback{5};

\draw[->] (5) to (1);
\draw[->] (5) to (6);
\draw[->] (6) to node[below,la]{$x$} (3);
\draw[->] (1) to node[right,la]{$\bigsqcup_! \pi_1$} (3);
\end{tz}
    This shows that the fiber of the composite $\slice{\bA}{(C,x)}\to\bA\xrightarrow{P} \Int\cC$ at an object $A\in \cC$ is given by the hom-object $\cC(A,C)$. Hence, we wish to show that the induced map between fibers 
\[ \cC(A,C)\xrightarrow{\id_{\cC(A,C)}\times x} \cC(A,C)\times P^{-1}C\xrightarrow{\ev^P_{A,C}} P^{-1}A \]
is an isomorphism in $\cV$. By definition of the conservative family $\cG$, this is equivalent to showing that, for every object $X\in \cG$, the induced map 
\[ \cV(X,\cC(A,C))\to \cV(X,P^{-1}A) \]
is an isomorphism of sets. 

Consider an element $G$ in the right-hand set; i.e., a map $G\colon X\to P^{-1} A$ in $\cV$. Since $\bA$ has all $\cC$-internal tensors by objects in $\cG$, there is an internal colimit $(G\otimes X,\gamma)$ for the corresponding internal functor $G\colon \cst X\to \bA$. As in the proof of \cref{thm:enrvsintterminal}, we obtain a unique map $X\to (\slice{\bA}{(C,x)})_0$ whose post-composition with $(\slice{\bA}{(C,x)})_0\to \bA_0$ is precisely~$G$. Now, note that the following diagram in $\cV$ commutes
\begin{tz}
    \node[](0) {$X$};
    \node[right of=0,xshift=1.3cm](1) {$(\slice{\bA}{(C,x)})_0$}; 
    \node[right of=1,xshift=1.3cm](2) {$\bA_0$}; 
    \node[below of=2](3) {$\bigsqcup_{A\in \cC} \un$}; 
    \node[below of=0](1') {$\un$};

    \draw[->,bend left=20] (0) to node[above,la]{$G$} (2);
    \draw[->] (0) to (1);
    \draw[->] (1) to (2);
    \draw[->] (0) to node[left,la]{$!$} (1');
    \draw[->] (1') to node[below,la]{$\iota_A$} (3);
    \draw[->] (2) to node[right,la]{$P_0$} (3);
\end{tz}
and so, by the universal property of the fiber of $\slice{\bA}{(C,x)}\to \Int\cC$ at $A$, the map $X\to (\slice{\bA}{(C,x)})_0$ induces a unique map $X\to \cC(A,C)$ into the fiber whose post-composition with $\cC(A,C)\to P^{-1}A$ is $G$. This shows the desired bijection.
\end{proof}

\subsection{The representation theorem in the presence of \texorpdfstring{$\cV$}{V}-tensors}\label{subsec:reptensors}

We now study the special case of the internal discrete fibration $\int_\cC F\to \Int\cC$ for a $\cV$-functor $F\colon \cC^\op\to \cV$. In particular, we show that the internal category $\int_\cC F$ admits $\cC$-internal tensors when $\cC$ has tensors in the enriched sense and $F$ preserves them. Let us first recall these enriched notions; see \cite[\S 3.7]{Kelly} for more details.

\begin{defn} \label{def:Vtensors}
    Let $\cC$ be a $\cV$-category and $X\in \cV$ be an object. We say that $\cC$ has all \textbf{$\cV$-tensors by $X$} if, for every object $C\in \cC$, there is an object $C\otimes X\in \cC$ together with a map $\gamma\colon X\to \cC(C,C\otimes X)$ in $\cV$ such that, for every object $A\in \cC$, the induced map 
    \[ [X,\cC(C,\gamma)]\colon \cC(C\otimes X,A)\to [X,\cC(C,A)] \]
    is an isomorphism in $\cV$. 
\end{defn}

Using the Yoneda lemma, this can be expressed by the following universal property. 

\begin{lemma} \label{univproptensor}
    Let $\cC$ be a $\cV$-category, and $C\in \cC$ and $X\in \cV$ be objects. A pair $(C\otimes X,\gamma)$ of an object $C\otimes X\in \cC$ and a map $\gamma\colon X\to \cC(C,C\otimes X)$ in $\cV$ is a $\cV$-tensor if and only if, for all objects $Y\in \cV$ and $A\in \cC$ and every map $\alpha\colon X\times Y\to \cC(C,A)$ in $\cV$, there is a unique map $g\colon Y\to \cC(C\otimes X,A)$ in $\cV$ making the following diagram commute.
    \begin{tz}
        \node[](1) {$X\times Y$}; 
        \node[right of=1,xshift=3.1cm](2) {$\cC(C,C\otimes X)\times \cC(C\otimes X,A)$}; 
        \node[below of=2](3) {$\cC(C,A)$}; 

        \draw[->] (1) to node[above,la]{$\gamma\times g$} (2); 
        \draw[->] (2) to node[right,la] {$c_{C,C\otimes X,A}$} (3); 
        \draw[->] (1) to node[left,la,yshift=-6pt]{$\alpha$} (3);
    \end{tz}
\end{lemma}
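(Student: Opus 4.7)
The plan is to reduce the statement to a direct application of the adjunction $X \times (-) \dashv [X, -]$ in $\cV$ together with a Yoneda-type argument. Recall that, by definition, the map $[X, \cC(C, \gamma)]\colon \cC(C\otimes X, A) \to [X, \cC(C, A)]$ appearing in \cref{def:Vtensors} is the unique map in $\cV$ that corresponds under this adjunction to the composite
\[ X \times \cC(C\otimes X, A) \xrightarrow{\gamma \times \id} \cC(C, C\otimes X) \times \cC(C\otimes X, A) \xrightarrow{c_{C, C\otimes X, A}} \cC(C, A). \]

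First, I would show that the commutativity of the diagram in the statement is equivalent, under the adjunction $X \times (-) \dashv [X, -]$, to the equality $[X, \cC(C, \gamma)] \circ g = \widetilde{\alpha}$ in $\cV$, where $\widetilde{\alpha}\colon Y \to [X, \cC(C, A)]$ denotes the transpose of $\alpha\colon X\times Y \to \cC(C, A)$. This amounts to checking that the transpose of $c_{C, C\otimes X, A} \circ (\gamma \times g)$ factors as $Y \xrightarrow{g} \cC(C\otimes X, A) \xrightarrow{[X, \cC(C, \gamma)]} [X, \cC(C, A)]$, which follows directly from the naturality of the adjunction applied to the factorization
\[ c_{C, C\otimes X, A} \circ (\gamma \times g) = \bigl( c_{C, C\otimes X, A} \circ (\gamma \times \id) \bigr) \circ (\id_X \times g). \]

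With this reformulation in hand, the claimed universal property becomes: for all $Y \in \cV$ and $A \in \cC$, and every map $\widetilde{\alpha}\colon Y \to [X, \cC(C, A)]$ in $\cV$, there exists a unique $g\colon Y \to \cC(C\otimes X, A)$ in $\cV$ such that $[X, \cC(C, \gamma)] \circ g = \widetilde{\alpha}$. By the Yoneda lemma applied to the representable $\cV(-, [X, \cC(C, A)])$, this condition on $\widetilde{\alpha}$ for all $Y$ is exactly the statement that post-composition with $[X, \cC(C, \gamma)]$ induces a bijection $\cV(Y, \cC(C\otimes X, A)) \cong \cV(Y, [X, \cC(C, A)])$ for every $Y$, which is in turn equivalent to $[X, \cC(C, \gamma)]\colon \cC(C\otimes X, A) \to [X, \cC(C, A)]$ being an isomorphism in $\cV$ for every $A$. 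This is precisely \cref{def:Vtensors}, yielding the desired equivalence.

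There is no real obstacle here: the entire argument is a formal unpacking of the adjunction $X\times(-) \dashv [X,-]$ together with Yoneda. The only care required is in the bookkeeping of step one, to verify that the transposition converts the triangle in the statement into the factorization involving $[X, \cC(C, \gamma)]$; this is routine since $[X, \cC(C,\gamma)]$ was defined to be exactly the transpose of $c_{C, C\otimes X, A} \circ (\gamma \times \id)$.
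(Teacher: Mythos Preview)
Your proposal is correct and follows essentially the same approach as the paper: both use the Yoneda lemma to reduce the isomorphism condition on $[X,\cC(C,\gamma)]$ to a bijection of hom-sets, and both use the adjunction $X\times(-)\dashv[X,-]$ to identify the assignment with the triangle in the statement. Your presentation is slightly more explicit about the transposition step, but the content is the same.
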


\begin{proof}
    By definition, the pair $(C\otimes X,\gamma)$ is a $\cV$-tensor if, for every object $A\in \cC$, the induced map 
    \[ [X,\cC(C,\gamma)]\colon \cC(C\otimes X,A)\to [X,\cC(C,A)] \]
    is an isomorphism in $\cV$. By the Yoneda lemma, this is equivalent to saying that, for all objects $Y\in \cV$ and $A\in \cC$, the induced map 
    \[ \cV(Y,[X,\cC(C,\gamma)])\colon \cV(Y,\cC(C\otimes X,A))\to \cV(Y,[X,\cC(C,A)])\cong \cV(X\times Y,\cC(C,A)) \]
    is an isomorphism of sets. 
    
    By direct inspection, an element in the left-hand set is a map $f\colon Y\to \cC(C\otimes X,A)$, an element in the right-hand set is a map $X\times Y\to \cC(C,A)$, and the assignment is given by sending each $f\colon Y\to \cC(C\otimes X,A)$ to the composite of $\gamma$ and $f$. This gives the desired result. 
\end{proof}

\begin{defn}
    Let $F\colon \cC^{\op}\to \cV$ be a $\cV$-functor and $X\in \cV$ be an object. Suppose that $\cC$ has all $\cV$-tensors by $X$. We say that $F\colon \cC^{\op}\to \cV$ \textbf{preserves $\cV$-tensors by $X$} if, for all objects~$C\in \cC$, there is an isomorphism $\varphi\colon [X,FC]\cong F(C\otimes X)$ in $\cV$ making the following diagram commute. 
    \begin{diagram} \label{fig:Fprestensor}
        \node[](1) {$X\times [X,FC]$}; 
        \node[right of=1,xshift=3.5cm](2) {$\cC(C,C\otimes X)\times F(C\otimes X)$}; 
        \node[below of=2](3) {$FC$}; 

        \draw[->] (1) to node[above,la]{$\gamma\times \varphi$}  (2); 
        \draw[->] (2) to node[right,la] {$\ev^F_{C,C\otimes X}$} (3); 
        \draw[->] (1) to node[left,la,yshift=-6pt]{$\ev_X$} (3);
    \end{diagram}
\end{defn}

In particular, the pair $(F(C\otimes X), \ev^F_{C,C\otimes X}(\gamma\times \id_{F(C\otimes X)}))$ satisfies the universal property of the \emph{$\cV$-cotensor} in $\cV$, i.e., the dual version of \cref{def:Vtensors}, and so we get the following. 

\begin{prop} \label{univpropFprestensor}
    Let $F\colon \cC^{\op}\to \cV$ be a $\cV$-functor and $X\in \cV$ be an object. Suppose that $\cC$ has all $\cV$-tensors by $X$ and that $F$ preserves them. Then, for every object $C\in \cC$ and every map $\alpha\colon X\times Y\to FC$ in~$\cV$, there is a unique map $y\colon Y\to F(C\otimes X)$ in $\cV$ making the following diagram commute.  
    \begin{tz} 
        \node[](1) {$X\times Y$}; 
        \node[right of=1,xshift=3cm](2) {$\cC(C,C\otimes X)\times F(C\otimes X)$}; 
        \node[below of=2](3) {$FC$}; 

        \draw[->] (1) to node[above,la]{$\gamma\times y$} (2); 
        \draw[->] (2) to node[right,la] {$\ev^F_{C,C\otimes X}$} (3); 
        \draw[->] (1) to node[left,la,yshift=-6pt]{$\alpha$} (3);
    \end{tz}
\end{prop}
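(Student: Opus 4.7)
The plan is to use the isomorphism $\varphi\colon [X,FC]\cong F(C\otimes X)$ provided by the preservation of $\cV$-tensors, together with the universal property of the adjunction $X\times(-)\dashv [X,-]$ in the cartesian closed category $\cV$. Concretely, I would first rewrite the commutative diagram \eqref{fig:Fprestensor} as the identity
\[ \ev^F_{C,C\otimes X}\circ(\gamma\times \id_{F(C\otimes X)})\circ(\id_X\times \varphi)=\ev_X, \]
where $\ev_X\colon X\times [X,FC]\to FC$ denotes the counit of the cartesian closed adjunction. In other words, the map $\psi\coloneqq \ev^F_{C,C\otimes X}\circ(\gamma\times \id_{F(C\otimes X)})$ is identified, via $\varphi$, with the evaluation map out of the internal hom.

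Given a map $\alpha\colon X\times Y\to FC$, I would then take its transpose $\tilde\alpha\colon Y\to [X,FC]$ under the adjunction, so that $\ev_X\circ(\id_X\times \tilde\alpha)=\alpha$, and set $y\coloneqq \varphi\circ \tilde\alpha\colon Y\to F(C\otimes X)$. Using the reformulation above together with the naturality of $\times$, the composite in the required diagram becomes
\[ \ev^F_{C,C\otimes X}\circ(\gamma\times y)=\psi\circ(\id_X\times \varphi)\circ(\id_X\times \tilde\alpha)=\ev_X\circ(\id_X\times \tilde\alpha)=\alpha, \]
which establishes existence.

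For uniqueness, if $y'\colon Y\to F(C\otimes X)$ also makes the diagram commute, the same rewriting yields $\ev_X\circ(\id_X\times(\varphi^{-1}\circ y'))=\alpha$; by the uniqueness of the transpose in the adjunction $X\times(-)\dashv [X,-]$, this forces $\varphi^{-1}\circ y'=\tilde\alpha$, and hence $y'=\varphi\circ \tilde\alpha=y$. There is no real obstacle here: the argument is essentially the observation that preservation of $\cV$-tensors by $F$ exactly says that $F(C\otimes X)$, equipped with $\psi$, is a cotensor of $FC$ by $X$ in $\cV$, and the statement is then just the universal property of this cotensor; the only care required is to track the isomorphism $\varphi$ and the evaluation map of the closed structure correctly.
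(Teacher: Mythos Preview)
Your proof is correct and follows essentially the same approach as the paper: both recognize that the statement is the universal property of the $\cV$-cotensor $[X,FC]$ in $\cV$, transported along the isomorphism $\varphi\colon [X,FC]\cong F(C\otimes X)$ provided by preservation of tensors. The paper's argument is more terse---it simply notes that the cotensor universal property in $\cV$ amounts to the map $(\ev^F_{C,C\otimes X})_*((-)\times\gamma)\colon \cV(Y,F(C\otimes X))\to\cV(X\times Y,FC)$ being a bijection---whereas you unpack this explicitly via the adjunction $X\times(-)\dashv [X,-]$ and track $\varphi$ by hand; but the content is the same.
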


\begin{proof}
    Since $\cV$ is cartesian closed, the universal property of the $\cV$-cotensor in $\cV$ can be expressed by only an isomorphism of hom-sets in $\cV$ and so amounts to requiring that the map
    \[ (\ev^F_{C,C\otimes X})_*((-)\times \gamma)\colon \cV(Y,F(C\otimes X))\to \cV(X\times Y,FC) \]
    is an isomorphism of sets, which gives the desired result.
\end{proof}

Now that we have recalled the notions of (co)tensors in the enriched setting, our goal is to identify $\cC$-internal tensors by an object $X\in \cV$ in the internal category of elements $\int_\cC F$ in the case where $\cC$ has all $\cV$-tensors by $X$ and $F\colon \cC^{\op}\to \cV$ preserves them. That is, for each object $X\in\cV$ and each map of the form $G\colon X\to FC$ in $\cV$, we wish to prove the existence of an internal colimit of the corresponding internal functor $G\colon \cst X\to\int_\cC F$. As we know, this amounts to finding an internal initial object in the internal category $G\downarrow\Delta$ of cocones under $G$, and so we begin by identifying an object in this internal category, which we will eventually show is the desired initial object.

\begin{rem}\label{rem:theinternalcolimitobject}
    Let $F\colon \cC^{\op}\to \cV$ be a $\cV$-functor, and $C\in \cC$ and $X\in \cV$ be objects. Let $G\colon X\to FC$ be a map in $\cV$, which can equivalently be regarded as a map $G\colon \un\to [X,FC]$. Suppose that the $\cV$-tensor $(C\otimes X,\gamma)$ exists and that $F$ preserves it. Then the pair $(\varphi G,\gamma \times \varphi G)$ defines an object in $G\downarrow \Delta$, where $\varphi$ denotes the isomorphism $\varphi\colon [X,FC]\cong F(C\otimes X)$. Indeed, by pre-composing with $\id_X\times G\colon X\cong X\times \un\to X\times [X,FC]$ in the commutative diagram \eqref{fig:Fprestensor}, we see that the following diagram in $\cV$ commutes,
    \begin{diagram} \label{phiGvsG}
        \node[](1) {$X$}; 
        \node[right of=1,xshift=2.6cm](2) {$\cC(C,C\otimes X)\times F(C\otimes X)$}; 
        \node[below of=2](3) {$FC$}; 

        \draw[->] (1) to node[above,la]{$\gamma\times \varphi G$} (2); 
        \draw[->] (2) to node[right,la] {$\ev^F_{C,C\otimes X}$} (3); 
        \draw[->] (1) to node[left,la,yshift=-6pt]{$G$} (3);
    \end{diagram}
    and hence it satisfies the description in \cref{elementsincatofcones}.
\end{rem}

As explained in \cref{univpropcolim}, the universal property of the internal colimit of an internal functor $G\colon\cst X\to\int_\cC F$ requires an understanding of the maps of the form $Y\to (G\downarrow \Delta)_0$, so in the following lemma we pursue an explicit description of these maps for $G\colon X\to FC$.

\begin{lemma} \label{lem:GdownDeltaobj}
    Let $F\colon \cC^{\op}\to \cV$ be a $\cV$-functor, $C\in \cC$ and $X\in \cV$ be objects, and $G\colon X\to FC$ be a map in $\cV$. Given an object $Y\in \cV$, a map $Y\to (G\downarrow \Delta)_0$ in~$\cV$ consists of families of maps $\{ y_A\colon Y_A\to FA \}_{A\in \cC}$ and $\{\beta_A\colon X\times Y_A\to \cC(C,A)\}_{A\in \cC}$ in~$\cV$ such that $\bigsqcup_{A\in \cC} Y_A\cong Y$ and, for every object $A\in \cC$, the following diagram in $\cV$ commutes.
    \begin{diagram}  \label{propbeta}
        \node[](1) {$X\times Y_A$}; 
        \node[below of=1](1') {$X$};
        \node[right of=1,xshift=2.2cm](2) {$\cC(C,A)\times FA$}; 
        \node[below of=2](3) {$FC$}; 

        \draw[->] (1) to node[above,la]{$(\beta_A,y_A\pi_1)$} (2); 
        \draw[->] (1) to node[left,la]{$\pi_0$} (1');
        \draw[->] (2) to node[right,la] {$\ev^F_{C,A}$} (3); 
        \draw[->] (1') to node[below,la]{$G$} (3);
    \end{diagram}
\end{lemma}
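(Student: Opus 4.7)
The plan is to apply \cref{descrslice}(i) to $\bA=\int_\cC F$ and then decompose the resulting data using extensivity of $\cV$. By \cref{descrslice}(i), a map $Y\to(G\downarrow\Delta)_0$ consists of maps $y\colon Y\to (\int_\cC F)_0$ and $\alpha\colon X\times Y\to(\int_\cC F)_1$ such that $(s,t)\alpha = G\times y$. Since $(\int_\cC F)_0 = \bigsqcup_{A\in\cC}FA$, the discussion in \cref{rem:extensive} applied to $y$ yields a unique family $\{y_A\colon Y_A\to FA\}_{A\in\cC}$ with $Y\cong\bigsqcup_{A\in\cC}Y_A$ over $(\int_\cC F)_0$ such that $y=\bigsqcup_{A\in\cC} y_A$ (identifying $y$ with a collection of maps indexed by $A$).

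Next, since $\cV$ is cartesian closed, the functor $X\times(-)$ preserves coproducts, so $X\times Y\cong\bigsqcup_{A\in\cC}X\times Y_A$. Recall from \cref{defn:groth} that $(\int_\cC F)_1=\bigsqcup_{A,B\in\cC}\cC(A,B)\times FB$, with source $s$ sending the $(A,B)$-summand into the $A$-summand of $(\int_\cC F)_0$ via $\ev^F_{A,B}$, and target $t$ sending it into the $B$-summand via $\pi_1$. Restricted to $X\times Y_A$, the composite $(s,t)\alpha=G\times y$ factors through $FC\times FA\hookrightarrow\bigsqcup_{B,B'\in\cC}FB\times FB'$, since $G$ takes values in the $C$-summand and $y_A$ takes values in the $A$-summand. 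Applying \cref{extensivelemma} to this constraint produces a unique family of maps $\alpha_A\colon X\times Y_A\to\cC(C,A)\times FA$ such that $\alpha$ is the induced map on coproducts and the source-target compatibility holds component-wise.

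Finally, the target-constraint $\pi_1\circ\alpha_A = y_A\pi_1$ together with the universal property of the product forces $\alpha_A=(\beta_A, y_A\pi_1)$ for a unique map $\beta_A\colon X\times Y_A\to\cC(C,A)$, while the source-constraint $\ev^F_{C,A}\circ\alpha_A = G\circ\pi_0$ translates exactly into the commutativity of diagram \eqref{propbeta}. All the steps above are reversible: any families $\{y_A\}$, $\{\beta_A\}$ with $\bigsqcup_{A\in\cC} Y_A\cong Y$ satisfying \eqref{propbeta} reassemble, via the universal property of the coproduct together with the extensivity-derived isomorphism $(\int_\cC F)_1\cong\bigsqcup_{A,B\in\cC}\cC(A,B)\times FB$, into a unique pair $(y,\alpha)$ with $(s,t)\alpha=G\times y$. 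The only subtlety is the correct use of \cref{extensivelemma} to force $\alpha$ to factor through the $(C,A)$-summands; everything else is a direct unwinding of the universal properties of products and coproducts.
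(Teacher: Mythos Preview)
Your proposal is correct and follows essentially the same route as the paper: apply \cref{descrslice}(i), decompose $y$ via extensivity into $\{y_A\}$, force $\alpha$ to factor through the $(C,A)$-summands using the source and target constraints, and then split each $\alpha_A$ as $(\beta_A,y_A\pi_1)$ via the product. The only cosmetic difference is that the paper first uses the source condition as a pullback to factor $\alpha$ through $\bigsqcup_{A\in\cC}\cC(C,A)\times FA$ and then invokes full faithfulness of $\bigsqcup_{A\in\cC}$, whereas you bundle both the source and target constraints into a single appeal to \cref{extensivelemma}; the content is the same.
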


\begin{proof}
    Recall from \cref{descrslice} that a map $Y\to (G\downarrow \Delta)_0$ consists of maps $y\colon Y\to (\int_\cC F)_0$ and $\alpha\colon X\times Y\to (\int_\cC F)_1$ in $\cV$ such that $(s,t)\alpha=G\times y$. In particular, the following diagram in $\cV$ commutes,
    \begin{diagram} \label{propalpha} 
        \node[](1) {$X\times Y$}; 
        \node[below of=1](1') {$X$};
        \node[right of=1,xshift=2.5cm](2) {$\bigsqcup_{A\in \cC} \cC(C,A)\times FA$}; 
        \node[below of=2](3) {$FC$}; 
        \node[right of=2,xshift=2.5cm](4) {$(\int_\cC F)_1$}; 
        \node[below of=4](5) {$(\int_\cC F)_0$}; 
        \pullback{2};

        \draw[->,bend left=20] (1) to node[above,la]{$\alpha$} (4); 
        \draw[dashed,->] (1) to (2); 
        \draw[->] (1) to node[left,la]{$\pi_0$} (1');
        \draw[->] (2) to node[left,la] {$\ev^F_{C,A}$} (3);
        \draw[->] (4) to node[right,la] {$s$} (5); 
        \draw[->] (1') to node[below,la]{$G$} (3);
        \draw[->] (2) to node[above,la]{$\iota_C$} (4);
        \draw[->] (3) to node[below,la]{$\iota_C$} (5);
    \end{diagram}
    and so $\alpha$ factors through a unique map $\alpha\colon X\times Y\to \bigsqcup_{A\in \cC} \cC(C,A)\times FA$ by the universal property of the pullback. 
    
    Next, the map $y\colon Y\to (\int_\cC F)_0$ can be considered as a map in $\slice{\cV}{\bigsqcup_{A\in \cC} \un}$, which by extensivity corresponds to a unique family of maps in $\cV$
    \[ \{ y_A\colon Y_A\to FA \}_{A\in \cC},  \]
    where $Y_A$ denotes the fiber at an object $A\in \cC$ of the composite $Y\xrightarrow{y} (\int_\cC F)_0\xrightarrow{(\pi_F)_0} (\Int\cC)_0$. In particular, by \cref{rem:extensive} we have $\bigsqcup_{A\in \cC} Y_A\cong Y$.
    
    Finally, by definition of $(y,\alpha)$, we also have that the following diagram in $\cV$ commutes,
    \begin{tz}
        \node[](1) {$\bigsqcup_{A\in \cC} X\times Y_A$}; 
        \node[below of=1](1') {$\bigsqcup_{A\in \cC} Y_A$};
        \node[right of=1,xshift=3cm](2) {$\bigsqcup_{A\in \cC}\cC(C,A)\times FA$}; 
        \node[below of=2](3) {$\bigsqcup_{A\in \cC} FA$}; 

        \draw[->] (1) to node[above,la]{$\alpha$} (2); 
        \draw[->] (1) to node[left,la]{$\bigsqcup_{A\in \cC} \pi_1$} (1');
        \draw[->] (2) to node[right,la] {$\bigsqcup_{A\in \cC} \pi_1$} (3); 
        \draw[->] (1') to node[below,la]{$y=\bigsqcup_{A\in \cC} y_A$} (3);
    \end{tz}
    and so by the universal property of the products $\cC(C,A)\times FA$, and by fully faithfulness of the functor $\bigsqcup_{A\in \cC}$ given by extensivity, the map $\alpha$ corresponds to a unique family of maps 
    \[ \{ (\beta_A, y_A\pi_1)\colon X\times Y_A\to \cC(C,A)\times FA \}_{A\in \cC} \]
    where $\beta_A$ is a map $X\times Y_A\to \cC(C,A)\times FA$. The commutativity of the square \eqref{propbeta} can then be extracted from the commutativity of the left-hand square in \eqref{propalpha}. 
\end{proof}

We can now show that the object $(\varphi G,\gamma \times \varphi G)$ of $G\downarrow \Delta$ described in \cref{rem:theinternalcolimitobject} is precisely the internal colimit of the internal functor corresponding to $G\colon X\to FC$.

\begin{prop} \label{colimofG}
    Let $\cC$ be a $\cV$-category that admits all $\cV$-tensors by an object $X\in \cV$, and $F\colon \cC^{\op}\to \cV$ be a $\cV$-functor that preserves them. Let $C\in \cC$ be an object and $G\colon X\to FC$ be a map in $\cV$. Then the object $(\varphi G,\gamma \times \varphi G)$ of $G\downarrow \Delta$ is the internal colimit of the corresponding internal functor $G\colon \cst X\to \int_\cC F$.
\end{prop}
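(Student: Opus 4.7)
By Proposition \ref{univpropcolim}, it suffices to show that for every object $Y\in\cV$ and every map $(y,\alpha)\colon Y\to (G\downarrow\Delta)_0$, there exists a unique map $f\colon Y\to (G\downarrow\Delta)_1$ with source $(\varphi G,\gamma\times\varphi G)\circ !_Y$ and target $(y,\alpha)$. The plan is to decompose both sides fiber-wise using extensivity, use the universal property of the $\cV$-tensor $(C\otimes X,\gamma)$ to pin down the unique candidate $f$, and then verify the source compatibility automatically via the functoriality of $F$ and the fact that $F$ preserves the $\cV$-tensor by $X$.

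First, apply Lemma \ref{lem:GdownDeltaobj} to write $(y,\alpha)$ as families $\{y_A\colon Y_A\to FA\}_{A\in\cC}$ and $\{\beta_A\colon X\times Y_A\to \cC(C,A)\}_{A\in\cC}$ with $\bigsqcup_{A\in\cC}Y_A\cong Y$, satisfying \eqref{propbeta}. By Proposition \ref{descrslice}(ii) a candidate $f\colon Y\to (\int_\cC F)_1$ must make two diagrams commute: the source must be constant at $\varphi G\in F(C\otimes X)$ while the target must be $y$, and it must satisfy $c((\gamma\times\varphi G)\pi_0,\, f\pi_1)=\alpha$. Since $(\int_\cC F)_1=\bigsqcup_{A,B\in\cC}\cC(A,B)\times FB$ and the target condition fixes the $B$-component via $y_A\in FA$, extensivity together with the decomposition $Y\cong\bigsqcup_{A\in\cC}Y_A$ forces $f$ to arise uniquely from a family $f_A=(g_A,y_A)\colon Y_A\to \cC(C\otimes X,A)\times FA$ indexed by $A\in\cC$, for maps $g_A\colon Y_A\to\cC(C\otimes X,A)$ still to be determined.

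Next, unpacking the composition condition on the $A$-component using the formula for $c$ in $\int_\cC F$ from Definition \ref{defn:groth}, one sees that it reduces to the single identity
\[ c_{C,C\otimes X,A}(\gamma\times g_A)=\beta_A\colon X\times Y_A\to\cC(C,A). \]
By Lemma \ref{univproptensor}, applied with $\alpha=\beta_A$, this equation has a unique solution $g_A\colon Y_A\to\cC(C\otimes X,A)$. This produces the unique candidate $f$ with the correct target and satisfying the composition condition.

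The remaining step, and the only genuinely delicate one, is to confirm that this $f$ also satisfies the source condition; i.e., that $\ev^F_{C\otimes X,A}(g_A,y_A)=\varphi G\circ !_{Y_A}$ for every $A\in\cC$. The plan is to apply $\ev^F_{C,A}$ to $(\beta_A,y_A\pi_1)$, substitute $\beta_A=c_{C,C\otimes X,A}(\gamma,g_A)$, and invoke the compatibility of $F$ with composition from Lemma \ref{lem:VfunctortoV}(2) to rewrite the result as $\ev^F_{C,C\otimes X}(\gamma,\ev^F_{C\otimes X,A}(g_A,y_A))$. On the other hand, condition \eqref{propbeta} identifies this with $G\circ\pi_0\colon X\times Y_A\to FC$. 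Since $F$ preserves the $\cV$-tensor by $X$, Proposition \ref{univpropFprestensor} guarantees that the map $Y_A\to F(C\otimes X)$ fitting into such a factorization is unique, and the constant map $\varphi G\circ !_{Y_A}$ is easily seen to work thanks to \eqref{phiGvsG}; uniqueness then forces $\ev^F_{C\otimes X,A}(g_A,y_A)=\varphi G\circ !_{Y_A}$, closing the argument.
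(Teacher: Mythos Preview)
Your proposal is correct and follows essentially the same approach as the paper's proof: both reduce via \cref{univpropcolim} and \cref{lem:GdownDeltaobj} to a fiberwise problem, use \cref{univproptensor} to produce the unique family $\{g_A\}$, and then invoke \cref{univpropFprestensor} together with \eqref{phiGvsG} and \cref{lem:VfunctortoV}(2) to verify the source condition. The only difference is organizational---you derive the form of $f$ from the target and composition constraints and then check the source, whereas the paper defines $f$ outright and checks source first (which is slightly cleaner since the composition condition only makes sense once the source condition is known); the mathematical content is identical.
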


\begin{proof}
    By \cref{univpropcolim}, we need to show that for every map $(y,\alpha)\colon Y\to (G\downarrow \Delta)_0$ in~$\cV$, there is a unique map $f\colon Y\to (G\downarrow\Delta)_1$ from $(\varphi G,\gamma\times \varphi G)$ to $(y,\alpha)$. 
    
    Consider a map $(y,\alpha)\colon Y\to (G\downarrow\Delta)_0$ in $\cV$. By \cref{lem:GdownDeltaobj}, the pair $(y,\alpha)$ consists of families $\{y_A\colon Y_A\to FA\}_{A\in \cC}$ and  $\{\beta_A\colon X\times Y_A\to \cC(C,A)\}_{A\in \cC}$ of maps in $\cV$ such that $\bigsqcup_{A\in \cC} Y_A\cong Y$ and the diagram \eqref{propbeta} commutes. By the universal property of the $\cV$-tensor $(C\otimes X,\gamma)$ unpacked in \cref{univproptensor}, the family $\{\beta_A\}_{A\in \cC}$ determines a unique family $\{g_A\colon Y_A\to \cC(C\otimes X,A)\}_{A\in \cC}$ of maps in $\cV$ such that, for every object $A\in \cC$, the following diagram in $\cV$ commutes. 
    \begin{diagram} \label{propg}
        \node[](1) {$X\times Y_A$}; 
        \node[right of=1,xshift=3.2cm](2) {$\cC(C,C\otimes X)\times \cC(C\otimes X,A)$}; 
        \node[below of=2](3) {$\cC(C,A)$}; 

        \draw[->] (1) to node[above,la]{$\gamma\times g_A$} (2); 
        \draw[->] (2) to node[right,la] {$c_{C,C\otimes X,A}$} (3); 
        \draw[->] (1) to node[left,la,yshift=-6pt]{$\beta_A$} (3);
    \end{diagram}
    Putting together the commutative diagrams \eqref{propbeta} and \eqref{propg}, we get that the pair $(y,\alpha)$ determines unique families $\{y_A\colon Y_A\to FA\}_{A\in \cC}$ and $\{g_A\colon Y_A\to \cC(C\otimes X,A)\}_{A\in \cC}$ of maps in $\cV$ making the following diagram commute. 
\begin{diagram} \label{condgy}
        \node[](1) {$X\times Y_A$}; 
        \node[below of=1,yshift=-1.5cm](1') {$X$};
        \node[right of=1,xshift=4.8cm](2) {$\cC(C,C\otimes X)\times \cC(C\otimes X,A)\times FA$};
        \node[below of=2](2') {$\cC(C,C\otimes X)\times F(C\otimes X)$};
        \node[below of=2'](3) {$FC$}; 

        \draw[->] (1) to node[above,la]{$(\gamma \pi_0,g_A\pi_1,y_A\pi_1)$} (2); 
        \draw[->] (1) to node[left,la]{$\pi_0$} (1');
        \draw[->] (2) to node[right,la] {$\id_{\cC(C,C\otimes X)}\times \ev^F_{C\otimes X,A}$} (2');
        \draw[->] (2') to node[right,la] {$\ev^F_{C,C\otimes X}$} (3); 
        \draw[->] (1') to node[below,la]{$G$} (3);
    \end{diagram}
    Here we used the relation from \cref{lem:VfunctortoV}(2) to deduce the commutativity of the above diagram. 
    
    We define $f\coloneqq \bigsqcup_{A\in \cC} (g_A,y_A)\colon \bigsqcup_{A\in \cC}Y_A\to \bigsqcup_{A\in \cC}\cC(C\otimes X,A)\times FA$. It remains to show that the corresponding map $f\colon Y\to (\int_\cC F)_1$ gives a map $Y\to (G\downarrow \Delta)_1$ from $(\varphi G,\gamma\times \varphi G)$ to $(y,\alpha)$. By \cref{descrslice} (ii), this amounts to showing that the following diagrams in $\cV$ commute.
    \begin{diagram} \label{twodiagram}
    \node[](1) {$Y$}; 
        \node[right of=1,xshift=1cm](2) {$(\int_\cC F)_1$}; 
        \node[below of=2](3) {$(\int_\cC F)_0\times (\int_\cC F)_0$}; 

        \draw[->] (1) to node[above,la]{$f$} (2); 
        \draw[->] (2) to node[right,la] {$(s,t)$} (3); 
        \draw[->] (1) to node[left,la,yshift=-6pt]{$(\varphi G,y)$} (3);
        
        \node[right of=2,xshift=1.6cm](1) {$X\times Y$}; 
        \node[right of=1,xshift=3.2cm](2) {$(\int_\cC F)_1\times_{(\int_\cC F)_0}(\int_\cC F)_1$}; 
        \node[below of=2](3) {$(\int_\cC F)_1$}; 

        \draw[->] (1) to node[above,la]{$(\gamma\times \varphi G,f\pi_1)$} (2); 
        \draw[->] (2) to node[right,la] {$c$} (3); 
        \draw[->] (1) to node[left,la,yshift=-6pt]{$\alpha$} (3);
    \end{diagram}
    For the left-hand diagram in \eqref{twodiagram}, it is clear that $tf=y$ by definition of $f$; hence, we only need to show that $sf=\varphi G$. This amounts to showing that the following diagram in $\cV$ commutes, for every object $A\in \cC$.
    \begin{tz}
        \node[](1) {$Y_A$}; 
        \node[below of=1](2) {$\un$}; 
        \node[right of=1,xshift=4.2cm](1') {$\cC(C\otimes X,A)\times FA$};
        \node[right of=2,xshift=1cm](0) {$[X,FC]$}; 
        \node[below of=1'](2') {$F(C\otimes X)$}; 
        \draw[->] (1) to node[left,la]{$!$} (2);
        \draw[->] (1') to node[right,la]{$\ev^F_{C\otimes X,A}$} (2');
        \draw[->] (1) to node[above,la]{$(g_A,y_A)$} (1');
        \draw[->] (2) to node[below,la]{$G$} (0);
        \draw[->] (0) to node[above,la]{$\cong$} node[below,la]{$\varphi$} (2');
    \end{tz}
    However, by the commutativity of the diagrams \eqref{phiGvsG} and \eqref{condgy} both composites $\varphi G !$ and $\ev^F_{C\otimes X,A}(g_A,y_A)$ are maps $h\colon Y_A\to F(C\otimes X)$ in $\cV$ satisfying $\ev^F_{C,C\otimes X}(\gamma\times h)=G\pi_0$. Hence, it follows from \cref{univpropFprestensor} that they must be equal. The commutativity of the right-hand diagram in \eqref{twodiagram} then simply follows from the commutativity of the diagrams \eqref{propg}, for every object $A\in \cC$. To conclude, the unicity of such a map $f$ is straightforward from the unicity of the family $\{g_A\colon Y_A\to \cC(C\otimes X,A)\}_{A\in \cC}$.
\end{proof}

As a consequence, we obtain the following.

\begin{prop}\label{prop:grothtensors}
    Let $\cG$ be a conservative family of objects in $\cV$, $\cC$ be a $\cV$-category that admits all $\cV$-tensors by objects $X\in \cG$, and $F\colon \cC^{\op}\to \cV$ be a $\cV$-functor that preserves them. Then the internal category of elements $\int_\cC F$ has all $\cC$-internal tensors by objects $X\in \cG$.
\end{prop}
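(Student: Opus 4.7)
The plan is to show that the verification of $\cC$-internal tensors for $\pi_F\colon \int_\cC F\to \Int\cC$ reduces almost immediately to the content of \cref{colimofG}, which already contains the bulk of the technical work. First, I would unpack the definition of $\cC$-internal tensors by $X$: I must verify that for every object $A\in\cC$ and every map $G\colon X\to (\pi_F)^{-1}A$ in $\cV$, the internal colimit of the corresponding internal functor $G\colon \cst X\to \int_\cC F$ exists. Note that $\pi_F$ is an internal discrete fibration by \cref{grothdiscfib}, so the notion of $\cC$-internal tensors applies.

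The next step is to identify the fiber $(\pi_F)^{-1}A$. Directly from the pullback description \eqref{defn:fiber} defining this fiber, the definition of $(\pi_F)_0$ as the canonical map $\bigsqcup_{A\in\cC} FA\to \bigsqcup_{A\in\cC}\un$, and the extensivity of $\cV$ (via \cref{rem:extensive}), we obtain a canonical isomorphism $(\pi_F)^{-1}A\cong FA$ in $\cV$. Under this identification, a map $G\colon X\to (\pi_F)^{-1}A$ corresponds to a map $G\colon X\to FA$ in $\cV$, and the associated internal functor $\cst X\to \int_\cC F$ agrees with the one considered in \cref{colimofG} with the choice $C\coloneqq A$.

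Finally, since by assumption $\cC$ admits all $\cV$-tensors by $X$ and $F$ preserves them, I would apply \cref{colimofG} to obtain that the internal colimit of the functor $G\colon \cst X\to \int_\cC F$ exists and is given by the pair $(\varphi G,\gamma\times \varphi G)$ in $G\downarrow\Delta$, where $(C\otimes X,\gamma)$ is the $\cV$-tensor and $\varphi\colon [X,FA]\cong F(A\otimes X)$ is the comparison isomorphism. This produces the desired $\cC$-internal tensor in $\int_\cC F$.

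The proof is essentially a matter of matching definitions, so I do not expect any substantive obstacle: the main technical content, namely the verification that $(\varphi G,\gamma\times \varphi G)$ satisfies the universal property of the internal colimit, has been handled in \cref{colimofG} by combining the universal properties of the $\cV$-tensor (\cref{univproptensor}) and of the preserved $\cV$-cotensor in $\cV$ (\cref{univpropFprestensor}). The only thing one needs to be careful about is that every internal functor out of $\cst X$ into $\int_\cC F$ factors through a single fiber, which is automatic because $\cst X$ is constant on one ``component of objects'' and the components of $\int_\cC F$ over $\Int\cC$ are precisely the $FA$.
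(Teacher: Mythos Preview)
Your approach is correct and matches the paper's own proof, which consists of the single line ``This follows directly from \cref{colimofG}.'' Your steps (unpacking the definition, identifying the fiber $(\pi_F)^{-1}A\cong FA$, and invoking \cref{colimofG}) are exactly the intended reduction.

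One small correction: the concern in your final paragraph is unnecessary, and its justification is not right in general. The definition of $\cC$-internal tensors only asks for internal colimits of functors $\cst X\to\int_\cC F$ that \emph{already} factor through a fiber $P^{-1}A$; you are not required to show that an arbitrary internal functor $\cst X\to\int_\cC F$ lands in a single fiber. In fact that stronger claim fails for general $X$: a map $X\to(\int_\cC F)_0=\bigsqcup_{A\in\cC}FA$ decomposes, by extensivity, into pieces $X_A\to FA$, and there is no reason for all but one piece to be empty unless $X$ is connected (as in the specific examples of \cref{ex:C-tensors}). So simply delete that last sentence; nothing in the proof depends on it.
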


\begin{proof}
    This follows directly from \cref{colimofG}.
\end{proof}

We can finally use what we learned to produce a representation theorem involving only the underlying $\cV$-category of elements.

\begin{theorem}\label{thm:enrichedrepthm}
    Let $\cG$ be a conservative family of objects in $\cV$, $\cC$ be a $\cV$-category that admits all $\cV$-tensors by objects $X\in \cG$, and $F\colon \cC^{\op}\to \cV$ be a $\cV$-functor that preserves them. Given a pair $(C,x)$ of an object $C\in \cC$ and an element $x\colon \un\to FC$ in $\cV$, the following are equivalent: 
\begin{rome}
    \item the $\cV$-functor $F$ is $\cV$-representable by $(C,x)$, 
    \item the pair $(C,x)$ is a $\cV$-terminal object of the underlying $\cV$-category of elements $\Und\int_\cC F$. 
\end{rome}
\end{theorem}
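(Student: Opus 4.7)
The proof will proceed by directly combining three results already established in the paper, so the plan is essentially to assemble them in the right order rather than to grind through new constructions. The bridge between the enriched representability condition (i) and the $\cV$-terminality condition (ii) passes through the intermediate notion of an internal terminal object in the internal category of elements $\int_\cC F$.

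First, I would invoke \cref{thm:representation}, which already gives that (i) is equivalent to the condition that $(C,x)$ is an internal terminal object in the internal category of elements $\int_\cC F$. This reduces the problem to comparing internal terminal objects in $\int_\cC F$ with $\cV$-terminal objects in its underlying $\cV$-category $\Und\int_\cC F$.

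Second, I would use \cref{thm:ctensorsterminal} applied to the internal discrete fibration $\pi_F\colon \int_\cC F\to \Int\cC$ (which is a discrete fibration by \cref{grothdiscfib}). That theorem asserts that if $\int_\cC F$ has all $\cC$-internal tensors by objects in the conservative family $\cG$, then internal terminal objects in $\int_\cC F$ coincide with $\cV$-terminal objects in $\Und\int_\cC F$ (for pairs $(C,x)$ of the specified shape). To apply it, I must verify the hypothesis that $\int_\cC F$ has these $\cC$-internal tensors.

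The hypothesis is exactly \cref{prop:grothtensors}, whose assumptions match ours verbatim: $\cC$ admits $\cV$-tensors by each $X \in \cG$, and $F$ preserves them. Thus $\int_\cC F$ has all $\cC$-internal tensors by objects in $\cG$, the conditions of \cref{thm:ctensorsterminal} are met, and chaining the two equivalences yields (i) $\Leftrightarrow$ (ii). The main point of the argument is therefore not a new construction but the observation that the three earlier results compose cleanly; there is no genuine obstacle, only the bookkeeping of checking that the pair $(C,x)$ in condition (ii) matches the shape required by \cref{thm:ctensorsterminal}, which it does since $x\colon \un\to FC$ is precisely an element of the fiber $\pi_F^{-1}C$ under the isomorphism from \cref{constr:eta}.
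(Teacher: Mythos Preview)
Your proposal is correct and matches the paper's proof exactly: the paper simply states that the result is obtained by combining \cref{thm:representation,thm:ctensorsterminal,prop:grothtensors}, which is precisely the chain of equivalences you describe. Your additional remarks about \cref{grothdiscfib} and \cref{constr:eta} are accurate supporting details that the paper leaves implicit.
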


\begin{proof}
    This is obtained by combining \cref{thm:representation,thm:ctensorsterminal,prop:grothtensors}.
\end{proof}

\begin{ex} \label{ex:repthmtensors} 
    In our examples, using the 
 interpretation and terminology from \cref{ex:tensors,ex:C-tensors} and the conservative families from \cref{ex:conservatives}, \cref{thm:enrichedrepthm} translates as follows. 
    \begin{enumerate}
        \item When $\cV=\Set$, it does not tell us anything new, for the same reasons as in \cref{ex:doublevsnterminal}(1). 
        \item When $\cV=\Cat$, we retrieve a stricter analogue of \cite[Theorem 6.15]{cM2}. In our case, given a $2$-functor $F\colon \cC^{\op}\to \Cat$, the $2$-category of elements $\Und\int_\cC F$ has as objects pairs $(A,x)$ of objects $A\in \cC$ and $x\in FC$, as morphisms $(A,x)\to (B,y)$ morphisms $f\colon A\to B$ in~$\cC$ such that $Ff(y)=x$, and as $2$-morphisms $\alpha\colon f\Rightarrow g\colon (A,x)\to (B,y)$ $2$-morphisms $\alpha\colon f\Rightarrow g$ in $\cC$ such that $(F\alpha)_y=\id_x$. Then, if $\cC$ has $2$-tensors by $\mathbbm{2}$ and $F$ preserves them, the theorem tells us that to test whether $F$ is $2$-representable by a pair $(C,x)$ it is enough to check whether $(C,x)$ is $2$-terminal in the above described $2$-category. 
        \item[($n$)] When $\cV=(n-1)\Cat$ for $n\geq 3$, given an $n$-functor $F\colon \cC^{\op}\to (n-1)\Cat$, the $n$-category $\Und\int_\cC F$ has as objects pairs $(A,x)$ of objects $A\in \cC$ and $x\in FA$, and as $k$-morphisms with source object $(A,x)$ and target object $(B,y)$, $k$-morphisms $\varphi$ in $\cC$ with source object $A$ and target object $B$ such that $(F\varphi)_y$ is the identity $(n-1)$-morphism at $x$, for all $1\leq k\leq n$. Then, if $\cC$ has $n$-tensors by $C_{n-1}$ and $F$ preserves them, the theorem tells us that to test whether $F$ is $n$-representable by a pair $(C,x)$ it is enough to check whether $(C,x)$ is $n$-terminal in the above described $n$-category. 
    \end{enumerate}
\end{ex}

\section{Weighted enriched limits}

In this section, let $\cV$ be an extensive and cartesian closed category with pullbacks. We apply here our results to a specific type of $\cV$-functors: the ones of the form $\cV^\cI(W,\cC(-,G))\colon \cC^{\op}\to \cV$, for $\cV$-functors $G\colon \cI\to \cC$ and $W\colon \cI\to \cV$, which allow us to define the $W$-weighted $\cV$-limit of~$G$. 

After recalling the notion of weighted $\cV$-limits in \cref{subsec:defnweightedlims}, we use \cref{thm:representation,enrichedstatement} to obtain a representation theorem for weighted $\cV$-limits in \cref{subsec:repthmforweightedlims}. These characterize weighted $\cV$-limits in terms of internal terminal objects in the corresponding internal category of weighted $\cV$-cones, and in terms of $\cV$-terminal objects in shifted underlying $\cV$-categories. As a particular case, we obtain a description of conical $\cV$-limits as well. In \cref{sec:weightedlimittensors} we explore how we can apply \cref{thm:enrichedrepthm} in the case where $\cC$ has certain $\cV$-tensors, which are now automatically preserved by the relevant $\cV$-functor, to obtain a further characterization simply in terms of $\cV$-terminal objects in the $\cV$-category of weighted $\cV$-cones.

The main result of this section is \cref{thm:weightedasconical}, which proves that weighted $\cV$-limits can be computed as conical internal limits, by replacing the indexing $\cV$-category by the internal category of elements of the weight. This is the content of \cref{subsec:weightedlimsasconical}.

\subsection{Definition}\label{subsec:defnweightedlims}

We begin by recalling the necessary background.

\begin{defn}
    Let $G\colon \cI\to \cC$ be a $\cV$-functor. We define a $\cV$-functor $\cC(-,G)\colon \cC^{\op}\to \cV^\cI$ such that 
    \begin{rome}
        \item it sends an object $A\in \cC$ to the $\cV$-functor $\cC(A,G-)\colon \cI\to \cV$ given by the composite of $\cV$-functors 
        \[ \cI\xrightarrow{G}\cC\xrightarrow{\cC(A,-)} \cV, \]
        \item for objects $A,B\in\cC$, it is given on hom-objects by the map \[ \cC(A,B)\to \cV^\cI(\cC(B,G-),\cC(A,G-)) \]
        induced by the unique maps $\cC(A,B)\to [\cC(B,Gi),\cC(A,Gi)]$ in $\cV$ corresponding under the adjunction $(-)\times \cC(B,Gi)\dashv [\cC(B,Gi),-]$ to the composition maps of~$\cC$
        \[ c_{A,B,Gi}\colon \cC(A,B)\times \cC(B,Gi)\to \cC(A,Gi), \]
        for all objects $i\in \cI$.
    \end{rome}
    Compatibility with identities and compositions follows from the unitality and associativity of composition maps in $\cC$.
\end{defn}

\begin{defn}\label{defn:weightedlim}
    Let $G\colon \cI\to \cC$ and $W\colon \cI\to \cV$ be $\cV$-functors. A \textbf{$W$-weighted $\cV$-limit} of $G$ is a pair $(L,\lambda)$ of an object $L\in \cC$ and a $\cV$-natural transformation $\lambda\colon W\Rightarrow \cC(-,G)$ in $\VCat(\cI,\cV)$ such that the induced $\cV$-natural transformation 
    \[ \lambda^*\colon \cC(-,L)\Rightarrow \cV^\cI(W,\cC(-,G)) \]
    is an isomorphism in $\VCat(\cC^{\op},\cV)$. In other words, this says that the $\cV$-functor \[ \cV^\cI(W,\cC(-,G))\colon \cC^{\op}\to \cV \]
    is representable by $(L,\lambda)$. 
\end{defn}

In the special case where the weight $W\colon \cI\to \cV$ is the constant $\cV$-functor $\Delta \un$ at the terminal object of $\cV$, we obtain the following.

\begin{defn}
    Let $G\colon \cI\to \cC$ be a $\cV$-functor. A \textbf{(conical) $\cV$-limit} of $G$ is a pair $(L,\lambda)$ of an object $L\in \cC$ and a $\cV$-natural transformation $\lambda\colon \Delta\un\Rightarrow \cC(L,G-)$ in $\VCat(\cI,\cV)$---or equivalently, a $\cV$-natural transformation $\lambda\colon \Delta L\Rightarrow G$ in $\VCat(\cI,\cC)$--- such that the $\cV$-functor 
   \[ \cV^\cI(\Delta\un,\cC(-,G))\cong \cC^\cI(\Delta(-),G)\colon \cC^{\op}\to \cV \]
   is $\cV$-representable at $(L,\lambda)$, where $\Delta\colon \cC\to \cC^\cI$ denotes the diagonal $\cV$-functor induced by the unique $\cV$-functor $\cI\to \mathbbm{1}$. 
\end{defn}

\subsection{The representation theorem for weighted \texorpdfstring{$\cV$}{V}-limits}\label{subsec:repthmforweightedlims} 

In order to obtain an explicit formulation of \cref{thm:representation,enrichedstatement} in the case of weighted $\cV$-limits, we first identify the internal category of elements of the $\cV$-functor $\cV^\cI(W,\cC(-,G))\colon \cC^{\op}\to \cV$.

\begin{notation}
   Given $\cV$-functors $G\colon \cI\to \cC$ and $W\colon \cI\to \cV$, we denote by \[ W\downarrow \Int\cC(-,G)\coloneqq \Int\cC(-,G)^{\op}\downarrow W \]
   the comma internal category from \cref{defn:internalcomma} in the case where $H$ is the internal functor $\Int\cC(-,G)^{\op}\colon \Int\cC\to \Int(\cV^\cI)^{\op}$ and $B$ is the object $W\in \cV^\cI$.
\end{notation}

The following result is the main ingredient in our characterization of weighted $\cV$-limits.

\begin{prop} \label{GCofVweightedlimit}
    Let $G\colon\cI\to\cC$ and $W\colon \cI\to \cV$ be $\cV$-functors. Then there is an isomorphism in~$\slice{\CatV}{\Int\cC}$.
\begin{tz}
\node[](1) {$\int_\cC \cV^\cI (W,\cC(-,G))$}; 
\node[below of=1,xshift=2.7cm,yshift=.2cm](3) {$\Int\cC$};
\node[above of=3,xshift=2.7cm,yshift=-.2cm](2) {$W\downarrow \Int\cC(-,G)$}; 

\draw[->] (1) to node[above,la]{$\cong$} (2);
\draw[->] (2) to (3);
\draw[->] (1) to (3);
\end{tz}
\end{prop}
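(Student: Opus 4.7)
The plan is to apply the equivalence of \cref{thm:equiv}. The projections $\int_\cC\cV^\cI(W,\cC(-,G))\to\Int\cC$ and $W\downarrow\Int\cC(-,G)\to\Int\cC$ are internal discrete fibrations by \cref{grothdiscfib} and \cref{prop:commaisdiscfib}, respectively. Under the equivalence of categories $\int_\cC\colon\VCat(\cC^{\op},\cV)\xrightarrow{\simeq}\slice{\Dfib}{\Int\cC}$, producing an isomorphism over $\Int\cC$ between these two internal discrete fibrations is therefore equivalent to exhibiting an isomorphism of $\cV$-functors between $\cV^\cI(W,\cC(-,G))$ and the $\cV$-functor $\Phi(W\downarrow\Int\cC(-,G))$ associated to the comma via the inverse equivalence from \cref{constr:phi}.

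By construction of $\Phi$, the latter $\cV$-functor sends an object $A\in\cC$ to the fiber of $W\downarrow\Int\cC(-,G)\to\Int\cC$ at $A$, so it suffices to identify this fiber with the object $\cV^\cI(W,\cC(A,G-))$ of $\cV$. Unwinding \cref{defn:internalcomma} and using that pullbacks in $\CatV$ are computed levelwise (\cref{pullbacksinCatV}), together with the identity $\llbracket\mathbbm{2},\bB\rrbracket_0=\bB_1$ from \cref{rem:hom2A} and the coproduct decomposition $\Int(\cV^\cI)_1\cong\bigsqcup_{F,F'\in\cV^\cI}\cV^\cI(F,F')$ coming from the internalization functor, one sees that over $\bigsqcup_{A\in\cC}\un$ the $A$-th component of $(W\downarrow\Int\cC(-,G))_0$ is precisely $\cV^\cI(W,\cC(A,G-))$. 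Extensivity and the description of the fiber in \eqref{defn:fiber} then yield the desired identification on objects.

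A perhaps more illuminating packaging, in the spirit of \cref{constr:psi}, is to construct the comparison internal functor $\int_\cC\cV^\cI(W,\cC(-,G))\to W\downarrow\Int\cC(-,G)$ over $\Int\cC$ directly, using the universal property of the comma as a pullback: the compatible map into $\llbracket\mathbbm{2},\Int(\cV^\cI)^{\op}\rrbracket$ is built at level $0$ by canonical inclusion into the appropriate coproduct component, and at level $1$ by combining the given $\cV$-natural transformation with composition data in $\cC$. One then concludes by \cref{prop:isoonfibers}, which reduces the problem to checking that the induced maps on fibers are isomorphisms, something that holds by design. The main technical obstacle will be to verify that the $\cV$-functorial structure induced on the fibers of the comma via \eqref{evP-1} agrees with the $\cV$-functorial structure of $\cV^\cI(W,\cC(-,G))$ prescribed by composition in $\cC$; the $(-)^{\op}$ appearing in the definition of $W\downarrow\Int\cC(-,G)$ is exactly what aligns the directions on the two sides.
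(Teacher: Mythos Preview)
Your proposal is correct and, in its second paragraph, follows essentially the same strategy as the paper: construct a comparison internal functor over $\Int\cC$, note both sides are internal discrete fibrations via \cref{grothdiscfib} and \cref{prop:commaisdiscfib}, and invoke \cref{prop:isoonfibers} to reduce to an object-level fiber check.

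The one place where the paper is slicker is in building the comparison map. Rather than assembling it by hand at levels $0$ and $1$ as you suggest, the paper first factors the comma through the slice: it exhibits $W\downarrow\Int\cC(-,G)$ as the pullback of $\slice{\Int(\cV^\cI)^{\op}}{W}\to\Int(\cV^\cI)^{\op}$ along $\Int\cC(-,G)^{\op}$. It then uses the change-of-base functor $\int_{\cC(-,G)}\cV^\cI(W,-)$ from \cref{constr:changeofbase} together with the isomorphism $\int_{(\cV^\cI)^{\op}}\cV^\cI(W,-)\cong\slice{\Int(\cV^\cI)^{\op}}{W}$ from \cref{cor:slicehom} to obtain the comparison by the universal property of this pullback. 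This packaging buys you two things: the comparison map comes for free from existing machinery, and the fiber computation is immediate since both the slice and the change-of-base have already been analyzed. In particular, the ``main technical obstacle'' you flag---verifying that the induced $\cV$-functorial structures on fibers agree---never needs to be confronted directly: once the comparison internal functor exists, \cref{prop:isoonfibers} requires only that each $\ev^H_A$ be an isomorphism in $\cV$, not that these assemble compatibly.
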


\begin{proof}
We first construct an internal functor $\int_\cC \cV^\cI (W,\cC(-,G))\to W\downarrow \Int\cC(-,G)$ over $\Int\cC$. For this, consider the following commutative diagram in $\CatV$, in which the outer rectangle is the pullback defining $W\downarrow \Int\cC(-,G)$.
\begin{tz}
\node[](1') {$W\downarrow \Int\cC(-,G)$}; 
\node[below of=1'](3') {$\Int\cC$};
\node[right of=1',xshift=2.6cm](1) {$\slice{\Int(\cV^\cI)^{\op}}{W}$}; 
\node[below of=1](3) {$\Int(\cV^\cI)^{\op}$};
\node[right of=1,xshift=3.5cm](2) {$\llbracket\mathbbm{2}, \Int(\cV^\cI)^{\op}\rrbracket$}; 
\node[below of=2](4) {$\Int(\cV^\cI)^{\op}\times \Int(\cV^\cI)^{\op}$}; 

\pullback{1};

\draw[->,dashed] (1') to (1);
\draw[->,bend left=15] (1') to (2); 
\draw[->] (3') to node[below,la]{$\Int\cC(-,G)^{\op}$} (3);
\draw[->] (1) to (2);
\draw[->] (2) to node[right,la]{$\langle 0,1\rangle^*$} (4);
\draw[->] (1) to (3);
\draw[->] (1') to (3');
\draw[->] (3) to node[below,la]{$\id_{\Int(\cV^\cI)^{\op}}\times W$} (4);
\end{tz} 
By the universal property of the pullback, there is a unique dashed internal functor as depicted. Moreover, since the right and outer diagrams are pullbacks, by pullback cancellation we get that the square on the left is also a pullback. Recall that from \cref{constr:changeofbase} we have an internal functor $\int_{\cC(-,G)}\cV^\cI(W,-)\colon \int_\cC \cV^\cI (W,\cC(-,G))\to \int_{(\cV^\cI)^{\op}}\cV^\cI (W,-)$ living over $\Int\cC(-,G)$, and furthermore, that $\int_{(\cV^\cI)^{\op}}\cV^\cI (W,-)$ is isomorphic to the slice internal category $\slice{\Int(\cV^\cI)^{\op}}{W}$  over $\Int(\cV^\cI)^{\op}$ by \cref{cor:slicehom}. By the universal property of the pullback, this induces a unique internal functor $\int_\cC \cV^\cI (W,\cC(-,G))\to W\downarrow \Int\cC(-,G)$, as desired. 

Recall from \cref{prop:commaisdiscfib,grothdiscfib} that the internal functors $W\downarrow \Int\cC(-,G)\to \Int\cC$ and $\int_\cC \cV^\cI(W,\cC(-,G))\to \Int\cC$ are internal discrete fibrations. Hence, by \cref{prop:isoonfibers}, to show that $\int_\cC \cV^\cI(W,\cC(-,G))$ and $W\downarrow \Int\cC(-,G)$ are isomorphic over $\Int\cC$, it suffices to show that their fibers are isomorphic. 

Using the pullback above left and the fact that there is an isomorphism over $\Int(\cV^\cI)^{\op}$ 
\[ \textstyle \slice{\Int(\cV^\cI)^{\op}}{W}\cong\int_{(\cV^\cI)^{\op}}\cV^\cI (W,-), \]
we can directly compute the fiber of $W\downarrow \Int\cC(-,G)$ at an object $A\in \cC$. Indeed, this is given by $\cV^\cI(W,\cC(A,G-))$, which is precisely the fiber of $\int_\cC \cV^\cI(W,\cC(-,G))$ at the object $A\in \cC$.
\end{proof}

We can finally prove our claimed characterization.

\begin{theorem}\label{thm:repweightedlimits}
Let $G\colon \cI\to \cC$ and $W\colon \cI\to \cV$ be $\cV$-functors. Given a pair $(L,\lambda)$ of an object~$L\in \cC$ and a $\cV$-natural transformation $\lambda\colon W\Rightarrow \cC(L,G-)$ in $\VCat(\cI,\cV)$, the following are equivalent: 
\begin{rome}
    \item the pair $(L,\lambda)$ is a $W$-weighted $\cV$-limit of $G$, 
    \item the pair $(L,\lambda)$ is an internal terminal object in the internal category $W\downarrow \Int\cC(-,G)$, 
    \item for any object $X$ in a conservative family $\cG$ of objects in $\cV$, the pair $(L,\lambda)$ is a $\cV$-terminal object in the $\cV$-category $\Und\llbracket \cst X, W\downarrow \Int\cC(-,G)\rrbracket$.
\end{rome}
\end{theorem}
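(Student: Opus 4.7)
The plan is to deduce this result by combining the general representation theorems of Section~5 with the identification of the internal category of elements of $\cV^\cI(W,\cC(-,G))$ as the comma internal category $W\downarrow \Int\cC(-,G)$ established in \cref{GCofVweightedlimit}. By \cref{defn:weightedlim}, condition~(i) states precisely that the $\cV$-functor $\cV^\cI(W,\cC(-,G))\colon \cC^{\op}\to \cV$ is $\cV$-representable by the pair $(L,x_\lambda)$, where $x_\lambda\colon \un\to \cV^\cI(W,\cC(L,G-))$ is the element corresponding to $\lambda$ under the Yoneda correspondence of \cref{rem:xnattr}. Thus the whole theorem is a transfer of \cref{thm:representation} and \cref{enrichedstatement} along the isomorphism of \cref{GCofVweightedlimit}.

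First I would establish (i)$\Leftrightarrow$(ii). Applying \cref{thm:representation} to the $\cV$-functor $\cV^\cI(W,\cC(-,G))$ yields that (i) is equivalent to $(L,x_\lambda)$ being an internal terminal object in $\int_\cC\cV^\cI(W,\cC(-,G))$. By \cref{GCofVweightedlimit}, there is an isomorphism in $\slice{\CatV}{\Int\cC}$ between $\int_\cC\cV^\cI(W,\cC(-,G))$ and $W\downarrow \Int\cC(-,G)$, and by \cref{terminalinvariance} internal terminal objects are created by isomorphisms of internal categories. Hence it suffices to check that the element $(L,x_\lambda)$ is sent under this isomorphism to the element $(L,\lambda)$ of $W\downarrow \Int\cC(-,G)$. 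Unpacking the construction in \cref{GCofVweightedlimit}, the isomorphism is defined via the composite of the change-of-base internal functor $\int_{\cC(-,G)}\cV^\cI(W,-)$ with the isomorphism $\int_{(\cV^\cI)^{\op}}\cV^\cI(W,-)\cong \slice{\Int(\cV^\cI)^{\op}}{W}$ from \cref{cor:slicehom}, followed by the universal map into the pullback defining $W\downarrow \Int\cC(-,G)$. Chasing $(L,x_\lambda)$ through this composite reduces to the Yoneda identification of \cref{rem:xnattr} applied in $\cV^\cI$, which matches $x_\lambda$ with $\lambda$ itself.

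Next I would establish (i)$\Leftrightarrow$(iii) by the same strategy. Applying \cref{enrichedstatement} to $\cV^\cI(W,\cC(-,G))$ yields that (i) is equivalent to $(L,x_\lambda)$ being a $\cV$-terminal object in $\Und\llbracket\cst X,\int_\cC\cV^\cI(W,\cC(-,G))\rrbracket$ for every $X\in \cG$. Since the isomorphism of \cref{GCofVweightedlimit} is an isomorphism in $\CatV$, applying the functors $\llbracket\cst X,-\rrbracket$ and $\Und$ produces an isomorphism of $\cV$-categories between the above shifted underlying $\cV$-category and $\Und\llbracket\cst X,W\downarrow\Int\cC(-,G)\rrbracket$, under which $(L,x_\lambda)$ is sent to $(L,\lambda)$ by the same verification as in the previous paragraph. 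Since $\cV$-terminal objects are preserved and reflected by isomorphisms of $\cV$-categories, this gives the desired equivalence.

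The main technical obstacle is the element-level verification that the pair $(L,x_\lambda)$ is identified with $(L,\lambda)$ under the isomorphism of \cref{GCofVweightedlimit}. This amounts to tracing the Yoneda correspondence through the two universal properties (of the slice in $\slice{\Int(\cV^\cI)^{\op}}{W}$ and of the comma pullback defining $W\downarrow \Int\cC(-,G)$). Although somewhat lengthy, this verification is routine since both the Yoneda correspondence and all constructions involved are defined by the same composition and evaluation maps.
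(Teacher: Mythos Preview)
Your proposal is correct and follows essentially the same approach as the paper, which simply states that the result follows directly from \cref{thm:representation}, \cref{enrichedstatement}, and \cref{GCofVweightedlimit}. Your additional care in tracking that the element $(L,x_\lambda)$ corresponds to $(L,\lambda)$ under the isomorphism of \cref{GCofVweightedlimit} is a reasonable point to make explicit, though the paper treats this as routine and leaves it implicit.
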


\begin{proof}
    This follows directly from \cref{thm:representation,enrichedstatement,GCofVweightedlimit}.
\end{proof}

In the case where the weight $W\colon \cI\to \cV$ is the constant $\cV$-functor $\Delta \un$ at the terminal object of $\cV$, \cref{thm:repweightedlimits} yields the following characterization of (conical) $\cV$-limits. 

\begin{notation}
    Given a $\cV$-functor $G\colon\cI\to \cC$, we denote by $\Int\Delta\downarrow G$ the comma internal category from \cref{defn:internalcomma} in the case of the internal functor $\Int\Delta\colon \Int\cC\to \Int(\cC^\cI)$ and the object $G\in \cC^{\cI}$.
\end{notation}

\begin{cor} \label{cor:repconicallimit}
Let $G\colon \cI\to \cC$ be a $\cV$-functor. Given a pair $(L,\lambda)$ of an object $L\in \cC$ and a $\cV$-natural transformation $\lambda\colon \Delta L\Rightarrow G$ in $\VCat(\cI,\cC)$, the following are equivalent: 
\begin{rome}
    \item the pair $(L,\lambda)$ is a $\cV$-limit of $G$, 
    \item the pair $(L,\lambda)$ is an internal terminal object in the internal category $\Int\Delta\downarrow G$,
    \item for any object $X$ in a conservative family $\cG$ of objects in $\cV$, the pair $(L,\lambda)$ is a $\cV$-terminal object in the $\cV$-category $\Und\llbracket \cst X, \Int\Delta\downarrow G\rrbracket$.
\end{rome}
\end{cor}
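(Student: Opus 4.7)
The plan is to derive the corollary as a direct specialization of \cref{thm:repweightedlimits} applied to the constant weight $W=\Delta\un\colon \cI\to \cV$. First, I will verify that, under this choice of weight, the notion of a $\Delta\un$-weighted $\cV$-limit coincides with that of a conical $\cV$-limit: a $\cV$-natural transformation $\Delta\un\Rightarrow \cC(L,G-)$ in $\VCat(\cI,\cV)$ is the same data as a $\cV$-natural transformation $\Delta L\Rightarrow G$ in $\VCat(\cI,\cC)$ via the identification $[\un,\cC(L,Gi)]\cong \cC(L,Gi)$, and the two representability conditions agree through the natural isomorphism $\cV^\cI(\Delta\un,\cC(-,G))\cong \cC^\cI(\Delta(-),G)$.

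With this identification in hand, the corollary will follow from \cref{thm:repweightedlimits} once I exhibit an isomorphism in $\CatV$ over $\Int\cC$
\[ \Delta\un\downarrow \Int\cC(-,G)\;\cong\; \Int\Delta\downarrow G, \]
since internal terminal objects are invariant under isomorphism of internal categories by \cref{terminalinvariance}, and $\cV$-terminal objects in the underlying $\cV$-categories of generalized elements $\Und\llbracket \cst X,-\rrbracket$ enjoy the same invariance by functoriality of this construction.

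To construct this isomorphism, I observe that both projections $\Delta\un\downarrow \Int\cC(-,G)\to \Int\cC$ and $\Int\Delta\downarrow G\to \Int\cC$ are internal discrete fibrations by \cref{prop:commaisdiscfib}. Hence, by \cref{prop:isoonfibers}, it suffices to construct any internal functor between them over $\Int\cC$ and check that it is a fiberwise isomorphism. The fiber of $\Delta\un\downarrow \Int\cC(-,G)$ at an object $A\in \cC$ is $\cV^\cI(\Delta\un, \cC(A,G-))$, as established in the proof of \cref{GCofVweightedlimit}; analogously, using \cref{cor:slicehom} to identify $\slice{\Int(\cC^\cI)}{G}$ with $\int_{\cC^\cI}\cC^\cI(-,G)$ and computing the pullback defining $\Int\Delta\downarrow G$, the fiber at $A$ is seen to be $\cC^\cI(\Delta A,G)$. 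The canonical comparison between these two fibers is an isomorphism, as both sides tautologically compute the end $\int_{i\in \cI}\cC(A,Gi)$, and assembling these fiberwise identifications over $A\in \cC$ via the universal properties of the defining pullbacks produces the required internal functor. The main obstacle is the bookkeeping needed to globalize the fiberwise identification into an internal functor over $\Int\cC$, which requires careful pullback chasing and an appeal to extensivity, but no fundamentally new idea beyond those already present in the proof of \cref{GCofVweightedlimit} is required.
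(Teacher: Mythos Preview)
Your proposal is correct and follows essentially the same approach as the paper: specialize \cref{thm:repweightedlimits} to the constant weight $W=\Delta\un$ and invoke the isomorphism $\Delta\un\downarrow \Int\cC(-,G)\cong \Int\Delta\downarrow G$ in $\CatV$. The paper simply asserts this isomorphism in one line, whereas you spell out a justification via \cref{prop:commaisdiscfib} and \cref{prop:isoonfibers} together with the fiberwise identification $\cV^\cI(\Delta\un,\cC(A,G-))\cong \cC^\cI(\Delta A,G)$; this extra detail is sound and in the spirit of the surrounding arguments, but not strictly more than what the paper intends.
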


\begin{proof}
    This is \cref{thm:repweightedlimits} in the case where $W=\Delta \un$, using the isomorphism in $\CatV$
   \[ \Delta \un\downarrow \Int\cC(-,G)\cong \Int\Delta\downarrow G. \qedhere \]
\end{proof}

In certain cases, the $\cV$-categories involved in the above statements can be described using constructions that are made fully in the world of enriched categories. 

\begin{rem} \label{rem:shifted}
    Given an object $X\in \cV$, the functor $[X,-]\colon \cV\to \cV$ induces by base-change a functor $\mathrm{Ar}_X\colon \VCat\to \VCat$ which sends a $\cV$-category $\cC$ to the $\cV$-category $\mathrm{Ar}_X\cC$ with the same set of objects and with hom-objects $\mathrm{Ar}_X\cC(A,B)\coloneqq [X,\cC(A,B)]$, for all objects $A,B\in \cC$. 
    
    Under the assumptions that the internalization functor $\Int\colon \VCat\to \CatV$ is fully faithful and the functor $[X,-]\colon \cV\to \cV$ preserves coproducts, a straightforward computation shows that there is an equality of functors \[
    \mathrm{Ar}_X=\Und\llbracket \cst X,\Int(-)\rrbracket\colon \VCat\to \VCat. \]
    Then, if the internalization functor $\Int\colon \VCat\to \CatV$ further preserves binary products, the $\cV$-categories $\Und\llbracket \cst X,W\downarrow \Int\cC(-,G)\rrbracket$ and $\Und\llbracket \cst X,\Int\Delta\downarrow G\rrbracket$ can be computed as the following pullbacks in $\VCat$.
    \begin{tz}
\node[](5) {$W\downarrow^\cV \mathrm{Ar}_X\cC(-,G)$}; 
\node[below of=5](6) {$\mathrm{Ar}_X\cC$};
\node[right of=5, xshift=3.85cm](1) {$\mathrm{Ar}_X((\cV^{\cI})^{\op})^{\mathbbm 2}$}; 
\node[below of=1](3) {$\mathrm{Ar}_X(\cV^\cI)^{\op}\times \mathrm{Ar}_X(\cV^\cI)^{\op}$};
\pullback{5};

\draw[->] (5) to (1);
\draw[->] (5) to (6);
\draw[->] (6) to node[below,la,xshift=-2pt]{$\mathrm{Ar}_X\cC(-,G)^{\op}\times W$} (3);
\draw[->] (1) to node[right,la]{$\langle 0,1\rangle^*$} (3);

\node[right of=1,xshift=1.7cm](5) {$\mathrm{Ar}_X\Delta\downarrow^\cV G$}; 
\node[below of=5](6) {$\mathrm{Ar}_X\cC$};
\node[right of=5, xshift=2.35cm](1) {$\mathrm{Ar}_X(\cC^{\cI})^{\mathbbm{2}}$}; 
\node[below of=1](3) {$\mathrm{Ar}_X(\cC^\cI)\times \mathrm{Ar}_X(\cC^\cI)$};
\pullback{5};

\draw[->] (5) to (1);
\draw[->] (5) to (6);
\draw[->] (6) to node[below,la,xshift=-2pt]{$\mathrm{Ar}_X\Delta\times G$} (3);
\draw[->] (1) to node[right,la]{$\langle 0,1\rangle^*$} (3);
\end{tz}
This follows from the fact that, under the given hypotheses, the functors $\Und$ and $\llbracket \cst X,-\rrbracket$ preserve products, pullbacks, and $2$-cotensors by $\mathbbm{2}$. 
\end{rem}

\begin{ex} \label{ex:weightedlimit}
    In our examples, using the interpretations from \cref{ex:functors,ex:doubleterminal} and the conservative families from \cref{ex:conservatives}, \cref{thm:repweightedlimits,cor:repconicallimit} translate as follows.
    \begin{enumerate}
        \item When $\cV=\Set$, given functors $W\colon \cI\to \Set$ and $G\colon \cI\to \cC$, the theorem says that a $W$-weighted limit of $G$ coincides with a terminal object in the category $W\downarrow \cC(-,G)$ of $W$-weighted cones over $G$. 
        
        When $W=\Delta \{*\}$ is the constant functor at the one-point set, the corollary retrieves the well-known result that a limit of $G$ coincides with a terminal object in the category~$\Delta\downarrow G$ of cones over $G$. 
        \item When $\cV=\Cat$, we retrieve stricter analogues of  \cite[Theorem 7.19 and Corollary~7.22]{cM2}. In our case, the theorem says that given $2$-functors $W\colon \cI\to \Cat$ and $G\colon \cI\to \cC$, a $W$-weighted $2$-limit of $G$ corresponds to a double terminal object in the double category $W\downarrow \Int\cC(-,G)$ of $W$-weighted $2$-cones over $G$, or equivalently, using \cref{rem:shifted}, to a $2$-terminal object in the shifted $2$-category $W\downarrow^2 \mathrm{Ar}_{\mathbbm 2}\cC(-,G)$ whose objects are morphisms of $W$-weighted $2$-cones over $G$, also called \emph{modifications}. 
        
        When $W=\Delta \mathbbm 1$ is the constant $2$-functor at the terminal category, the corollary shows that a $2$-limit of $G$ corresponds to a double terminal object in the double category $\Int\Delta\downarrow G$ of $2$-cones over $G$, or equivalently, using \cref{rem:shifted}, to a $2$-terminal object in the shifted $2$-category $\mathrm{Ar}_{\mathbbm 2}\Delta\downarrow G$ whose objects are modifications of $2$-cones over $G$.
        \item[($n$)] When $\cV=(n-1)\Cat$ for $n\geq 3$, given $n$-functors $W\colon \cI\to (n-1)\Cat$ and $G\colon \cI\to \cC$, the theorem says that a $W$-weighted $n$-limit of $G$ corresponds to a double $(n-1)$-terminal object in the double $(n-1)$-category $W\downarrow\Int\cC(-,G)$ of $W$-weighted $n$-cones over $G$, or equivalently, using \cref{rem:shifted}, to an $n$-terminal object in the shifted $n$-category $W\downarrow^n \mathrm{Ar}_{C_{n-1}}\cC(-,G)$ whose objects are $(n-1)$-morphisms of $W$-weighted $n$-cones over~$G$. 
        
        When $W=\Delta \mathbbm 1$ is the constant $n$-functor at the terminal $(n-1)$-category, the corollary shows that an $n$-limit corresponds to a double $(n-1)$-terminal object in the double $(n-1)$-category $\Int\Delta \downarrow G$ of $n$-cones over $G$, or equivalently, using \cref{rem:shifted}, to an $n$-terminal object in the shifted $n$-category $ \mathrm{Ar}_{C_{n-1}}\Delta \downarrow^nG$ whose objects are $(n-1)$-morphisms of $n$-cones over $G$.
    \end{enumerate}
\end{ex}

\subsection{Weighted \texorpdfstring{$\cV$}{V}-limits in the presence of \texorpdfstring{$\cV$}{V}-tensors} \label{sec:weightedlimittensors}

We can also study what happens in the case where the $\cV$-category $\cC$ admits certain $\cV$-tensors. First, we show that the $\cV$-functor $\cV^\cI(W,\cC(-,G))$ preserves any $\cV$-tensors which are present in $\cC$.

\begin{prop} \label{lem:weightedconeprestensors}
    Let $\cC$ be a $\cV$-category that admits all $\cV$-tensors by an object $X\in \cV$, and $G\colon \cI\to \cC$ and $W\colon \cI\to \cV$ be $\cV$-functors. Then the $\cV$-functor
    $\cV^\cI(W,\cC(-,G))\colon \cC^{\op}\to \cV$ preserves all $\cV$-tensors by $X$. 
\end{prop}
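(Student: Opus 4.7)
The plan is to leverage the fact that any representable $\cV$-functor $\cC(-,A)\colon \cC^{\op}\to \cV$ automatically preserves $\cV$-tensors by $X$ when they exist in $\cC$ (this is essentially the defining property of $\cV$-tensors), and then to exploit the fact that the hom-functor $\cV^\cI(W,-)\colon \cV^\cI\to \cV$ commutes with $[X,-]$ since $[X,-]$ preserves limits in $\cV$.

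More concretely, I would first fix an object $C\in \cC$ with $\cV$-tensor $(C\otimes X,\gamma\colon X\to \cC(C,C\otimes X))$. By \cref{def:Vtensors}, the map $[X,\cC(C,\gamma)]\colon \cC(C\otimes X,A)\to [X,\cC(C,A)]$ is an isomorphism in $\cV$, for every $A\in \cC$. Applied to the objects $A=Gi$ and checking naturality in $i$, these isomorphisms assemble into an isomorphism of $\cV$-functors $\cI\to \cV$
\[ \cC(C\otimes X,G-)\xrightarrow{\cong} [X,\cC(C,G-)] \]
whose component at $i\in \cI$ is $[X,\cC(C,\gamma)_{Gi}]$. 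Here $[X,\cC(C,G-)]$ denotes the $\cV$-functor obtained by post-composing $\cC(C,G-)$ with $[X,-]\colon \cV\to \cV$. Applying the functor $\cV^\cI(W,-)$ then yields an isomorphism
\[ \cV^\cI(W,\cC(C\otimes X,G-))\xrightarrow{\cong} \cV^\cI(W,[X,\cC(C,G-)]). \]

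Next, since the end description of $\cV^\cI(W,-)$ from \cref{rem:internalhomVcat} presents it as a limit in $\cV$, and since $[X,-]\colon \cV\to \cV$ preserves limits as a right adjoint to $X\times(-)$, there is a canonical isomorphism in $\cV$
\[ \cV^\cI(W,[X,\cC(C,G-)])\xrightarrow{\cong} [X,\cV^\cI(W,\cC(C,G-))]. \]
Composing with the previous isomorphism gives the required map $\varphi\colon [X,\cV^\cI(W,\cC(C,G-))]\xrightarrow{\cong} \cV^\cI(W,\cC(C\otimes X,G-))$, or equivalently its inverse.

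The final (and main) task will be to verify that this $\varphi$ satisfies the compatibility diagram \eqref{fig:Fprestensor} for $F=\cV^\cI(W,\cC(-,G))$. Unpacking the $\ev^F$ map, this amounts to showing a commutative square whose right edge is induced, levelwise over $i\in \cI$, by the composition $\cC(C,C\otimes X)\times\cC(C\otimes X,Gi)\to \cC(C,Gi)$. The compatibility at each level then follows from the fact that $\cC(-,Gi)$ preserves $\cV$-tensors via the diagram characterising $(C\otimes X,\gamma)$ (as witnessed by \cref{univproptensor}), and the required equality in the end $\cV^\cI(W,\cC(C,G-))$ is obtained by using the universal property of the end to reduce to a levelwise check. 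The main obstacle is essentially bookkeeping: tracking the $\gamma\times(-)$ and the evaluation maps through the two isomorphisms above and confirming that they match the single map $\gamma\times \varphi$ in \eqref{fig:Fprestensor}; there are no conceptual difficulties beyond the naturality and end axioms.
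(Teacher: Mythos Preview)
Your proposal is correct and follows essentially the same approach as the paper: the paper's proof is simply the terse two-line chain of isomorphisms $\cV^\cI(W,\cC(C\otimes X,G-)) \cong \cV^\cI(W,[X,\cC(C,G-)])\cong [X,\cV^\cI(W,\cC(C,G-))]$, citing \cite[(3.16)]{Kelly} for the second step (that cotensors in $\cV^\cI$ are computed levelwise, which is your observation that $[X,-]$ commutes with the defining end). Your additional care in verifying the compatibility diagram~\eqref{fig:Fprestensor} is not spelled out in the paper but is a welcome level of detail.
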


\begin{proof}
    By \cite[(3.16)]{Kelly}, $\cV$-cotensors in the internal hom $\cV$-categories $\cV^\cI$ are computed levelwise in $\cV$. Hence, for every object $C\in \cC$, also using the universal property of the $\cV$-tensor $C\otimes X$ in $\cC$, we have the following isomorphisms in $\cV$
    \begin{align*}
        \cV^\cI(W,\cC(C\otimes X,G-)) \cong \cV^\cI(W,[X,\cC(C,G-)])\cong [X,\cV^\cI(W,\cC(C,G-))]
    \end{align*}
    which shows that $\cV^\cI(W,\cC(-,G))\colon \cC^{\op}\to \cV$ preserves $\cV$-tensors by $X$.
\end{proof}

The above result allows us to use \cref{thm:enrichedrepthm} to obtain a further characterization of weighted $\cV$-limits.

\begin{theorem} \label{thm:limitswithtensors}
    Let $\cG$ be a conservative family of objects in $\cV$, $\cC$ be a $\cV$-category that admits all $\cV$-tensors by objects $X\in \cG$, and $G\colon \cI\to \cC$ and $W\colon \cI\to \cV$ be $\cV$-functors. Given a pair~$(L,\lambda)$ of an object $L\in \cC$ and a $\cV$-natural transformation $\lambda\colon W\Rightarrow \cC(L,G-)$ in $\VCat(\cI,\cV)$, the following are equivalent: 
\begin{rome}
    \item the pair $(L,\lambda)$ is a $W$-weighted $\cV$-limit of $G$, 
    \item the pair $(L,\lambda)$ is a $\cV$-terminal object in the underlying $\cV$-category $\Und(W\downarrow \Int\cC(-,G))$.
\end{rome}
\end{theorem}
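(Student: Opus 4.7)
The plan is to show that this statement follows by directly applying \cref{thm:enrichedrepthm} to the $\cV$-functor $F \coloneqq \cV^\cI(W,\cC(-,G)) \colon \cC^{\op}\to \cV$, and then transferring the resulting characterization along the isomorphism of internal categories established in \cref{GCofVweightedlimit}.

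First, I would verify that the hypotheses of \cref{thm:enrichedrepthm} are satisfied in this setting. By assumption, $\cC$ admits all $\cV$-tensors by objects $X\in\cG$, and \cref{lem:weightedconeprestensors} ensures that the $\cV$-functor $F = \cV^\cI(W,\cC(-,G))$ automatically preserves these tensors. Hence \cref{thm:enrichedrepthm} applies and says that, for a pair $(L,\lambda)$ consisting of an object $L\in\cC$ and an element $\lambda\colon\un\to FL$ (which, by the enriched Yoneda lemma as recalled in \cref{rem:xnattr}, is the same datum as a $\cV$-natural transformation $\lambda\colon W\Rightarrow\cC(L,G-)$), the $\cV$-functor $F$ is $\cV$-representable by $(L,\lambda)$ if and only if $(L,\lambda)$ is a $\cV$-terminal object in $\Und\int_\cC F$.

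By \cref{defn:weightedlim}, the $\cV$-representability of $F$ by $(L,\lambda)$ is precisely the condition that $(L,\lambda)$ is a $W$-weighted $\cV$-limit of $G$, so (i) already matches the first half of the equivalence from \cref{thm:enrichedrepthm}. To conclude, I would invoke \cref{GCofVweightedlimit}, which gives an isomorphism $\int_\cC F \cong W\downarrow\Int\cC(-,G)$ in $\slice{\CatV}{\Int\cC}$. Applying the functor $\Und\colon \CatV\to\VCat$ yields a $\cV$-isomorphism of underlying $\cV$-categories $\Und\int_\cC F \cong \Und(W\downarrow\Int\cC(-,G))$, and since $\cV$-terminal objects (in the sense of \cref{defn:Vterminal}) are clearly preserved and reflected by $\cV$-isomorphisms of $\cV$-categories, the statement (ii) is equivalent to the corresponding statement for $\Und\int_\cC F$.

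The only point requiring a little care is bookkeeping: one has to verify that under the isomorphism $\int_\cC F \cong W\downarrow\Int\cC(-,G)$ the element of $\int_\cC F$ picked out by the pair $(L,\lambda)$ coincides with the element of $W\downarrow\Int\cC(-,G)$ determined by $L\in\cC$ and $\lambda\colon W\Rightarrow\cC(L,G-)$. This is a direct unpacking using the construction of the isomorphism in the proof of \cref{GCofVweightedlimit} (which on fibers at $L\in\cC$ is the identity of $\cV^\cI(W,\cC(L,G-))$), and should present no real obstacle. Hence the result follows by stringing together \cref{thm:enrichedrepthm}, \cref{lem:weightedconeprestensors}, and \cref{GCofVweightedlimit}, with no genuinely new argument needed.
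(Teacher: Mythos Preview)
Your proposal is correct and follows exactly the same approach as the paper, which simply states that the result is obtained by combining \cref{thm:enrichedrepthm}, \cref{GCofVweightedlimit}, and \cref{lem:weightedconeprestensors} together with \cref{defn:weightedlim}. Your additional bookkeeping about matching the distinguished element $(L,\lambda)$ across the isomorphism of \cref{GCofVweightedlimit} is a reasonable elaboration but is not spelled out in the paper's one-line proof.
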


\begin{proof}
    By definition of a $W$-weighted $\cV$-limit from \cref{defn:weightedlim}, this is obtained by combining \cref{thm:enrichedrepthm,GCofVweightedlimit,lem:weightedconeprestensors}.
\end{proof}

Again specializing to the case where $W=\Delta \un$ is the constant $\cV$-functor at the terminal object, we obtain the following. 

\begin{cor} \label{cor:conicalwithtensors}
    Let $\cG$ be a conservative family of objects in $\cV$, $\cC$ be a $\cV$-category that admits all $\cV$-tensors by objects $X\in \cG$, and $G\colon \cI\to \cC$ be a $\cV$-functor. Given a pair $(L,\lambda)$ of an object~$L\in \cC$ and a $\cV$-natural transformation $\lambda\colon \Delta L\Rightarrow G$ in $\VCat(\cI,\cC)$, the following are equivalent: 
\begin{rome}
    \item the pair $(L,\lambda)$ is a $\cV$-limit of $G$, 
    \item the pair $(L,\lambda)$ is a $\cV$-terminal object in the underlying $\cV$-category $\Und(\Int\Delta\downarrow G)$.
\end{rome}
\end{cor}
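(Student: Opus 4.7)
The plan is to derive this statement directly from \cref{thm:limitswithtensors} by specializing to the weight $W = \Delta \un\colon \cI\to \cV$, exactly as \cref{cor:repconicallimit} was derived from \cref{thm:repweightedlimits}. First I would recall that a (conical) $\cV$-limit of $G$ is by definition a $\Delta\un$-weighted $\cV$-limit of $G$, and that a $\cV$-natural transformation $\lambda\colon \Delta L\Rightarrow G$ in $\VCat(\cI,\cC)$ corresponds bijectively, via the adjunction defining tensors and cotensors with $\un$, to a $\cV$-natural transformation $\lambda\colon \Delta \un\Rightarrow \cC(L,G-)$ in $\VCat(\cI,\cV)$.

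Next I would invoke the isomorphism in $\CatV$
\[ \Delta \un\downarrow \Int\cC(-,G)\cong \Int\Delta\downarrow G \]
used in the proof of \cref{cor:repconicallimit}. Applying the right adjoint $\Und\colon \CatV\to \VCat$ from \cref{prop:IntUnd}, this yields an isomorphism of $\cV$-categories
\[ \Und(\Delta \un\downarrow \Int\cC(-,G))\cong \Und(\Int\Delta\downarrow G), \]
and since isomorphisms of $\cV$-categories create $\cV$-terminal objects in the sense of \cref{defn:Vterminal}, the pair $(L,\lambda)$ is $\cV$-terminal in one side if and only if it is $\cV$-terminal in the other.

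Putting these together, \cref{thm:limitswithtensors} applied with $W = \Delta\un$ gives the equivalence between $(L,\lambda)$ being a $\Delta\un$-weighted $\cV$-limit of $G$ and $(L,\lambda)$ being $\cV$-terminal in $\Und(\Delta\un\downarrow \Int\cC(-,G))$, which via the above isomorphism is the same as being $\cV$-terminal in $\Und(\Int\Delta\downarrow G)$. This yields the desired equivalence between (i) and (ii). Since \cref{thm:limitswithtensors} does all the heavy lifting, there is no serious obstacle here; the only point requiring a moment's care is the identification of the two comma internal categories, but this is precisely the isomorphism already recorded in the proof of \cref{cor:repconicallimit} and requires no further work.
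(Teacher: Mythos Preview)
Your proposal is correct and matches the paper's approach exactly: the paper simply prefaces the corollary with ``Again specializing to the case where $W=\Delta\un$ is the constant $\cV$-functor at the terminal object, we obtain the following,'' and gives no further proof. You have just made explicit the isomorphism $\Delta\un\downarrow\Int\cC(-,G)\cong\Int\Delta\downarrow G$ from \cref{cor:repconicallimit} and its transport under $\Und$, which is precisely what that one-line specialization entails.
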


\begin{rem} \label{rem:underofcones}
    Under the assumption that the internalization functor $\Int\colon \VCat\to \CatV$ is fully faithful and preserves binary products, the $\cV$-categories $\Und(W\downarrow \Int\cC(-,G))$ and $\Und(\Int\Delta\downarrow G)$ can be computed as the following pullbacks in $\VCat$,
    \begin{tz}
\node[](5) {$W\downarrow^\cV \cC(-,G)$}; 
\node[below of=5](6) {$\cC$};
\node[right of=5, xshift=2.5cm](1) {$((\cV^\cI)^{\op})^{\mathbbm{2}}$}; 
\node[below of=1](3) {$(\cV^\cI)^{\op}\times (\cV^\cI)^{\op}$};
\pullback{5};

\draw[->] (5) to (1);
\draw[->] (5) to (6);
\draw[->] (6) to node[below,la]{$\cC(-,G)^{\op}\times W$} (3);
\draw[->] (1) to node[right,la]{$\langle 0,1\rangle^*$} (3);

\node[right of=1,xshift=2cm](5) {$\Delta\downarrow^\cV G$}; 
\node[below of=5](6) {$\cC$};
\node[right of=5, xshift=1.4cm](1) {$(\cC^{\cI})^{\mathbbm{2}}$}; 
\node[below of=1](3) {$\cC^\cI\times \cC^\cI$};
\pullback{5};

\draw[->] (5) to (1);
\draw[->] (5) to (6);
\draw[->] (6) to node[below,la]{$\Delta\times G$} (3);
\draw[->] (1) to node[right,la]{$\langle 0,1\rangle^*$} (3);
\end{tz}
which we refer to as the \emph{$\cV$-categories of ($W$-weighted) $\cV$-cones over $G$}. This follows from the fact that, under the given hypotheses, the unit $\id_{\VCat}\cong \Und\Int$ is an isomorphism and the functor $\Und$ preserves products, pullbacks, and $2$-cotensors by $\mathbbm{2}$. 
\end{rem}

\begin{ex} \label{ex:weightedlimittensor}
    In our cases of interest, using the facts from \cref{ex:functors} and the conservative families from \cref{ex:conservatives}, \cref{thm:limitswithtensors,cor:conicalwithtensors} translate as follows.
    \begin{enumerate}
        \item When $\cV=\Set$, they do not tell us anything new, for the same reasons as in \cref{ex:doublevsnterminal}(1).
        \item When $\cV=\Cat$, we retrieve stricter analogues of \cite[Theorem 7.21 and Corollary~7.25]{cM2}. In our case, using \cref{rem:underofcones}, the theorem says that given $2$-functors $W\colon \cI\to \Cat$ and $G\colon \cI\to \cC$, if $\cC$ has all $2$-tensors by $\mathbbm 2$, a $W$-weighted $2$-limit of $G$ corresponds to a $2$-terminal object in the usual $2$-category $W\downarrow^2\cC(-,G)$ of $W$-weighted $2$-cones over $G$.
        
        When $W=\Delta\mathbbm{1}$ is the constant $2$-functor at the terminal category, using \cref{rem:underofcones}, the corollary shows that, if $\cC$ has all tensors by~$\mathbbm 2$, a $2$-limit of $G$ corresponds to a $2$-terminal object in the usual $2$-category $\Delta\downarrow^2 G$ of $2$-cones over $G$. 
        \item[($n$)] When $\cV=(n-1)\Cat$ for $n\geq 3$, using \cref{rem:underofcones}, the theorem says that given $n$-functors $W\colon \cI\to (n-1)\Cat$ and $G\colon \cI\to \cC$, if $\cC$ has all $n$-tensors by~$C_{n-1}$, a $W$-weighted $n$-limit of $G$ corresponds to an $n$-terminal object in the $n$-category $W\downarrow^n \cC(-,G)$ of $W$-weighted $n$-cones over $G$. 
        
        When $W=\Delta\mathbbm{1}$ is the constant $n$-functor at the terminal $(n-1)$-category, using \cref{rem:underofcones}, the corollary shows that if $\cC$ has all $n$-tensors by~$C_{n-1}$, an $n$-limit of $G$ corresponds to an $n$-terminal object in the $n$-category $\Delta\downarrow^n G$ of $n$-cones over $G$. 
    \end{enumerate}
\end{ex}

\subsection{Weighted \texorpdfstring{$\cV$}{V}-limits as conical internal limits}\label{subsec:weightedlimsasconical}

In order to express weighted $\cV$-limits in terms of internal limits, let us first define this latter notion, which is dual to the one from \cref{def:internalcolimit}. 

\begin{notation} \label{not:catofcones}
    Given an internal functor $G\colon \bI\to \bA$ to $\cV$, we denote by $\Delta\downarrow G$ the comma internal category from \cref{defn:internalcomma} in the case of the internal functor $\Delta\colon \bA\to \llbracket\bI,\bA\rrbracket$ and the element $G\colon \un\to \llbracket\bI,\bA\rrbracket$. We refer to $\Delta\downarrow G$ as the \textbf{internal category of cones over $G$}.
\end{notation}

\begin{defn}
    Let $G\colon\bI\to \bA$ be an internal functor. An \textbf{internal limit} of $G$ is an internal terminal object in the internal category $\Delta\downarrow G$ of cones over $G$. 
\end{defn}

\begin{ex} \label{ex:doublelimit}
    In our examples of interest, we have the following. 
    \begin{enumerate}
        \item When $\cV=\Set$, internal limits in a category $\bA$ coincide with the usual notion of limits for categories. 
        \item When $\cV=\Cat$, internal limits in a double category $\bA$ correspond to the \emph{double limits} considered in \cite[\S 4.2]{GrandisPare}. 
        \item[($n$)] When $\cV=(n-1)\Cat$ for $n\geq 3$, we refer to internal limits in a double $(n-1)$-category~$\bA$ as \emph{double $(n-1)$-limits}.
    \end{enumerate}
\end{ex}

We also introduce the following functors, which play a key role in the proof of the main result.

\begin{defn}
    The \textbf{suspension functor} $\Sigma\colon \cV\to \sliceunder{\VCat}{\{0,1\}}$ sends an object $X$ in $\cV$ to the $\cV$-category $\Sigma X$ whose
    \begin{rome}
        \item object set is $\{0,1\}$, 
        \item hom-objects in $\cV$ are given by 
        \[ \Sigma X(0,0)=\Sigma X(1,1)\coloneqq \un, \quad \Sigma X(0,1)\coloneqq X, \quad \text{and} \quad \Sigma X(1,0)\coloneqq \emptyset, \]
        where $\emptyset$ denotes the initial object of $\cV$,
        \item composition is uniquely determined, 
        \item identity maps are given by the identity at $\un$. 
    \end{rome}
    It sends a morphism $f\colon X\to Y$ in $\cV$ to the $\cV$-functor $\Sigma f\colon \Sigma X\to \Sigma Y$ which is the identity on objects and is given by 
    \[ f\colon \Sigma X(0,1)=X\to Y=\Sigma Y(0,1)\]
    on the non-trivial hom-objects.
\end{defn} 

\begin{defn}
    The \textbf{hom-object functor} $\Hom\colon \sliceunder{\VCat}{\{0,1\}}\to \cV$ sends an object $\cC_{A,B}$ in $\sliceunder{\VCat}{\{0,1\}}$, i.e., a $\cV$-category $\cC$ together with objects $A,B\in \cC$, to the hom-object $\cC(A,B)$ in~$\cV$. It sends a $\cV$-functor $F\colon \cC_{A,B}\to \cD_{FA,FB}$ to the map $F_{A,B}\colon \cC(A,B)\to \cD(FA,FB)$ in $\cV$. 
\end{defn}

It is not hard to observe the following.

\begin{prop}
    There is an adjunction
    \begin{tz}
\node[](1) {$\cV$}; 
\node[right of=1,xshift=1.4cm](2) {$\sliceunder{\VCat}{\{0,1\}}$}; 
\punctuation{2}{.};

\draw[->] ($(1.east)+(0,5pt)$) to node[above,la]{$\Sigma$} ($(2.west)+(0,5pt)$);
\draw[->] ($(2.west)-(0,5pt)$) to node[below,la]{$\Hom$} ($(1.east)-(0,5pt)$);

\node[la] at ($(1.east)!0.5!(2.west)$) {$\bot$};
\end{tz}
\end{prop}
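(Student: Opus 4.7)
The plan is to prove the adjunction by exhibiting a natural bijection
\[ \sliceunder{\VCat}{\{0,1\}}(\Sigma X, \cC_{A,B}) \cong \cV(X, \cC(A,B)) \]
directly, and then invoking naturality in both variables. First I would unpack what a morphism in the left-hand set looks like. A $\cV$-functor $F\colon \Sigma X \to \cC$ under $\{0,1\}$ is constrained to send $0 \mapsto A$ and $1 \mapsto B$. On hom-objects, it consists of four maps $F_{0,0}, F_{1,1}, F_{0,1}, F_{1,0}$ in $\cV$, but three of these are forced: $F_{0,0}\colon \un \to \cC(A,A)$ and $F_{1,1}\colon \un \to \cC(B,B)$ must agree with $i_A$ and $i_B$ respectively by compatibility with identities, and $F_{1,0}\colon \emptyset \to \cC(B,A)$ is the unique map out of the initial object. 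Hence the only free datum is $F_{0,1}\colon X \to \cC(A,B)$, giving a candidate bijection sending $F$ to $F_{0,1}$.

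Next I would verify that for any map $f\colon X \to \cC(A,B)$ in $\cV$, the assignment above defines a valid $\cV$-functor. Compatibility with identities holds by construction. Compatibility with composition needs to be checked for all eight triples of objects in $\{0,1\}$; in each case the required diagram either reduces to the unitality axioms of $\cC$ (when one of the hom-objects involved is $\un$, i.e., for an identity factor) or has a factor of $\cC(B,A)$ in a product whose domain includes $\Sigma X(1,0) = \emptyset$. Since $\cV$ is cartesian closed, the product $(-)\times(-)$ preserves colimits in each variable, so any such domain is isomorphic to $\emptyset$ and the diagram commutes uniquely by initiality. This establishes the bijection on the level of sets.

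Finally I would check naturality. In the variable $\cC_{A,B}$, a morphism in $\sliceunder{\VCat}{\{0,1\}}$ is a $\cV$-functor $G\colon \cC \to \cD$ with $GA = A'$ and $GB = B'$, and post-composition with $G$ corresponds on the right-hand side to post-composition with $G_{A,B}\colon \cC(A,B) \to \cD(A',B')$, which is precisely the action of $\Hom$ on the morphism $G\colon \cC_{A,B}\to \cD_{A',B'}$. In the variable $X$, naturality follows from the functoriality of $\Sigma$ on morphisms, which acts as $f$ on the unique non-trivial hom-object. The main obstacle is simply the bookkeeping for the composition axioms in step two; but the presence of $\emptyset$ in $\Sigma X(1,0)$, combined with cartesian closedness of $\cV$, makes every potentially non-trivial condition vacuous, so the argument is straightforward.
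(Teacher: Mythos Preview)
Your proposal is correct. The paper itself does not supply a proof of this proposition, stating only that it is ``not hard to observe''; your argument is exactly the direct verification one would expect, and the key observation you isolate---that cartesian closedness forces $X \times \emptyset \cong \emptyset$, rendering every composition axiom involving $\Sigma X(1,0)$ vacuous---is precisely what makes the check routine.
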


 Note that there is a dual of the internal Grothendieck construction 
\[ \textstyle\int_\cI\colon \VCat(\cI,\cV)\to \Dopfib(\cI) \]
which takes a $\cV$-functor $W\colon \cI\to \cV$ to an \emph{internal discrete opfibration} $\pi_W\colon \int_\cI W\to \Int\cI$, where these are defined as in \cref{defn:discfib} by replacing the target maps with the source maps. We can now formally state the main result of this section. 

\begin{theorem} \label{thm:weightedasconical}
    Let $G\colon \cI\to \cC$ and $W\colon \cI\to \cV$ be $\cV$-functors. Given a pair $(L,\lambda)$ of an object $L\in \cC$ and a $\cV$-natural transformation $\lambda\colon W\Rightarrow \cC(-,G)$ in $\VCat(\cI,\cV)$, the following are equivalent: 
    \begin{rome}
        \item the pair $(L,\lambda)$ is a $W$-weighted limit of $G$, 
        \item the pair $(L,\lambda)$ is an internal limit of the internal functor 
        \[ \textstyle\int_\cI W\xrightarrow{\pi_W} \Int\cI\xrightarrow{\Int G} \Int\cC. \]
    \end{rome}
\end{theorem}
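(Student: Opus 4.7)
The plan is to reduce the biconditional to the construction of a canonical isomorphism of internal categories
\[
W\downarrow \Int\cC(-,G)\;\cong\;\Delta\downarrow(\Int G\circ \pi_W)
\]
in $\slice{\CatV}{\Int\cC}$ that sends $(L,\lambda)$ to $(L,\lambda)$. Indeed, by \cref{thm:repweightedlimits}, statement (i) is equivalent to $(L,\lambda)$ being an internal terminal object in the left-hand side; by the definition of an internal limit (dual to \cref{def:internalcolimit}), statement (ii) is equivalent to $(L,\lambda)$ being an internal terminal object in the right-hand side; and internal terminal objects are preserved by any isomorphism of internal categories by \cref{terminalinvariance}.

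To establish the required isomorphism, I note that both projections to $\Int\cC$ are internal discrete fibrations by \cref{prop:commaisdiscfib}: the first comes from the comma along $\Int\cC(-,G)^{\op}\colon \Int\cC\to \Int(\cV^{\cI})^{\op}$ and the element $W$, while the second comes from the comma along $\Delta\colon \Int\cC\to \llbracket\int_{\cI}W,\Int\cC\rrbracket$ and the element $\Int G\circ \pi_W$. By the fully faithful equivalence $\int_\cC$ of \cref{thm:equiv}, it is therefore enough to identify the corresponding $\cV$-functors $\cC^{\op}\to \cV$. On the left-hand side, the proof of \cref{GCofVweightedlimit} shows that this $\cV$-functor is $\cV^{\cI}(W,\cC(-,G))$, with fiber at an object $A\in \cC$ given by $\cV^{\cI}(W,\cC(A,G-))$. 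My aim is then to show that the $\cV$-functor $\Phi(\Delta\downarrow(\Int G\circ \pi_W))\colon \cC^{\op}\to \cV$, whose value at~$A\in \cC$ is the fiber at $A$, sends $A$ naturally to $\cV^{\cI}(W,\cC(A,G-))$.

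To carry out this fiber computation, I would unpack the pullback defining $\Delta\downarrow(\Int G\circ \pi_W)$. Using \cref{rem:hom2A} together with the cartesian closed structure of $\CatV$, the object of objects of $\llbracket\mathbbm{2},\llbracket\int_{\cI}W,\Int\cC\rrbracket\rrbracket$ identifies with $\llbracket\int_{\cI}W,\Int\cC\rrbracket_1$, so the fiber at $A$ is the object in $\cV$ of internal natural transformations $\Delta A\Rightarrow \Int G\circ \pi_W$. Now applying the coproduct description $(\int_{\cI}W)_0=\bigsqcup_{I\in \cI}WI$ from \cref{defn:groth} and the extensivity of $\cV$, the level-$0$ data of such a natural transformation is a family $\{WI\to \cC(A,GI)\}_{I\in\cI}$ of maps in $\cV$. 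The level-$1$ compatibility, which uses the description of $(\int_{\cI}W)_1$ as $\bigsqcup_{I,J\in\cI}\cI(I,J)\times WJ$ together with the source map involving $W$, translates precisely into the $\cV$-naturality condition of \cref{lem:VnattoV}; the resulting object in $\cV$ is then the end $\cV^{\cI}(W,\cC(A,G-))$. The naturality of this identification in $A$ and the fact that the induced isomorphism on internal discrete fibrations sends $(L,\lambda)$ to $(L,\lambda)$ are straightforward verifications from the constructions.

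The main obstacle will be this fiber computation: carefully chasing through the pullback defining the comma internal category, the coproduct description of $\int_{\cI}W$, and the universal property of the end, while systematically exploiting extensivity (as packaged in \cref{extensivelemma}) to split coproducts over the index set of $\cI$. Once the fiberwise identification is in place, \cref{prop:isoonfibers} or, equivalently, the fully faithfulness of $\int_\cC$ from \cref{thm:equiv}, upgrades it to the desired isomorphism over $\Int\cC$, completing the proof.
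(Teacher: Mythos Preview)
Your proposal follows the same overall strategy as the paper: reduce to constructing an isomorphism $W\downarrow\Int\cC(-,G)\cong\Delta\downarrow(\Int G\circ\pi_W)$ in $\slice{\CatV}{\Int\cC}$, observe that both projections are internal discrete fibrations by \cref{prop:commaisdiscfib}, and then use \cref{thm:equiv} (equivalently \cref{prop:isoonfibers}) to reduce to a fiberwise comparison in $\cV$, natural in $A\in\cC$.

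Where you diverge from the paper is in the execution of the fiber computation. You propose to compute the fiber of $\Delta\downarrow(\Int G\circ\pi_W)$ at $A$ directly, using the coproduct description of $\int_\cI W$ together with extensivity to identify it with the end $\cV^\cI(W,\cC(A,G-))$. The paper instead applies Yoneda to reduce to a natural bijection of hom-sets $\cV(X,-)$, and then deploys a chain of adjunctions --- $\Sigma\dashv\Hom$, $\Int\dashv\Und$, and the cartesian closed structures of $\VCat$ and $\CatV$ --- to translate both sides into sets of $\cV$-functors $\cI\times\Sigma X\to\cV$ and internal functors $\int_\cI W\times\Int\Sigma X\to\Int\cC$ under the prescribed boundary conditions; a final explicit comparison (\cref{lem:isoofslices}) checks these agree.

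One caution about your route: the sentence ``the level-$0$ data of such a natural transformation is a family $\{WI\to\cC(A,GI)\}_{I\in\cI}$'' only describes the $\un$-points of the fiber, not the fiber itself as an object of $\cV$. To conclude that the fiber \emph{is} the end, you must either run the argument for $X$-generalized elements for every $X\in\cV$ (which is exactly what the paper does via Yoneda), or compute the hom-object $\Und\llbracket\int_\cI W,\Int\cC\rrbracket(\Delta A,\Int G\circ\pi_W)$ directly as an equalizer in $\cV$ and identify it with the end formula from \cref{rem:internalhomVcat}. The latter is feasible but requires unpacking $\llbracket\int_\cI W,\Int\cC\rrbracket_1$ and splitting the internal homs $[\bigsqcup_I WI,-]$ via extensivity, which is more delicate than your sketch indicates. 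The paper's adjunction chain essentially packages this bookkeeping; your approach trades that machinery for a more hands-on limit computation.
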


\begin{proof}
A $W$-weighted limit of $G$ corresponds by \cref{thm:repweightedlimits} to an internal terminal object in the comma internal category \[ \textstyle\int_\cC \cV^{\cI}(W,\cC(-,G))\cong W\downarrow \Int\cC(-,G)\coloneqq \Int\cC(-,G)^{\op}\downarrow W. \] 
On the other hand, an internal limit of $\Int G\circ \pi_W\colon \int_\cI W\to \Int\cC$ is an internal terminal object in the internal category $\Delta\downarrow (\Int G\circ \pi_W)$ of cones over $\Int G\circ \pi_W$ by definition. Then, since internal terminal objects are invariant under isomorphisms by \cref{terminalinvariance}, in order to prove the result it is enough to show that there is an isomorphism in $\CatV$
\setcounter{equation}{\value{theorem}}
\refstepcounter{theorem}\refstepcounter{equation}
\begin{equation} \label{psi}
W\downarrow \Int\cC(-,G)\cong\Delta\downarrow (\Int G\circ \pi_W).
\end{equation}

Moreover, the internal functors \[ W\downarrow \Int\cC(-,G)\to \Int\cC \quad \text{and} \quad \Delta\downarrow (\Int G\circ \pi_W)\to \Int\cC\] are both internal discrete fibrations by \cref{prop:commaisdiscfib}. Hence, to see that we have an isomorphism as in \eqref{psi} over $\Int\cC$, by applying the inverse of the internal Grothendieck construction from \cref{subsec:inversegroth} it is enough to construct a $\cV$-natural transformation between their fibers. In other words, we must construct an isomorphism in $\cV$
\[ \textstyle\cV^{\cI}(W,\cC(A,G-))\cong \Und \llbracket\int_\cI W,\Int\cC\rrbracket(\Delta A,\Int G\circ \pi_W), \]
which is $\cV$-natural in $A\in\cC$.

By the Yoneda lemma, this is equivalent to constructing a natural isomorphism of sets
\[ \textstyle\cV(X,\cV^{\cI}(W,\cC(A,G-)))\cong \cV(X,\Und\llbracket\int_\cI W,\Int\cC\rrbracket(\Delta A,\Int G\circ\pi_W)),\] for every object $X\in\cV$. But note that we have the following natural isomorphisms of sets:
    \begin{align*}
    \cV(X,\cV^{\cI}(W,\cC(A,G-))) &\cong \sliceunder{\VCat}{\{0,1\}}(\Sigma X, (\cV^{\cI})_{W,\cC(A,G-)}) & \Sigma\dashv \Hom \\
    &\cong \sliceunder{\VCat}{\cI\times \{0,1\}}(\cI\times \Sigma X,\cV_{W,\cC(A,G-)}) & \cI\times (-)\dashv (-)^\cI 
    \end{align*}
and
    \begin{align*}
    \cV(X,\Und&\textstyle\llbracket\int_\cI W,\Int\cC\rrbracket(\Delta A,\Int G\circ\pi_W)) & \\
    &\cong \textstyle\sliceunder{\VCat}{\{0,1\}}(\Sigma X,\Und\llbracket\int_\cI W,\Int\cC\rrbracket_{\Delta A,\Int G\circ\pi_W})  &\Sigma\dashv \Hom
      \\
    &\cong \textstyle\sliceunder{\CatV}{\{0,1\}}(\Int\Sigma X,\llbracket\int_\cI W,\Int\cC\rrbracket_{\Delta A,\Int G\circ\pi_W}) & \Int\dashv \Und  \\
    &\cong \textstyle\sliceunder{\CatV}{\int_\cI W\times \{0,1\}}(\int_\cI W\times \Int\Sigma X,\Int\cC_{\Delta A,\Int G\circ\pi_W}) & \textstyle\int_\cI W\times (-)\dashv \llbracket\int_\cI W,-\rrbracket\\
\end{align*}
Then, our proof will be done if we show that, for all objects $A\in \cC$ and $X\in\cV$, there is a natural isomorphism of sets
    \[ \textstyle\sliceunder{\VCat}{\cI\times \{0,1\}}(\cI\times \Sigma X,\cV_{W,\cC(A,G-)})\cong \sliceunder{\CatV}{\int_\cI W\times \{0,1\}}(\int_\cI W\times \Int\Sigma X,\Int\cC_{\Delta A,\Int G\circ\pi_W}). \] 
    We prove this fact as a separate result below.
\end{proof}

\begin{lemma} \label{lem:isoofslices}
    For all objects $X\in\cV$ and $A\in \cC$, there is an isomorphism of sets
    \[ \textstyle\sliceunder{\VCat}{\cI\times \{0,1\}}(\cI\times \Sigma X,\cV_{W,\cC(A,G-)})\cong \sliceunder{\CatV}{\int_\cI W\times \{0,1\}}(\int_\cI W\times \Int\Sigma X,\Int\cC_{\Delta A,\Int G\circ\pi_W}). \]
\end{lemma}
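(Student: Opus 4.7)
The plan is to unpack both sides of the stated isomorphism as concrete families of maps in $\cV$ with compatibility conditions, and then observe that the two descriptions coincide after transposing through the adjunction $(-)\times Wi\dashv [Wi,-]$. The key observations are that both $\cI\times\Sigma X$ and $\int_\cI W\times\Int\Sigma X$ are ``two-column'' structures whose only non-trivial ``cross'' hom-data has source the objects over $0$ and target the objects over $1$, and that being under $\cI\times\{0,1\}$ or $\int_\cI W\times\{0,1\}$ pins down the two columns but leaves the cross-data free.

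First, I would analyse the left-hand side. Using $\Sigma X(0,0)=\Sigma X(1,1)=\un$, $\Sigma X(0,1)=X$, and $\Sigma X(1,0)=\emptyset$, a $\cV$-functor $H\colon \cI\times\Sigma X\to\cV$ whose restriction to $\cI\times\{0,1\}$ is $(W,\cC(A,G-))$ is determined by the hom-action at $(i,0)\to(j,1)$, which by \cref{lem:VfunctortoV} and the adjunction $(-)\times Wi\dashv[Wi,-]$ corresponds to a family of maps $\phi_{i,j}\colon \cI(i,j)\times X\to [Wi,\cC(A,Gj)]$ in $\cV$. The compatibility with compositions of $\cI\times\Sigma X$ unwinds into two pieces: the compatibility of $\phi$ with the $\cV$-functoriality of $W$ at the source and of $\cC(A,G-)$ at the target, and the compatibility with the composition of $\cI$.

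Second, I would unpack the right-hand side. Extensivity decomposes $(\int_\cI W\times\Int\Sigma X)_0\cong\bigsqcup_{i\in\cI} Wi\sqcup\bigsqcup_{i\in\cI}Wi$ and $(\int_\cI W\times\Int\Sigma X)_1$ into three blocks indexed by $\Sigma X(0,0)$, $\Sigma X(0,1)=X$, and $\Sigma X(1,1)$. The two ``diagonal'' blocks are forced by the restriction to $\int_\cI W\times\{0,1\}$ to coincide with $\Delta A$ and $\Int G\circ\pi_W$ respectively. The cross block furnishes the only free data, namely a map $\bigsqcup_{i,j\in\cI}\cI(i,j)\times Wi\times X\to \bigsqcup_{A',B'\in\cC}\cC(A',B')$; source–target compatibility (source $=A$ forced by $\Delta A$, target $=Gj$ forced by $\Int G\circ\pi_W$ using that $t\colon(\int_\cI W)_1\to(\int_\cI W)_0$ sends the $(i,j)$-summand to $Wj$ via the $\cV$-functor $W$) shows by \cref{extensivelemma} that it factors uniquely through a family $\psi_{i,j}\colon \cI(i,j)\times Wi\times X\to \cC(A,Gj)$. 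Compatibility with identities and compositions of $\int_\cI W\times\Int\Sigma X$, again decomposed via extensivity, produces exactly the same two pieces of data required on the left-hand side.

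Third, I would establish the bijection by sending $\phi_{i,j}$ to the map $\psi_{i,j}$ obtained by transposing $\phi_{i,j}$ across the adjunction $(-)\times Wi\dashv[Wi,-]$. Naturality in $A\in\cC$ and $X\in\cV$ is automatic, since the transpose is natural in both variables. The main obstacle is bookkeeping: one must verify that every compatibility condition generated by the internal-functoriality of the right-hand map matches, under the adjunction transpose, the corresponding $\cV$-functoriality condition on the left-hand map. The non-trivial cases involve the interaction of $\psi_{i,j}$ with the target map of $\int_\cI W$, which encodes exactly the $\cV$-functoriality of $W$ appearing in the compatibility condition for $\phi$. Once these match, the bijection follows.
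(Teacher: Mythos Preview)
Your approach is essentially the paper's: unpack both sides into a family of maps $Wi\times\cI(i,j)\times X\to\cC(A,Gj)$ with compatibility conditions and match them. Two small points are worth noting. First, the adjunction transpose through $[Wi,-]$ is unnecessary; the paper works directly with the $\ev^H$-form on both sides (via \cref{lem:VfunctortoV}), so the families $\phi_{i,j}$ and $\psi_{i,j}$ are literally the same maps rather than transposes of each other.

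Second, you misidentify the ``non-trivial'' compatibility. The interaction of the cross-maps with the target map of $\int_\cI W$ (i.e.\ with $\ev^W$) appears in the $(0,0)$-then-$(0,1)$ composition, and this condition is identical on the two sides. The only compatibility that is not literally the same is the $(0,1)$-then-$(1,1)$ composition: on the enriched side it involves $\ev^{\cC(A,G-)}_{j,k}$, while on the internal side the $(1,1)$-column is governed by $\Int G\circ\pi_W$, so the condition involves $c_{A,Gj,Gk}\circ(\id\times G_{j,k})$. These agree precisely because $\cC(A,G-)$ is defined as the composite $\cC(A,-)\circ G$, so $\ev^{\cC(A,G-)}_{j,k}=c_{A,Gj,Gk}\circ(\id\times G_{j,k})$. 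That identity is the one step you should flag explicitly.
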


\begin{proof}
    Let us begin by  unpacking the data of the elements of each of the sets involved. We first claim that the data of an element in $\sliceunder{\VCat}{\cI\times \{0,1\}}(\cI\times \Sigma X,\cV_{W,\cC(A,G-)})$ 
    amounts to a family of maps in $\cV$
    \[ \{\ev^H_{i,j}\colon Wi\times \cI(i,j)\times X\to \cC(A,Gj)\}_{i,j\in \cI} \]
    making the following diagram in $\cV$ commute, for all objects $i,j,k\in \cI$. 
    \begin{diagram} \label{comp1}
        \node[](0) {$Wi\times \cI(i,j)\times \cI(j,k)\times X$}; 
        \node[below right of=0,xshift=2.5cm](0') {$Wi\times \cI(i,k)\times X$}; 
        \node[below right of=0',xshift=2.5cm](0'') {$\cC(A,Gk)$};

        \node[above right of=0',xshift=2.5cm](1){$Wj\times \cI(j,k)\times X$};
        \node[below left of=0',xshift=-2.5cm](2) {$\cC(A,Gj)\times \cI(j,k)$};
        
        \draw[->] (0) to node[right,la,yshift=4pt]{$\id_{Wi}\times c_{i,j,k}\times \id_X$} (0');
        \draw[->] (0') to node[right,la,yshift=4pt]{$\ev^H_{i,k}$} (0'');
        \draw[->] (0) to node[above,la]{$\ev^W_{i,j}\times \id_{\cI(j,k)\times X}$} (1); 
        \draw[->] (1) to node[right,la]{$\ev^H_{j,k}$} (0'');
        \draw[->] (0) to node[left,la]{$\ev^H_{i,j}\times \id_{\cI(j,k)}$} (2); 
        \draw[->] (2) to node[below,la]{$\ev^{\cC(A,G-)}_{j,k}$} (0'');
    \end{diagram}

    To see this note that, an element in $\sliceunder{\VCat}{\cI\times \{0,1\}}(\cI\times \Sigma X,\cV_{W,\cC(A,G-)})$ consists by definition of a $\cV$-functor $H\colon \cI\times \Sigma X\to \cV$ making the following diagram in $\VCat$ commute.
    \begin{tz}
        \node[](1) {$\cI\times\{0,1\}$}; 
        \node[below of=1](2) {$\cI\times \Sigma X$}; 
        \node[right of=2,xshift=1.1cm](3) {$\cV$}; 
        \draw[right hook->] (1) to (2); 
        \draw[->] (2) to node[below,la]{$H$} (3); 
        \draw[->] (1) to node[above,la,xshift=29pt]{$(W,\cC(A,G-))$} (3);
    \end{tz}
    Then, we can see that the $\cV$-functor $H$ is completely determined on objects by $W$ and $\cC(A,G-)$. Moreover, it acts as $W$ on hom-objects of the form $(\cI\times \Sigma X)((i,0),(j,0))=\cI(i,j)$ and as $\cC(A,G-)$ on hom-objects of the form $(\cI\times \Sigma X)((i,1),(j,1))=\cI(i,j)$. In particular, the compatibility of $H$ with identities and compositions when restricted to the sub-$\cV$-categories $\cI\times \{0\}$ and $\cI\times \{1\}$ is automatic.
    
    Hence, the data of $H$ amounts to maps on hom-objects 
    \[ \ev^H_{i,j}\colon H(i,0)\times (\cI\times \Sigma X)((i,0),(j,1))=Wi\times \cI(i,j)\times X\to \cC(A,Gj)=H(j,1), \]
    which are compatible with compositions. In particular, the commutativity of the diagram~\eqref{comp1} expresses the fact that the maps $\ev^H_{i,j}$ are compatible with compositions with respect to hom-objects of the form 
    \[ (\cI\times \Sigma X)((i,0),(j,0))\times (\cI\times \Sigma X)((j,0),(k,1))=\cI(i,j)\times \cI(j,k)\times X \]
    in the upper triangle, and with respect to hom-objects of the form 
    \[ (\cI\times \Sigma X)((i,0),(j,1))\times (\cI\times \Sigma X)((j,1),(k,1))=\cI(i,j)\times X\times \cI(j,k) \]
    in the lower triangle, which recovers our desired description.

    Next, we focus on the set $\sliceunder{\CatV}{\int_\cI W\times \{0,1\}}(\int_\cI W\times \Int\Sigma X,\Int\cC_{\Delta A,\Int G\circ\pi_W}),$ and we prove that the data of an element in this set 
    amounts to a family of maps in $\cV$ 
    \[ \{ \ev^H_{i,j}\colon Wi\times \cI(i,j)\times X\to \cC(A,Gj)\}_{i,j\in \cI} \]
   making the following diagram in $\cV$ commute, for all objects $i,j,k\in \cI$. 
    \begin{diagram} \label{comp2}
        \node[](0) {$Wi\times \cI(i,j)\times \cI(j,k)\times X$}; 
        \node[below right of=0,xshift=2.5cm](0') {$Wi\times \cI(i,k)\times X$}; 
        \node[below right of=0',xshift=2.5cm](0'') {$\cC(A,Gk)$};

        \node[above right of=0',xshift=2.5cm](1){$Wj\times \cI(j,k)\times X$};
        \node[below left of=0',xshift=-2.5cm](2) {$\cC(A,Gj)\times \cC(Gj,Gk)$};
        
        \draw[->] (0) to node[right,la,yshift=4pt]{$\id_{Wi}\times c_{i,j,k}\times \id_X$} (0');
        \draw[->] (0') to node[right,la,yshift=4pt]{$\ev^H_{i,k}$} (0'');
        \draw[->] (0) to node[above,la]{$\ev^W_{i,j}\times \id_{\cI(j,k)\times X}$} (1); 
        \draw[->] (1) to node[right,la]{$\ev^H_{j,k}$} (0'');
        \draw[->] (0) to node[left,la]{$\ev^H_{i,j}\times G_{j,k}$} (2); 
        \draw[->] (2) to node[below,la]{$c_{A,Gj,Gk}$} (0'');
    \end{diagram}

 To see this, note that an element in the set $\sliceunder{\CatV}{\int_\cI W\times \{0,1\}}(\int_\cI W\times \Int\Sigma X,\Int\cC_{\Delta A,\Int G\circ\pi_W})$ consists by definition of an internal functor $H\colon \int_\cI W\times \Int\Sigma X\to \Int\cC$ making the following diagram in $\CatV$ commute.
    \begin{tz}
        \node[](1) {$\int_\cI W\times \{0,1\}$}; 
        \node[below of=1](2) {$\int_\cI W\times \Int\Sigma X$}; 
        \node[right of=2,xshift=2cm](3) {$\Int\cC$}; 
        \draw[right hook->] (1) to (2); 
        \draw[->] (2) to node[below,la]{$H$} (3); 
        \draw[->] (1) to node[above,la,xshift=33pt]{$(\Delta A,\Int G\circ \pi_W)$} (3);
    \end{tz}
Then, we can see that the internal functor $H$ is completely determined on level $0$ by~$\Delta A$ and~$\Int G\circ \pi_W$, and hence the data of the map $H_1$ amounts to a map in~$\cV$ of the form
    \[ \textstyle H_1\colon (\bigsqcup_{i,j\in \cI} Wi\times \cI(i,j))\times (\{0,1\}\sqcup X) \to \bigsqcup_{A,B\in \cC} \cC(A,B).\]
    Moreover, the map $H_1$ acts as $\Delta A$ on the components of the form $Wi\times \cI(i,j)\times \{0\}$ and as~$\Int G\circ \pi_W$ on the components of the form $Wi\times \cI(i,j)\times \{1\}$. In particular, the compatibility of~$F$ with identity and composition when restricted to the internal subcategories $\int_\cI W\times \{0\}$ and $\int_\cI W\times \{1\}$ is automatic.

    Hence, the data of $H_1$ amount to maps of components in $\cV$
    \[ \ev^H_{i,j}\colon Wi\times \cI(i,j)\times X\to \cC(A,Gj) \]
    which are compatible with composition. In particular, the commutativity of the diagram \eqref{comp2} expresses the fact that the maps $\ev^H_{i,j}$ are compatible with composition with respect to components of the form 
    \[ (Wi\times \cI(i,j)\times \{0\}) \times_{Wj\times \{0\}} (Wj\times \cI(j,k)\times X)= Wi\times \cI(i,j)\times \cI(j,k)\times X\]
    in the upper triangle, and with respect to components of the form 
    \[ (Wi\times \cI(i,j)\times X) \times_{Wj\times \{1\}} (Wj\times \cI(j,k)\times \{1\})= Wi\times \cI(i,j)\times X\times \cI(j,k)\]
    in the lower triangle. This recovers our claimed description.

    Finally, to see that the required isomorphism in the statement of this lemma holds, given our descriptions above it suffices to show that the following diagram in $\cV$ commutes. 
    \begin{diagram} \label{square1}
\node[](1) {$Wi\times \cI(i,j)\times \cI(j,k)\times X$}; 
\node[below of=1](3) {$\cC(A,Gj)\times \cC(Gj,Gk)$};
\node[right of=1,xshift=4.5cm](2) {$\cC(A,Gj)\times \cI(j,k)$}; 
\node[below of=2](4) {$\cC(A,Gk)$}; 

\draw[->] (1) to node[above,la]{$\ev^H_{i,j}\times \id_{\cI(j,k)}$} (2);
\draw[->] (2) to node[right,la]{$\ev^{\cC(A,G-)}_{j,k}$} (4);
\draw[->] (1) to node[left,la]{$\ev^H_{i,j}\times G_{j,k}$} (3);
\draw[->] (3) to node[below,la]{$c_{A,Gj,Gk}$} (4);
\end{diagram}
By definition, the $\cV$-functor $\cC(A,G-)$ is given by the composite 
\[ \cI\xrightarrow{G}\cC\xrightarrow{\cC(A,-)} \cV \]
and so the below left diagram in $\cV$ commutes.
\begin{tz}
\node[](1) {$\cI(j,k)$}; 
\node[below of=1](3) {$\cC(Gj,Gk)$};
\node[right of=3,xshift=3.1cm](4) {$[\cC(A,Gj),\cC(A,Gk)]$}; 

\draw[->] (1) to node[above,la,xshift=14pt]{$\cC(A,G-)_{j,k}$} (4);
\draw[->] (1) to node[left,la]{$G_{j,k}$} (3);
\draw[->] (3) to node[below,la]{$\cC(A,-)_{Gj,Gk}$} (4);

\node[right of=1,xshift=8cm](1) {$\cC(A,Gj)\times \cI(j,k)$}; 
\node[below of=1](3) {$\cC(A,Gj)\times \cC(Gj,Gk)$};
\node[right of=3,xshift=2.8cm](4) {$\cC(A,Gk)$}; 

\draw[->] (1) to node[above,la,xshift=11pt]{$\ev^{\cC(A,G-)}_{j,k}$} (4);
\draw[->] (1) to node[left,la]{$\id_{\cC(A,Gj)}\times G_{j,k}$} (3);
\draw[->] (3) to node[below,la]{$c_{A,Gj,Gk}$} (4);
\end{tz}
Using the adjunction $\cC(A,Gj)\times (-)\dashv [\cC(A,Gj),-]$ and the definition of the representable $\cV$-functor $\cC(A,-)$, we can rewrite the above left diagram as the above right commutative diagram in $\cV$. The commutativity of \eqref{square1} then follows from the commutativity of this last triangle. 
\end{proof}

As a particular case of \cref{thm:weightedasconical}, we get the following characterization of (conical) $\cV$-limits.

\begin{cor} \label{cor:enrichedlimitasinternallimit}
    Let $G\colon \cI\to \cV$ be a $\cV$-functor. Given a pair $(L,\lambda)$ of an object $L\in \cC$ and a $\cV$-natural transformation $\lambda\colon \Delta L\Rightarrow G$ in $\VCat(\cI,\cC)$, the following are equivalent: 
    \begin{rome}
        \item the pair $(L,\lambda)$ is a $\cV$-limit of $G$, 
        \item the pair $(L,\lambda)$ is an internal limit of $\Int G$.
    \end{rome}
\end{cor}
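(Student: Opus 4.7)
The plan is to derive this corollary directly as a special case of \cref{thm:weightedasconical} by specializing to the weight $W=\Delta\un\colon\cI\to\cV$, the constant $\cV$-functor at the terminal object $\un$ of $\cV$. Recall that by definition, a $\cV$-limit of $G$ is precisely a $\Delta\un$-weighted $\cV$-limit of $G$, so applying \cref{thm:weightedasconical} immediately equates (i) with the condition that $(L,\lambda)$ be an internal limit of the composite
\[ \textstyle\int_\cI \Delta\un\xrightarrow{\pi_{\Delta\un}} \Int\cI\xrightarrow{\Int G} \Int\cC. \]
So the only work remaining is to identify this composite with $\Int G\colon \Int\cI\to\Int\cC$ itself, via a canonical isomorphism over $\Int\cI$.

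First I would establish the isomorphism $\int_\cI\Delta\un\cong\Int\cI$ in $\slice{\CatV}{\Int\cI}$, where $\pi_{\Delta\un}$ corresponds to the identity on $\Int\cI$. This follows by direct inspection of \cref{defn:groth}: the object of objects of $\int_\cI\Delta\un$ is the coproduct $\bigsqcup_{i\in\cI}\un=(\Int\cI)_0$, and the object of morphisms is
\[ \textstyle\bigsqcup_{i,j\in\cI}\cI(i,j)\times \un\cong\bigsqcup_{i,j\in\cI}\cI(i,j)=(\Int\cI)_1, \]
with source, target, identity, and composition maps agreeing with those of $\Int\cI$ under this identification. The projection $\pi_{\Delta\un}$ is induced at levels $0$ and $1$ by the unique map $\un\to\un$ and the first projection $\cI(i,j)\times\un\to\cI(i,j)$, respectively, both of which become identities.

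Alternatively, one can observe this abstractly via \cref{thm:equiv}: the $\cV$-functor $\Delta\un\colon\cI^{\op}\to\cV$ is the terminal object of $\VCat(\cI^{\op},\cV)$, so its image under the equivalence $\int_\cI$ is the terminal object of $\slice{\Dfib}{\Int\cI}$, namely $\id_{\Int\cI}$.

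Using this identification, the composite $\Int G\circ\pi_{\Delta\un}$ becomes simply $\Int G\colon\Int\cI\to\Int\cC$, and hence an internal terminal object in the internal category $\Delta\downarrow(\Int G\circ\pi_{\Delta\un})$ of cones is the same as an internal terminal object in $\Delta\downarrow\Int G$; that is, an internal limit of $\Int G$. The main obstacle, such as it is, is just the careful verification that the isomorphism $\int_\cI\Delta\un\cong\Int\cI$ lies over $\Int\cI$ with $\pi_{\Delta\un}$ corresponding to the identity, but this is routine once the coproduct decompositions in \cref{defn:groth} are written down.
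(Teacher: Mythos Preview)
Your proof is correct and follows exactly the same approach as the paper: specialize \cref{thm:weightedasconical} to $W=\Delta\un$ and observe that the projection $\pi_{\Delta\un}\colon\int_\cI\Delta\un\to\Int\cI$ is the identity. You supply more detail than the paper does (including an alternative abstract argument via the equivalence), but the strategy is identical.
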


\begin{proof}
    This result is obtained by taking $W=\Delta \un\colon \cI\to \cV$ to be the constant $\cV$-functor at the terminal object $\un$ of $\cV$ in \cref{thm:weightedasconical} and noticing that the projection $\int_\cI \Delta\un\to \Int\cI$ is simply the identity internal functor at $\Int\cI$.
\end{proof}

\begin{ex} \label{ex:weightedasconical}
    In our examples of interest, using the terminology from \cref{ex:doublelimit}, \cref{thm:weightedasconical,cor:enrichedlimitasinternallimit} translate as follows. 
    \begin{enumerate}
        \item When $\cV=\Set$, the theorem recovers the classical result that all weighted limits of categories are in particular conical limits; see e.g.~\cite[(7.1.8)]{Riehlcathtpyth}. The corollary does not tell us anything new.
        \item When $\cV=\Cat$, the theorem retrieves the result from \cite[Proposition 1.4]{GraParPersistentII}, showing that every weighted $2$-limit can be obtained as a certain double limit. The corollary then implies that a $2$-limit of a $2$-functor $G$ corresponds to a double limit of the double functor $\Int G$, as mentioned in \cite[\S 4.2 c)]{GrandisPare}.
        \item[($n$)] When $\cV=(n-1)\Cat$ for $n\geq 3$, the theorem tells us that every weighted $n$-limit can be obtained as a certain double $(n-1)$-limit. In particular, the corollary implies that an $n$-limit of an $n$-functor $G$ corresponds to a double $(n-1)$-limit of the double $(n-1)$-functor $\Int G$. 
    \end{enumerate}
\end{ex}

\begin{rem} \label{rem:justifytensors}
    Given a $\cV$-category $\cC$ and objects $C\in \cC$ and $X\in \cV$, then the $\cV$-tensor of $C$ by $X$ can equivalently be defined as the weighted $\cV$-colimit of the $\cV$-functor $C\colon \mathbbm{1}\to \cC$ by the weight $X\colon \mathbbm{1}\to \cV$; see \cite[\S 3.7]{Kelly}. Then, by the dual version of \cref{thm:weightedasconical}, this amounts to the internal colimit of the internal functor 
\[ \textstyle X=\int_\mathbbm{1} X\to \un\xrightarrow{C}\Int\cC. \]
Hence $\cV$-tensors correspond to internal tensors of a specific form, which justifies our terminology.
\end{rem}

\bibliographystyle{alpha}
\bibliography{References}

\begin{thebibliography}{CLPS22}

\bibitem[Ber07]{bergner}
Julia~E. Bergner.
\newblock A model category structure on the category of simplicial categories.
\newblock {\em Trans. Amer. Math. Soc.}, 359(5):2043--2058, 2007.

\bibitem[Buc14]{Buckley}
Mitchell Buckley.
\newblock Fibred 2-categories and bicategories.
\newblock {\em J. Pure Appl. Algebra}, 218(6):1034--1074, 2014.

\bibitem[BW19]{BeardsleyWong}
Jonathan Beardsley and Liang~Ze Wong.
\newblock The enriched {G}rothendieck construction.
\newblock {\em Adv. Math.}, 344:234--261, 2019.

\bibitem[CFP17]{Extensive}
Thomas Cottrell, Soichiro Fujii, and John Power.
\newblock Enriched and internal categories: an extensive relationship.
\newblock {\em Tbilisi Math. J.}, 10(3):239--254, 2017.

\bibitem[CLPS22]{CLPS}
Geoffrey S.~H. Cruttwell, Michael~J. Lambert, Dorette~A. Pronk, and Martin
  Szyld.
\newblock Double fibrations.
\newblock {\em Theory Appl. Categ.}, 38:Paper No. 35, 1326--1394, 2022.

\bibitem[cM22a]{cM1}
tslil clingman and Lyne Moser.
\newblock 2-limits and 2-terminal objects are too different.
\newblock {\em Appl. Categ. Structures}, 30(6):1283--1304, 2022.

\bibitem[cM22b]{cM2}
tslil clingman and Lyne Moser.
\newblock Bi-initial objects and bi-representations are not so different.
\newblock {\em Cah. Topol. G\'{e}om. Diff\'{e}r. Cat\'{e}g.}, 63(3):259--330,
  2022.

\bibitem[GHL22]{GHL}
Andrea Gagna, Yonatan Harpaz, and Edoardo Lanari.
\newblock Bilimits are bifinal objects.
\newblock {\em J. Pure Appl. Algebra}, 226(12):Paper No. 107137, 36, 2022.

\bibitem[GP99]{GrandisPare}
Marco Grandis and Robert Paré.
\newblock Limits in double categories.
\newblock {\em Cahiers Topologie G\'{e}om. Diff\'{e}rentielle Cat\'{e}g.},
  40(3):162--220, 1999.

\bibitem[GP19]{GraParPersistentII}
Marco Grandis and Robert Par\'{e}.
\newblock Persistent double limits and flexible weighted limits.
\newblock \url{https://www.mscs.dal.ca/~pare/DblPrs2.pdf}, 2019.

\bibitem[Gra20]{Grandis}
Marco Grandis.
\newblock {\em Higher dimensional categories}.
\newblock World Scientific Publishing Co. Pte. Ltd., Hackensack, NJ, 2020.
\newblock From double to multiple categories.

\bibitem[HM15]{HeutsMoerdijk}
Gijs Heuts and Ieke Moerdijk.
\newblock Left fibrations and homotopy colimits.
\newblock {\em Math. Z.}, 279(3-4):723--744, 2015.

\bibitem[Joh02]{elephant}
Peter~T. Johnstone.
\newblock {\em Sketches of an elephant: a topos theory compendium. {V}ol. 1},
  volume~43 of {\em Oxford Logic Guides}.
\newblock The Clarendon Press, Oxford University Press, New York, 2002.

\bibitem[Joy08]{joyal2008notes}
Andr{\'e} Joyal.
\newblock Notes on quasi-categories.
\newblock preprint available at
  \url{https://www.math.uchicago.edu/~may/IMA/Joyal.pdf}, 2008.

\bibitem[JY21]{JohYau}
Niles Johnson and Donald Yau.
\newblock {\em 2-dimensional categories}.
\newblock Oxford University Press, Oxford, 2021.

\bibitem[Kel89]{Kellyelementary}
G.~Maxwell Kelly.
\newblock Elementary observations on {$2$}-categorical limits.
\newblock {\em Bull. Austral. Math. Soc.}, 39(2):301--317, 1989.

\bibitem[Kel05]{Kelly}
G.~Maxwell Kelly.
\newblock Basic concepts of enriched category theory.
\newblock {\em Repr. Theory Appl. Categ.}, (10):vi+137, 2005.
\newblock Reprint of the 1982 original [Cambridge Univ. Press, Cambridge;
  MR0651714].

\bibitem[Lam21]{Lambert}
Michael Lambert.
\newblock Discrete double fibrations.
\newblock {\em Theory Appl. Categ.}, 37:671--708, 2021.

\bibitem[LR20]{LoregianRiehl}
Fosco Loregian and Emily Riehl.
\newblock Categorical notions of fibration.
\newblock {\em Expo. Math.}, 38(4):496--514, 2020.

\bibitem[Lur09]{htt}
Jacob Lurie.
\newblock {\em Higher topos theory}, volume 170 of {\em Annals of Mathematics
  Studies}.
\newblock Princeton University Press, Princeton, NJ, 2009.

\bibitem[MRR]{MRR3}
Lyne Moser, Nima Rasekh, and Martina Rovelli.
\newblock Limits in an $(\infty,n)$-category.
\newblock Work in progress.

\bibitem[MRR23]{MRR2}
Lyne Moser, Nima Rasekh, and Martina Rovelli.
\newblock An $(\infty,n)$-categorical straightening-unstraightening
  construction.
\newblock \href{https://arxiv.org/abs/2307.07259}{arXiv:2307.07259}, 2023.

\bibitem[Ras21]{RasekhD}
Nima Rasekh.
\newblock Yoneda lemma for $\mathcal{D}$-simplicial spaces.
\newblock \href{https://arxiv.org/abs/2108.06168}{arXiv:2108.06168}, 2021.

\bibitem[Ras23]{RasekhYoneda}
Nima Rasekh.
\newblock Yoneda {L}emma for {S}implicial {S}paces.
\newblock {\em Appl. Categ. Structures}, 31(4):27, 2023.

\bibitem[Rie14]{Riehlcathtpyth}
Emily Riehl.
\newblock {\em Categorical homotopy theory}, volume~24 of {\em New Mathematical
  Monographs}.
\newblock Cambridge University Press, Cambridge, 2014.

\bibitem[Rie16]{Riehlcontext}
Emily Riehl.
\newblock {\em Category theory in context}.
\newblock Aurora Modern Math Originals. Dover Publications, 2016.

\bibitem[RV22]{RiehlVerity}
Emily Riehl and Dominic Verity.
\newblock {\em Elements of {$\infty$}-category theory}, volume 194 of {\em
  Cambridge Studies in Advanced Mathematics}.
\newblock Cambridge University Press, Cambridge, 2022.

\bibitem[Tam09]{tamaki}
Dai Tamaki.
\newblock The grothendieck construction and gradings for enriched categories.
\newblock arXiv:0907.0061, 2009.

\bibitem[Tom20]{Tomasic}
Ivan Toma\v{s}i\'{c}.
\newblock A topos-theoretic view of difference algebra.
\newblock \href{https://arxiv.org/abs/2001.09075}{arXiv:2001.09075}, 2020.

\bibitem[Ver11]{VerityThesis}
Dominic Verity.
\newblock Enriched categories, internal categories and change of base.
\newblock {\em Repr. Theory Appl. Categ.}, (20):1--266, 2011.

\end{thebibliography}

\end{document}